\newtheorem{thm}{Theorem}[section]
\newtheorem{prop}[thm]{Proposition}
\newtheorem{cor}[thm]{Corollary}
\newtheorem{conj}[thm]{Conjecture}
\newtheorem{defn}[thm]{Definition}
\tikzset{myarrow/.style={ decoration={bent,aspect=0.3, markings,mark=at
			position 0.5 with {\arrow[scale=1.2]{latex'}}}, postaction=decorate}}
\tikzset{myarrowshort/.style={ decoration={bent,aspect=0.3, markings,mark=at
			position 0.3 with {\arrow[scale=1.2]{latex'}}}, postaction=decorate}}
\tikzset{myarrowshorter/.style={ decoration={bent,aspect=0.3, markings,mark=at
			position 0.2 with {\arrow[scale=1.2]{latex'}}}, postaction=decorate}}
\tikzset{->-/.style={decoration={markings, mark=at position #1 with
			{\arrow{>}}},postaction={decorate}}}
\tikzset{my_dot/.style={fill, circle, inner sep=0pt,minimum size=1.5pt}}
\tikzset{my_node/.style={fill, circle, inner sep=0pt,minimum size=3pt}}
\tikzset{inv/.style={fill, circle, inner sep=0pt,minimum size=0pt}}
\newcommand{\notadp}{{  
\begin{tikzpicture}[baseline=-.55ex,scale=.2, every loop/.style={}]
 \node[circle,draw,fill,inner sep=.5pt] (a) at (0,0) {};
 \draw (a) edge[loop] (a);
 \draw (-.2,-.2) -- (.2,.5);
\end{tikzpicture}}}
\newcommand{\Sym}{\on{Sym}}
\newcommand{\gm}{\mathfrak{g}^{\mathfrak{m}}}
\newcommand{\grt}{\mathfrak{grt}}
\newcommand{\ab}{\mathrm{ab}}
\newcommand{\es}{\mathrm{es}}
\newcommand{\LL}{\mathbb{L}}
\newcommand{\NN}{\mathbb{N}}
\newcommand{\QQ}{\mathbb{Q}}
\newcommand{\RR}{\mathbb{R}}
\newcommand{\ZZ}{\mathbb{Z}}
\newcommand{\cA}{\mathcal{A}}
\newcommand{\cC}{\mathcal{C}}
\newcommand{\cF}{\mathcal{F}}
\newcommand{\cG}{\mathcal{G}}
\newcommand{\cM}{\mathcal{M}}
\newcommand{\cO}{\mathcal{O}}
\newcommand{\cS}{\mathcal{S}}
\newcommand{\cT}{\mathcal{T}}
\newcommand{\cU}{\mathcal{U}}
\newcommand{\Sat}{\textrm{\begin{CJK}{UTF8}{min}サ\end{CJK}}}
\newcommand{\BKGr}{BK_{\mathrm{Gr}}}
\newcommand{\MGr}{M_{\mathrm{Gr}}}
\newcommand{\Fil}{F}  
\newcommand{\on}{\operatorname}
\newcommand{\Gr}{\operatorname{Gr}}
\newcommand{\Spec}{\operatorname{Spec}}
\newcommand{\Prim}{\operatorname{Prim}}
\newcommand{\Proj}[1]{\mathrm{Proj}_{#1}}
\newcommand{\St}{\operatorname{St}}
\newcommand{\Tot}{\operatorname{Tot}}
\newcommand{\ov}{\overline}
\newcommand{\GL}{\mathrm{GL}}
\newcommand{\SL}{\mathrm{SL}}
\newcommand{\SO}{\mathrm{SO}}
\newcommand{\ad}{\mathrm{ad}}
\newcommand{\bu}{\bullet}
\DeclareMathOperator*{\colim}{colim}
\newcommand{\GC}{\mathsf{GC}}
\newcommand{\xra}{\xrightarrow}
\newcommand{\wt}{\widetilde}
\newcommand{\eps}{\epsilon}
\newcommand{\tr}{\on{tr}}
\newcommand{\Hom}{\on{Hom}}
\newcommand{\Sp}{\mathrm{Sp}}
\newcommand{\trop}{\mathrm{trop}}
\newcommand{\col}{\colon}
\newcommand{\computed}[1]{\textcolor{RoyalBlue}{\bm{#1}}}
\newcommand{\vanish}[1]{\textcolor{violet}{\bm{#1}}}
\newcommand{\stable}[1]{\textcolor{OliveGreen}{\bm{#1}}}
\newcommand{\blank}[1]{}
\newcommand{\hide}[1]{}
\newcommand{\stub}[1]{}
\newcommand{\matrixtwo}[4]{\left(\begin{matrix}#1&#2\\#3&#4\end{matrix}\right)}
\newcommand{\down}[2]{\xymatrix@R=6mm@C=2mm{
#1\ar[d]\\ #2
}}
\newcommand{\downlabel}[3]{\xymatrix@R=6mm@C=2mm{
{#1}\ar[d]^<<<{#3} \\ #2
}}
\newcommand{\squarediagram}[4]{\xymatrix@R=8mm@C=8mm{
#1\ar[d]\ar[r] & #2\ar[d] \\ #3\ar[r] &#4
}}
\newcommand{\squarediagrammapsto}[4]{\xymatrix@R=8mm@C=8mm{
#1\ar@{|->}[d]\ar@{|->}[r] & #2\ar@{|->}[d] \\ #3\ar@{|->}[r] &#4
}}
\newcommand{\squarediagramlabel}[8]{\xymatrix@R=8mm@C=8mm{
#1\ar[d]_{#6}\ar[r]^{#5} & #2\ar[d]^{#7} \\ #3\ar[r]^{#8} &#4
}}
\newcommand{\isocelesdown}[3]{\xymatrix@R=6mm@C=0mm{
& {#1}\ar[dl] \ar[dr] & \\
{#2} \ar[rr] && {#3}
}}
\newcommand{\isocelesdownlabel}[6]{\xymatrix@R=6mm@C=0mm{
& {#1}\ar[dl]_<<<<{#4} \ar[dr]^<<<<{#5} & \\
{#2} \ar[rr]_{#6} && {#3}
}}
\newcommand{\isocelesup}[3]{\xymatrix@R=6mm@C=0mm{
 #1\ar[rr]\ar[dr]  && #2\ar[dl] \\
 & #3 &
}}
\newcommand{\isocelesuplabel}[6]{\xymatrix@R=6mm@C=0mm{
 #1\ar[rr]^{{#4}} \ar[dr]_<<<{#5} && #2\ar[dl]^<<<{#6} \\
 & #3 &
}}
\newtheorem{Definition}[thm]{Definition}
\newenvironment{definition}
  {\begin{Definition}}{\end{Definition}}
\newtheorem{Example}[thm]{Example}
\newenvironment{example}
  {\begin{Example}\rm}{\end{Example}}
\newtheorem{Exercise}[thm]{Exercise}
\newenvironment{exercise}
  {\begin{Exercise}}{\end{Exercise}}
\newtheorem{Exploration}[thm]{Exploration}
\newenvironment{exploration}
  {\begin{Exploration}}{\end{Exploration}}
\newtheorem{Fact}[thm]{Fact}
\newenvironment{fact}
  {\begin{Fact}}{\end{Fact}}
\newtheorem{Theorem}[thm]{Theorem}
\newenvironment{theorem}
  {\begin{Theorem}}{\end{Theorem}}
\newtheorem{Lemma}[thm]{Lemma}
\newenvironment{lemma}
  {\begin{Lemma}}{\end{Lemma}}
\newtheorem{Proposition}[thm]{Proposition}
\newenvironment{proposition}
  {\begin{Proposition}}{\end{Proposition}}
\newtheorem{Corollary}[thm]{Corollary}
\newenvironment{corollary}
  {\begin{Corollary}}{\end{Corollary}}
\newtheorem{Question}[thm]{Question}
\newenvironment{question}
  {\begin{Question}}{\end{Question}}
\newtheorem{Conjecture}[thm]{Conjecture}
\newenvironment{conjecture}
  {\begin{Conjecture}}{\end{Conjecture}}
  \newtheorem{Problem}[thm]{Problem}
\theoremstyle{remark}
\newtheorem{Remark}[thm]{Remark}
\newenvironment{remark}
  {\begin{Remark}\rm}{\end{Remark}}
\newcommand \defnow[1]{\begin{definition}{#1}\end{definition}}
\newcommand \lemnow[1]{\begin{lemma}{#1}\end{lemma}}
\newcommand \proofnow[1]{\begin{proof}{#1}\end{proof}}
\newcommand \remnow[1]{\begin{remark}{#1}\end{remark}}
\newcommand \propnow[1]{\begin{proposition}{#1}\end{proposition}}
\newcommand \cornow[1]{\begin{corollary}{#1}\end{corollary}}
\newcommand \enumnow[1]{\begin{enumerate}{#1}\end{enumerate}}
\newcommand \sseq[1]{{}^{#1}\!E}
\title{Hopf algebras in the cohomology of $\cA_g$, $\GL_n(\ZZ)$, and $\SL_n(\ZZ)$}
\author[F.~Brown]{Francis Brown}\address{All Souls College, Oxford, OX1 4AL, United Kingdom}\email{francis.brown@all-souls.ox.ac.uk}
\author[M.~Chan]{Melody Chan}\address{Department of Mathematics, Brown University, Box
1917, Providence, RI 02912}\email{melody\_chan@brown.edu}
\author[S.~Galatius]{S{\o}ren Galatius}\address{Department of Mathematics, University of Copenhagen, Denmark}\email{galatius@math.ku.dk}
\author[S.~Payne]{Sam Payne}\address{Department of Mathematics, University of Texas at Austin, Austin, TX 78712}\email{sampayne@utexas.edu}
\date{\today}
\begin{document}

\begin{abstract}
We describe a bigraded cocommutative Hopf algebra structure on the weight zero compactly supported rational cohomology of the moduli space of principally polarized abelian varieties. By relating the primitives for the coproduct to graph cohomology, we deduce that $\dim H^{2g+k}_c(\cA_g)$ grows at least exponentially with $g$ for $k = 0$  and for all but finitely many positive integers $k$.  Our proof relies on a new result of independent interest; we use a filtered variant of the Waldhausen construction to show that Quillen's spectral sequence abutting to the cohomology of $BK(\ZZ)$ is a spectral sequence of Hopf algebras. From the same construction, we also deduce that $\dim H^{\binom{n}{2} - n - k}(\SL_n(\ZZ))$ grows at least exponentially with $n$, for $k = -1$ and for all but finitely many non-negative integers $k$.
\end{abstract}

\maketitle

\tableofcontents

\section{Introduction}

In this paper, we introduce and study new algebraic structures in the cohomology of the complex moduli space $\cA = \bigsqcup_{g \geq 0} \cA_g$ of principally polarized abelian varieties of all dimensions $g\ge 0$. Except where stated otherwise, all cohomology and compactly supported cohomology groups will be taken with coefficients in $\QQ$.  

The proofs of our results on the cohomology of $\cA$ involve several technical constructions of independent interest. We produce a filtered coproduct on the Waldhausen construction of $BK(\ZZ)$, the de-looping of the $K$-theory space of $\ZZ$.  
We also produce a filtered coproduct on a cubical space of graphs and a filtered map to $BK(\ZZ)$ that respects the relevant structures, inducing a morphism of spectral sequences of bialgebras. From the $E^1$-pages, we obtain new relations between the homology of the commutative graph complex $\GC_2$ and $K(\ZZ)$.

The terms on the $E^1$-page of Quillen's spectral sequence, induced by the rank filtration on $BK(\ZZ)$, have natural interpretations in terms of the cohomology of $\GL_n(\ZZ)$ and $\SL_n(\ZZ)$. Thus, our constructions give new structures on the unstable cohomology of these groups.

\subsection{A Hopf algebra structure on weight zero cohomology} The compactly supported cohomology $H^*_c(\cA)$ is isomorphic to $\bigoplus_g H^k_c(\cA_g)$, which we consider as a bigraded $\QQ$-vector space, in which $H^k_c(\cA_g)$ has bidegree $(g,k\!-\!g)$.  It inherits a natural mixed Hodge structure from those on $H^*_c(\cA_g)$, and the new structures we study are in its weight $0$ subspace.

\begin{theorem} \label{thm:HopfAg} 
{\hspace{-5 pt} There} is a bigraded Hopf algebra structure on the weight zero subspace $W_0H^*_c(\cA)$. 
\end{theorem}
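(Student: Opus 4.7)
The plan is to construct the multiplication from the direct sum operation on principally polarized abelian varieties and the comultiplication from a Waldhausen-type splitting construction on filtrations, working throughout via a combinatorial model for the weight zero compactly supported cohomology. First I would identify $W_0 H^*_c(\cA_g)$ with the reduced cohomology of a combinatorial complex attached to a suitable compactification of $\cA_g$, for instance the boundary complex of the perfect cone toroidal compactification, essentially the link of the origin in the rational cone of positive semidefinite symmetric bilinear forms modulo $\GL_g(\ZZ)$. By general principles for the weight zero part of $H^*_c$ of a Deligne--Mumford stack, this combinatorial model is homotopy-canonical, so natural operations on it descend to $W_0 H^*_c(\cA)$.

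For the product, the direct sum map $\oplus \colon \cA_g \times \cA_{g'} \to \cA_{g+g'}$ is proper, and the induced pushforward on compactly supported cohomology, combined with K\"unneth, yields a graded-commutative associative product $H^*_c(\cA_g) \otimes H^*_c(\cA_{g'}) \to H^*_c(\cA_{g+g'})$. This is a morphism of mixed Hodge structures, so preserves the weight filtration and restricts to $W_0$; combinatorially it is induced by block-diagonal direct sum of bilinear forms.

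For the coproduct, I would use the filtered Waldhausen construction on $BK(\ZZ)$ advertised in the abstract. On the lattice/quadratic-form model, a two-step filtration $0 \subset \Lambda' \subset \Lambda$ splits, up to extension data, as $\Lambda' \oplus (\Lambda/\Lambda')$, and the extension data lies in strictly positive weight, so the splitting is exact in weight zero. Packaging such two-step filtrations yields a comultiplication $W_0 H^*_c(\cA_{g+g'}) \to W_0 H^*_c(\cA_g) \otimes W_0 H^*_c(\cA_{g'})$, cocommutative by the $\ZZ/2$-symmetry exchanging a sublattice with its complement. The bialgebra compatibility is a distributivity between direct sum and two-step filtration encoded in the monoidal structure on the Waldhausen simplicial object; the antipode exists automatically because the bigraded object is connected, with $(W_0 H^*_c(\cA))_{(0,0)} = \QQ$ coming from $\cA_0 = \mathrm{pt}$.

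The main obstacle, and where the deep technical work of the paper enters, is verifying that the Waldhausen-style combinatorial splitting genuinely computes the asserted coproduct on $W_0 H^*_c(\cA)$. This requires careful compatibility of mixed Hodge weights with the rank filtration on $BK(\ZZ)$, together with the assertion that extension classes obstructing splitting of a general principally polarized abelian variety contribute only in positive weight; once this is established, the bialgebra and Hopf axioms follow formally.
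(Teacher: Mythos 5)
Your overall blueprint---block sum of (tropical) abelian varieties giving one operation and a Waldhausen-type splitting of the rank filtration giving the other---is indeed the paper's strategy, but as written the two operations are mis-assigned and one key claim is false. For the proper map $\cA_g\times\cA_{g'}\to\cA_{g+g'}$ the natural functoriality of compactly supported cohomology is \emph{pullback}, which produces the coproduct $W_0H^*_c(\cA_{g+g'})\to W_0H^*_c(\cA_g)\otimes W_0H^*_c(\cA_{g'})$; there is no degree- and weight-preserving pushforward on $H^*_c$ (a Gysin map for the closed image would shift degree by $2gg'$ and land in positive weight, so it cannot restrict to $W_0$ in the bigrading of the theorem). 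Thus your ``product from direct sum'' is either undefined or is really the block-sum \emph{product} on the graded dual $\bigoplus_g H^{\mathrm{BM}}_*(A_g^{\trop})$, i.e.\ the coproduct on $W_0H^*_c(\cA)$. Dually, the splitting of two-step filtrations lives on the homology side ($BK(\ZZ)$, with $E^1$ page $\bigoplus_n H_*(\GL_n(\ZZ);\St_n\otimes\QQ)$) and dualizes to the \emph{product} on $W_0H^*_c(\cA)$. Moreover that operation is emphatically not (co)commutative: a step $0\subset\Lambda'\subset\Lambda$ of a filtration has no canonical complement, so there is no $\ZZ/2$-symmetry exchanging $\Lambda'$ with $\Lambda/\Lambda'$ compatible with the rank filtration, and the failure of exactly this symmetry is what makes the Hopf algebra noncommutative---the feature on which the free Lie subalgebra and exponential growth results rest. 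Your cocommutativity argument would, if correct, contradict the central structural point of the construction.

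The second gap is that ``packaging two-step filtrations'' does not by itself produce a filtered comultiplication, and even once it does, it does not act on $W_0H^*_c(\cA)$ without substantial further input. The diagonal of $BK(\ZZ)$ does not respect the rank filtration; the paper replaces it by the map obtained from edgewise subdivision, shows this is a filtered map, coassociative up to filtration-preserving homotopy and compatible with the block-sum product, and then uses the multiplicative (Cartan--Eilenberg/Douady) structure of the spectral sequence of a filtered space to equip every page of the Quillen spectral sequence with a Hopf algebra structure. Even then, the resulting Hopf algebra is $\sseq{Q}^1\cong\bigoplus_{n}H_*(\GL_n(\ZZ);\St_n\otimes\QQ)$, not $(W_0H^*_c(\cA))^\vee$: the identification requires the acyclicity of the inflation locus in $A_g^{\trop}$ (so that the tropical spectral sequence has exact $E^1$ page), which gives an algebra isomorphism $\sseq{Q}^1\cong (W_0H^*_c(\cA))^\vee\otimes\QQ[x]/(x^2)$, after which one must quotient by the Hopf ideal generated by the primitive class $e$ in bidegree $(1,0)$ and dualize. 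Your appeal to ``extension data in strictly positive weight'' does not substitute for any of these steps; without them the Waldhausen construction yields a Hopf structure on Steinberg homology rather than on $W_0H^*_c(\cA)$.
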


\noindent The coproduct in this Hopf algebra structure is graded-cocommutative, and is induced by the proper maps $\cA_g \times \cA_{g'} \to \cA_{g + g'}$ via pullback.
The product we construct is more subtle; in particular, it is not graded-commutative. 

We now identify a bigraded subspace of the primitives for the coproduct that is closely related to invariant differential forms on symmetric spaces. Let $\Omega^*_c$ denote the  vector  space over $\QQ$ spanned by  non-trivial exterior products of symbols $\omega^{4k+1}$ for $k \geq 1$. 
It is  bigraded  by \emph{genus} and \emph{degree minus genus}, where $ \omega^{4k_1+1} \wedge \cdots \wedge \omega^{4k_r +1}$, with $k_1 < \cdots < k_r$, has genus $2k_r+1$ and degree $(4k_1+1) + \cdots + (4k_r + 1)$. 
Let $\Omega^*_c[-1]$ denote the shift of $\Omega^*_c$ in which $\omega^{4k_1+1} \wedge \cdots \wedge \omega^{4k_r +1}$ has genus $2k_r+1$ and degree $(4k_1 + 1) + \cdots + (4k_r + 1) + 1$.
Let $\Prim(H)$ denote the subspace of primitives in a Hopf algebra $H$.

\begin{theorem} \label{thm:canonical-inj}
There is an injection of bigraded vector spaces 
\[
\Omega^*_c[-1] \otimes \RR \to \Prim(W_0H^*_c(\cA;\RR)).
\]
\end{theorem}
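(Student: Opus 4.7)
The strategy is to factor the desired injection through the rational cohomology of $K(\ZZ)$, using the morphism of Hopf-algebra spectral sequences announced in the introduction, which relates a cubical space of graphs to the Waldhausen construction of $BK(\ZZ)$. By Borel's theorem, the rational cohomology $H^*(K(\ZZ);\RR)$ is an exterior algebra $\Lambda(\omega^5, \omega^9, \omega^{13}, \ldots)$, whose augmentation ideal is canonically identified with $\Omega^*_c \otimes \RR$. The bigrading is explained by the rank filtration on $BK(\ZZ)$: the wedge $\omega^{4k_1+1} \wedge \cdots \wedge \omega^{4k_r+1}$ is first supported in rank filtration $2k_r+1$, matching the stated genus, while the single degree shift $[-1]$ records the passage from the spectrum $BK(\ZZ)$ to compactly supported cohomology of $\cA$.

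Concretely, I would proceed as follows. For each increasing sequence $k_1 < \cdots < k_r$ of positive integers, identify a specific connected cycle $\Xi_{k_1,\ldots,k_r}$ in the commutative graph complex $\GC_2$ that represents the wedge $\omega^{4k_1+1} \wedge \cdots \wedge \omega^{4k_r+1}$ on the $E^1$-page of the graph spectral sequence. The morphism of spectral sequences then allows us to compare with Borel's classes in $H^*(\GL_n(\ZZ);\RR)$ on the $K(\ZZ)$ side. Apply the paper's companion map from the graph complex to $W_0 H^*_c(\cA;\RR)$ to obtain classes in the target; these are primitive because $\Xi_{k_1,\ldots,k_r}$ is a connected graph and the coproduct on the graph-complex side is induced by disjoint union, compatible with the pullback-by-sum coproduct on $\cA$.

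The hard part will be injectivity of the full composition: distinct wedges of Borel classes must give linearly independent primitives in $W_0 H^*_c(\cA;\RR)$ even after passing through several layers of construction. The plan is to detect the classes by pairing against Borel's regulator cycles in the cohomology of $\GL_n(\ZZ)$, using the morphism of Hopf-algebra spectral sequences to ensure the pairing is compatible at every stage. Nondegeneracy of Borel's regulator pairing on $\Lambda(\omega^5, \omega^9, \ldots) \otimes \RR$ is the essential real-analytic input, and propagates through the construction; this is precisely where real coefficients become indispensable. Once injectivity is established, the bigrading match is a matter of bookkeeping, and primitivity is automatic from the graph-cohomological description.
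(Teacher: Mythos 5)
Your proposal rests on an identification that the paper explicitly warns against and that breaks the argument: $\Omega^*_c$ is \emph{not} the augmentation ideal of Borel's stable exterior algebra $H^*(K(\ZZ);\RR)\cong\bigwedge(\omega^5,\omega^9,\dots)$. The classes of compact type $\omega^{4k_1+1}\wedge\cdots\wedge\omega^{4k_r+1}$ (with $4k_r+1=2g-1$) live in the \emph{unstable}, compactly supported cohomology $H^*_c(P_g/\GL_g(\ZZ);\RR)$ in degrees near the virtual cohomological dimension, far outside Borel's stable range, and their nonvanishing there is the hard input: it is the bordification theorem of \cite{brown-bordifications} giving an injection $\Omega^*_c(g)[-1]\otimes\RR\to H^*_c(P_g/\GL_g(\ZZ);\RR)$, i.e.\ into the genus-$g$ column of the $E_1$ page of the cohomological Quillen/tropical spectral sequence. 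Borel's theorem and the nondegeneracy of the regulator pairing say nothing about these compactly supported unstable classes, so your injectivity step has no supporting input. Moreover, your plan to represent the wedges by connected cycles $\Xi_{k_1,\dots,k_r}$ in $\GC_2$ and detect them graphically cannot work for $r\ge 2$: those classes lie strictly above the diagonal (degree $>$ twice the genus), and the paper notes that the tropical Torelli map kills essentially all canonical forms above the diagonal, so graph homology only sees the single classes $\omega^{2g-1}$ (via the wheel classes $W_g$), which is exactly why the graph-side pairing is used only in the proof of Theorem~\ref{thm:free}, not here.

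Primitivity also does not come for free from connectedness of a hypothetical graph representative; it must be verified for the coproduct on $W_0H^*_c(\cA)$ (equivalently in $\sseq{Q}_1$ after killing $\epsilon$), and the map of spectral sequences goes from graphs to $K$-theory, not the other way, so a statement on the graph side does not transport. The paper's actual argument is short and direct: the block-sum formula $\omega^n_{X\oplus Y}=\omega^n_X+\omega^n_Y$ gives $\omega^I_{X\oplus Y}=\sum_{I=J\cup K}\omega^J_X\otimes\omega^K_Y$, and since every term contains the factor $\omega^{2g-1}$, which vanishes on matrices of rank $<g$, the restriction of $\omega^I$ to the image of $\cA_m^{\trop}\times\cA_n^{\trop}$ with $m,n>0$ is zero; hence the reduced coproduct vanishes. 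Combined with the isomorphism $\sseq{Q}^1\cong\cA^{BM}\otimes\QQ[\epsilon]/\epsilon^2$ (Proposition~\ref{prop:quillen-to-A}) and passage to the quotient by the Hopf ideal $(\epsilon)$, this yields the theorem. You would need to replace your regulator-based detection by the bordification injection, and your connectedness argument by this vanishing argument, to obtain a correct proof.
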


\noindent This  injection uses the identification $W_0H^*_c(\cA_g) \cong H^*_c(A_g^\trop)$, where $A_g^\trop$ is the moduli space of principally polarized tropical abelian varieties \cite{bbcmmw-top, bmv}, the stratification by locally symmetric spaces $A_g^\trop = \bigsqcup_{g' \leq g} P_{g'} / \GL_{g'}(\ZZ)$, and the inclusion of the genus $g$ subspace of $\Omega^*_c[-1]$ in the compactly supported cohomology of $P_g/\GL_g(\ZZ)$, as in \cite{brown-bordifications}. Here $P_g$ denotes the cone of positive definite symmetric bilinear forms on $\RR^g$.

The subspace of primitives in a graded Hopf algebra is a Lie algebra with respect to the bracket $[x,y] = xy-(-1)^{\deg(x)\deg(y)}yx.$ 
By relating this Lie algebra to the cohomology of the commutative graph complex $\GC_2$ and the Grothendieck--Teichm\"uller Lie algebra, we find a subspace of $\Omega^*_c[-1]$, in diagonal bidegree $(d,d)$, that generates a free Lie subalgebra.

\begin{theorem} \label{thm:free}
For $N = 11$, the image of $\{ \omega^5, \ldots, \omega^{4N+1} \}$ in $\Prim(W_0H^*_c(\cA;\RR))$ generates a free Lie subalgebra.
\end{theorem}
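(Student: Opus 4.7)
The plan is to transport the freeness question from $\Prim(W_0 H^*_c(\cA; \RR))$ to the Grothendieck--Teichmüller Lie algebra via graph cohomology, using the morphism of Hopf-algebra spectral sequences constructed earlier in this paper, and then to invoke Brown's theorem on the motivic Galois group of mixed Tate motives over $\ZZ$.

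First, I would produce a bigraded Lie algebra homomorphism
\[
\psi \colon \Prim(W_0 H^*_c(\cA; \RR)) \longrightarrow H^*(\GC_2; \RR).
\]
This combines two ingredients: (a) the filtered map from the cubical space of graphs to the Waldhausen construction of $BK(\ZZ)$, which induces a Hopf-algebra morphism on the abutments of the spectral sequences and hence a Lie algebra morphism on primitives; and (b) the identification $W_0 H^*_c(\cA_g) \cong H^*_c(A_g^\trop)$ together with the stratification of $A_g^\trop$ by locally symmetric spaces $P_{g'}/\GL_{g'}(\ZZ)$. On both sides the Lie bracket is the graded commutator of the respective Hopf algebra product, and compatibility follows from the fact that the morphism of spectral sequences is a morphism of Hopf algebras.

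Next I would verify, for each $k = 1, \ldots, 11$, that $\psi(\omega^{4k+1})$ is a nonzero scalar multiple of the wheel class $w_{2k+1} \in H^0(\GC_2; \RR)$. Concretely, this requires tracking the $\GL_{2k+1}(\ZZ)$-invariant Borel form representing $\omega^{4k+1}$ (via the inclusion in the compactly supported cohomology of $P_{2k+1}/\GL_{2k+1}(\ZZ)$) through the filtered maps on cochain complexes and pairing it against the wheel-graph cocycle. I expect this cocycle-level identification to be the technical heart of the proof and its main obstacle.

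Finally, under Willwacher's isomorphism $H^0(\GC_2; \RR) \cong \grt_1 \otimes \RR$, the wheel classes $w_{2k+1}$ correspond to lifts of the Brown generators $\sigma_{2k+1}$. By Brown's theorem on mixed Tate motives over $\ZZ$, together with the known identifications of wheel classes with Brown generators in the bigradings relevant for $k \leq 11$, the images of $\sigma_3, \sigma_5, \ldots, \sigma_{23}$ freely generate a Lie subalgebra of $\grt_1 \otimes \RR$. Since $\psi$ is a Lie algebra homomorphism sending each $\omega^{4k+1}$ to a nonzero multiple of the corresponding $w_{2k+1}$, freeness of the images pulls back to freeness of $\{\omega^5, \omega^9, \ldots, \omega^{45}\}$ in $\Prim(W_0 H^*_c(\cA; \RR))$, as required.
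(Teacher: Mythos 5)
Your overall route---push the classes $\omega^{4k+1}$ into graph cohomology, use the wheel classes, and quote freeness results from Grothendieck--Teichm\"uller theory---is the same skeleton as the paper's proof, but two of your steps have genuine gaps.

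First, the map you call $\psi$ cannot be obtained from the abutments: the filtered graph space has the rational homology of a point, so the induced map on abutments carries no information. The useful morphism lives on $E^1$ pages (dually, $\Prim\big(\bigoplus_g \sseq{Q}_1^{g,g}\big) \to H^0(\GC_2)$), and it factors through $\Prim(W_0H^*_c(\cA))$ because it kills the class $e$ in bidegree $(1,0)$. More seriously, your claim that $\psi(\omega^{4k+1})$ is a scalar multiple of a wheel class is neither provable by the proposed cocycle chase nor needed: the image is the functional $[G]\mapsto \int_{\sigma_G}\omega^{4k+1}$ over \emph{all} genus-$(2k+1)$ graph cells with $4k+2$ edges, and in this range $H^0(\GC_2)$ has dimension bigger than one, so there is no reason for proportionality to any single class. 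What the Brown--Schnetz integral actually gives is that the pairing of $\psi(\omega^{4k+1})$ with the wheel \emph{homology} class $[W_{2k+1}]\in H_0(\GC_2^\vee)$ is nonzero; combined with the primitivity of the wheels (so that $[W_{2k+1}]$ annihilates commutators), this yields exactly that $\psi(\omega^{4k+1})$ is nonzero in $\grt^{\ab}\otimes\RR$, which is the statement the paper proves and uses.

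Second, the freeness step does not close as stated. Brown's theorem gives freeness of the motivic Lie algebra $\gm$ on the $\sigma_{2k+1}$, but surjectivity of $\gm\to\grt$ is Drinfeld's open conjecture, so elements of $\grt\otimes\RR$ that are nonzero in the abelianisation---or that ``lift'' the $\sigma_{2k+1}$ in some associated graded---need not generate a free subalgebra. The missing ingredient, and the true reason the theorem is restricted to $N=11$, is the computational fact that $\Gr^W_k\gm=\Gr^W_k\grt$ for $k\le 23$ (verified via the multiple zeta value datamine up to weight $24$). Since each $\psi(\omega^{4k+1})$ with $k\le 11$ is homogeneous of weight $2k+1\le 23$, it therefore lies in $\gm\otimes\RR$ and is nonzero in $(\gm)^{\ab}\otimes\RR$; as these classes sit in distinct weights, they extend to a homogeneous free generating set of the free Lie algebra $\gm\otimes\RR$, and freeness then pulls back along the Lie homomorphism $\psi$ exactly as in your last step. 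Your attribution of the bound $N=11$ to ``known identifications of wheel classes with Brown generators'' misses this input; without the weight-$\le 23$ comparison of $\gm$ and $\grt$ the argument does not go through.
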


\noindent The statement for $N = 11$ reflects what we can prove with the current knowledge of the Grothendieck--Teichm\"uller Lie algebra; it is expected to hold for all $N$.

By the Milnor--Moore theorem (\cite[Theorem 5.18]{MR0174052}), the cocommutative Hopf algebra $W_0H^*_c(\cA)$ is isomorphic to $\cU(\Prim(W_0 H^*_c(\cA)))$, the universal enveloping algebra of its Lie algebra of primitives.  
The universal enveloping algebra on the free Lie algebra generated by the images of $\omega^5,\ldots,\omega^{4N+1}$ is isomorphic to a tensor algebra generated by those elements. Therefore Theorems~\ref{thm:canonical-inj} and \ref{thm:free} have the following  implications for dimensions.  Let $\cT_*$ denote a free genus-graded tensor algebra with one generator in each genus  $3, 5, \ldots, 23$.  

\begin{corollary} \label{cor:tensor}
The dimension of $W_0 H^{2g}_c(\cA_g)$ is greater than or equal to the dimension of $\cT_g$.  More generally, whenever $\Omega^*_c[-1]$ is nonzero in bidegree $(g_0, g_0+k)$ for some nonnegative integer $k$, the dimension of $H^{2g + k}_c(\cA_{g})$ is greater than or equal to the dimension of $\cT_{g-g_0}$.
\end{corollary}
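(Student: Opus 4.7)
The plan is to deduce both inequalities from Theorems~\ref{thm:canonical-inj} and~\ref{thm:free} together with the Milnor--Moore theorem. Let $F \subset \Prim(W_0H^*_c(\cA;\RR))$ denote the free Lie subalgebra generated by the images of $\omega^5,\ldots,\omega^{4N+1}$ provided by Theorem~\ref{thm:free}. Since $W_0H^*_c(\cA;\RR)$ is a connected, cocommutative bigraded Hopf algebra, Milnor--Moore identifies it with $\cU(\Prim)$, and the inclusion $F \hookrightarrow \Prim$ induces an inclusion $\cU(F) \hookrightarrow W_0H^*_c(\cA;\RR)$. The generators of $F$ all lie in even total degree ($4k+2$ for $k \geq 1$), so $F$ is concentrated in the even part of the primitive Lie superalgebra and $\cU(F)$ is the free associative algebra on those generators, namely the tensor algebra $\cT_*$. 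With respect to the paper's bigrading, $\omega^{4k+1}$ has bidegree $(2k+1, 2k+1)$, so a word of total genus $g$ lies in bidegree $(g,g)$, which corresponds to $W_0H^{2g}_c(\cA_g;\RR)$. Since $\dim_\QQ W_0H^{2g}_c(\cA_g) = \dim_\RR W_0H^{2g}_c(\cA_g;\RR)$, this proves the first inequality.

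For the second inequality, fix a nonzero $\alpha \in \Omega^*_c[-1]$ in bidegree $(g_0, g_0+k)$ and let $\bar\alpha$ be its image in $\Prim(W_0H^*_c(\cA;\RR))$ under Theorem~\ref{thm:canonical-inj}. I would show that right multiplication by $\bar\alpha$ defines an injection
\[
\cU(F) \longrightarrow W_0H^*_c(\cA;\RR),\qquad y \longmapsto y\bar\alpha.
\]
If $\bar\alpha$ already lies in $F$ this is immediate, since $\cU(F)$ is a tensor algebra and hence an integral domain. If $\bar\alpha \notin F$, let $L''$ denote the Lie subalgebra of $\Prim$ generated by $F$ and $\bar\alpha$, and pick a graded vector-space complement $V''$ of $F$ in $L''$ containing $\bar\alpha$. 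The Poincar\'e--Birkhoff--Witt theorem for Lie superalgebras in characteristic zero, applied with an ordering that puts $F$-basis elements before $V''$-basis elements, presents $\cU(L'')$ as a free left $\cU(F)$-module with basis indexed by PBW monomials in a basis of $V''$. Since $\bar\alpha$ itself is one such monomial, $\cU(F)\cdot\bar\alpha$ is a free rank-one left $\cU(F)$-summand of $\cU(L'')$ and the map $y \mapsto y\bar\alpha$ is the corresponding isomorphism $\cU(F) \xrightarrow{\sim} \cU(F)\cdot\bar\alpha$. Composing with $\cU(L'') \hookrightarrow W_0H^*_c(\cA;\RR)$ and tracking bidegrees, $\cT_{g-g_0}$ is placed injectively in bidegree $(g, g+k)$, hence in $W_0H^{2g+k}_c(\cA_g;\RR) \subset H^{2g+k}_c(\cA_g;\RR)$, giving the inequality.

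The main subtlety I anticipate is verifying that the PBW step goes through uniformly in the parity of $\bar\alpha$, since enveloping algebras of Lie superalgebras can have zero divisors. Because $F$ is purely even, the case $\bar\alpha \in F$ always reduces to the tensor-algebra-domain argument; and in the case $\bar\alpha \notin F$, passing to the associated graded of the PBW filtration identifies $y\bar\alpha$ with $\gr(y)\cdot\bar\alpha$ in the supersymmetric algebra on $L''$, and multiplication by $\bar\alpha$ on the summand $\mathrm{Sym}(F)\otimes 1$ is injective whether $\bar\alpha$ is an even polynomial generator or an odd exterior generator, since $\gr(y)$ does not involve $\bar\alpha$. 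The remaining content of the argument is bookkeeping with bidegrees and the equality of $\QQ$- and $\RR$-dimensions.
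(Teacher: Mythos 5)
Your proposal is correct and follows essentially the same route as the paper: Milnor--Moore identifies $W_0H^*_c(\cA;\RR)$ with $\cU(\Prim)$, Theorem~\ref{thm:free} gives the free Lie subalgebra whose enveloping algebra is the tensor algebra $\cT_*$, and the shifted copies in bidegree $(g,g+k)$ come from multiplying by the primitive image of a class of $\Omega^*_c[-1]$ supplied by Theorem~\ref{thm:canonical-inj}. Your explicit PBW-freeness argument (taking a graded complement containing $\bar\alpha$ to see that right multiplication by $\bar\alpha$ is injective on $\cU(F)$, with the separate tensor-algebra-domain case when $\bar\alpha\in F$) is exactly the detail the paper leaves implicit, and it is carried out correctly.
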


\begin{corollary}
\label{cor:expAg}
The dimension of $W_0H^{2g+k}_c (\cA_g)$ grows at least exponentially with $g$ for $k = 0$ and all but finitely many positive integers $k$.
\end{corollary}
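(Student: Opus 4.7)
The plan is to derive this corollary directly from Corollary~\ref{cor:tensor}. Two facts must be verified: first, that $\dim \cT_n$ grows at least exponentially in $n$; and second, that the set of nonnegative integers $k$ for which $\Omega^*_c[-1]$ is nonzero in some bidegree $(g_0, g_0 + k)$ contains $0$ and is cofinite in $\ZZ_{>0}$. Given both, Corollary~\ref{cor:tensor} yields $\dim W_0 H^{2g+k}_c(\cA_g) \geq \dim \cT_{g - g_0}$ for each admissible $k$ and some $g_0 = g_0(k)$, and the right-hand side grows exponentially in $g$ for fixed $g_0$, which is the desired conclusion.

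Exponential growth of $\dim \cT_n$ is a standard consequence of its Hilbert series: since $\cT_*$ is freely generated by eleven elements of genera $3, 5, 7, \ldots, 23$, one has
\[
\sum_n (\dim \cT_n)\, t^n \;=\; \frac{1}{1 - t^3 - t^5 - \cdots - t^{23}}.
\]
The denominator equals $-10$ at $t = 1$, so its smallest positive root $\rho$ lies strictly in $(0,1)$; standard meromorphic asymptotics then give $\dim \cT_n \sim C\rho^{-n}$, which is exponential.

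To identify the admissible set of $k$, I unpack the bigrading on $\Omega^*_c[-1]$. A wedge $\omega^{4k_1+1}\wedge\cdots\wedge\omega^{4k_r+1}$ with $k_1 < \cdots < k_r$ lies in bidegree $(g_0, g_0 + k)$, where $g_0 = 2k_r + 1$ and
\[
k \;=\; 4(k_1 + \cdots + k_{r-1}) + r - 1.
\]
Taking $r = 1$ gives $k = 0$, realized for example by $\omega^5$ in bidegree $(3,3)$, so the case $k = 0$ of the corollary holds with $g_0 = 3$. For each $r \geq 2$, as $k_1 < \cdots < k_{r-1}$ vary, the realizable sums $k_1 + \cdots + k_{r-1}$ cover every integer $\geq \binom{r}{2}$, so the realizable values of $k$ form an arithmetic progression of common difference $4$ in residue class $r - 1 \pmod 4$, above an explicit threshold. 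Running $r$ through $2, 3, 4, 5$ sweeps all four residues modulo $4$ and covers every sufficiently large positive integer $k$. This combinatorial bookkeeping is the main (and only modest) step; once it is carried out, no serious obstacle remains and the corollary follows from Corollary~\ref{cor:tensor}.
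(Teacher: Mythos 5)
Your proposal is correct and follows essentially the same route as the paper: deduce the statement from Corollary~\ref{cor:tensor}, check exponential growth of $\dim\cT_n$ via its generating function $\bigl(1-t^3-t^5-\cdots-t^{23}\bigr)^{-1}$ (the paper's Poincar\'e-series discussion in Section~\ref{sec:Poincare}), and identify the admissible $k$ combinatorially — your formula $k=4(k_1+\cdots+k_{r-1})+r-1$ is just the paper's description of $k$ as a sum of distinct integers $\geq 5$ congruent to $1\bmod 4$ (Section~\ref{rem:kAg}), yielding the same cofinite set with the same $20$ possible exceptions.
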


\noindent See Section~\ref{subsec:diagonals-and-degrees} for proofs and refinements of Corollaries~\ref{cor:tensor} and~\ref{cor:expAg}.  In particular, there are at most 20 exceptional values of $k$ for which $\dim H^{2g+k}_c(\cA_g)$ does not grow at least exponentially.  The precise dimension bounds given by  Corollary~\ref{cor:tensor} depend on knowing that the image of $\{\omega^5, \ldots, \omega^{4N+1}\}$ generates a free Lie subalgebra for $N = 11$. However, the exponential growth stated in Corollary~\ref{cor:expAg} already follows from the statement for $N = 2$. Extending  Theorem~\ref{thm:free} to larger values of $N$ improves the base of the exponential lower bound only slightly; see Section~\ref{sec:Poincare}. 

\bigskip

The complex moduli space $\cA_g$ is a smooth Deligne--Mumford stack and has the homotopy type of a classifying space for $\Sp_{2g}(\ZZ)$. Thus, each of the results stated above has equivalent reformulations in terms of the group cohomology of $\Sp_{2g}(\ZZ)$. Indeed, applying Poincar\'e duality and identifying the singular cohomology of $\cA_g$ with the group cohomology of $\Sp_{2g}(\ZZ)$ gives  
$$H^*_c(\cA_g) \cong H^{g^2+g - *}(\Sp_{2g}(\ZZ);\QQ)^\vee.$$ In particular, we have the following immediate consequence of Corollary~\ref{cor:expAg}.

\begin{corollary} For $k = 0$ and all but finitely many positive integers $k$,
the dimension of $H^{g^2-g-k}(\Sp_{2g}(\ZZ);\QQ)$ grows at least exponentially with $g$.
\end{corollary}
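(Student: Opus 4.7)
The plan is to deduce this directly from Corollary~\ref{cor:expAg} using the dimension identification recorded immediately before the corollary, namely
\[
H^*_c(\cA_g;\QQ) \cong H^{g^2+g-*}(\Sp_{2g}(\ZZ);\QQ)^\vee.
\]
The two ingredients are: Poincaré duality on the smooth Deligne--Mumford stack $\cA_g$, and the elementary observation that the weight-zero piece $W_0 H^{2g+k}_c(\cA_g;\QQ)$ sits as a $\QQ$-subspace of $H^{2g+k}_c(\cA_g;\QQ)$, so its dimension is a lower bound for that of the full compactly supported cohomology.

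First I would recall that $\cA_g$ is the quotient of the contractible Siegel upper half-space by $\Sp_{2g}(\ZZ)$ acting with finite stabilizers, hence a rational classifying space for $\Sp_{2g}(\ZZ)$ of complex dimension $g(g+1)/2$ and real dimension $g^2+g$. Setting $* = 2g+k$ in the displayed duality isomorphism and comparing dimensions yields
\[
\dim H^{2g+k}_c(\cA_g;\QQ) \;=\; \dim H^{g^2-g-k}(\Sp_{2g}(\ZZ);\QQ).
\]

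Next I would chain the inequalities
\[
\dim H^{g^2-g-k}(\Sp_{2g}(\ZZ);\QQ) \;=\; \dim H^{2g+k}_c(\cA_g;\QQ) \;\geq\; \dim W_0 H^{2g+k}_c(\cA_g;\QQ),
\]
and invoke Corollary~\ref{cor:expAg} to conclude that the right-hand side, and therefore the left-hand side, grows at least exponentially in $g$ for $k=0$ and all but finitely many positive integers $k$.

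No substantive obstacle arises, since the excerpt has already done all the real work (constructing the Hopf structure on $W_0H^*_c(\cA)$, producing primitives, and establishing freeness of the Lie subalgebra generated by $\omega^5,\dots,\omega^{4N+1}$). The one technical point worth explicitly verifying rather than taking for granted is that rational Poincaré duality applies to $\cA_g$ as a smooth Deligne--Mumford stack; this is standard and holds because $\QQ$-coefficients invert the orders of the finite stabilizer groups, so the orbifold behaves as a rational homology manifold of real dimension $g^2+g$.
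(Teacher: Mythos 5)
Your argument is correct and is essentially the paper's own: the paper deduces this corollary immediately from Corollary~\ref{cor:expAg} via the same duality $H^*_c(\cA_g;\QQ) \cong H^{g^2+g-*}(\Sp_{2g}(\ZZ);\QQ)^\vee$ (Poincar\'e duality for the smooth Deligne--Mumford stack plus the identification of $H^*(\cA_g;\QQ)$ with group cohomology of $\Sp_{2g}(\ZZ)$). Your explicit note that $W_0H^{2g+k}_c(\cA_g)$ is a subspace of $H^{2g+k}_c(\cA_g)$, so its dimension gives a lower bound, is exactly the (implicit) final step.
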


It was previously known that $H^{g^2-g}(\cA_g)$ is nonvanishing for all $g$, because the tautological subring of $H^*(\cA_g)$, i.e., the subring generated by the $\lambda$-classes, is Gorenstein with socle in this degree \cite{van-der-geer-cycles}. It is expected that $H^k(\cA_g)$ may vanish for $k > g^2 - g$ \cite[Question~1.1]{bruck-patzt-sroka}. We note that all classes in the tautological subring are of pure weight, i.e., weight equal to cohomological degree, and the tautological subspace of $H^{g^2-g}(\cA_g)$ always has rank 1. The growth we find is in the top weight cohomology, i.e., the graded piece of the weight filtration Poincar\'e-dual to the weight zero compactly supported cohomology.

\subsection{Filtered coproduct on the Waldhausen construction}

The heart of our proof of Theorems~\ref{thm:HopfAg} and \ref{thm:canonical-inj} is the construction of a coassociative but not cocommutative  coproduct on the Waldhausen construction of $BK(\ZZ)$ \cite{Waldhausen} that is compatible with the rank filtration.  Here, $BK(\ZZ)$ is the 1-fold delooping of the $K$-theory space for $\ZZ$, so $\pi_{i+1}(BK(\ZZ)) = K_i(\ZZ)$.  
The homological spectral sequence associated to the rank filtration on $BK(\ZZ)$ is called the \emph{Quillen spectral sequence}, and we denote it by $\sseq{Q}^*$. The terms on the $E^1$-page satisfy 
\begin{equation} \label{eq:Qss}
\sseq{Q}^1_{n,k} \cong H_{k} (\GL_n(\ZZ); \St_n \otimes \QQ),
\end{equation}
where $\St_n$ denotes the Steinberg module. Our filtered coproduct gives rise to a bigraded Hopf algebra structure on each page of $\sseq{Q}^*$, as in Theorem~\ref{thm:QSS}. Let  $\mathrm{Indec}(H)$ denote the indecomposables in a Hopf algebra $H$. For $\sseq{Q}^1$ we have the following.

\begin{theorem} \label{thm:HopfSL}
There is a 
bigraded commutative Hopf algebra structure on the $E^1$-page of the Quillen spectral sequence
\[
\sseq{Q}^1_{n,k} = 
H_{k} (\GL_n(\ZZ); \St_n \otimes \QQ)
,
\]
and a surjection of bigraded vector spaces
\begin{equation}\label{eq:indec-homological-quillen}\mathrm{Indec}(\sseq{Q}^1 \otimes \RR) \xrightarrow{\pi} \Omega^*_c[-1]^\vee \oplus \RR \cdot e.
\end{equation}
Here, $e$ is in bidegree $(1,0)$.
\end{theorem}
\noindent These results follow from Theorem~\ref{thm:QSS} and the proof of Theorem 1.2 in Section~\ref{sec:proof12} (establishing an injection dual to~\eqref{eq:indec-homological-quillen}).

\subsection{Relation to the graph spectral sequence} \label{sec:intro-graphss}
Our proof of Theorem~\ref{thm:free} relies on relations between the filtered coproduct on the Waldhausen construction of $BK(\ZZ)$ and an analogous filtered coproduct on a space of graphs $\BKGr$.   
This filtered space of graphs, which will be defined in Section~\ref{sec:Graphss}, has the rational homology of the disjoint union of a circle and a point, but 
the graded dual of the $E_1$-page of the associated cohomological spectral sequence is a bialgebra whose primitive elements contain a copy of the Grothendieck--Teichm\"uller Lie algebra.

Moreover, the space $\BKGr$ admits a filtered map to the Waldhausen construction of $BK(\ZZ)$ that respects all of the relevant structures.   Let $\sseq{Q}_r^{*,*}$ denote the pages of the cohomological spectral sequence dual to $\sseq{Q}^r_{*,*}$.  The induced map on the diagonal subspace $E_1^{g,g}$ will be most important. 

\begin{proposition}  \label{prop:diagonalLie}
 There is a morphism of  graded  Lie algebras 
\begin{equation}  \label{eq:q-to-gc2}   \Prim \Big( \bigoplus_{g} \sseq{Q}_1^{g,g}\Big) \to   H^0 (\GC_2)    \end{equation} 
which, after tensoring with $\RR$, sends $\omega^{4k+1}$ for all $k\geq 1$  to an  element that is non-trivial in the abelianization of $H^0(\GC_2)\otimes \RR$ with respect to its graded  Lie algebra structure.

Moreover, for $N=11$, there is an injection \[T(\langle \omega^5, \omega^9, \ldots, \omega^{4N+1} \rangle)\hookrightarrow \bigoplus_{g\ge0} \sseq{Q}_1^{g,g}\otimes \RR.\] 
    \end{proposition}
\noindent This result is established in~\eqref{eq:prim-Quillen to grt},  Proposition~\ref{prop: omegasnonzeroingrtab}, and Corollary~\ref{cor: partialfreeness}.  The morphism~\eqref{eq:q-to-gc2} is induced by the tropical Torelli map, up to a zig-zag of rationally equivalent spaces that is explained in Section~\ref{sec: Loc-sym-Quillen} and further in Section~\ref{sec:freeness}.
The class $\omega^{4k+1}$ pairs non-trivially  with the homology class of the wheel graph $[W_{2k+1}] \in H_0(\GC_2^\vee)$. 
This pairing  is given by  an integral which is a non-zero rational multiple of $\zeta(2k+1)$ \cite{brown-schnetz}. 
To prove  Theorem~\ref{thm:free}, we use known results on the Grothendieck--Teichm\"uller Lie algebra, which is isomorphic to $H^0(\GC_2)$ by \cite{willwacher-kontsevich}, the fact that it contains a free Lie algebra with one generator in each odd degree greater than one \cite{brown-mixed}, and best-known bounds on agreement between the two with respect to their weight filtrations. See Section~\ref{sec:proof3}.

\medskip

We may pass from such statements about $\sseq{Q}_*$ to statements on $W_0H^*_c(\cA_g)$. Using one of the main results of \cite{bbcmmw-top}, we show that there is a canonical choice of an element $e$ in $\sseq{Q}^1_{1,0}$ and an isomorphism of bigraded algebras
\begin{equation} \label{eq:inflation}
 (W_0 H^*_c(\cA))^\vee \otimes_\QQ \QQ[x] / (x^2) \overset{\sim}{\to}  \sseq{Q}^1_{*,*} 
\end{equation}
taking $x$ to $e$.  
The element $e$ is 
a primitive in the Hopf algebra structure on $\sseq{Q}^1_{*,*}$, and 
Theorem~\ref{thm:HopfAg} is proved by taking the quotient 
by the Hopf ideal generated by this primitive, and passing to the graded dual.
See Section~\ref{sec:proof12}.   

\subsection{Applications to the unstable cohomology of \texorpdfstring{$\GL_n(\ZZ)$}{GLnZ} and \texorpdfstring{$\SL_n(\ZZ)$}{SLnZ}}

Our construction of a Hopf algebra structure on Quillen's spectral sequence, together with the injection in Theorem~\ref{thm:canonical-inj}, gives the following result and produces a bigraded commutative Hopf algebra structure on a large subspace of $\bigoplus_n H^*(\SL_n(\ZZ);\QQ)$.  

\begin{corollary} \label{cor:expSL}
The dimensions of $H^{\binom{n}{2} - n - k}(\GL_n(\ZZ);\QQ_\mathrm{or})$ and of $H^{\binom{n}{2} - n - k}(\SL_n(\ZZ);\QQ)$ grow at least exponentially with $n$ for $k = -1$ and for all but finitely many nonnegative integers $k$.
\end{corollary}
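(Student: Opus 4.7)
The plan is to deduce this as a corollary of the machinery already in place: translate the statement via Borel--Serre duality into a claim about the $E^1$ page of the Quillen spectral sequence, then import the exponential growth of $W_0 H^*_c(\cA)$ from Corollary~\ref{cor:expAg} through the isomorphism~(\ref{eq:inflation}), handling the $k=-1$ case by multiplication by the primitive $e$, and finally transfer from $\GL_n(\ZZ)$ to $\SL_n(\ZZ)$ using the decompositions recalled in the excerpt.

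The first step is bookkeeping. Borel--Serre duality gives
\[
H^{\binom{n}{2}-j}(\GL_n(\ZZ); \QQ_\mathrm{or}) \;\cong\; H_j(\GL_n(\ZZ); \St_n\otimes\QQ) \;=\; \sseq{Q}^1_{n,j},
\]
so the $\GL_n$ half of the corollary is equivalent to exponential growth in $n$ of $\dim \sseq{Q}^1_{n,n+k}$ for $k=-1$ and for all but finitely many $k\ge 0$. Recalling that $W_0 H^\ell_c(\cA_h)$ sits in bidegree $(h,\ell-h)$ and that $e$ has bidegree $(1,0)$, the isomorphism~(\ref{eq:inflation}) decomposes
\[
\sseq{Q}^1_{n,n+k} \;\cong\; (W_0 H^{2n+k}_c(\cA_n))^\vee \;\oplus\; e\cdot (W_0 H^{2(n-1)+k+1}_c(\cA_{n-1}))^\vee.
\]

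For the nonnegative $k$ case, I would simply quote Corollary~\ref{cor:expAg}: the first summand $(W_0 H^{2n+k}_c(\cA_n))^\vee$ grows at least exponentially in $n$ for $k=0$ and all but finitely many positive $k$. For $k=-1$, the second summand is $e\cdot (W_0 H^{2(n-1)}_c(\cA_{n-1}))^\vee$, and multiplying the exponential family supplied by Corollary~\ref{cor:expAg} at $k=0$ and $g=n-1$ by $e$ produces an exponential family in $\sseq{Q}^1_{n,n-1}$; independence after multiplication by $e$ is automatic since $(-)\otimes_\QQ \QQ[x]/x^2$ is a free extension. This settles the $\GL_n$ statement.

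Finally, the transfer to $\SL_n(\ZZ)$ is direct: for $n$ odd, $H^*(\SL_n(\ZZ);\QQ) \cong H^*(\GL_n(\ZZ);\QQ_\mathrm{or})$, while for $n$ even, $H^*(\GL_n(\ZZ);\QQ_\mathrm{or})$ is a direct summand of $H^*(\SL_n(\ZZ);\QQ)$, so the exponential lower bound transfers verbatim in either case. Given the technology already developed in the earlier sections, there is no substantive obstacle here; the only real care needed is in tracking the bidegree conventions so that the $e$-shift lands precisely in the $k=-1$ slot, which the decomposition above verifies.
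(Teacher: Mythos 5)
Your proposal is correct and follows essentially the same route as the paper: identify $H^{\binom{n}{2}-n-k}(\GL_n(\ZZ);\QQ_\mathrm{or})$ with $H_{n+k}(\GL_n(\ZZ);\St_n\otimes\QQ)=\sseq{Q}^1_{n,n+k}$, decompose via the isomorphism~\eqref{eq:inflation} into $(W_0H^{2n+k}_c(\cA_n))^\vee\oplus e\cdot(W_0H^{2(n-1)+k+1}_c(\cA_{n-1}))^\vee$, import the exponential growth from Corollary~\ref{cor:expAg} (using the $e$-shifted summand for $k=-1$), and pass to $\SL_n(\ZZ)$ via the summand decomposition from Shapiro's lemma, exactly as in Section~\ref{rem:kSL}. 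The only minor difference is that the paper's refinement exploits both summands simultaneously (growth unless both $k$ and $k+1$ are exceptional), whereas you use one summand at a time, which still suffices for the stated "all but finitely many" conclusion.
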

\noindent See Section~\ref{rem:kSL}, in particular for the refined statement that there are at most 10 exceptional values of $k$ for which these groups do not grow at least exponentially.

The cohomology groups $H^i(\SL_n(\ZZ);\QQ)$ are stable in low degrees, for $i \leq n - 2$ \cite{li-sun-lowdegree}; the growth in Corollary~\ref{cor:expSL} takes place entirely in the unstable range.  In the highest degrees, $H^{\binom{n}{2}- n - k}(\SL_n(\ZZ);\QQ)$ is conjectured to vanish for $k < -1$ \cite[Conjecture~2]{church-farb-putman-stability}. Hence, Corollary~\ref{cor:expSL} says that the cohomology groups grow exponentially in the degree immediately below the threshold at and above which they are expected to vanish, and at almost every fixed distance below the threshold. 

The following corollary can be deduced from dualizing~\eqref{eq:indec-homological-quillen} in Theorem~\ref{thm:HopfSL}, together with the Poincar\'e--Birkhoff--Witt theorem and Shapiro's Lemma.

\begin{cor}\label{cor:sym} There is an injection of bigraded  vector spaces
    \[ \Sym \left( \Omega^*_{c}[-1]  \oplus \QQ \cdot \epsilon \right) \otimes \RR   \hookrightarrow \bigoplus_{n} H_{\binom{n+1}{2} - *}(\SL_n(\ZZ);\RR)\]
    Here, $\epsilon$ is in bidegree $(1,0)$, and $H_{\binom{n+1}{2} - k}(\SL_n(\ZZ))$ is in bidegree $(n,k-n)$.
\end{cor}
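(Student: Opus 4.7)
My plan is to deduce the corollary from Theorem~\ref{thm:HopfSL} by Hopf-algebraic duality, as the text preceding the statement indicates.

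Write $\sseq{Q}_1 = (\sseq{Q}^1)^\vee$ for the graded linear dual. Since $\sseq{Q}^1$ is a bigraded connected commutative Hopf algebra of finite type---with unit $\sseq{Q}^1_{0,0} = H_0(\GL_0(\ZZ); \St_0\otimes\QQ) = \QQ$, and each bigraded piece finite-dimensional by standard finiteness for arithmetic groups---its dual $\sseq{Q}_1$ is a bigraded connected cocommutative Hopf algebra of finite type. The standard identification $\Prim(H^\vee) \cong (\mathrm{Indec}\, H)^\vee$ for such Hopf algebras, applied to the surjection $\pi$ of Theorem~\ref{thm:HopfSL}, dualizes to an injection of bigraded vector spaces
\[ \Omega^*_c[-1] \oplus \RR\cdot\epsilon \;\hookrightarrow\; \Prim(\sseq{Q}_1 \otimes \RR), \]
where $\epsilon = e^\vee$ is in bidegree $(1,0)$.

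Next I invoke the Milnor--Moore theorem: since $\sseq{Q}_1\otimes\RR$ is a connected cocommutative bigraded Hopf algebra of finite type over a field of characteristic zero, it is isomorphic as a Hopf algebra to $U(\mathfrak{g})$, where $\mathfrak{g} = \Prim(\sseq{Q}_1\otimes\RR)$. Over a field of characteristic zero, the Poincar\'e--Birkhoff--Witt symmetrization gives a coalgebra---and in particular vector space---isomorphism $U(\mathfrak{g}) \cong \Sym(\mathfrak{g})$. Since $\Sym$ preserves injections of vector spaces, applying it to the injection above and composing with PBW yields
\[ \Sym(\Omega^*_c[-1] \oplus \RR\cdot\epsilon) \;\hookrightarrow\; \Sym(\mathfrak{g}) \;\cong\; \sseq{Q}_1\otimes\RR. \]

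It remains to embed $\sseq{Q}_1\otimes\RR$ into the homology of $\SL_n(\ZZ)$. From the Borel--Serre identity $H_k(\GL_n(\ZZ); \St_n\otimes\QQ) \cong H^{\binom{n}{2} - k}(\GL_n(\ZZ); \QQ_\mathrm{or})$ recalled before Corollary~\ref{cor:expSL}, together with self-duality of the rank-one orientation module, dualizing over $\QQ$ gives $\sseq{Q}_1^{n,k} \cong H_{\binom{n}{2} - k}(\GL_n(\ZZ); \QQ_\mathrm{or})$. The Shapiro-lemma analysis recorded before Corollary~\ref{cor:expSL} shows (upon dualizing) that $H_*(\GL_n(\ZZ); \QQ_\mathrm{or})$ is a direct summand of $H_*(\SL_n(\ZZ);\QQ)$ for both parities of $n$, so $\sseq{Q}_1^{n,k}\otimes\RR \hookrightarrow H_{\binom{n}{2}-k}(\SL_n(\ZZ);\RR)$. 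Substituting $k' = k + n$ rewrites the target as $H_{\binom{n+1}{2} - k'}(\SL_n(\ZZ); \RR)$ in bidegree $(n, k' - n)$, matching the convention of Corollary~\ref{cor:sym} exactly. Composing with the $\Sym$-inclusion produces the injection required by the corollary.

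The main technical obstacle has already been overcome in the construction of the Hopf algebra structure on $\sseq{Q}^1$ and the surjection $\pi$ in Theorem~\ref{thm:HopfSL}; the remaining argument above is a standard combination of Milnor--Moore, PBW, and bidegree bookkeeping. The one point that demands attention is the translation between the spectral-sequence indexing and the homological indexing of Corollary~\ref{cor:sym}, which the identity $\binom{n+1}{2} - (k+n) = \binom{n}{2} - k$ resolves.
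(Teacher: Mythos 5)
Your proof is correct and takes essentially the same route as the paper: Corollary~\ref{cor:sym} is deduced from Theorem~\ref{thm:HopfSL} by duality, using Milnor--Moore and Poincar\'e--Birkhoff--Witt together with the Borel--Serre and Shapiro identifications, exactly as in the paper's proof of the analogous Corollary~\ref{cor:sym-A}. The only cosmetic difference is that you apply Milnor--Moore directly to $\sseq{Q}_1\otimes\RR$ and obtain $\epsilon$ from the surjection $\pi$, whereas the paper applies it to $W_0H^*_c(\cA;\RR)$ and reinstates the $\epsilon$-factor via the isomorphism $W_0H^*_c(\cA)\otimes\QQ[x]/x^2\cong\sseq{Q}^1$.
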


\noindent Here, and elsewhere, $\Sym$ denotes the free graded-commutative algebra on a graded vector space. It is the tensor product of an exterior algebra on elements of $\Omega^{*}_{c}[-1] \oplus\RR\epsilon$ of odd degree, and a commutative polynomial algebra on the elements of even degree. 
A  version of this statement  (without the class $\epsilon$)
 was announced by Ronnie Lee \cite{lee-unstable}, but no proof has appeared in the literature.   Our results  imply that the  symmetric algebra  on a much larger set of primitive elements embeds into the cohomology of the special linear group: in addition to the classes in $\Omega^*_{c}[-1] \oplus \QQ \cdot \epsilon$, we may also take infinitely many independent commutators in the  free Lie algebra on $\{\omega^5,\ldots, \omega^{45}\}$; see Remark~\ref{rem:allknowncohomology}.   

The injection in Corollary~\ref{cor:sym} is defined with real coefficients. We also give related constructions with rational coefficients. The wheel graphs $W_{2k+1}$, for positive integers $k$, give non-trivial homology classes in the graph homology $H_0(\GC_2^\vee)$, i.e., the graded dual of $H^0(\GC_2)$.  Moreover, $[W_{2k+1}]$ pairs nontrivially with $\omega^{4k+1},$ by pushing forward from the filtered space of graphs $\BKGr$ discussed in Section~\ref{sec:intro-graphss} and identifying $[\omega^{4k+1}]$ with a compactly supported  cohomology class on the rank-$(2k+1)$ stratum of $BK(\ZZ)$.  By showing that these odd wheel classes are primitive with respect to the coproduct, we deduce the following statement in Section~\ref{sec:polynomials-wheel}.
\begin{cor} There is an injective map of commutative bigraded algebras
\[  \QQ[ W_3, W_5,\ldots , W_{2k+1},\ldots    ] \otimes \QQ[x]/(x^2) \longrightarrow   \sseq{Q}^1_{*, *}   \]
     where the element $x$ is in bidegree $(1,0)$. 
\end{cor}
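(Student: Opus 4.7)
The plan is to define the algebra map directly on the generators $e$ and $W_{2k+1}$, and to deduce injectivity from the structure theorem for connected graded-commutative Hopf algebras over $\QQ$.

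\emph{Constructing the map.} The source $\QQ[W_3,W_5,\ldots]\otimes \QQ[e]/(e^2)$ is the free bigraded graded-commutative algebra, under the sign convention $xy=(-1)^{|x||y|}yx$ based on total cohomological degree, on the odd generator $e$ in bidegree $(1,0)$ (total degree $1$) and the even generators $W_{2k+1}$ in bidegree $(2k+1,2k+1)$ (total degree $4k+2$). Since $\sseq{Q}^1_{*,*}$ is a bigraded commutative Hopf algebra by Theorem~\ref{thm:HopfSL}, the universal property produces a unique bigraded algebra homomorphism $\Phi$ sending $e$ to the element of bidegree $(1,0)$ named in Theorem~\ref{thm:HopfSL} and sending each $W_{2k+1}$ to the pushforward, into $\sseq{Q}^1_{2k+1,2k+1}$, of the wheel-graph homology class from $H_0(\GC_2)$ described in the paragraph before the corollary.

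\emph{Primitivity and linear independence.} Both $e$ and every $W_{2k+1}$ are primitive in $\sseq{Q}^1_{*,*}$: primitivity of $e$ is part of the discussion around \eqref{eq:inflation}, and primitivity of the wheel classes is the assertion ``By showing that these odd wheel classes are primitive with respect to the coproduct'' in the paragraph preceding the corollary. Since these generators lie in pairwise distinct bidegrees, their linear independence in $\Prim(\sseq{Q}^1_{*,*})$ reduces to nonvanishing of each. Nonvanishing of $e$ is part of Theorem~\ref{thm:HopfSL}; nonvanishing of $W_{2k+1}$ follows from its nontrivial pairing with $\omega^{4k+1}$ described in Section~\ref{sec:intro-graphss}, which is a nonzero rational multiple of $\zeta(2k+1)$ and hence shows $W_{2k+1}\ne 0$ already in $\sseq{Q}^1_{2k+1,2k+1}\otimes \RR$.

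\emph{Invoking the structure theorem.} The commutative counterpart of the Milnor--Moore theorem states that every connected bigraded commutative Hopf algebra over a characteristic-zero field, of finite type in each bidegree, is free graded-commutative as an algebra on any lift of its indecomposables, and that the natural map $\Prim \to \mathrm{Indec}$ is an isomorphism in this setting. Our $\sseq{Q}^1_{*,*}$ satisfies these hypotheses, with connectedness coming from $\sseq{Q}^1_{0,0}=\QQ$. Consequently, any linearly independent set of primitives generates a free graded-commutative subalgebra of $\sseq{Q}^1_{*,*}$. Applying this to $\{e\}\cup\{W_{2k+1}\}_{k\ge1}$ shows that $\Phi$ is injective.

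The principal obstacle is establishing the primitivity of the wheel classes $W_{2k+1}$: because the coproduct on $\sseq{Q}^1_{*,*}$ is controlled by splittings of rank-$n$ objects into two nonzero summands, one must verify that the pushforward of $[W_{2k+1}]$ has no nonzero contribution from any such splitting. This requires tracking the compatibility of the filtered coproduct on the cubical graph space of Section~\ref{sec:intro-graphss} with the coproduct on the Waldhausen construction of $BK(\ZZ)$, and relies combinatorially on the fact that the wheel graph admits no admissible decomposition into two subgraphs contributing to the reduced coproduct. Once primitivity is in hand, the structure theorem mechanically produces the desired injection.
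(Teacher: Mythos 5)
Your proposal is correct and follows essentially the same route as the paper: both arguments rest on (i) primitivity of the wheel classes, proved by pushing them through the Hopf-algebra map from the graph spectral sequence to $\sseq{Q}^1$ and using the combinatorial fact that a wheel admits no proper subgraph/quotient contribution to the reduced Connes--Kreimer coproduct (the paper's Lemma on wheel primitivity), (ii) primitivity of $e$ for bidegree reasons, and (iii) a structure theorem guaranteeing that independent primitives generate freely. The only real divergence is in step (iii): the paper applies Milnor--Moore in the form $\cU(\Prim(\sseq{Q}^1))\hookrightarrow \sseq{Q}^1$ together with Poincar\'e--Birkhoff--Witt (the bracket of primitives vanishes since the product is graded-commutative, so the enveloping algebra is exactly $\QQ[W_3,W_5,\ldots]\otimes\QQ[e]/(e^2)$), whereas you invoke the commutative-side structure theorem. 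One caution there: your statement that $\Prim\to\mathrm{Indec}$ is an \emph{isomorphism} for connected graded-commutative Hopf algebras of finite type over $\QQ$ is false in general (shuffle Hopf algebras give primitives strictly smaller than indecomposables); the correct commutative half of Milnor--Moore only gives \emph{injectivity} of $\Prim\to\mathrm{Indec}$, which, combined with the Leray/Hopf freeness theorem, is all your argument actually uses, so the conclusion is unaffected.
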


\begin{remark}
Note that the last two corollaries only give polynomial growth in the cohomology of $\cA$ and $\SL_n(\ZZ)$. Our results on exponential growth are stronger and fundamentally use the fact that our co-commutative Hopf algebras are far from commutative. 
\end{remark}

\begin{remark}
    When this paper was in the final stages of preparation, we learned that Ash, Miller, and Patzt independently discovered a bigraded commutative Hopf algebra structure on $\bigoplus_{k,n} H_{k} (\GL_n(\ZZ); \St_n \otimes \QQ)$ and described some indecomposable elements. It is not immediately clear whether the two Hopf algebra structures agree. They also found a related Hopf algebra structure on $\bigoplus_{k,n} H_{k} (\GL_n(\ZZ); \St_n \otimes \widetilde{\QQ})$ \cite{ash-miller-patzt}.
\end{remark}

\begin{remark}
  Recent work of Yuan and Jansen introduces an $E_\infty$-monoid denoted $K^\partial(\ZZ)$ in \cite{Yuan} and $|\mathcal{M}_\mathrm{RBS}(\ZZ)|$ in \cite{Jansen} which seems related to the Hopf algebra structure on the Quillen spectral sequence, presumably by Koszul duality.  In particular, \cite[Theorem 10.11]{Jansen} implies that the $E^1$-page of Quillen's spectral sequence arises as homology of the bar construction of an $E_\infty$-algebra, which can perhaps be viewed as a conceptual reason for the Hopf algebra structure.
\end{remark}

\subsection{Relations to moduli spaces of curves} 

Theorems~\ref{thm:HopfAg} and \ref{thm:free}, and the resulting  exponential growth of $\dim W_0H^{2g}_c(\cA_g)$ (Corollary~\ref{cor:expAg} for $k = 0$) are analogs of the main results of \cite{cgp-graph-homology} for the moduli space of curves $\cM = \bigsqcup_{g \geq 2} \cM_g$. Neither of these collections of results appears to be directly deducible from the other.  However, $W_0H^*_c(\cA)$ and $W_0H^*_c(\cM)$ can be related to each other through a zig-zag, as follows.  Recall the Torelli map $\cM\to\cA$, taking a curve to its Jacobian.  The Torelli map is not proper, and hence does not give rise to a natural morphism of mixed Hodge structures between $H^*_c(\cM)$ and $H^*_c(\cA)$, but it factors as $\cM\to \cM^{\mathrm{ct}} \to \cA$, through the open inclusion of $\cM$ into the moduli space $\cM^{\mathrm{ct}}$ of curves of compact type 
(or the image of $\cM^{\mathrm{ct}}$ in $\cA$). The Torelli map extends to a proper morphism on $\cM^{\mathrm{ct}}$, giving rise to a zig-zag of mixed Hodge structures \[H^*_c(\cM) \to H^*_c(\cM^{\mathrm{ct}}) \leftarrow H^*_c(\cA).\] 
However, we do not yet have sufficient understanding of the compactly supported cohomology of $\cM^{\mathrm{ct}}$ (or its image in $\cA$) to use such a  zig-zag effectively.

Nevertheless, the Grothendieck--Teichm\"uller Lie algebra, via its interpretation as the zeroth degree cohomology of  the commutative graph complex $\GC_2$, plays a  common   role in  the top weight compactly supported cohomology of both $\mathcal{A}$ and $\mathcal{M}$. 
Indeed,   the main results of  \cite{cgp-graph-homology} identify $W_0H^*_c(\cM)$ with $H^*(\GC_2)$ and thereby endow it with the structure of a bigraded Lie algebra. Combining this identification with results from Grothendieck--Teichm\"uller theory \cite{brown-mixed, willwacher-kontsevich} shows  that $\bigoplus_g W_0 H^{2g}_c(\cM_g)$ contains a free Lie subalgebra with one generator $\sigma_g$ in each odd genus $g \geq 3$ and hence $\dim W_0 H^{2g}_c(\cM_g)$ grows at least exponentially with $g$. In particular the graph cohomology $H^*(\GC_2)$ plays the role of an intermediary between $W_0H^*_c(\cM)$ and $W_0H^*_c(\cA)$. Indeed, the main construction of \cite{cgp-graph-homology} is a map from $\GC_2$ to a cellular chain complex that computes $W_0H^*_c(\cM)$, which is in fact a quasi-isomorphism. Here, we construct a map from $\Prim \big( \sseq{Q}_1 \big)$ to $H^*(\GC_2)$ that vanishes on $\sseq{Q}_1^{1,0}$ and hence factors as
\[
\Prim \big( \sseq{Q}_1 \big) \to \Prim \big( W_0H^*_c(\cA) \big) \to H^*(\GC_2).
\]
This map $\Prim \big( W_0H^*_c(\cA) \big) \to H^*(\GC_2)$ is not an isomorphism, but is nevertheless essential to our proof of Theorem~\ref{thm:free} and its corollaries. It remains possible that this map may restrict to an isomorphism $\Prim \big( \bigoplus_g H^{2g}_c(\cA) \big) \to H^0(\GC_2)$; see Question~\ref{question: diagonaliso}.

\begin{remark}
The analog of Corollary~\ref{cor:expAg} for $H^*_c(\cM)$ is an open problem; it is conjectured but not known that $\dim H^{2g+k}_c(\cM_g)$ grows at least exponentially with $g$ for all but finitely many $k \geq 0$ \cite[Conjecture~1.3]{payne-willwacher-weight11}. This is proved for a few dozen values of $k$. In each known case, the exponential growth is established in one fixed graded piece of the weight filtration; see  \cite[Corollary~1.3]{payne-willwacher-weight2} and  \cite[Corollary~1.2]{payne-willwacher-weight11}. 
\end{remark}

\subsection{Further results and conjectures}

We now state questions, conjectures, and further results related to injectivity, vanishing, and extensions of Hodge structures involving $W_0H^*_c(\cA)$. We also state a generalization of our results related to the Hopf algebra on the Quillen spectral sequence (Theorem~\ref{thm:HopfSL} and Proposition~\ref{prop:diagonalLie}) for rings of integers in number fields.

\subsubsection{Injectivity} Theorems \ref{thm:canonical-inj} and \ref{thm:free}  suggest the following conjecture. 
\begin{conj} \label{conj: Tinjects} The inclusion of $\Omega^*_c[-1] \otimes \RR$ into the primitives for the coproduct on $W_0H^*_c(\cA;\RR)$ induces an injection 
 $T(\Omega^{*}_c[-1])  \otimes \RR  \to W_0H^*_c(\cA; \RR).$
\end{conj}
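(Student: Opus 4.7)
The plan is to reduce the conjecture, via Milnor-Moore, to a freeness statement for a Lie subalgebra of primitives, and then to attack that freeness by enlarging the graphical detection argument behind Theorem~\ref{thm:free}. By Theorem~\ref{thm:HopfAg} the Hopf algebra $W_0H^*_c(\cA;\RR)$ is cocommutative and connected, so Milnor-Moore gives an isomorphism $W_0H^*_c(\cA;\RR)\cong\cU(L)$ with $L=\Prim(W_0H^*_c(\cA;\RR))$. Since $T(V)\cong\cU(\mathrm{Free}(V))$, Poincar\'e-Birkhoff-Witt identifies the desired injection $T(\Omega^*_c[-1])\otimes\RR\hookrightarrow W_0H^*_c(\cA;\RR)$ with the statement that the inclusion $\Omega^*_c[-1]\otimes\RR\hookrightarrow L$ of Theorem~\ref{thm:canonical-inj} extends to an injective graded Lie algebra map $\mathrm{Free}(\Omega^*_c[-1]\otimes\RR)\hookrightarrow L$. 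Equivalently, the $\omega^{4k+1}$'s together with all of their wedge products must be algebraically free under the Lie bracket in $L$.

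For the diagonal classes $\omega^{4k+1}$ I would upgrade Theorem~\ref{thm:free} from $N=11$ to all $N\geq 1$ by exploiting the Lie algebra map of Proposition~\ref{prop:diagonalLie}. Composed with the isomorphism $H^0(\GC_2)\cong\grt$ of \cite{willwacher-kontsevich}, it sends each $\omega^{4k+1}$ to a class of $\grt\otimes\RR$ pairing non-trivially with the wheel $W_{2k+1}$, and hence with a nonzero multiple of the motivic generator $\sigma_{2k+1}$ after passage to the associated graded of the depth (or motivic weight) filtration. Since the $\sigma_{2k+1}$ generate a free Lie subalgebra of $\grt$ by \cite{brown-mixed}, this detects the depth-one symbol of each $\omega^{4k+1}$ and propagates freeness from the associated graded back to $L$; the required input is the compatibility of the depth filtration with the Lie bracket on the image in $\grt$ and a weight-by-weight argument to pick out the correct generator.

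For the off-diagonal wedge products $\omega^{4k_1+1}\wedge\cdots\wedge\omega^{4k_r+1}$ with $r\geq 2$, the map of Proposition~\ref{prop:diagonalLie} is blind, since these classes lie off the bidegree diagonal in $L$. My approach would be to enlarge the graphical target---for instance to higher-degree graph cohomology $H^*(\GC_2)$, or to a graph complex refined by the stratification $\{P_{g'}/\GL_{g'}(\ZZ)\}$ of $A_g^\trop$ coming from the bordifications of \cite{brown-bordifications}---so that each basis element of $\Omega^*_c[-1]$ pairs non-trivially with an explicit graph class through a Feynman-integral calculation generalising $\int\omega^{4k+1}=\zeta(2k+1)$ of \cite{brown-schnetz}. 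Combined with the diagonal step, an iterated bracket analysis using a compatible weight filtration would then upgrade generator-by-generator detection to freeness of the full Lie subalgebra $\mathrm{Free}(\Omega^*_c[-1]\otimes\RR)\hookrightarrow L$.

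The principal obstacle is twofold. First, unconditionally establishing freeness of $\{\omega^{4k+1}\}_{k\geq 1}$ in $\grt$ runs up against the conjecture $\gm=\grt$ (or the depth conjecture), and for large $k$ a single wheel pairing no longer suffices to pin down the image in $\grt$. Second, and more serious, detecting the off-diagonal wedge products seems to require graph-complex machinery not developed in the paper, since $\GC_2$ is essentially concentrated in cohomological degree zero; an alternative route, bypassing the graph complex, would be to construct explicit Eisenstein-type cohomology classes on the bordification of $P_g/\GL_g(\ZZ)$ whose integrals against iterated brackets of the $\omega$'s are provably nonzero.
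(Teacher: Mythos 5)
You should first note that the statement you are addressing is stated in the paper as an open conjecture (Conjecture~\ref{conj: Tinjects}); the paper does not prove it, and your proposal does not either --- it is a program whose two main steps both hit obstructions that the paper itself identifies. Your opening reduction is correct and coincides exactly with the paper's own reformulation: by Milnor--Moore and Poincar\'e--Birkhoff--Witt, injectivity of $T(\Omega^*_c[-1])\otimes\RR \to W_0H^*_c(\cA;\RR)$ is equivalent to the statement that the free Lie algebra on $\Omega^*_c[-1]$ injects into $\Prim(W_0H^*_c(\cA;\RR))$ (Conjecture~\ref{conj: Liealginjects}). The paper's actual evidence is of a different nature: a spectral sequence of Hopf algebras whose $E_1$ page is $T(\Omega^*_c[-1])$ and whose abutment matches that of the Quillen spectral sequence (Section~\ref{sec:canonical-ss}), together with injectivity for $g\leq 9$ and an isomorphism for $g\leq 7$ obtained from the Quillen spectral sequence and the computations of \cite{elbaz-vincent-gangl-soule-perfect}.

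The concrete gaps are the following. On the diagonal, your claim that wheel pairings detect the depth-one (or weight-graded) symbol of each $\omega^{4k+1}$ and that this ``propagates freeness from the associated graded back to $L$'' fails: nonvanishing of the image of $\omega^{4k+1}$ in $\grt^{\ab}\otimes\RR$, or even in $\Gr^1_D\grt\otimes\RR$, does not yield freeness, because the Lie subalgebra of the depth-graded $\grt$ generated in depth one is \emph{not} free --- it has the Ihara--Takao quadratic relations coming from cusp forms, as the paper notes immediately after its depth-filtration proposition. The freeness in Theorem~\ref{thm:free} is obtained instead by placing the classes inside the motivic Lie algebra $\gm$, which is free by \cite{brown-mixed}; this is known only in weights $\leq 23$ (whence $N=11$ in Corollary~\ref{cor: partialfreeness}), and extending it to all $k$ is tied to Drinfeld's conjecture that $\gm\to\grt$ is surjective, which your argument implicitly assumes. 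Off the diagonal, you correctly observe that the map of Proposition~\ref{prop:diagonalLie} is blind to the wedge classes, but the proposed remedy of enlarging the graphical target cannot work along the same map: the paper records that the tropical Torelli map vanishes on the image of almost all canonical forms lying above the diagonal \cite[\S 14.6]{brown-bordifications}, so no graph-complex refinement factoring through it will pair nontrivially with $\omega^{4k_1+1}\wedge\cdots\wedge\omega^{4k_r+1}$ for $r\geq 2$, and no substitute pairing is constructed. As written, the proposal therefore establishes nothing beyond what Theorems~\ref{thm:canonical-inj}, \ref{thm:free} and \ref{thm: Tensoralgebramaps} already give, and the conjecture remains open.
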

\noindent To support this conjecture, we construct a spectral sequence of bigraded Hopf algebras whose $E_1$-page is  $ T(\Omega^{*}_c[-1])$ and show that the abutment of this spectral sequence is isomorphic, as a bigraded vector space, to the abutment of the Quillen spectral sequence.   
Note that Conjecture~\ref{conj: Tinjects} implies  in particular that the tensor algebra on $\{\omega^{4k+1}\}$ injects into $W_0H^*_c(\cA; \RR)$, or equivalently, that the classes $\omega^{4k+1}$ generate a free Lie subalgebra of  $\Prim(W_0H^*_c(\cA; \RR))$. 
To prove this weaker statement, it would suffice to show that each class $\omega^{4k+1}$ maps into the motivic Lie subalgebra of the Grothendieck--Teichm\"uller Lie algebra \cite{brown-mixed}.

The map  in Conjecture~\ref{conj: Tinjects} is injective for $g\leq 9$ and an isomorphism for $g\leq 7$; the former follows from the methods developed in this paper, the latter from the isomorphism \eqref{eq:inflation} and explicit calculations in \cite{elbaz-vincent-gangl-soule-perfect}. It is not expected to be an isomorphism in general.  Indeed,  the dual vector space of $\Omega^*_c[-1]$ in genus $g$ is expected to be spanned by cohomology classes for $\GL_g(\ZZ)$ of non-cuspidal (Eisenstein) type \cite{Grobner}. 
Using automorphic methods,   cuspidal  cohomology classes for $\GL_g(\ZZ)$ for $g = 79$ and $105$ were recently constructed in  \cite{boxer-calegari-gee-cuspidal}. 
 Also, starting from $g = 9$, the Euler characteristic of $T(\Omega^{*}_c[-1])$ and the isomorphism \eqref{eq:inflation} are not compatible with homological Euler characteristic computations for $\GL_g(\ZZ)$ \cite[Theorem 3.3]{horozov-euler}.  

The situation potentially changes if we restrict to the  diagonal $\bigoplus_g W_0H^{2g}_c(\cA_g;\RR)$, which, by our earlier results, is a graded cocommutative Hopf algebra of particular interest. 
\begin{question} \label{question: diagonaliso} Is the induced map 
\[  T \Big( \bigoplus_{k\geq 1}  \omega^{4k+1} \RR \Big) \to   \bigoplus_g W_0H^{2g}_c(\cA_g;\RR)\]
an isomorphism? 
\end{question}

As noted above, the injectivity of this map  would follow from Drinfeld's conjecture that  the  motivic  Lie algebra surjects onto the Grothendieck--Teichm\"uller Lie algebra. Assuming the same conjecture and restricting to the Lie algebra of primitives, the analog of Question \ref{question: diagonaliso} for the cohomology of the moduli stack of curves has an affirmative answer, since $\bigoplus_g W_0 H^{2g}_c(\cM_g)$ is isomorphic to $H^0(\GC_2)$.

\subsubsection{Vanishing}

Note that $H^i_c(\cM_g)$ vanishes for $i < 2g$ \cite{harer-virtual, church-farb-putman-rational, morita-sakasai-suzuki-abelianizations}. Passing to weight zero, this implies the vanishing of $H^*(\GC_2)$ in negative degrees; see \cite[Theorem~1.1]{willwacher-kontsevich} and \cite[Theorem~1.4]{cgp-graph-homology}. It is expected that $H^i_c(\cA_g)$ may also vanish for $i < 2g$ \cite[Question 1.1]{bruck-patzt-sroka}; this is known for $i < g + \max\{2,g\}$  \cite{borel-serre-corners, gunnels-symplectic, bruck-patzt-sroka}. 

The vanishing of $H^{2g+1}_c(\cM_g)$ for all $g$ is a compelling open question, closely related to results in deformation theory, such as formality of deformation quantization \cite[Question~1.4]{payne-willwacher-weight11}. The analogous statement for $H^*_c(\cA)$ is also open.

\begin{question}
Does $H^{2g+1}_c(\cA_g)$ vanish for all $g$?
\end{question}

\noindent The answer is yes for $g \leq 4$. See \cite{hain-rational} and \cite[Corollary~3]{hulek-tommasi-cohomology}.

\subsubsection{Extensions of Tate Hodge structures}
Our study of $W_0H^*_c(\cA)$ also sheds new light on the full mixed Hodge structure $H^*_c(\cA)$. In particular, we can now show that this mixed Hodge structure contains a nontrivial extension of Tate Hodge structures in genus 3, answering questions of Hain and Looijenga.

Following Namikawa \cite{namikawa-toroidal-book}, we shall use the notation $\cA_g^\Sat$ for the Satake--Baily--Borel compactification of $\cA_g$.  
Hain observed over twenty years ago that the mixed Hodge structure on $H^6(\cA_3)$ is an extension of $\QQ(-6)$ by $\QQ(-3)$ and stated the expectation that it should be a multiple (possibly trivial) of the extension given by $\zeta(3)$ \cite[pp.~473--474]{hain-rational}. More recently, Looijenga showed that a Tate twist of a nontrivial multiple of this same extension appears in the stable cohomology of the Satake compactification $H^6(\cA_\infty^{\Sat})$ and asked whether the pullback $H^6(\cA_\infty^\Sat) \to H^6(\cA_3^\Sat)$ is injective \cite[p.~1370]{looijenga-goresky-pardon}. 
Here, we confirm Hain's expectation and give an affirmative answer to Looijenga's question. Moreover, we show that the extension in $H^6(\cA_3)$ is nontrivial.

\begin{theorem} \label{thm:extension}
The restriction map $H^6(\cA_\infty^\Sat) \to H^6(\cA_3^\Sat)$ is injective. Moreover, the image of this restriction map is equal to the image of $H^6_c(\cA_3)$ under push forward for the open inclusion $\cA_3 \subset \cA_3^\Sat$. In particular, the mixed Hodge structure on $H^6_c(\cA_3)$ is the nontrivial extension of $\QQ(-3)$ by $\QQ$ given by a nonzero rational multiple of $\zeta(3)$.
\end{theorem}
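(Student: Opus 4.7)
\emph{Proof plan.} The plan is to prove the theorem in two stages: first, to establish the mixed Hodge structure on $H^6_c(\cA_3)$ as a nontrivial $\zeta(3)$-extension; and second, to deduce the statements about the restriction from $H^6(\cA_\infty^\Sat)$ by matching extension classes.

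For the first stage, I would combine three inputs. Hain's computation \cite{hain-rational} gives that $H^6(\cA_3)$ is a two-dimensional extension of $\QQ(-6)$ by $\QQ(-3)$; Poincar\'e duality on the smooth Deligne--Mumford stack $\cA_3$ of complex dimension $6$ then identifies $H^6_c(\cA_3)$ as a two-dimensional extension
\[
0 \to \QQ \to H^6_c(\cA_3) \to \QQ(-3) \to 0.
\]
To pin down the weight-zero line rationally, I would apply Theorem~\ref{thm:canonical-inj}: the element $\omega^5 \in \Omega^*_c[-1]$ has bidegree $(3,6)$ and injects into $\Prim(W_0 H^6_c(\cA;\RR))$, so $W_0 H^6_c(\cA_3;\RR) \neq 0$, forcing $W_0 H^6_c(\cA_3) = \QQ$ and exhibiting the $\RR$-generator as the image of $\omega^5$.

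The decisive step is to identify the class in $\mathrm{Ext}^1_{\mathrm{MHS}}(\QQ(-3), \QQ) \cong \CC / (2\pi i)^3 \QQ$ corresponding to this extension with a nonzero rational multiple of $\zeta(3)$. For this I would use the morphism of Section~\ref{sec:intro-graphss} together with the isomorphism \eqref{eq:inflation} to express the period realizing the MHS extension as the pairing of the de Rham class $\omega^5$ against the wheel graph $W_3 \in H_0(\GC_2)$, interpreted via the tropical model as a topological cycle representing the dual rational generator of $W_0 H^6_c(\cA_3;\QQ)$. By Proposition~\ref{prop:diagonalLie} and \cite{brown-schnetz}, this pairing equals a nonzero rational multiple of $\zeta(3)$; since $\zeta(3) \notin (2\pi i)^3 \QQ$, the extension is nontrivial, and this completes the identification of the mixed Hodge structure on $H^6_c(\cA_3)$.

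For the second stage, I would use the open--closed long exact sequence associated to $\cA_3^\Sat = \cA_3 \sqcup \cA_2^\Sat$,
\[
H^5(\cA_2^\Sat) \to H^6_c(\cA_3) \xra{j_!} H^6(\cA_3^\Sat) \to H^6(\cA_2^\Sat),
\]
and invoke known computations for $\cA_2$ and its Satake compactification (cf.~\cite{hain-rational, hulek-tommasi-cohomology}) giving $H^5(\cA_2^\Sat) = 0$. Thus $j_!$ is injective and $\img(j_!)$ is a two-dimensional subspace carrying the $\zeta(3)$-extension established above. Looijenga \cite{looijenga-goresky-pardon} realised a (Tate twist of) a nonzero rational multiple of the same $\zeta(3)$-extension inside $H^6(\cA_\infty^\Sat)$, and stable Satake cohomology computations imply $H^6(\cA_\infty^\Sat)$ is exhausted by this extension. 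Naturality places the restriction of $H^6(\cA_\infty^\Sat)$ inside $\ker(H^6(\cA_3^\Sat) \to H^6(\cA_2^\Sat)) = \img(j_!)$, and matching extension classes modulo $(2\pi i)^3 \QQ$ forces the two images to coincide. Injectivity of the restriction map then follows because the $\zeta(3)$-extension is nontrivial in $H^6(\cA_3^\Sat)$ by the first stage.

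The main obstacle is the compatibility step in the second paragraph: verifying that the period computing the MHS extension class on $H^6_c(\cA_3)$ equals, up to a nonzero rational scalar, the graph-complex pairing $\langle \omega^5, W_3 \rangle$. This requires tracing through the tropical/perfect-cone model for $W_0 H^*_c(\cA)$, the Hopf-algebraic isolation of $[\omega^5]$ via \eqref{eq:inflation}, and the de Rham realisation of the Goresky--Pardon lift of $c_3$, to confirm that the wheel-graph cycle pairs against a de Rham representative of the weight-$6$ generator precisely as predicted by our Hopf-algebraic framework.
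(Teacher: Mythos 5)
There is a genuine gap, and it sits exactly where you flag ``the main obstacle'': the claim that the period $\langle \omega^5, [W_3]\rangle \in \QQ^\times\,\zeta(3)$ of \cite{brown-schnetz} computes the class of the extension $0\to\QQ\to H^6_c(\cA_3)\to\QQ(-3)\to 0$ in $\mathrm{Ext}^1_{\mathrm{MHS}}(\QQ(-3),\QQ)\cong \CC/(2\pi i)^3\QQ$. That pairing takes place entirely inside the single weight-graded piece $\Gr^W_0 H^6_c(\cA_3)$: it compares the real representative $\omega^5$ (an invariant form on $P_3/\GL_3(\ZZ)$, defined only over $\RR$) with the rational Betti generator given by the wheel cycle. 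The extension class, by contrast, is an off-diagonal period comparing the rational Betti structure with the algebraic de Rham structure and Hodge filtration across the two weight steps, and nothing in Theorem~\ref{thm:canonical-inj}, the isomorphism \eqref{eq:inflation}, or Proposition~\ref{prop:diagonalLie} produces a de Rham-rational (or $F^3$-adapted) representative whose integral against a Betti lift would yield that class. Supplying such a comparison is essentially the content of the Goresky--Pardon/Looijenga analysis, i.e.\ of Hain's conjecture itself; the paper's proof is structured precisely to avoid this computation. It never determines the extension class of $H^6_c(\cA_3)$ intrinsically: it first proves injectivity of the restriction and the equality of images by geometric means, and only then transports Looijenga's stable computation \cite{looijenga-goresky-pardon} along the resulting zig-zag of MHS isomorphisms to conclude the ``in particular'' clause. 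So your Stage~1 cannot serve as input to Stage~2.

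Stage~2 also needs repair at two points, and the missing ingredients are exactly the ones the paper supplies. First, ``naturality'' does not place the restriction of $H^6(\cA_\infty^\Sat)$ inside $\ker\big(H^6(\cA_3^\Sat)\to H^6(\cA_2^\Sat)\big)$: for the non-weight-zero part this requires knowing that the Goresky--Pardon lift $\tilde c_3$ dies on $\cA_2^\Sat$, which the paper gets from the compatibility $\pi_\Sigma^*(\tilde c_3)=c_3(\widetilde\Lambda)$ of \eqref{eq:pi*} together with $\mathrm{rank}\,\Lambda_2=2$, and for the weight-zero part it uses the Satake-stratification spectral sequence to see $W_0H^6(\cA_2^\Sat)=0$ and to identify $W_0H^6(\cA_6^\Sat)\cong W_0H^6_c(\cA_3)\cong W_0H^6(\cA_3^\Sat)$. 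Second, ``matching extension classes forces the two images to coincide'' is not valid as stated: two rank-two sub-mixed-Hodge-structures of $H^6(\cA_3^\Sat)$ realizing the same extension class need not be equal; the paper instead uses the containment of the restricted classes in the image of $H^6_c(\cA_3)$ (via the excision sequence and the vanishing on $\cA_2^\Sat$) together with Hain's rank count to force equality. Finally, injectivity of the restriction in the paper rests on the nonvanishing of $\tilde c_3$ on $\cA_3^\Sat$, proved by pulling back along a toroidal compactification and the extended Torelli map to $\overline{\cM}_3$ and invoking nonzero Hodge integrals involving $\lambda_3$ \cite{faber-pandharipande-hodge}; this ingredient, or a substitute for it, is absent from your plan.
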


\noindent Theorem~\ref{thm:extension} is proved in Section~\ref{sec:proof-mhs}.  By  Poincar\'e duality, we see that  $H^6(\cA_3)$ is the nontrivial extension of $\QQ(-6)$ by $\QQ(-3)$ given by a nonzero rational multiple of $\zeta(3)$. 

\begin{remark} The motivic structure of $H^6(\cA_3)$ (i.e. its associated mixed Hodge structure and  $\ell$-adic Galois representations) and the extensions of Tate structures in $H^{2g}(\cA_g^\Sat)$ have generated sustained  interest. See, for instance,  \cite[Section~13]{clery-faber-vandergeer} for a discussion of the Siegel and Teichm\"uller modular forms that occur in $H^6(\cA_3)$ and $H^6(\cM_3)$ and \cite{vandergeer-looijenga} for a discussion of the differences that appear when working over fields of positive characteristic.
\end{remark}

We predict that Theorem~\ref{thm:extension} generalizes to higher genera, as follows. 

\begin{conjecture} \label{conj:omega-extension}
    For odd $g \geq 3$, the mixed Hodge structure on $H^{2g}_c(\cA_g)$ has a subquotient isomorphic to the extension of $\QQ(-g)$ by $\QQ$ given by a nonzero rational multiple of $\zeta(g)$. 
\end{conjecture}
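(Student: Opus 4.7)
The plan is to generalise the argument underlying Theorem~\ref{thm:extension}, which handles the $g = 3$ case. For odd $g = 2k+1 \geq 3$, Theorem~\ref{thm:canonical-inj} produces a non-zero class $\omega^{4k+1}$ in $W_0 H^{2g}_c(\cA_g; \RR)$, since the shifted degree of $\omega^{4k+1}$ in $\Omega^*_c[-1]$ equals $4k+2 = 2g$. This class, essentially the Borel regulator for $K_{4k+1}(\ZZ)$, will play the role of $\omega^5$ in the $g=3$ case and provide the weight-zero piece of the desired extension.

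I would carry out three main steps. First, identify a corresponding weight-$2g$ Tate class in $H^{2g}_c(\cA_g)$. Following the strategy for $g = 3$, one expects the stable cohomology $H^{2g}(\cA_\infty^\Sat)$ to contain a non-trivial extension of $\QQ(-g)$ by $\QQ$ whose extension class is a rational multiple of $\zeta(g)$; this generalises Looijenga's observation in \cite{looijenga-goresky-pardon} for $g=3$, and should be accessible via the Goresky--Pardon structure of the Satake compactification together with the Borel-regulator interpretation of $\omega^{4k+1}$. Second, show that the restriction $H^{2g}(\cA_\infty^\Sat) \to H^{2g}(\cA_g^\Sat)$ remains injective on this subquotient, and that its image coincides with the image of the push-forward $H^{2g}_c(\cA_g) \to H^{2g}(\cA_g^\Sat)$; this transfers the extension to a subquotient of $H^{2g}_c(\cA_g)$ defined over $\QQ$. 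Third, compute the extension class via the period pairing of $\omega^{4k+1}$ against the wheel graph cycle $W_{2k+1}$; Proposition~\ref{prop:diagonalLie} together with the Brown--Schnetz formula identifies this period as a non-zero rational multiple of $\zeta(2k+1) = \zeta(g)$, establishing non-triviality.

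The principal obstacle is the first step. For $g = 3$ the stable cohomology $H^6(\cA_\infty^\Sat)$ is controlled explicitly, which makes the isolation and survival of the extension essentially automatic; for general odd $g$ one must contend with further Eisenstein contributions to $H^{2g}(\cA_\infty^\Sat)$ and carefully prove that the desired subquotient persists under restriction to finite genus. A potentially cleaner alternative would be to upgrade the Hopf algebra structure on the Quillen spectral sequence (Theorem~\ref{thm:HopfSL}) and the isomorphism~\eqref{eq:inflation} to the category of mixed Hodge structures; in that setting, the primitivity of $\omega^{4k+1}$ together with its non-trivial pairing against $W_{2k+1}$ would detect the extension directly inside $H^{2g}_c(\cA_g)$, bypassing the Satake compactification altogether. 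Either route reduces the conjecture to the evaluation of a single period, which is already known.
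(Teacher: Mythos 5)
You are attempting to prove a statement that the paper itself leaves open: Conjecture~\ref{conj:omega-extension} is stated as a conjecture, with the only evidence offered being that $\Gr_W^{2g}H^{2g}_c(\cA_g)$ contains a copy of $\QQ(-g)$ coming from the tautological socle. So there is no paper proof to compare against, and your proposal must be judged on whether it closes the gap on its own. It does not: your own ``first step'' is precisely the open content of the conjecture, and the mechanisms that made it work for $g=3$ in Theorem~\ref{thm:extension} are special to that case. There, the argument leaned on three low-degree facts: $H^6_c(\cA_3)$ has rank $2$ (Hain), the truncated Satake spectral sequence degenerates for degree reasons so that $W_0H^6(\cA_3^\Sat)\cong W_0H^6_c(\cA_3)$ is one-dimensional, and $H^6(\cA_\infty^\Sat)$ has rank $2$ with the $\zeta(3)$-extension isolated by Looijenga. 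For general odd $g$ none of this persists: the paper's main theorems show $\dim W_0H^{2g}_c(\cA_g)$ grows at least exponentially, $H^{2g}(\cA_\infty^\Sat)$ contains many Goresky--Pardon monomials in the lifted Chern classes, and the Satake spectral sequence in degree $2g$ has many potentially contributing columns and differentials. Saying that ``one expects'' the relevant rank-two extension to sit inside $H^{2g}(\cA_\infty^\Sat)$, to restrict injectively to $H^{2g}(\cA_g^\Sat)$, and to land in the image of $H^{2g}_c(\cA_g)$ is a restatement of the conjecture, not an argument; no mechanism is given for isolating this subquotient among the exponentially many weight-zero classes or for controlling the restriction map unstably.

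Your third step has a related gap. The nonvanishing pairing of $\omega^{4k+1}$ against the wheel cycle $W_{2k+1}$ is a real period integral; by itself it does not produce a two-dimensional subquotient of the mixed Hodge structure defined over $\QQ$ whose extension class is that period. One needs a rational (geometric) origin for both the Betti and de Rham classes and a reason that they span a sub-MHS with triangular period matrix --- in the $g=3$ proof this rational structure was supplied exactly by the Satake/Goresky--Pardon geometry, which is what is missing here. Finally, the suggested shortcut of upgrading the Hopf structure on the Quillen spectral sequence and the isomorphism~\eqref{eq:inflation} to mixed Hodge structures is unsupported: that spectral sequence is built from the rank filtration on $BK(\ZZ)$, a purely homotopy-theoretic object, and the paper gives no motivic or Hodge-theoretic enhancement of it. As it stands, your proposal is a reasonable research programme, but each of its load-bearing steps is an open problem rather than a proof.
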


\noindent 
As evidence for  Conjecture~\ref{conj:omega-extension}, we note that $\Gr_W^{2g} H^{2g}_c(\cA_g)$ contains a copy of $\QQ(-g)$, because the tautological subring of $H^*(\cA_g)$ has its socle in degree $g^2 - g$ \cite{van-der-geer-cycles}.

\subsection{Quillen spectral sequences for rings of integers in number fields}
Our proof of the existence of a Hopf algebra structure on the Quillen spectral sequence in  Theorem~\ref{thm:HopfSL}  holds  for  much more general rings $R$.  A  particular case of interest is when  $R=\mathcal{O}_K$ is the ring of integers in a  number field $K$. 
In this setting,  its  $E^1$-page was computed by Quillen and satisfies
\[  \sseq{Q}_{n,k}^1 (\mathcal{O}_K)  \cong  \bigoplus_{\mathfrak{a}} H_{k} ( \GL(P_\mathfrak{a}); \St(P_{\mathfrak{a}}\otimes K)) \]
where the sum is over representatives $P_{\mathfrak{a}}$  of the isomorphism classes of projective  $\mathcal{O}_K$-modules of rank $n$ (which for $n > 0$ are in bijection with the ideal class group of $\mathcal{O}_K$).  Our results imply that $\sseq{Q}^r_{*,*} (\mathcal{O}_K)$, and in particular its $E^1$-page, has the structure of  a commutative bigraded  Hopf algebra.  It gives a spectral sequence of Hopf algebras converging  to the rational homology of  $BK (\mathcal{O}_K)$, which was computed by Borel.

\bigskip

\noindent \textbf{Acknowledgments.}  We are most grateful to Samuel  Grushevsky for illuminating discussions about the geometry of $\cA_g$ and its compactifications. We also thank Avner Ash, Dick Hain, Ralph Kaufmann, Jeremy Miller, Natalia Pacheco-Tallaj, Peter Patzt, Dan Petersen, and Jan Steinebrunner for helpful conversations. 

FB thanks Trinity College Dublin for a Simons Visiting Professorship, and the University of Geneva for hospitality during which part of this work was carried out.  MC was supported by NSF CAREER DMS-1844768, NSF FRG DMS--2053221, and a grant from the Simons Foundation (1031656, Chan). SG thanks Alexander Kupers and Oscar Randal-Williams for previous collaborations on related topics \cite{GKRW20}, and was supported by the Danish National Research Foundation (DNRF151).  Part of this work was carried out while SG held a one-year visiting position at Columbia University, and he thanks the department for its hospitality and support.
SP was supported by NSF Grant DMS--2302475, NSF FRG Grant DMS--2053261, and a CRM--Simons Visiting Professorship in Montr\'eal.  Part of this collaboration occurred at Stanford University supported by the Poincar\'e Distinguished Visiting Professorship of SP.

\section{Preliminaries}

\subsection{Weight zero cohomology and the tropical spectral sequence}
  
Let $\cA_g$ denote the complex moduli stack of principally polarized abelian varieties of dimension $g$.  For background on geometry and topology of $\cA_g$ and its compactifications, we refer to the articles \cite{grushevsky-geometry} and \cite{hulek-tommasi-topology}. For each $k\ge 0$, its compactly supported rational cohomology $H^k_c(\cA_g;\QQ)$ admits a {\em weight filtration} 
\[W_0H^k_c(\cA_g;\QQ)\subset W_1H^k_c(\cA_g;\QQ) \subset \cdots  \subset  H^k_c(\cA_g;\QQ)\]
as part of the mixed Hodge structure on $H^k_c(\cA_g;\QQ)$.  Let
$\cA = \coprod_{g\ge 0} \cA_g$
be the moduli space of principally polarized abelian varieties of any dimension.  Then
$H^*_c(\cA) = \bigoplus_{g \ge 0} H^*_c(\cA_g)$
and we take $W_k H^*_c(\cA) = \bigoplus_g W_k H^*_c(\cA_g)$.

There is no single canonical choice of normal crossings compactification for $\cA_g$. Rather, by \cite{amrt}, there is a toroidal compactification $\ov{\cA_g}^\Sigma$ for every choice of certain polyhedral data $\Sigma$, as we now recall.  Let $P_g \subset \mathrm{Sym}^2((\RR^g)^\vee)$ denote the set of symmetric bilinear forms on $\RR^g$ that are positive definite. It is an open convex cone of full dimension inside the $\binom{g+1}{2}$-dimensional Euclidean vector space $\mathrm{Sym}^2((\RR^g)^\vee)$.  Let $P_g^{\mathrm{rt}}$ denote the set 
of positive semidefinite forms on $\RR^g$ whose kernel is rational, i.e., the kernel is of the form $W\otimes \RR$ for a vector subspace $W$ of $\QQ^g$.  Thus $P_g \subset P_g^{\mathrm{rt}}$.  
Let $\Sigma$ denote any {\em admissible decomposition} of $P_g^{\mathrm{rt}}$.  That is, $\Sigma$ is an infinite rational polyhedral cone decomposition supported on $P_g^{\mathrm{rt}}$, whose cones are permuted under the action given by $X\cdot A := A^TXA$ for $A\in \GL_g(\ZZ)$, and there are only finitely many orbits of cones of $\Sigma$ under this action. 
It is a nontrivial, but classical, fact that admissible decompositions exist. Famous examples include the decomposition into perfect cones;  
see \cite{amrt} and references therein.

It will be convenient to topologize $P_g^{\mathrm{rt}}$ not with its subspace topology induced from the ambient vector space $\mathrm{Sym}^2((\RR^g)^\vee)$, but rather with its {\em Satake} topology, which we now define.  Let $\Sigma$ denote any admissible decomposition of $P_g^{\mathrm{rt}}$.  The Satake topology on the set $P_g^{\mathrm{rt}}$ is the finest topology such that for every cone $\sigma$ in $\Sigma$, the map $\sigma \to P_g^{\mathrm{rt}}$ is continuous.  The Satake topology agrees with the Euclidean topology on $P_g^{\mathrm{rt}}$ for $g=0,1$, but is strictly finer for $g\ge 2$.  On the other hand, the Satake topology restricts to the Euclidean topology on the open subset $P_g$, since every point of $P_g$ is contained in only finitely many cones of $\Sigma$.  Moreover, it is independent of the choice of $\Sigma$, since any two choices of admissible decompositions admit a common refinement \cite[IV.2, p.~97]{faltings-chai-degenerations}.  Throughout, we consider $P_g^{\mathrm{rt}}$ as a topological space with the Satake topology.

Now define \[A_g^{\mathrm{trop}} := P_g^{\mathrm{rt}}/\GL_g(\ZZ).\]
For an interpretation of $A_g^{\mathrm{trop}}$ as a moduli space of principally polarized tropical abelian varieties, see \cite{bmv}.  A comparison theorem, as in \cite[Theorem 5.8]{cgp-graph-homology}, implies the following.

\propnow{\cite[Theorem 3.1]{bbcmmw-top}, \cite[Corollary 2.9]{odaka-oshima-collapsing} For each $k\ge 0$, there is a canonical isomorphism
\[H^k_c(A_g^{\mathrm{trop}}) \cong W_0 H^k_c(\cA_g).\]
}

\lemnow{\label{lem:loc-symmetric} For each $g>0$, we have a map
$A_{g-1}^{\mathrm{trop}}\to A_{g}^{\mathrm{trop}}$
which is a homeomorphism onto the closed subspace of $A_{g}^{\mathrm{trop}}$ whose  
complement is $P_g/\GL_g(\ZZ)$. 
}

\proofnow{Consider the linear injection 
$\Sym^2((\RR^{g-1})^\vee)\to \Sym^2((\RR^{g})^\vee)$ induced by the map $(\RR^{g-1})^\vee\to (\RR^g)^\vee$ that extends by zero on the last basis vector in the standard ordered basis of $\RR^g$.  It restricts to a map 
\begin{equation}\label{eq:skin} P_{g-1}^{\mathrm{rt}} \to P_g^{\mathrm{rt}}\end{equation} which we claim is continuous.  
Indeed, since the Satake topology identifies $P_g^{\mathrm{rt}}$ with the colimit of the cones in any admissible decomposition, it suffices to observe that the image of each perfect cone in $P_{g-1}^{\mathrm{rt}}$ is a perfect cone in $P_g^{\mathrm{rt}}$. 
The map \eqref{eq:skin} descends to a closed embedding
\[
A_{g-1}^\trop \hookrightarrow A_g^\trop;
\] 
the complement of the image is $P_g/\GL_g(\ZZ)$ \cite[Lemma~4.9 and Proposition 4.11]{bbcmmw-top}.
}

\subsubsection{The tropical spectral sequence}
\label{subsubsection:tropical}

Let $\sseq{T}$ 
denote the spectral sequence on Borel--Moore homology, with rational coefficients, associated to the sequence of spaces
\[\emptyset \subset A_0^{\mathrm{trop}} \subset A_1^{\mathrm{trop}} \subset \cdots. \] 
We henceforth call $\sseq{T}$ the {\em tropical spectral sequence}.  Since $A_s^\mathrm{trop} \setminus A_{s-1}^\mathrm{trop}$ is homeomorphic to $P_s/\GL_s(\ZZ)$ (Lemma~\ref{lem:loc-symmetric}), we have
\begin{equation}\label{eq:thing}
\sseq{T}^1_{s,t} = H_{s+t}^{\mathrm{BM}} (P_s / \GL_s(\ZZ); \QQ).\end{equation}
\remnow{Alternatively, let $A_g^{\mathrm{trop}} \cup \{\infty\}$ denote the one-point compactification of $A_g^{\mathrm{trop}}$.  Then $\sseq{T}$ is the first quadrant spectral sequence on reduced rational homology of the sequence of pointed spaces
\begin{equation}\label{eq:ag-infty}\{\infty\} \subset A_0^{\mathrm{trop}} \cup \{\infty\} \subset  A_1^{\mathrm{trop}} \cup \{\infty\} \cup \cdots,\end{equation}
with $\sseq{T}^1_{s,t} = H_{s+t}(A_s^{\mathrm{trop}}\cup\{\infty\}, A_{s-1}^{\mathrm{trop}}\cup\{\infty\};\QQ) \cong \wt H_{s+t}(P_s/\GL_s(\ZZ)\cup\{\infty\};\QQ)$ for $s \geq 0$. The one-point compactification point of view will play a role in Section~\ref{sec: Loc-sym-Quillen}, where the associated graded spaces of the filtered space $A_\infty^{\mathrm{trop}}\cup\{\infty\}$ arising as the colimit of~\eqref{eq:ag-infty} shall be related to the associated graded spaces of a filtration of the space $BK(\ZZ)$.
}
By Poincar\'e duality, $\sseq{T}^1_{s,t}$ is isomorphic to \begin{equation} \label{eq:thing2}
H^{\binom{s+1}{2} - s-t} (P_s / \GL_s(\ZZ);\QQ_\mathrm{or}).
\end{equation}
Here and throughout, $\ZZ_{\mathrm{or}}$ is the $\GL_s(\ZZ)$-module induced by the action on orientations of the symmetric space $P_s$ for $\GL_s(\RR)$, and $\QQ_{\mathrm{or}} := \ZZ_{\mathrm{or}} \otimes \QQ$. 
Note that $\QQ_\mathrm{or}$ is nontrivial if and only if $s$ is even \cite[Lemma 7.2]{elbaz-vincent-gangl-soule-perfect}.  Next, \eqref{eq:thing2} is identified with group cohomology \[H^{\binom{s+1}{2} - s-t}(\GL_s(\ZZ),\mathbb{Q}_{\mathrm{or}}).\]

Since $\GL_s(\ZZ)$ is a virtual duality group of virtual cohomological dimension $\binom{s}{2}$, with dualizing module
$\mathrm{St}_s \otimes \ZZ_\mathrm{or},$  it follows that $\sseq{T}^1_{s,t}$ is isomorphic to 
\begin{equation}\label{eq:thing3}
H_t (\GL_s(\ZZ);\mathrm{St}_s \otimes \QQ).
\end{equation}

The following proposition is a consequence of acyclicity of inflation \cite[\S5]{bbcmmw-top}, as we now explain.

\propnow{\label{prop:T-on-E2} The spectral sequence
\begin{equation}\label{sseq:T}\sseq{T}^1_{s,t} = H_{t}(\GL_s(\ZZ), \mathrm{St}_s \otimes \QQ) \end{equation}
has exact $E^1$-page and hence $E^2_{s,t} = 0$ for all $s,t$.}

\begin{proof}Let \[A_\infty^{\mathrm{trop}} = \cup_{g\ge 0} A_g^{\mathrm{trop}} = \varinjlim_g A_g^{\mathrm{trop}}.\] 
Let $C^{\mathrm{BM}}_*(X)$ denote the locally finite chain complex associated to a space $X$. 

Note that $\sseq{T}$ is the spectral sequence associated to the filtration on $C=\varinjlim_g C^{\mathrm{BM}}_*(A_g^{\mathrm{trop}})$ by
\[F_s C= C^{\mathrm{BM}}_*(A_s^{\mathrm{trop}}).\]

For each $g\ge 0$, let $\wt I_g \subset P_g^{\mathrm{rt}}$ denote the 
union of the cones in the $\GL_g(\ZZ)$-orbit of the set
\begin{equation}\label{eq:inflated}\{\sigma + \RR_{\ge 0}e_g e_g^T \mid \sigma \in \Sigma^{\mathrm{perf}}_{g-1}\}\end{equation}

Let $I_g = \wt I_g / \GL_g(\ZZ) \subset A_g^{\mathrm{trop}}$, henceforth referred to as the {\em inflation locus}. 

\lemnow{\label{prop:LI_g}\cite[Theorem 5.15]{bbcmmw-top} The link $LI_g$ has the rational homology of a point. }
\proofnow{The cellular chain complex for $LI_g$ is the one denoted $I^{(g)}_*$ in op.~cit., where it is shown that $I^{(g)}_*$  is acyclic.}
\remnow{In fact $LI_g$ is contractible.  This is not in the current literature, but can be proved using a slight modification of  Yun's Morse-theoretic enhancement of the proof of acyclicity in \cite[Theorem 5.15]{bbcmmw-top}. See \cite{yun-discrete}.  Yun's theorem deals not with $LI_g$ but with the link of a similarly defined {\em matroidal coloop} locus,    consisting of the $\GL_g(\ZZ)$-orbits of the unions of polyhedral cones~\eqref{eq:inflated} in which $\sigma$ is of the form
\[\RR_{\ge 0} \langle v_1v_1^T,\ldots,v_n v_n^T\rangle,\]
where $v_1,\ldots,v_n$ are the column vectors of a $g\times n$ totally unimodular matrix.
}

\cornow{$H_*^\mathrm{BM}(I_g) = 0$.}
\proofnow{We have $H_*^\mathrm{BM}(I_g) = H_*(I_g^+,\infty) = \widetilde H_* (S(LI_g)) = \widetilde H_{*-1}(LI_g)$, where $S(-)$ denotes suspension.  And $\widetilde H_{*-1}(LI_g)=0$ by Lemma~\ref{prop:LI_g}.}
  Now the inclusions
\[A_{g-1}^\mathrm{trop} \subset I_{g} \subset A_{g}^\mathrm{trop},\]
are closed and hence proper, and  \[H_k^{\mathrm{BM}}(I_g) = 0\qquad\text{for all }k\ge 0.\]

It follows that for each $k$ and for each $g$,
\begin{equation}  \label{eqn: InflationZeroMap}
H_k^{\mathrm{BM}}(A_{g-1}^\mathrm{trop}) \to H_k^{\mathrm{BM}}(A_{g}^\mathrm{trop})
\end{equation}
is the zero map, since it factors through $H_k^{\mathrm{BM}}(I_{g}) = 0$.  Now the fact that $\sseq{T}^2_{*, *} = 0$ follows from the following general proposition, concluding the proof of Proposition~\ref{prop:T-on-E2}.
\end{proof}

\propnow{\label{prop:E2-0} Let $(C,d)$ be a chain complex, and let 
\[0=F_{-1} C \subset F_0C \subset F_1C \subset \cdots  \]
be an increasing filtration on $C$, such that for each $i, k \ge 0$, the map 
\begin{equation}
    \label{eq:zerohomology}
H_k(F_i C) \to H_k(F_{i+1} C )
\end{equation}
is zero. Then the spectral sequence of the filtered chain complex
\[E^1_{s,t} = H_{s+t}(F_s C, F_{s-1} C)\]
collapses at $E^2_{s,t} = 0$ for all $s$ and $t$.
}
\proofnow{For each $i$, the short exact sequence of chain complexes
\[0\to F_i C \to F_{i+1}C \to F_{i+1}C / F_{i}C\to 0\]
has associated long exact sequence
\begin{eqnarray*}
    \cdots&\longrightarrow& H_k(F_i C) \xrightarrow{0} H_k(F_{i+1} C) \to H_k(F_{i+1}C, F_i C) \\
    &\longrightarrow& H_{k-1}(F_i C) \xra{0} H_{k-1}(F_{i+1} C) \to H_{k-1}(F_{i+1}C, F_i C) \to \cdots 
\end{eqnarray*}
which, by~\eqref{eq:zerohomology}, splits into short exact sequences
\[0\to H_k(F_{i+1} C) \to H_k(F_{i+1}C, F_i C) \to H_{k-1}(F_i C) \to 0\]
 for each $k\ge 0$.  Now the rows of $\sseq{T}^1$ of the spectral sequence read
\[\cdots \leftarrow H_{s+t-1}(F_{s-1}C, F_{s-2}C) \leftarrow H_{s+t}(F_{s}C, F_{s-1} C) \leftarrow H_{s+t+1}(F_{s+1}C, F_{s}C) \leftarrow \cdots \]
and 
fit into the commuting diagram below,
\[\xymatrix@R=4mm@C=-6mm{
&&&&0\ar[dl]\\
&&&H_{s+t}(F_sC)\ar@{_{(}->}[dl]&\\
H_{s+t-1}(F_{s-1}C,F_{s-2}C) & &H_{s+t}(F_sC, F_{s-1}C)\ar[ll]\ar@{->>}[dl] &&H_{s+t+1}(F_{s+1}C, F_sC) \ar[ll]\ar@{->>}[ul]\\
& H_{s+t-1}(F_{s-1}C) \ar@{_{(}->}[ul] \ar[dl] &&&\\
0 &&&&
}\]
from which it follows that the rows of $\sseq{T}^1$ are exact.  Hence $\sseq{T}^2=0$.  
}

Let $(C_{\le s}^*,d)$ denote the truncation of a cochain complex $(C^*, d)$, where $C_{\le s}^i = C^i$ for $i\le s$ and is zero 
for $i>s$.  The category of cochain complexes over a field $k$ is equivalent to the category of graded $k[x]/(x^2)$-modules, with $\deg (x)  = +1$.  We record the following standard fact.

\lemnow{\label{lem:acyclic-truncations} Suppose $(C^*,d)$ is an acyclic cochain complex over a field $k$. Then
\[(C^*, d) \cong \Big( \bigoplus_{s} H^s((C_{\le s}^*, d))\Big)\otimes k[x]/(x^2)\]
as graded $k[x]/(x^2)$-modules.
}

It will be convenient to consider the cohomological spectral sequence $\sseq{T}_r^{*,*}$ dual to the tropical spectral sequence~$\sseq{T}^r_{*,*}$ in \eqref{eq:thing}. The following is a consequence of Proposition~\ref{prop:T-on-E2}, by applying Lemma~\ref{lem:acyclic-truncations} to $\sseq{T}_1$.

\begin{theorem} \label{thm:epsilon} We have an isomorphism 
\[\big(\sseq{T}_1, d\big) \cong \Big(\bigoplus_{s \ge 0} W_0 H^{s+t}_c (\cA_s;\QQ) \Big) \otimes \QQ[x]/(x^2)\]
of $\ZZ^2$-graded cochain complexes with differential in degree $(1,0)$.  Here, $W_0 H^{s+t}_c (\cA_s;\QQ)$ is in bidegree $(s,t)$, and $x$ is in bidegree $(1,0)$.  
\end{theorem}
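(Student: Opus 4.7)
The plan is to deduce the theorem as a direct application of Lemma~\ref{lem:acyclic-truncations} to the cohomological complex $\sseq{T}_1$, regarded as a $\ZZ^2$-graded cochain complex with differential of bidegree $(1,0)$. The two ingredients I need are: (i) acyclicity of $\sseq{T}_1$ row by row, and (ii) the identification of the top cohomology of each column-wise truncation with $W_0 H^{s+t}_c(\cA_s;\QQ)$.

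For (i), I dualize Proposition~\ref{prop:T-on-E2}: since each row of the chain complex $\sseq{T}^1$ is an exact complex of $\QQ$-vector spaces, so is each row of the dual cochain complex $\sseq{T}_1$. Fix $t$, and view $(\sseq{T}_1^{*,t}, d)$ as an acyclic cochain complex in the variable $s$. Then Lemma~\ref{lem:acyclic-truncations} yields an isomorphism
\[(\sseq{T}_1^{*,t}, d) \cong \Big( \bigoplus_{s \ge 0} H^s\big((\sseq{T}_1^{*,t})_{\le s}, d\big) \Big) \otimes_\QQ \QQ[x]/(x^2)\]
of graded $\QQ[x]/(x^2)$-modules, where $x$ has degree $1$ and the summand indexed by $s$ contributes its generator in degree $s$.

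The technical heart of the argument is step (ii). By definition, $H^s\big((\sseq{T}_1^{*,t})_{\le s}, d\big)$ is the cokernel of the incoming differential, which is $\QQ$-linearly dual to $\ker\big(d^1 : \sseq{T}^1_{s,t} \to \sseq{T}^1_{s-1,t}\big)$ in the homological complex. Here I reuse the key observation from the proof of Proposition~\ref{prop:T-on-E2}: the vanishing of the inflation map~\eqref{eqn: InflationZeroMap} splits the long exact sequence in Borel--Moore homology for the pair $(A_s^\trop, A_{s-1}^\trop)$ into short exact sequences
\[0 \to H_{s+t}^{\mathrm{BM}}(A_s^\trop) \to H_{s+t}^{\mathrm{BM}}(P_s/\GL_s(\ZZ)) \to H_{s+t-1}^{\mathrm{BM}}(A_{s-1}^\trop) \to 0,\]
in which the middle term is $\sseq{T}^1_{s,t}$ and the right-hand map is exactly $d^1$. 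Consequently $\ker(d^1) \cong H_{s+t}^{\mathrm{BM}}(A_s^\trop)$, and its $\QQ$-linear dual is $H^{s+t}_c(A_s^\trop) \cong W_0 H^{s+t}_c(\cA_s;\QQ)$ by the tropical comparison isomorphism recalled at the start of Section~2.1.

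Assembling over all $s$ and $t$, and keeping track of the placement of generators in bidegree $(s,t)$ and of $x$ in bidegree $(1,0)$, produces the desired $\ZZ^2$-graded isomorphism of cochain complexes. The main obstacle is a careful bookkeeping: one must correctly match the abstract notion of ``top cohomology of the truncation'' to a concrete geometric object, and I expect step (ii) to be the only substantive ingredient. Once that translation is in place, both Lemma~\ref{lem:acyclic-truncations} and Proposition~\ref{prop:T-on-E2} are stated in exactly the form needed, and the conclusion is formal.
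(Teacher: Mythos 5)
Your proposal is correct and follows essentially the same route as the paper: acyclicity of $\sseq{T}_1$ dual to Proposition~\ref{prop:T-on-E2}, followed by Lemma~\ref{lem:acyclic-truncations}, with the identification of the top cohomology of each truncation with $W_0 H^{s+t}_c(\cA_s;\QQ)$ — the paper phrases that last step as degeneration of the truncated spectral sequence onto its final column, while you unpack the same fact by dualizing to $\ker d^1$ and using the split short exact sequences~\eqref{eq:a-ses} coming from the vanishing of the inflation maps. One small imprecision: the surjection in~\eqref{eq:a-ses} is the boundary map $\partial$ landing in $H^{\mathrm{BM}}_{s+t-1}(A_{s-1}^{\trop})$, not $d^1$ itself; since $d^1$ is $\partial$ followed by the injective map $\iota$ of the next short exact sequence, one still has $\ker d^1 = \ker \partial \cong H^{\mathrm{BM}}_{s+t}(A_s^{\trop})$, so your computation goes through unchanged.
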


\begin{proof}
Proposition \ref{prop:T-on-E2} shows that $\sseq{T}_1$ is acyclic. The truncation of $\sseq{T}_1$ after the first $s$ columns is the $E_1$-page of the spectral sequence that converges on $E_2$ to $W_0 H^{s+t}_c(\cA_s;\QQ)$, supported in column $s$.  The statement then follows from Lemma~\ref{lem:acyclic-truncations}.
\end{proof}

In Proposition~\ref{prop:quillen-to-A} we shall upgrade  Theorem~\ref{thm:epsilon} to an isomorphism of graded-commutative algebras over $\QQ$.

\subsection{The Waldhausen construction of \texorpdfstring{$K$}{K}-theory}

We briefly review here the Waldhausen $S_\bu$-construction of $K$-theory, following the survey \cite[IV.8]{weibel-k-book}.  The construction applies to arbitrary Waldhausen categories.  We shall first recall the definition of a Waldhausen category. We do this only briefly, keeping in mind that our main example shall be the category of finitely generated projective left modules over a ring, as in Example~\ref{ex:projectives} below.  We refer to \cite[II.9]{weibel-k-book} for full and precise definitions.
 
A {\em category with cofibrations} is a category $\cC$ equipped with a special subcategory of morphisms $\mathrm{co}(\cC)$ called \emph{cofibrations}.  Every isomorphism is a cofibration. There is a distinguished $0$ object in $\cC$ and the unique map from $0$ to any object is a cofibration. Cofibrations are preserved under pushouts along arbitrary morphisms; in particular, such pushouts  exist.  
We denote cofibrations with a feathered arrow $\rightarrowtail$.  

A {\em Waldhausen category} is a category with cofibrations together with a family $w(\cC)$ of morphisms, called \emph{weak equivalences}.  Weak equivalences will be denoted $\xrightarrow{\sim}$.  All isomorphisms are weak equivalences. Weak equivalences are closed under composition and satisfy the following gluing condition: 
for every commutative diagram
\[\xymatrix{C \ar[d]^{\sim}& A\ar[l]\ar[d]^{\sim} \ar@{>->}[r]& B \ar[d]^{\sim}\\ C' & A'\ar[l]\ar@{>->}[r] & B'}\]
the induced map of pushouts
\[B\cup_A C\to B'\cup_{A'} C'\]
is a weak equivalence. 
The following is our main example of interest.
\begin{example}\label{ex:projectives}
    Let $R$ be a ring; we do not require that $R$ is commutative.  
    Let $\Proj{R}$ denote the category of finitely generated projective left $R$-modules, let $\mathrm{co}(\Proj{R})$ be the injections with projective cokernel, and let $w(\Proj{R})$ be the  isomorphisms.  Then $(\Proj{R}, \mathrm{co}(\Proj{R}), w(\Proj{R}))$ is a Waldhausen category.
\end{example}

We now review Waldhausen's $S_\bullet$-construction of $K$-theory, following \cite[IV.8]{weibel-k-book}. Let $\cC$ be any Waldhausen category.  
For each $p\ge 0$, define a category $S_p(\cC)$ whose objects are commutative diagrams 
\begin{equation}\label{eq:triangle-of-projectives}
\begin{aligned} 
\xymatrix{0 \ar@{>->}[r] & P_{0,1} \ar[d] \ar@{>->}[r] & P_{0,2} \ar[d] \ar@{>->}[r] & \cdots\ar[d]  \ar@{>->}[r] & P_{0,p}\ar[d]\\  & 0 \ar@{>->}[r] &P_{1,2} \ar[d] \ar@{>->}[r]  & \cdots \ar[d] \ar@{>->}[r] & P_{1,p} \ar[d] \\ && 0 \ar@{>->}[r] &\cdots \ar[d] \ar@{>->}[r] &\vdots \ar[d] \\ &&&0\ar@{>->}[r] & P_{p-1,p} \ar[d] \\ &&&& 0} 
\end{aligned}
\end{equation}
in which the horizontal morphisms are cofibrations and $P_{i,j}$ is a chosen cokernel for the horizontal map $P_i\to P_j$, where $P_i := P_{0,i}$. The arrows in $S_p(\cC)$ are morphisms $P_\bullet \to Q_\bullet$ of such diagrams that are weak equivalences.  By a weak equivalence, we mean a morphism $P_\bullet \to Q_\bullet$ such that each component $P_{i,j}\to Q_{i,j}$ is a weak equivalence.  

\remnow{In other sources such as \cite[IV.8]{weibel-k-book}, what we write as $S_p(\cC)$ above is written as $wS_p(\cC)$, while $S_p(\cC)$ is reserved for the category whose objects are diagrams ~\eqref{eq:triangle-of-projectives} and whose morphisms are morphisms of diagrams, with no restrictions.  
This more general notion of morphism in $S_p(\cC)$ is useful when iterating the $S_\bu$-construction, which we will not need here.  Since the only morphisms in $S_p(\cC)$ relevant to us are the weak equivalences, we shall
write $S_p(\cC)$ instead of $wS_p(\cC)$ for the category whose objects are triangular diagrams~\eqref{eq:triangle-of-projectives} and whose morphisms are weak equivalences.
}

Returning to Waldhausen's $S_\bu$-construction, we define functors
$d_i\colon S_\bullet(\cC) \to S_{\bullet -1}(\cC)$ for each $i$ by deleting both the $i$th row and the $i$th column in \eqref{eq:triangle-of-projectives} (i.e., the row with objects labeled $P_{i,j}$ and  the column with objects labeled $P_{k,i}$) and then re-indexing the remaining terms.
Similarly, we define functors
$s_i \colon S_\bullet(\cC) \to S_{\bullet + 1}(\cC)$ by duplicating both the $i$th row and the $i$th column in \eqref{eq:triangle-of-projectives}, inserting identity morphisms, and then re-indexing.  In this way, $S_\bullet(\cC)$ is a simplicial object in the category of categories.  
Taking the nerve yields a bisimplicial set 
\begin{align*}
    N_\bullet S_\bullet(\cC) \colon \Delta^{\mathrm{op}}\times \Delta^{\mathrm{op}} & \to \mathsf{Set}\\
    ([p],[q]) & \mapsto N_q S_p(\cC).
\end{align*}

\begin{remark}\label{rem:skeleton}
    In order to get a bisimplicial \emph{set}, the category $\cC$ must be small, i.e.\ there is a set of objects.  In that case $S_\bullet(\cC)$ is also small.  The main examples, such as finitely generated projective left modules over some ring, are \emph{essentially small} but not small if one takes \emph{all} such objects.  In those cases one chooses (often tacitly) a small category $\cC^\mathrm{small}$ equivalent to $\cC$ and applies the Waldhausen construction to that.  The resulting space $|N_\bullet S_\bullet(\cC^\mathrm{small})|$ is independent of this choice, up to homotopy equivalence.

    In the case $\cC = \Proj{\ZZ}$ of finitely generated projective modules over $\ZZ$, we may make the following explicit choice of $(\Proj{\ZZ})^\mathrm{small}$. The set of objects of $(\Proj{\ZZ})^\mathrm{small}$ is $\NN = \{0,1,2,\ldots\}$.
    The set $\mathrm{Mor}_{(\Proj{\ZZ})^\mathrm{small}}(a,b)$ is the set of $b\times a$ integer matrices. Composition is given by matrix multiplication.  This category may be further equipped with a symmetric monoidal structure corresponding to direct sum of $\ZZ$-modules: the product is given on objects by $m(a,b) = a+b$, and on morphisms by block sum of matrices.  The associator 
      \[\begin{tikzcd}
      (\Proj{\ZZ})^\mathrm{small}\times (\Proj{\ZZ})^\mathrm{small}\times (\Proj{\ZZ})^\mathrm{small}
      \arrow[bend left,
      start anchor=north east,
      end anchor=north west]{r}[name=U]{}
      \arrow[bend right,
      start anchor=south east,
      end anchor=south west]{r}[name=D]{} &
      (\Proj{\ZZ})^\mathrm{small}
      \arrow[Rightarrow, shorten <= 5pt, to path={(U) -- (D)}]{}
    \end{tikzcd}\]
    is given on $(a,b,c)$ by the identity matrix of size $a+b+c$.
    The symmetry natural isomorphism is given on $(a,b)\in (\Proj{\ZZ})^\mathrm{small}\times (\Proj{\ZZ})^\mathrm{small}$ 
    by the $(a+b)\times (a+b)$ block matrix 
    \[\matrixtwo{0}{\mathrm{Id}_a}{\mathrm{Id}_b}{0}.\]
    In this case, sending $n \mapsto \ZZ^n$ extends to a symmetric monoidal equivalence from $(\Proj{\ZZ})^\mathrm{small}$ to the category $\Proj{\ZZ}$ of all finitely generated free $\ZZ$-modules (equipped with the cartesian symmetric monoidal structure, i.e., direct sum).
\end{remark}

\subsection{The Quillen spectral sequence}

Recall the category $\Proj{R}$ of finitely generated projective left $R$-modules over a ring $R$, considered as a Waldhausen category as in Example~\ref{ex:projectives}. Then 
\begin{equation} \label{eq:BK}
BK(R) := |N_\bullet S_\bullet(\Proj{R})|, \mbox{ \ \ and \ \ } K_i(R) := \pi_{i+1} BK(R).
\end{equation}
By \eqref{eq:BK} and its degree shift, $BK(R)$ is a de-looping of the $K$-theory space of $R$.

We now take $R=\ZZ$.  Consider the following filtrations on the objects of $\Proj{\ZZ}$ and $S_\bullet(\Proj{\ZZ})$. First, an object $P$ is in $\Fil_n$ if $\mathrm{rank}(P)\le n$.  This filtration induces a filtration on the objects of $S_p(\Proj{\ZZ})$, in which an object of $S_p (\Proj{\ZZ})$, which is a triangular diagram~\eqref{eq:triangle-of-projectives}, is in $\Fil_n$ if  the top right projective module is in $\Fil_n$, i.e.,  has rank at most $n$.
Since that rank is non-increasing under face and degeneracy maps, and is preserved under weak equivalences, it follows that the filtration on objects of $S_p(\Proj{\ZZ})$, for each $p\ge 0$ yields a filtered bisimplicial set $N_\bu S_\bu (\Proj{\ZZ})$.
Hence, we obtain an exhaustive filtration of based spaces
\begin{equation}\label{eq:fil-BKZ}
 \Fil_0 BK(\ZZ) \subset \cdots  \subset \Fil_n BK(\ZZ) \subset \Fil_{n+1} BK(\ZZ) \subset \cdots\subset BK(\ZZ),\end{equation}
where $\Fil_0 BK(\ZZ)$ is contractible.

The functor $\Proj{\ZZ} \times \Proj{\ZZ} \to \Proj{\ZZ}$ defined by direct sum of $\ZZ$-modules induces a map
\begin{equation*}
    BK(\ZZ) \times BK(\ZZ) \xrightarrow{m} BK(\ZZ)
\end{equation*}
which is filtered in the sense that $m(\Fil_a(BK(\ZZ)) \times \Fil_b(BK(\ZZ))) \subset \Fil_{a+b} BK(\ZZ)$, and hence induces a product on the spectral sequence associated to the filtration (see Proposition~\ref{prop:(4)} below for more details).

\defnow{Let $\sseq{Q}^*_{*,*}$ denote the homology spectral sequence
  associated to the rank filtration on
  $BK(\ZZ)$, called the {\em Quillen spectral sequence}.}

A filtration of $BK(\ZZ)$ equivalent to the one above, and more generally a filtration of $BK(\mathcal{O}_F)$ for a number field $F$, was used by Quillen in \cite[pp.\ 179--199]{quillen-higher} for his proof that $K_i(\mathcal{O}_F)$ is a finitely generated abelian group for all $i \in \NN$.  Quillen's construction of the filtration used the ``$Q$-construction'' model for $BK(\mathcal{O}_F)$, and he also identified the associated graded with homology of the Steinberg module.

\propnow{\label{prop:E1-formula-with-Steinberg}There is a canonical isomorphism 
\begin{equation}\label{sseq:Q}
\sseq{Q}^1_{s,t} \cong H_{t}(\GL_s(\ZZ); \mathrm{St}_s \otimes \QQ) \Rightarrow H_{\ast}(BK(\ZZ)).\end{equation}
With respect to this isomorphism, the product on the $E^1$ page is induced by the block sum homomorphism $\GL_s(\ZZ) \times \GL_{s'}(\ZZ) \to \GL_{s+s'}(\ZZ)$ and the $\GL_s(\ZZ) \times \GL_{s'}(\ZZ)$-equivariant map $\mathrm{St}_s \otimes \mathrm{St}_{s'} \to \mathrm{St}_{s + s'}$ induced by the ``block sum'' map of spaces $T_s(\QQ) \ast T_{s'}(\QQ) \to T_{s + s'}(\QQ)$.
}

The Tits building $T_{s + s'}(\QQ)$ is the (nerve of the) poset of non-zero proper linear subspaces of $\QQ^{s + s'} = \QQ^s \oplus \QQ^{s'}$.  Identifying $\QQ^s$ with $\QQ^s \oplus \{0\} \subset \QQ^{s + s'}$ and $\QQ^{s'}$ with $\{0\} \oplus \QQ^{s'} \subset \QQ^{s + s'}$ leads to a canonical embedding of simplicial complexes\begin{align*}
    T_s(\QQ) \ast T_{s'}(\QQ) & \hookrightarrow T_{s+s'}(\QQ)\\
    (V \in T_s(\QQ)) & \mapsto V \oplus \{0\}\\
    (V' \in T_{s'}(\QQ)) & \mapsto \{0\} \oplus V',
\end{align*}
and this is the ``block sum'' map referred to in the proposition.  It is evidently equivariant with respect to the usual block sum operation of matrices $$(A,B) \mapsto \begin{pmatrix}A & 0 \\ 0 & B\end{pmatrix}.$$
We remark that for $s = 0$, one should take $\mathrm{St}_0 = \QQ$ in the above formula for the $E^1$-page, and that the canonical isomorphism $\QQ = H_0(\GL_0(\ZZ);\mathrm{St}_0)$ gives a two-sided unit for the product.

\begin{proof}
  As mentioned above this is essentially due to Quillen, although his construction of the spectral sequence and identification of the $E^1$-page with the  homology of the Steinberg module used a different model of $BK(\ZZ)$.  His model, the $Q$-construction, compares to the Waldhausen construction by an explicit homotopy equivalence  explained in \cite[Section~1.9]{Waldhausen}. One may verify that this homotopy equivalence induces a homotopy equivalence in each filtration degree.
  
    In Section~\ref{sec: Loc-sym-Quillen}, we need to use what the isomorphism 
    $\sseq{Q}^1_{s,t} \cong H_t(\GL_s(\ZZ); \mathrm{St}_s\otimes \QQ)$ is, so we sketch a direct construction which makes no mention of the $Q$-construction. 
    Let us write
  \begin{equation*}
    S^n_p (\Proj{\ZZ}) = \Fil_n S_p(\Proj{\ZZ}) \setminus \Fil_{n-1} S_p(\Proj{\ZZ})
  \end{equation*}
  for the full subgroupoid on those triangular diagrams~(\ref{eq:triangle-of-projectives}) in which $P_{0,p} \cong \ZZ^n$.  Then the filtration quotients are described, in bidegree $(p,q)$, by the bijection of pointed sets
  \begin{equation}\label{eq:4}
    \frac{\Fil_n N_q S_p(\Proj{\ZZ})}{\Fil_{n-1} N_q S_p(\Proj{\ZZ})} \cong N_q S^n_p (\Proj{\ZZ}) \cup \{\infty\}.
  \end{equation}
   Here, the quotient $S/T$ of a set $S$ by a subset $T$, or more generally along a morphism $T\to S$, is understood as the pushout of the diagram $\{\ast\} \leftarrow T \rightarrow S$. This pushout $S/T$ is naturally regarded as a pointed set with distinguished element corresponding to the image of $\{\ast\}\to S/T$.  
  On the right hand side,
  $\infty$ denotes a disjoint basepoint corresponding to the collapsed subset.  There are well defined functors $d_i\col S_p^n (\Proj{\ZZ}) \to S^n_{p-1}(\Proj{\ZZ})$ for $0 < i < p$, defined by deleting the $i$th row and column of the triangular diagram, but the outer faces $d_0$ and $d_p$ will not always land in the subgroupoid $S^n_{p-1}(\Proj{\ZZ}) \subset \Fil_n S_p(\Proj{\ZZ})$ because the rank may drop (which happens unless $P_{0,1} \cong 0$ or $P_{p-1,p} \cong 0$, respectively).  In the bisimplicial set~(\ref{eq:4}), the face operators in the $p$-direction will send a simplex to the basepoint $\infty$ when this happens.  We deduce that $E^1_{n,*}$ is calculated as the homology of the total complex associated to the double complex with
  \begin{equation*}
    C_{p,q} = \QQ \langle N_q S^n_p (\Proj{\ZZ})\rangle = \frac{ \QQ \langle N_q S^n_p (\Proj{\ZZ}) \cup \{\infty\}\rangle}{\QQ\langle\{\infty\}\rangle},
  \end{equation*}
  where $\QQ\langle X\rangle$ denotes the free $\QQ$-module on a set $X$, and boundary maps defined as alternating sum of face maps as usual.

  The groupoid $S_p(\Proj{\ZZ})$ comes with a functor $F$ 
  to the groupoid of finitely generated projective $\ZZ$-modules and their isomorphisms, defined by sending a triangular diagram~(\ref{eq:triangle-of-projectives}) to $P_{0,p}$.  The categorical fiber 
  $(F\downarrow\ZZ^n)$ over the object $\ZZ^n$ is the groupoid whose objects are triangular diagrams equipped with a specified isomorphism $P_{0,p} \to \ZZ^n$, and where morphisms from one triangular diagram to another is an isomorphism of diagrams compatible with the two specified isomorphisms from upper right entries
  to $\ZZ^n$.  If we denote this groupoid $\widetilde{S}^n_p (\Proj{\ZZ})$, then there is a forgetful map
  \begin{equation}\label{eq:16}
    N_q \widetilde{S}^n_p (\Proj{\ZZ}) \to N_q S^n_p (\Proj{\ZZ})
  \end{equation}
  which identifies $N_q S^n_p (\Proj{\ZZ})$ with the set of orbits for the free $\GL_n(\ZZ)$-action on the set $N_q \widetilde{S}^n_p (\Proj{\ZZ})$, defined by 
  acting by post-composition on
  the specified isomorphisms $P_{0,p} \cong \ZZ^n$.  There is also a map of sets
  \begin{equation}
    \label{eq:15}
    N_q \widetilde{S}^n_p (\Proj{\ZZ}) \to \pi_0(N_\bullet \widetilde{S}^n_p (\Proj{\ZZ}))
  \end{equation}
  sending a $q$-simplex to the path component containing it.  Regarding $p$ as fixed, the maps~(\ref{eq:15}) assemble to a map of simplicial sets from $[q] \mapsto N_q \widetilde{S}^n_p (\Proj{\ZZ})$ to the constant simplicial set $[q] \mapsto \pi_0(N_\bullet \widetilde{S}^n_p (\Proj{\ZZ}))$. This map is a weak homotopy equivalence 
  because automorphisms in the groupoid $\widetilde{S}^n_p (\Proj{\ZZ})$ are uniquely determined by their action on the top-right module $P_{0,p} = \ZZ^n$ and hence all automorphism groups in this groupoid are trivial. 

  The set $\pi_0(N_\bullet \widetilde{S}^n_p (\Proj{\ZZ}))$ is in bijection with the set of flags $0 \subset P_{0,1} \subset \dots \subset P_{0,p-1} \subset \ZZ^n$ of saturated submodules.  Such a flag is non-degenerate as a $p$-simplex of $[p] \mapsto \pi_0(N_\bullet \widetilde{S}^n_p(\Proj{\ZZ}))$ if and only if none of the inclusions are equalities. For $p \geq 2$, this set of non-degenerate $p$-simplices is in bijection with the set of $(p-2)$-dimensional faces of the Tits building $T_n(\QQ)$.  Writing
  \begin{equation*}
    \widetilde{C}_{p,q} = \QQ \langle N_q \widetilde{S}^n_p (\Proj{\ZZ})\rangle = \frac{ \QQ \langle N_q \widetilde{S}^n_p (\Proj{\ZZ}) \cup \{\infty\}\rangle}{\QQ\langle\{\infty\}\rangle},
  \end{equation*}
  we deduce that the maps
  \begin{equation*}
    \Tot_p\widetilde C_{*,*} \to \widetilde{C}_{p-2} (T_n(\QQ))
  \end{equation*}
that are induced by~(\ref{eq:15}) on $\widetilde{C}_{p,0}$ and zero on $\widetilde{C}_{p',p-p'}$ for $p'\ne p$ assemble to a quasi-isomorphism of chain complexes,   and hence, for $n \geq 1$, an isomorphism
  \begin{equation*}
    H_p(\Tot\widetilde{C}_{*,*}) \cong
    \begin{cases}
      \mathrm{St}_n(\QQ) \otimes \QQ & \text{for $p = n$}\\
      0 & \text{otherwise}.
    \end{cases}
  \end{equation*}
  The maps~(\ref{eq:16}) induce an isomorphism of double complexes
  \begin{equation*}
    \QQ \otimes_{\QQ[\GL_n(\ZZ)]} \widetilde{C}_{*,*} \xrightarrow{\cong} C_{*,*}
  \end{equation*}
  and hence an isomorphism of associated total complexes.  Since $\Tot(\widetilde{C}_{*,*})$ is a complex of free $\QQ[\GL_n(\ZZ)]$-modules, we can use it to compute group homology as
  \begin{align*}
    H_p(\GL_n(\ZZ);\mathrm{St}_n(\QQ)) & = \mathrm{Tor}_p^{\QQ[\GL_n(\ZZ)]}(\QQ,\mathrm{St}_n(\QQ)) = H_{p-n}(\QQ \otimes_{\QQ[\GL_n(\ZZ)]} \Tot \widetilde{C}_{*,*}) \\ &= H_{p-n}(\Tot C_{*,*}) = H_{p-n}(\Fil_n(BK(\ZZ)),\Fil_{n-1}(BK(\ZZ));\QQ),
  \end{align*} as required. The assertion about the algebra structure follows by tracing isomorphisms, since direct sum of $\ZZ$-modules corresponds to block sum of matrices upon choosing an isomorphism to $\ZZ^n$ for some $n$.
  \end{proof}

\begin{remark}
    We have stated the theorem and proof above for the ring $R = \ZZ$, but the rank filtration evidently makes sense for any ring $R$, by considering the full subcategories $\Fil_g(\Proj{R})$ on those projective modules which arise as summands of $R^{\oplus g}$.

The formula for the $E^1$-page generalizes to any Dedekind domain $R$, for instance $R = \cO_E$ for a number field $E$, or if $R$ is any field.  If $K$ is the fraction field of a Dedekind domain $R$, then saturated subspaces of a finitely generated projective $R$-module $P$ of rank $s$
are in bijection with linear subspaces of $P \otimes_R K \cong K^{\oplus s}$, and the Tits building $T(P \otimes_R K) \approx T_s(K)$ is defined as the nerve of the partially ordered set of non-zero proper linear subspaces of $P \otimes_R K$.
The geometric realization of this partially ordered set has the homotopy type of a wedge of $(s-2)$-spheres by the Solomon--Tits theorem, and the Steinberg module can be defined as $\St(P \otimes_R K) = \widetilde{H}_{s-2}(|T(P \otimes_R K)|) \cong \St_s(K)$.  The proof above goes through in this generality with only minor modifications, giving a Quillen spectral sequence of the form
\begin{equation*}
    \sseq{Q}^1_{s,t} =  \bigoplus_P H_t(\GL(P);\St(P\otimes_R K)) \Rightarrow H_{s+t}(BK(R))
\end{equation*}
where $P$ ranges over a set of representatives for the isomorphism classes of projective modules $P$ over $R$ of rank $s$.  For a completely general ring we still have a spectral sequence associated to the rank filtration, but not a useful formula for its $E^1$-page.
\end{remark}

In Section~\ref{sec:k-theory-graphs}, we explain a construction of $K$-theory for graphs that is closely related to the $S_\bu$-construction for Waldhausen categories. Several variants are possible.  The variant we use, which seems most closely related to the Quillen spectral sequence, does not fit the definitions of a Waldhausen category, but is similar in spirit.

\subsection{Comparison of tropical and Quillen spectral sequences}
\label{sec: Loc-sym-Quillen}

Recall that the tropical spectral sequence and the Quillen spectral sequence have isomorphic $E^1$-pages
\begin{equation*}
    \sseq{T}^1_{s,t} \cong 
    H_t(\GL_s(\ZZ);\St_s \otimes \QQ)
    \cong \sseq{Q}^1_{s,t}.
\end{equation*}
The goal of this subsection is to make the implied isomorphism of $E^1$-pages more explicit.  Recall that Borel--Moore homology of a reasonable space agrees with the relative homology of its one-point compactification, for instance $H_s^\mathrm{BM}(A_g^\trop) = H_s (A_g^\trop \cup \{\infty\}, \infty)$.  Therefore, the tropical spectral sequence can be viewed as the spectral sequence associated (in ordinary singular homology) to the filtered space
\begin{equation*}
    A_\infty^\trop \cup \{\infty\} = \colim_{g \to \infty} A_g^\trop \cup \{\infty\},
\end{equation*}
the direct limit of the one-point compactifications of the locally compact space $A_g^\trop$.  The associated graded has
\begin{equation*}
    \Gr_g(A_\infty^\trop \cup \{\infty\}) \cong (P_g / \GL_g(\ZZ)) \cup \{\infty\},
\end{equation*}
the one-point compactification of $P_g/\GL_g(\ZZ)$.

In this subsection we will construct an explicit zig-zag of (rational) equivalences
\begin{equation}\label{eq:24}
    \Gr_g(BK(\ZZ)) \xleftarrow{\simeq} 
    |N_\bu T_\bu(\QQ^g)\cup\{\infty\}|
    \xrightarrow{\simeq_\QQ} \Gr_g(A_\infty^\trop \cup\{\infty\}))
\end{equation}
in which the middle term $|N_\bu T_\bu(\QQ^g)\cup\{\infty\}|$ will be defined in this section.
The reduced homology of the rightmost space agrees with the Borel--Moore homology of $P_g/\GL_g(\ZZ)$ and hence 
is the $E^1$-page of the tropical spectral sequence.  The reduced homology of the leftmost space in the zig-zag is manifestly the $E^1$-page of the Quillen spectral sequence, and for both spectral sequences we have identified the $E^1$-page as $E^1_{s,t} = H_t(\GL_s(\ZZ),\St_s\otimes\QQ)$.
The purpose of discussing this zig-zag is to evaluate certain pairings: given classes
\begin{align*}
  w \in \sseq{Q}^1_{s,t} & \cong H_t(\GL_s(\ZZ);\St_s \otimes \QQ)\\
  \omega \in \sseq{T}_1^{s,t} \otimes \RR& \cong \Hom(H_t(\GL_s(\ZZ);\St_s),\RR)
\end{align*}
we wish to make sense of the pairing $\langle w,\omega\rangle \in \RR$.  To do this we need to pin down the isomorphism $\sseq{Q}^1_{s,t} \to \sseq{T}^1_{s,t}$, which is what the zig-zag~\eqref{eq:24} is useful for.  In practice, the homology class $w$ will be represented by a somewhat explicit cycle in  $\Gr_g(BK(\ZZ))$ arising from graphs, and the cohomology class $\omega$ will be given by an explicit $\GL_g(\ZZ)$-equivariant differential form on $P_g$.  In this situation, the strategy will be to transfer $w$ along the zig-zag to get an interpretation as a  cycle in $(A_g^\trop \cup \{\infty\})/(A_{g-1}^\trop \cup \{\infty\})$ and evaluate the pairing by performing an integral.

Before constructing the zig-zag~\eqref{eq:24}, let us point out that this must happen at the level of associated gradeds  and cannot be lifted to the level of filtered spaces, since the tropical spectral sequence collapses to $E^2 = 0$ while the Quillen spectral sequence does not.

We first define the space in the middle.
\begin{definition}
    For a finite-dimensional $\QQ$-vector space $V$, let $N_0 T_p(V)$ be the set of pairs $(A_p \subset \dots \subset A_0, <)$ where
    \begin{equation*}
        A_p \subset \dots \subset A_0 \subset V^\vee \setminus\{0\}
    \end{equation*}
    is a flag of non-zero vectors in the dual vector space, $<$ is a total order on $A_0$, such that 
    \begin{enumerate}
        \item $A_i$ is finite for all $i$,
        \item $A_p = \emptyset$,
        \item $\mathrm{span}(A_0) = V^\vee$.
    \end{enumerate}
    Let $N_0 T_p(V) \cup\{\infty\}$ denote the pointed set obtained by adding a disjoint base point, which we denote $\infty$, and define maps of pointed sets
    \begin{equation*}
        d_i \colon N_0 T_p(V) \cup\{\infty\} \to N_0 T_{p-1}(V) \cup\{\infty\}
    \end{equation*} 
    for $0 \leq i \leq p$ by the formula
    \begin{equation*}
        d_i(A_p \subset \dots \subset A_0,<) =
        \begin{cases}
            \infty & \text{if $i = 0<p$ and $\mathrm{span}(A_1) \neq V^\vee$}\\
            \infty & \text{if $i = p > 0$ and $A_{p-1} \neq \emptyset$}\\
            (A_p \subset \dots \subset \widehat{A_i} \subset \dots \subset A_0,<) &\text{otherwise.}
        \end{cases}
    \end{equation*}    
    There are also degeneracy maps $s_i \colon N_0 T_{p-1}(V) \cup \{\infty\} \to N_0 T_p(V) \cup \{\infty\}$ defined by duplicating $A_i$, making $[p] \mapsto N_0 T_p(V) \cup \{\infty\}$ into a simplicial object in the category of pointed sets.

There is an evident action of $\GL_g(\ZZ)$ on $N_0 T_p(\QQ^g)$, which we can use to turn $N_0 T_p(\QQ^g)$ into the objects of a groupoid $T_p(\QQ^g)$: the set of morphisms from $(A_p \subset \dots \subset A_0)$ to $(A'_p \subset \dots \subset A'_0)$ is the set of matrices $X \in \GL_g(\ZZ)$ with the property that $X.(A_p \subset \dots \subset A_0) = (A'_p \subset \dots \subset A'_0)$.  (This action is of course the restriction of an action of $\GL_g(\QQ)$, but for our purposes we need the action by integer matrices only.)  Explicitly,
\begin{equation*}
  N_q T_p(\QQ^g) = (\GL_g(\ZZ))^q \times N_0 T_p(\QQ^g),
\end{equation*}
with face maps in the $q$-direction given by
\begin{equation*}
\resizebox{.95\hsize}{!}{
$\begin{aligned}
    d_i(X_1,\dots, X_q,(A_p \subset \dots \subset A_0,<)) =
    \begin{cases}
        (X_2, \dots, X_q, (A_p \subset \dots \subset A_0,<)) & \text{for $i = 0$}\\
        (X_1, \dots, X_i X_{i+1}, \dots, X_q, (A_p \subset \dots \subset A_0,<)) & \text{for $0 < i < p$}\\
        (X_1, \dots, X_{q-1}, X_q.(A_p \subset \dots \subset A_0,<))
    \end{cases}
    \end{aligned}$
    }
\end{equation*}
\end{definition}

The notation $N_\bu T_\bu(\QQ^g) \cup \{\infty\}$ should be read as the bisimplicial pointed set
\begin{equation*}
  ([p],[q]) \mapsto N_q T_p(\QQ^g) \sqcup \{\infty\},
\end{equation*}
with face and degeneracy operators as defined above.
\begin{remark}
  All automorphism groups in the groupoid $T_p(\QQ^g)$ are trivial, so the canonical map
  \begin{equation}\label{eq:pi-zero-T}
    |N_\bu T_p(\QQ^g)| \to \pi_0(N_\bu T_p(\QQ^g)) \cong N_0(T_p(\QQ^g))/\GL_g(\ZZ)
  \end{equation}
  is a weak equivalence for all $p$.  Therefore the middle space in the zig-zag~\eqref{eq:24} is also homotopy equivalent to the geometric realization of the simplicial set
  \begin{equation*}
    [p] \mapsto N_0T_p(\QQ^g)/\GL_g(\ZZ) \cup \{\infty\}.
  \end{equation*}
  We need both simplicial directions for the map to $\Gr_g(BK(\ZZ))$ though.
\end{remark}

We first explain how to construct the map $|N_\bu T_\bu(\QQ^g) \cup \{\infty\}| \to \Gr_g(BK(\ZZ))$ in the zig-zag~(\ref{eq:24}).  Here it is natural to look for a map of bisimplicial sets, with $(p,q)$-simplices
\begin{equation*}
  N_q T_p(\QQ^g) \cup \{\infty\} \to \Gr_g(N_q S_p(\Proj{\ZZ})).
\end{equation*}
We explain how this works for $q = 0$, starting with $(\emptyset = A_p \subset \dots \subset A_0,<) \in N_0 T_p(V)$.  

The subsets $A_i \subset V^\vee \setminus \{0\}
$ give rise to canonical maps
\begin{align*}
    V &\to \QQ^{A_i}\\
    v &\mapsto (\psi \mapsto \psi(v)),
\end{align*}
which by assumption is injective for $i = 0$.  For $V = \QQ^g$ we shall make use of the restrictions
\begin{equation*}
    \ZZ^g \hookrightarrow \QQ^g \to \QQ^{A_i}
\end{equation*}
in the following.  For $0 \leq i \leq j \leq p$, define a free $\ZZ$-module $P_{i,j}$ as the image of the composition
\begin{equation*}
  \mathrm{Ker}(\ZZ^g \to \QQ^{A_j}) \hookrightarrow \ZZ^g \to \QQ^{A_i}.
\end{equation*}
These modules fit into a triangular diagram of the form~\eqref{eq:triangle-of-projectives}, forming an object of $S_p(\Proj{\ZZ})$.  Let us also remark that for $j=p$, the submodule $P_{i,p} \subset \QQ^{A_i}$ agrees with the image of the second map $\ZZ^g \to \QQ^{A_i}$, which for $i = 0$ is injective.  Hence we obtain a canonical isomorphism $\ZZ^g \to P_{0,p}$.  We have constructed a map of sets
\begin{equation}\label{eq:25}
  N_0 T_p(\QQ^g) \to \Fil_g(N_0 S_p(\Proj{\ZZ})) \setminus \Fil_{g-1}(N_0 S_p(\Proj{\ZZ})),
\end{equation}
which is easily extended to a map of bisimplicial pointed sets
\begin{equation*}
  N_\bu T_\bu(\QQ^g) \cup \{\infty\} \to \Gr_g(N_\bu S_\bu(\Proj{\ZZ})),
\end{equation*}
which in turn realizes to the desired map of spaces
\begin{equation*}
  |N_\bu T_\bu(\QQ^g) \cup \{\infty\}| \to \Gr_g(BK(\ZZ)).
\end{equation*}
\begin{proposition}
  This map $|N_\bu T_\bu(\QQ^g) \cup \{\infty\}| \to \Gr_g(BK(\ZZ))$ induces an isomorphism in integral homology.
\end{proposition}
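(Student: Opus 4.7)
The plan is to compare both sides to a standard free $\ZZ[\GL_g(\ZZ)]$-resolution of the Steinberg module $\St_g = \widetilde{H}_{g-2}(|T_g(\QQ)|;\ZZ)$, concentrated in homological degree $g$. Proposition~\ref{prop:E1-formula-with-Steinberg} has already established this for the target: the reduced chains of $\Gr_g(BK(\ZZ))$ arise as the $\GL_g(\ZZ)$-coinvariants of the total complex of a bicomplex $\widetilde C_{*,*}$ of free $\ZZ[\GL_g(\ZZ)]$-modules, quasi-isomorphic to $\St_g[g]$. I would carry out the analogous construction on the source side and then compare the two.

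First, I check freeness and rigidity: any element of $\GL_g(\ZZ)$ stabilizing a simplex $(X_1,\ldots,X_q,(A_p\subset\ldots\subset A_0,<))$ must preserve the total order on $A_0$, hence fix each vector in $A_0$ pointwise, and since $A_0$ spans $(\QQ^g)^\vee$ this forces the element to be the identity. The same argument shows all automorphism groups in the groupoid $T_p(\QQ^g)$ are trivial, so the map~\eqref{eq:pi-zero-T} is a weak equivalence and the reduced bicomplex $\widetilde D_{p,q} = \ZZ\langle N_q T_p(\QQ^g)\rangle$ consists of free $\ZZ[\GL_g(\ZZ)]$-modules whose total complex, after coinvariants, computes the reduced singular chains of $|N_\bullet T_\bullet(\QQ^g)\cup\{\infty\}|$.

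Next, I identify the homology of $\Tot(\widetilde D_{*,*})$ with $\St_g[g]$ as a $\ZZ[\GL_g(\ZZ)]$-module. Mirroring the argument of Proposition~\ref{prop:E1-formula-with-Steinberg}, the $q$-direction collapses (trivial automorphism groups), reducing to the simplicial pointed set $[p]\mapsto N_0 T_p(\QQ^g)\cup\{\infty\}$. I would construct an augmentation to the reduced chains of the Tits building $|T_g(\QQ)|$ by sending $(A_p\subset\ldots\subset A_0,<)$ to the chain of the flag of subspaces $\mathrm{span}(A_1)\supsetneq\ldots\supsetneq\mathrm{span}(A_{p-1})$ of $(\QQ^g)^\vee$ whenever this flag is strictly decreasing, and to zero otherwise. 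To show this augmentation is a quasi-isomorphism, I would exhibit a contracting homotopy on each fiber, indexed by a flag of subspaces; the fiber parametrizes nested ordered spanning sets refining the flag, and its acyclicity should follow from an explicit deformation retraction (e.g., by a shelling argument or by collapsing onto a canonical choice of spanning set at each level).

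Finally, the equivariant map of bisimplicial pointed sets in~\eqref{eq:25} gives a chain map $\widetilde D_{*,*}\to\widetilde C_{*,*}$ compatible with both augmentations to $\St_g[g]$: by construction, $P_{0,j} = \ker(\ZZ^g\to\QQ^{A_j})$ is precisely the annihilator of $\mathrm{span}(A_j)\subset(\QQ^g)^\vee$, so the flag of saturated submodules of $\ZZ^g$ corresponds, via perfect duality between $(\QQ^g)^\vee$ and $\QQ^g$, to the flag of spans. A chain map of free $\ZZ[\GL_g(\ZZ)]$-resolutions of the same module that induces the identity on homology is a chain homotopy equivalence, so taking coinvariants preserves the quasi-isomorphism, yielding the claimed isomorphism on integral homology. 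The main obstacle is the contractibility of the augmentation fibers on the source side; the combinatorics of ordered nested spanning sets is more delicate than that of the bare flag chains used on the target side, and the explicit contracting homotopy will require some care to construct, especially in handling the ordering data on $A_0$.
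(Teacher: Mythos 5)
Your overall architecture matches the paper's: use triviality of stabilizers (your rigidity argument is correct and is exactly the point of the remark following the definition of $T_p(\QQ^g)$), collapse the $q$-direction, and compare the resulting complex of free $\ZZ[\GL_g(\ZZ)]$-modules equivariantly with the complex $\Tot\widetilde{C}_{*,*}$ from the proof of Proposition~\ref{prop:E1-formula-with-Steinberg}, both viewed as free resolutions of $\St_g$ concentrated in a single degree, and then pass to coinvariants. The genuine gap is at the central step, which you yourself flag as ``the main obstacle'': you never prove that the source complex has homology $\St_g$ concentrated in degree $g$. Your proposed route --- an augmentation sending $(A_p\subset\cdots\subset A_0,<)$ to the flag of spans, plus a fiberwise contracting homotopy --- is not a routine verification. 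First, as written the augmentation (which lowers degree by two) is only a chain map if one also imposes conditions at the outer faces (e.g.\ it must vanish unless $\mathrm{span}(A_1)\neq V^\vee$ and $A_{p-1}\neq\emptyset$, else the $d_0$ and $d_p$ terms have no counterpart in the boundary of the flag chain). More seriously, the acyclicity of the ``fibers'' of ordered nested spanning sets over a fixed flag is essentially the full content of the theorem of Lee and Szczarba; a shelling or canonical-collapse argument there would amount to re-proving that theorem, and you do not supply it. The paper avoids this entirely: the cubical cell structure identifies the cellular chains of $|N_0T_\bu(\QQ^g)\cup\{\infty\}|$ with the complex generated by tuples $[\phi_0,\dots,\phi_p]$ of nonzero covectors, with non-spanning tuples set to zero, and this is recognized as the Lee--Szczarba free resolution of $\St_g$, citing \cite{lee-szczarba-homology} both for the computation of its homology and for the identification of the comparison map to the Waldhausen side with the map considered there, hence an isomorphism.

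If you replace your deferred contracting-homotopy step by this citation, the rest of your argument does go through: compatibility of the map~\eqref{eq:25} with the two identifications (your annihilator-duality observation that $P_{0,j}\cong\ker(\ZZ^g\to\QQ^{A_j})$ is the annihilator of $\mathrm{span}(A_j)$ is the right point), the comparison theorem for bounded-below complexes of free modules, and the $q$-direction spectral sequence. One smaller imprecision to fix: the bicomplex $\ZZ\langle N_qT_p(\QQ^g)\rangle$ already computes the reduced chains of the source without taking coinvariants; the complex of free $\ZZ[\GL_g(\ZZ)]$-modules you actually want to compare is the $p$-direction complex $\ZZ\langle N_0T_p(\QQ^g)\rangle$, and it is because the $\GL_g(\ZZ)$-action on $N_0T_p(\QQ^g)$ is free that its coinvariants compute the homology of the realization --- this is what licenses ``taking coinvariants preserves the quasi-isomorphism.''
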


\begin{proof}
   The simplicial set $N_0T_\bu(V) \cup \{\infty\}$ has a cubical structure (similar in nature to a space $\MGr$ which we will define later), as follows.  For each pair $(A,<)$ consisting of a finite subset $A \subset V^\vee \setminus\{0\}$ and a total order $<$ on $A$, the set of elements $(A_p \subset \dots \subset A_0, <') \in N_0T_p(V) \cup \{\infty\}$ for which $A_0 \subset A$ and $<'$ is the restriction of $<$, assemble as $p$ varies to a map of simplicial sets
   \begin{equation*}
       (\Delta^1_\bu)^A \xrightarrow{(A,<)} N_0T_\bu(V) \cup \{\infty\}.
   \end{equation*}
   This map lands in the basepoint $\infty$ unless $A$ spans $V^\vee$, in which case the induced map of geometric realizations
   \begin{equation}\label{eq: Lee-Szczarba cell}
       |\Delta^1_\bu|^A \xrightarrow{|(A,<)|} |N_0 T_\bu(V)| \cup \{\infty\}
   \end{equation}
   is injective when restricted to $(\Delta^1 \setminus \partial \Delta^1)^A$.  As $(A,<)$ varies over all possible (totally ordered) finite spanning sets of $V^\vee$, the maps~\eqref{eq: Lee-Szczarba cell} together with the basepoint $\infty$ form a CW structure on the space $|N_0 T_\bu(V) \cup \{\infty\}|$.  It follows that the reduced homology of $|N_0T_\bu(V)\cup\{\infty\}|$ may be computed by a chain complex defined combinatorially by generators $[\phi_0,\dots, \phi_p] \in (V^\vee\setminus\{0\})^{p+1}$ for varying $p$, subject to the relation that $[\phi_0, \dots, \phi_p] = 0$ unless the $\phi_i$ span $V^\vee$.  (Notice that we do not impose any relations between generators that differ by the action of the symmetric group $S_{p+1}$ on the set of ordered $(p+1)$-tuples.)  The boundary map is given by
   \begin{equation*}
       \partial[\phi_0,\dots, \phi_p] = \sum_{i = 0}^p (-1)^i [\phi_0, \dots, \widehat{\phi_i}, \dots, \phi_p].
   \end{equation*}
   For $V = \QQ^g$, this chain complex is in fact a well known free resolution of $\St_g(\QQ)$ as a module over $\GL_g(\QQ)$ due to Lee and Szczarba \cite[Section 3]{lee-szczarba-homology}, and we deduce
   \begin{equation*}
       \widetilde{H}_s(|N_0T_\bu(\QQ^g) \cup\{\infty\}|) \cong
       \begin{cases}
           \St_g(\QQ) & \text{for $p = g$}\\
           0 & \text{otherwise.}
       \end{cases}
   \end{equation*}
   
   The Steinberg module $\St_g(\QQ)$ is also the (reduced) homology of  $N_\bu \widetilde S^g_\bu(\Proj{\ZZ}) \cup \{\infty\}$, the bisimplicial set from the proof of Proposition~\ref{prop:E1-formula-with-Steinberg} above. It is not hard to compare these two instances of the Steinberg module.  Indeed, the map~\eqref{eq:25} was constructed by a recipe in which the ``upper right'' module $P_{0,p}$ in the triangular diagram came with a preferred isomorphism $\ZZ^g \to P_{0,p}$, and this provides a lift to
   \begin{equation*}
        N_0 T_\bu(\QQ^g) \cup \{\infty\} \to N_0 \widetilde{S}^g_\bu(\Proj{\ZZ})  \cup \{\infty\}.
   \end{equation*}
   On reduced homology, the composition
   \begin{equation*}
        |N_0 T_\bu(\QQ^g) \cup \{\infty\}| \to |N_0 \widetilde{S}^g_\bu(\Proj{\ZZ}) \cup \{\infty\}| \to |N_\bu \widetilde{S}^g_\bu(\Proj{\ZZ}) \cup\{\infty\}|
   \end{equation*}
   therefore induces a map between two modules isomorphic to $\St_g(\QQ)$.  It is the same map as considered by Lee and Szczarba and hence an isomorphism.
   
   By a spectral sequence argument, it follows that the map
   \begin{equation*}
       |N_\bu T_\bu(\QQ^g) \cup \{\infty\}| \to |N_\bu S^g_\bu(\Proj{\ZZ})| \cup \{\infty\}
   \end{equation*}
   also induces an isomorphism on homology.   
\end{proof}
   
Next we define the map to $\Gr_g(A_\infty^\trop \cup \{\infty\}) = (P_g/\GL_g(\ZZ)) \cup \{\infty\}$ in~\eqref{eq:24} as a composition
\begin{equation*}
    |N_\bu T_\bu(\QQ^g) \cup \{\infty\}| \xrightarrow{\simeq} |N_0T_\bu(\QQ^g)/\GL_g(\ZZ) \cup \{\infty\}| \to \Gr_g(A_\infty^\trop \cup\{\infty\}),
\end{equation*}
where the first map is induced by~\eqref{eq:pi-zero-T} above, and we must define the second map.  To an object $(\emptyset = A_p \subset \dots\subset A_0,<) \in N_0 T_p(\QQ^g)$ we associate a map $\Delta^g \to (P_g/\GL_g(\ZZ)) \cup \{\infty\}$, constructed in the following way.  For $(\emptyset = A_p \subset \dots \subset A_0,<) \in N_0 T_p(\QQ^n)$ consider the map
\begin{equation}
    \begin{aligned}
\label{eq:map-simplex-to-g}
  (\Delta^p \setminus \partial \Delta^p) &\to P_g\\
  (0 < s_1 < \dots < s_p < 1) & \mapsto \sum_{i = 1}^{p} \ell(s_i) \sum_{\psi \in A_{i-1} \setminus A_i} \psi^2,
\end{aligned}
\end{equation}
where $\psi^2 \in P_g^\mathrm{rt}$ denotes the rank-1 quadratic form $v \mapsto (\psi(v))^2$, and \begin{equation}\label{eq:diffeo}\ell\col (0,1)\to (0,\infty)\end{equation} is any diffeomorphism, for example $s\mapsto -\log(s)$ or $s\mapsto s/(1-s)$.  

Letting $(A_p \subset \dots \subset A_0,<) \in N_0 T_p(\QQ^g)$ vary, these maps assemble to a $\GL_g(\QQ)$-equivariant map
\begin{equation*}
  \coprod_p (\Delta^p\setminus\partial \Delta^p) \times N_0 T_p(\QQ^g) \to P_g.
\end{equation*}
Passing to orbit sets for the action of the subgroup $\GL_g(\ZZ)$, we obtain a map which extends as in the following diagram
\begin{equation}\label{eq:20}
  \begin{tikzcd}
    \coprod_p (\Delta^p\setminus\partial \Delta^p) \times N_0 T_p(\QQ^g)/\GL_g(\ZZ) \rar\dar[hookrightarrow] & P_g/\GL_g(\ZZ) \dar\\
    {|N_0T_\bu(\QQ^g)/\GL_g(\ZZ) \cup \{\infty\}|} \rar & (P_g/\GL_g(\ZZ)) \cup \{\infty\}.
  \end{tikzcd}
\end{equation}
\begin{proposition}\label{prop:bottom-horizontal}
  The bottom horizontal map in~(\ref{eq:20}) induces an isomorphism in reduced rational homology.
\end{proposition}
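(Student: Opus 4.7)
The plan is to identify the reduced rational homology of both spaces with $H_{*-g}(\GL_g(\ZZ); \St_g \otimes \QQ)$, and then show the map realizes the canonical isomorphism between these two models. Both of these homology identifications are essentially known from earlier parts of the excerpt; the work goes into showing the chain-level map realizes them compatibly.

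For the source, the cellular chain complex associated to the Lee--Szczarba-style CW structure \eqref{eq: Lee-Szczarba cell} on $|N_0 T_\bullet(\QQ^g) \cup \{\infty\}|$ is precisely the reduced Lee--Szczarba resolution of $\St_g(\QQ)$ as a $\QQ[\GL_g(\QQ)]$-module, concentrated in degrees $\geq g$ (this was already used in the preceding proof). Passing to $\GL_g(\ZZ)$-coinvariants, and using that rationally the finite stabilizers contribute trivially, identifies the reduced rational cellular chain complex of $|N_0 T_\bullet(\QQ^g)/\GL_g(\ZZ) \cup \{\infty\}|$ with a chain complex computing $H_{*-g}(\GL_g(\ZZ); \St_g(\QQ))$. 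For the target, $(P_g/\GL_g(\ZZ)) \cup \{\infty\}$ is the one-point compactification of a locally compact Hausdorff space, so its reduced rational singular homology equals $H_*^{\mathrm{BM}}(P_g/\GL_g(\ZZ);\QQ)$, which was already identified in \eqref{eq:thing}--\eqref{eq:thing3} with $H_{*-g}(\GL_g(\ZZ);\St_g\otimes\QQ)$ via virtual duality.

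To upgrade these abstract matching computations to the statement that \emph{our} map is a rational isomorphism, I would exploit the $\GL_g(\QQ)$-equivariance of the lifted map $\coprod_p (\Delta^p\setminus\partial\Delta^p)\times N_0 T_p(\QQ^g) \to P_g$. After choosing a $\GL_g(\ZZ)$-equivariant polyhedral decomposition of $P_g^{\mathrm{rt}}$ (for instance the perfect cone decomposition), the construction yields a $\GL_g(\QQ)$-equivariant chain map between two free resolutions of $\St_g(\QQ)$. Since $\St_g(\QQ)$ is irreducible over $\QQ[\GL_g(\QQ)]$, we have $\End_{\GL_g(\QQ)}(\St_g(\QQ)) = \QQ$, so the induced endomorphism on $\St_g(\QQ)$ is automatically a scalar; the rational homology map is therefore an isomorphism as soon as this scalar is nonzero.

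To verify nonvanishing, I would evaluate on a single explicit top-dimensional Lee--Szczarba generator, namely $A = (e_1^*, \ldots, e_g^*)$ with its standard order. The corresponding cell maps onto the parametrization $(t_1,\ldots,t_g) \mapsto \sum_i t_i (e_i^*)^2$ with $0 < t_1 < \cdots < t_g$, whose image lies inside the positive definite cone of diagonal forms in $P_g$. Together with its $S_g$-translates this covers a full top-dimensional cone in $P_g$; pairing its fundamental class with a $\GL_g(\ZZ)$-invariant compactly supported top-degree bump form supported near this diagonal cell produces an explicit nonzero number, certifying that the map on Steinberg modules is nonzero. The main obstacle is precisely this last nonvanishing check: it requires matching the orientation and sign conventions between Lee--Szczarba differentials and the cellular Borel--Moore differentials of $P_g^{\mathrm{rt}}$, which is the standard bookkeeping underlying the virtual duality isomorphism for $\GL_g(\ZZ)$.
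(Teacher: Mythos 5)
Your reduction starts from the same place as the paper (both sides have finite-dimensional rational homology abstractly isomorphic to $H_{*-g}(\GL_g(\ZZ);\St_g\otimes\QQ)$, and the source identification via the Lee--Szczarba resolution is fine), but the central step of your argument has a genuine gap. You assert that the construction ``yields a $\GL_g(\QQ)$-equivariant chain map between two free resolutions of $\St_g(\QQ)$'' and then invoke Schur's lemma. The Lee--Szczarba complex is indeed a free resolution of $\St_g(\QQ)$ over $\QQ[\GL_g(\QQ)]$, but no such model exists on the target side: any admissible decomposition of $P_g^{\rt}$ (e.g.\ the perfect cone decomposition) is only $\GL_g(\ZZ)$-equivariant, and the resulting chain model for $H^{\mathrm{BM}}_*(P_g/\GL_g(\ZZ))$ is a complex of $\QQ[\GL_g(\ZZ)]$-modules whose ``upstairs'' homology is $\QQ_{\mathrm{or}}$ concentrated in degree $\binom{g+1}{2}$, not $\St_g$ in degree $g$. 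The Steinberg module enters the target only through Poincar\'e duality plus Borel--Serre duality, and realizing that identification equivariantly at the chain level, compatibly with the specific map \eqref{eq:20}, is precisely the hard content of the proposition rather than ``standard bookkeeping''. If you retreat to $\GL_g(\ZZ)$-equivariance, Schur's lemma is unavailable: you cannot cite that $\St_g\otimes\QQ$ has scalar endomorphism ring as a $\QQ[\GL_g(\ZZ)]$-module, so ``the induced endomorphism is automatically a scalar'' is unjustified.

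The proposed nonvanishing certificate also fails as stated. The cell attached to $(e_1^*,\dots,e_g^*)$ has dimension $g$, not top dimension $\binom{g+1}{2}$ (the cone of diagonal forms is nowhere near top-dimensional in $P_g$), so it cannot be integrated against a top-degree bump form; and, more fatally, its class in $H^{\mathrm{BM}}_g(P_g/\GL_g(\ZZ))\cong H_0(\GL_g(\ZZ);\St_g\otimes\QQ)$ vanishes for $g\ge 2$ (this is the known vanishing in the bottom rows of Table~\ref{tab:E1}), so any homologically invariant pairing against it is zero regardless of the value of your scalar. By contrast, the paper uses the abstract isomorphism and finite-dimensionality only to reduce to producing a one-sided inverse on rational homology, and then constructs that inverse directly: it builds a finite simplicial model $X_\bu$ of $(P_g/\GL_g(\ZZ))\cup\{\infty\}$ from the perfect cone decomposition with rank-one rays (contractible point-inverse images, Vietoris--Begle), and sends each nondegenerate simplex to the \emph{average} of its $2^p\,p!$ lifts to $N_0T_p(\QQ^g)/\GL_g(\ZZ)$ --- the averaging absorbing exactly the sign and ordering ambiguities that your approach would need to control through an explicit chain-level duality.
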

We remark that a map similar to the bottom horizontal map in~\eqref{eq:20} appeared in~\cite[p.~333]{ash-unstable} where it is credited to L.~Rudolph.
\begin{proof}
  The homology of the domain is isomorphic to $\sseq{Q}^1_{g,*} = H_*(\GL_g(\ZZ);\St_g \otimes\QQ)$, as is the homology of the codomain by duality (see \S\ref{subsubsection:tropical}).  Therefore the rational homology groups of domain and codomain are abstractly isomorphic.  Since the homology groups are also finite-dimensional, it suffices to construct a one-sided inverse to the map.

  To produce a one-sided inverse (on the level of rational homology), the strategy will be to choose a suitable ``simplicial structure'' on $P_g/\GL_g(\ZZ) \cup \{\infty\}$, namely a simplicial pointed set $X_\bu$ with finitely many non-degenerate simplices, and a homotopy equivalence $|X_\bu| \to P_g/\GL_g(\ZZ) \cup \{\infty\}$, and then defining the one-sided inverse on the simplicial chains of $X_\bu$ by sending each non-degenerate non-basepoint simplex to some preferred lift in $N_0 T_\bu(\QQ^g)/\GL_g(\ZZ) \cup \{\infty\}$.  We will not quite succeed in doing exactly this, but can achieve that simplices of $X_\bu$ come with a finite set of specified lifts.  The one-sided inverse is given by averaging those lifts.

  To construct the simplicial set $X_\bu$, we first choose an admissible decomposition of $P_g^\mathrm{rt}$ whose rays are all rank $1$,
  for instance the perfect cone decomposition.  Each cone $\sigma$ is then the convex hull of its extremal rays $\RR_{\geq 0} \to P_g^\mathrm{rt}$ which are of the form $t \mapsto t \psi^2$ for some $\psi \in (\QQ^g)^\vee \setminus\{0\}$.  Up to scaling by a non-zero rational number we can arrange that $\psi$ is \emph{normalized} to satisfy $\psi(\ZZ^g) = \ZZ$, and for normalized $\psi$ the form $\psi^2 \in P_g^\mathrm{rt}$ is uniquely determined by the projective class $[\psi] \in \mathbb{P}((\QQ^g)^\vee)$, because $\psi$ itself is determined up to a sign by the normalization condition.
  
  Any cone in the cone decomposition is then the image of a map of the form
  \begin{align*}
      \RR_{\geq 0}^{p} &\to P_g^\mathrm{rt}\\
      (t_1, \dots, t_p) &\mapsto \sum_{i = 1}^p t_i \psi_i^2
  \end{align*}
  for some normalized $\psi_i \in (\QQ^g)^\vee \setminus \{0\}$.  The map need not be injective unless the cone is simplicial, but it will always be a proper homotopy equivalence onto the cone (the inverse image of any point in the cone is a compact and convex subset of the octant).  If the $\psi_i$ span $(\QQ^g)^\vee$, then the open cone has image in $P_g$ and is also the image of the map
  \begin{align*}
      (\Delta^1 \setminus \partial \Delta^1)^{p} &\to P_g\\
      (s_1, \dots, s_p) &\mapsto \sum_{i = 1}^p \ell(s_i) \psi_i^2,
  \end{align*}
  where $\ell$ is as in~\eqref{eq:diffeo},
  analogous to~\eqref{eq:map-simplex-to-g} but without inequalities among the $s_i$.
  The admissible decomposition gives finitely many $\GL_g(\ZZ)$-orbits of such maps, whose image in $P_g/\GL_g(\ZZ)$ forms a decomposition into open cones (which may or may not be simplicial). 
  After 1-point compactifying, we obtain finitely many maps of the form
  \begin{equation}\label{eq:cube-in-locally-symmetric-space}
      |\Delta^1_\bu|^{p} \to (P_g/\GL_g(\ZZ)) \cup \{\infty\},
  \end{equation}
  one for each $\GL_g(\ZZ)$-orbit of cones
  spanned by $p$ many rays $t \mapsto t \psi_i^2$, with the property that the $\psi_i$ span $(\QQ^g)^\vee$.  The image of each such map is the closure of the corresponding cone, but the map need not be injective.
  However, the inverse image of a point in the cone is contractible.  If we pre-compose with the geometric realization of any of the top-dimensional simplices $\Delta^p_\bu \to (\Delta^1_\bu)^p$ then we obtain $p!$ many maps of the form~\eqref{eq:map-simplex-to-g} for each cone.

The maps $\Delta^p \to P_g/\GL_g(\ZZ) \cup \{\infty\}$ arising this way, $p!$ many for each cone in the decomposition with $p$ many extremal rays, can be regarded as elements of the pointed set $\mathrm{Sin}_p((P_g/\GL_g(\ZZ) \cup \{\infty\})$.  Together with all iterated faces and degeneracies of these elements, and with the basepoint, they form a simplicial subset
  \begin{equation*}
      X_\bu \subset \mathrm{Sin}_\bu((P_g/\GL_g(\ZZ) \cup\{\infty\})
  \end{equation*}
  with the property that the induced map $$|X_\bu| \to P_g/\GL_g(\ZZ) \cup \{\infty\}$$ has contractible fibers 
  and that $X$ has only finitely many non-degenerate simplices.  It follows from the Vietoris--Begle theorem that this map induces an isomorphism on homology.  The simplicial set $X_\bu$ has only finitely many non-degenerate simplices, on each of which the map is of the form~\eqref{eq:map-simplex-to-g} and therefore agrees with the map $|N_0 T_\bu(\QQ^g) \cup\{\infty\}| \to P_g/\GL_g(\ZZ) \cup \{\infty\}$ restricted to some $p$-simplex.  

  If we write $\widetilde C_*(X) = C_*(X_\bu,\{\infty\};\QQ)$ for the normalized rational chains relative to the base point, then we obtain a quasi-isomorphism
  \begin{equation*}
      \widetilde C_*(X_\bu) \stackrel{\simeq}\hookrightarrow \widetilde C_*(\mathrm{Sin}_\bu(P_g/\GL_g(\ZZ) \cup \{\infty\})) \simeq \widetilde C_*^\mathrm{sing}(P_g/\GL_g(\ZZ) \cup \{\infty\};\QQ)
  \end{equation*}
  from a finite chain complex $\widetilde C_*(X_\bu)$ similar in spirit to the ``Voronoi complexes'' of \cite{elbaz-vincent-gangl-soule-perfect}.  The main difference is that each cone  in the decomposition, of geometric dimension $d$ and spanned by $p$ many extremal rays, gives rise to one generator of homological degree $d$ in the Voronoi complex while in our complex it gives rise to $p!$ many generators of homological degree $p$ and also some lower-dimensional generators.

  Unfortunately, this does not quite identify $X_\bu$ with a simplicial subset of $N_0T_\bu(\QQ^g) \cup \{\infty\}$, for two reasons.  Firstly, the extremal rays of the cones in a cone decomposition are of the form $t \mapsto t \psi^2$ for some covector $\psi \in (\QQ^g)^\vee$ but $\psi$ is not quite canonically determined by the ray, only up to a sign.  Secondly, elements of $N_0T_p(\QQ^g)$ involve a total order $<$ which we cannot canonically produce from a $p$-simplex of $X_\bu$.  Altogether, we have explained a recipe by which a non-degenerate non-basepoint $\sigma \in X_p$ lifts in $2^p p!$ many ways to an element $\widetilde \sigma \in N_0T_p(\QQ^g)/\GL_g(\ZZ)$.  This is sufficient for defining a one-sided inverse on the level of rational homology though: let the diagonal map in the diagram
  \begin{equation*}
      \begin{tikzcd}
          & \widetilde C_*^\mathrm{sing}(|N_0T_\bu(\QQ^g)/\GL_g(\ZZ) \cup \{\infty\}|;\QQ) \dar\\
          \widetilde C_*(X_\bu) \rar["\simeq"] 
          \arrow[ur] & \widetilde C_*^\mathrm{sing}(P_g/\GL_g(\ZZ) \cup \{\infty\};\QQ)
      \end{tikzcd}
  \end{equation*}
  be defined by sending a non-degenerate non-basepoint simplex of $X_\bu$ to the average of the $2^p p!$ many lifts to $N_0 T_p(\QQ^g)$ that we have explained.
\end{proof}

\begin{remark} \label{rem:two-comparisons}
    The choice of diffeomorphism $\ell$ in~\eqref{eq:diffeo}  may be chosen either to be an increasing or decreasing function.  This gives two different isomorphisms between $\sseq{T}^1_{s,t}$ and $\sseq{Q}^1_{s,t}$, differing by the automorphism of $\sseq{Q}^*$ induced by dualizing projective modules (and reflecting the triangular diagrams~\eqref{eq:triangle-of-projectives}).  The two comparison isomorphisms differ by a sign on $\sseq{Q}^1_{1,0} \cong \QQ \cong \sseq{T}^1_{1,0}$.
\end{remark}

\begin{remark}
    Orthogonal direct sum of symmetric forms induces a map $P_g \times P_h \to P_{g+h}$.  Passing to orbits leads to a map
    \begin{equation*}
        (P_g/\GL_g(\ZZ)) \times (P_h/\GL_h(\ZZ)) \to (P_{g+h}/\GL_{g+h}(\ZZ))
    \end{equation*}
    which is proper.  Therefore it induces a map of one-point compactifications, giving a product on $\sseq{T}^1_{*,*}$, the $E^1$-page of the tropical spectral sequence.  The isomorphism
    \begin{equation*}
        \sseq{T}^1 \xrightarrow{\cong} \sseq{Q}^1
    \end{equation*}
    constructed above is an isomorphism of bigraded algebras with respect to this product on $\sseq{T}^1_{*,*}$ and the product on $\sseq{Q}^1_{*,*}$ constructed in Section~\ref{sec:copr-filtr-waldh}.  There, we also produce a coproduct, of which it seems more difficult to give a simple interpretation in $\sseq{T}$.
\end{remark}

\subsection{Canonical forms for \texorpdfstring{$\GL_n$}{GLn}} \label{sect: backgroundcanforms}
The space $P_g$ of positive-definite real symmetric  matrices of rank $g$ is  equipped with a right action of $\GL_g(\RR)$ given by $X \mapsto g^T X g$.  Classical invariant theory  provides  a differential $n$-form for every $n\geq 1 $
\[  \omega^n_X= \tr ( (X^{-1} dX)^n)\]
which is invariant under the action of $\GL_g(\RR)$. One shows that $\omega^n$ vanishes unless $n\equiv 1 \pmod{4}$; the case $n=1$ plays very little role in what follows.
The $\omega^n$ have a number of useful properties, including compatibility with block direct sums of matrices:
\begin{equation} \label{eqn: omegablockdirectsum} \omega^n_{X_1 \oplus X_2} = \omega^n_{X_1} + \omega^n_{X_2}
    \end{equation}
    and, in the case when $h>1$ is odd, the form $\omega^{2h-1}$ has the   vanishing property
\begin{equation} \label{eqn: omega2g1vanishing}
\omega^{2h-1}_X  =0  \qquad \hbox{ if } X \hbox{ has rank } g<h \ .
 \end{equation}
 By invariance, the forms $\omega^{4k+1}_X$  for $k>1$ define differential forms on the locally symmetric space $P_g/\GL_g(\ZZ)$. 
Borel showed that the graded exterior algebra they generate is isomorphic to its stable cohomology:  
\[  \bigwedge   \bigoplus_{k\geq 1} \omega^{4k+1}\RR   \  \overset{\sim}{\longrightarrow} \   \varprojlim_g H^{*}(P_g/\GL_g(\ZZ);\RR)\ .\]
Both sides of this equation have a Hopf algebra structure such that the (indecomposable) generators   $\omega^{4k+1}$ are primitive. This follows from \eqref{eqn: omegablockdirectsum}.  By taking primitives, Borel deduced that $K_{n}(\ZZ)\otimes\QQ \cong \QQ$ for  all  $n=4k+1$ with $k>1$,  and vanishes for all other $n>0$. 

We will need much more precise results about the cohomology of $\GL_n(\ZZ)$ in the unstable range. For this, let $g>1$ be odd and 
let $\Omega^{*}(g)$ denote the graded exterior algebra generated by $\omega^5,\ldots, \omega^{2g-1}$, which are the non-vanishing forms on $P_g/\GL_g(\ZZ).$  It is a direct sum
\[  \Omega^*(g) = \Omega^*_{c}(g) \oplus \Omega^*_{nc}(g)\]
where $\Omega^*_c(g)$ is the graded vector space  of forms `of compact type' given by the ideal generated by $\omega^{2g-1}$, and $\Omega^*_{nc}(g)$ is the graded algebra generated by $\omega^5,\ldots, \omega^{2g-5}$. The Hodge star operator interchanges these two spaces.  In \cite{brown-bordifications} it was shown that every element in $\Omega_c(g)$ defines a unique compactly supported cohomology class, giving two injective maps
\begin{eqnarray}
\Omega^{*}_{nc}(g) & \longrightarrow &  H^*(P_h/\GL_h(\ZZ);\RR)  \qquad \hbox{ for all } h\geq g, \label{eqn: Omegancinjects} \\
\Omega^*_{c}(g)[-1] & \longrightarrow &  H_c^*(P_g/\GL_g(\ZZ);\RR)\ , \label{eqn: Omegacinjects}
\end{eqnarray}
the first of which is a map of graded algebras, the second only a map of graded vector spaces. 
By taking the limit of the first injection, one obtains a stronger version  of Borel's theorem (which states that  \eqref{eqn: Omegancinjects} is injective in  degrees  $\leq g/4$ when $h=g$.) However, it is the second injective map \eqref{eqn: Omegacinjects} which is involved in a key definition.  

\begin{defn} \label{def:omega-compact} Set $\Omega_c(g)=0$ for $g$ even.  Denote by 
\[ \Omega^{*}_{c} = \bigoplus_{g>1  } \Omega_c^{*}(g)\]
 the  $\QQ$ vector space spanned by the canonical forms of compact type, bigraded as follows.  The {\em degree} and {\em genus} are given on generators by 
\begin{eqnarray*}
\on{deg}(\omega^{4i_1 + 1} \wedge \cdots \wedge \omega^{4i_k+1}) &= & \textstyle \Sigma_{j=1}^k (4i_j + 1)\\ 
\on{g}(\omega^{4i_1 + 1} \wedge \cdots \wedge \omega^{4i_k+1}) & = & 2i_k+1
\end{eqnarray*}
for integers $0<i_1 < \cdots < i_k.$  Then a generator $\omega$ is in bidegree $(\on{g}(\omega), \on{deg}(\omega)\!-\!\on{g} ).$
\end{defn}

  Although   the bigraded vector space $\Omega^{*}_c$ is formally isomorphic to the space of  elements of positive degree in the graded exterior algebra generated by  the  $\{\omega^{4k+1} \mid k\ge 1\}$,  
  it is  advisable  not to confuse the two.  The former is a bigraded vector space, while the latter is the positive degree elements of a Hopf algebra. Indeed, we shall prove that the image of $\Omega^{*}_c$ is primitive with respect to the coproduct we shall define, which is \emph{not} the case for non-trivial products of $\omega^{4k+1}$ in the stable cohomology of the general linear group. 
  The precise relationship between these two coproducts is via the Quillen spectral sequence,  explained below. 

\subsubsection{Canonical  tensor algebra} 
Denote the tensor algebra on a bigraded  vector space $V$   by 
\[ T(V) = \bigoplus_{n \geq 0 } V^{\otimes n}\ .\]
It is a connected, bigraded Hopf algebra with non-commutative product given by the tensor product, and (graded) cocommutative coproduct  
$ \Delta \colon T(V) \rightarrow T(V) \otimes T(V)$ dual to the shuffle product, with respect to which the elements of $V$ are primitive.  We shall denote elements  
 $v_1\otimes \ldots\otimes  v_n \in V^{\otimes n} \subset T(V)$ 
using the bar notation  $[v_1|\ldots | v_n]$. 

Let $\Omega^\ast_c[-1]$ denote the bigraded $\QQ$-vector space in which degree is shifted by $1$. So $\omega^{4i+1}$ has genus $2i+1$ and degree $(4i+1)+1$.
The tensor algebra $T(\Omega^*_{c}[-1] )$ 
 is a non-commutative,  (graded) cocommutative Hopf algebra. It is again bigraded by genus and degree minus genus,  where $[v_1|\ldots |v_n]$ has degree $\sum \mathrm{deg}(v_i)$ and genus $\sum \mathrm{g}(v_i)$. 
 Note that it has an additional grading by length of tensors, but only the associated filtration will play a role.

\section{Coproducts and filtrations in the Waldhausen construction}\label{sec:copr-filtr-waldh}

Recall that the rational homology of an infinite loop space such as $BK(\ZZ)$ is naturally a graded commutative and cocommutative Hopf algebra, with coproduct given by push-forward under the diagonal. However, this coproduct does not respect the filtration on $BK(\ZZ)$ and hence does not give rise to a coproduct on the pages of the Quillen spectral sequence.

Our goal in this section is to construct a \emph{filtered coproduct} on $BK(\ZZ)$. We do so via the simplicial operation of edgewise subdivision from \cite[Lemma~1.1]{bokstedt-hsiang-madsen-cyclotomic}. The resulting  coproduct induces a bigraded associative (but not cocommutative) coproduct on each page of the Quillen spectral sequence. It is homotopic to the diagonal and hence induces the usual commutative coproduct on the abutment.

\subsection{A filtered coproduct from edgewise subdivision}
The goal now is to define a map
\[BK(\ZZ) \to BK(\ZZ) \times BK(\ZZ)\]
that respects the filtration~\eqref{eq:fil-BKZ}, i.e., with the property that the restriction to the $n$th filtration $\Fil_n = \Fil_n BK(\ZZ)$ factors as
\[\Fil_n \to \bigcup_{a+b=n} \Fil_a \times \Fil_b \]
so that it induces a coproduct on the associated relative homology spectral sequence.  We will use the \emph{edgewise subdivision} of \cite[Section 1]{bokstedt-hsiang-madsen-cyclotomic}, in turn inspired by \cite{Segal-configuration-spaces}.  We recall the definition.

\defnow{\label{def:es} For any category $\cS$, given a simplicial object $X\col \Delta^{\mathrm{op}}\to \cS$, the edgewise subdivision $\es(X)$ of $X$ is the simplicial object in $\cS$ 
\[\Delta\to\Delta\xrightarrow{X}\cS\]
where the functor $\Delta\to \Delta$ is \[[p]\mapsto [p]\sqcup [p].\] 
Here, the set $[p]\sqcup [p]$ is linearly ordered by concatenation. 
}

In other words, we identify  $[p]\sqcup[p]\cong [1]\times [p]$ with the lexicographic ordering.

\remnow{
Note, in particular, that $\es_p(X) = X_{2p+1}$ for each $p\ge 0.$}

For a simplicial set $X\col \Delta^{\mathrm{op}}\to \mathsf{Set}$, there is a natural homeomorphism
\begin{equation*}
  |\es(X)| \xrightarrow{\phi_X} |X|.
\end{equation*}
It is the unique natural transformation that is affine on simplices and sends each vertex $x \in \es_0(X) = X_1$ to the mid-point of the corresponding edge  or vertex (degenerate 1-simplex) in $|X|$.  To check that this is a homeomorphism it suffices to consider the case where $X = \Delta(-,[n])$ is a simplex, since both sides preserve colimits.  See  \cite[Lemma 1.1]{bokstedt-hsiang-madsen-cyclotomic}.  

We now consider how the (inverse of the) homeomorphism $\phi_X$ interacts with filtrations. 
\begin{lemma}\label{lemma:subdividing-and-filtering}
  Let $X_\bullet$ be a filtered simplicial set, i.e., a simplicial set $X$ and a sequence of subsets $\Fil_0 X_p \subset \Fil_1 X_p \subset \dots$ for each $p$ such that each $\Fil_n X_\bullet$ forms a simplicial subset, i.e.\ $d_i \Fil_nX_{p+1} \subset \Fil_n X_p$ and $s_i \Fil_n X_p \subset \Fil_n X_{p+1}$ for all $p$ and all $i$.  For any filtered simplicial space, give the geometric realization $|X|$ the induced filtration: $\Fil_n|X|$ is the image of the realization of the inclusion $|\Fil_nX| \to |X|$.  Finally, give the simplicial set $\es_\bullet(X)$ the filtration with $\Fil_n \es_p(X) = \es_p(\Fil_n X_\bullet) = \Fil_n X_{2p+1}$.  
  
  Then the inverse homeomorphism 
  \begin{equation*}
    |X| \xrightarrow{\phi_X^{-1}} |\es(X)|
  \end{equation*}
  is a filtered map: it satisfies $\phi_X^{-1}(\Fil_n|X|) \subset \Fil_n|\es(X)|$.
\end{lemma}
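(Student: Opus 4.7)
The plan is to deduce the lemma from naturality of the homeomorphism $\phi_X$ in $X$, together with the simple observation that the filtration on $\es(X)$ defined by $F_n\es_p(X)=F_nX_{2p+1}$ agrees with $\es(F_nX_\bullet)$, viewed as a sub-simplicial set of $\es(X_\bullet)$.

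More precisely, first I would check that $F_n\es(X_\bullet)=\es(F_nX_\bullet)$ as simplicial subsets of $\es(X_\bullet)$. Since $\es$ is defined by precomposition with the functor $[p]\mapsto [p]\sqcup[p]$, the face and degeneracy maps of $\es(X)$ are themselves face/degeneracy maps of $X$, so the simplicial subset axiom for $F_nX_\bullet\subset X_\bullet$ immediately gives the simplicial subset axiom for $F_n\es(X_\bullet)\subset\es(X_\bullet)$. By definition of the induced filtration on geometric realizations, the image of $|\es(F_nX_\bullet)|\to|\es(X)|$ is exactly $F_n|\es(X)|$, and the image of $|F_nX_\bullet|\to|X|$ is exactly $F_n|X|$.

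Next, naturality of $\phi$ in the simplicial-set argument (which is built into Definition~\ref{def:es} of $\es$, since $\phi$ is a natural transformation of functors $\mathsf{sSet}\to\mathsf{Top}$) applied to the inclusion $\iota_n\col F_nX_\bullet\hookrightarrow X_\bullet$ yields a commutative square
\[
\xymatrix{
|\es(F_nX_\bullet)|\ar[r]^{|\es(\iota_n)|}\ar[d]_{\phi_{F_nX}} & |\es(X_\bullet)|\ar[d]^{\phi_X}\\
|F_nX_\bullet|\ar[r]^{|\iota_n|} & |X|.
}
\]
Given $y\in F_n|X|$, write $y=|\iota_n|(z)$ for some $z\in|F_nX_\bullet|$. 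Then
\[
\phi_X^{-1}(y)=\phi_X^{-1}\bigl(|\iota_n|(z)\bigr)=|\es(\iota_n)|\bigl(\phi_{F_nX}^{-1}(z)\bigr),
\]
which lies in the image of $|\es(F_nX_\bullet)|\to|\es(X)|$, namely $F_n|\es(X)|$, by the first paragraph. This gives $\phi_X^{-1}(F_n|X|)\subset F_n|\es(X)|$ as required (and in fact equality, by applying $\phi_X$ in the opposite direction).

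I do not anticipate a real obstacle: the content of the lemma is entirely formal once one has unpacked the definition of the induced filtration on $|X|$ and on $\es(X)$, and noted that $\phi$ is natural and each $\phi_X$ is a homeomorphism. The only point where care is needed is bookkeeping the compatibility $F_n\es(X_\bullet)=\es(F_nX_\bullet)$, which is automatic from the formula $\es_p=(-)_{2p+1}$.
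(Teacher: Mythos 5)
Your proof is correct, but it takes a different route from the paper's. You argue globally: identify $F_n\es(X)=\es(F_nX)$ as simplicial subsets, then apply the naturality square for $\phi$ at the inclusion $\iota_n\colon F_nX\hookrightarrow X$ and use that $\phi_{F_nX}$ and $\phi_X$ are both homeomorphisms to conclude $\phi_X^{-1}\circ|\iota_n| = |\es(\iota_n)|\circ\phi_{F_nX}^{-1}$, whence $\phi_X^{-1}(F_n|X|)\subset F_n|\es(X)|$ (indeed with equality, as you note). The paper instead argues pointwise: it introduces the filtration function $f$ on simplices, observes that $f(\theta^*\sigma)\le f(\sigma)$ for any simplicial operator $\theta$, and uses naturality to relate the non-degenerate carrier simplex of a point $x\in|X|$ to the carrier of $\phi_X^{-1}(x)$ in $\es(X)$, obtaining the pointwise inequality $f(\phi_X^{-1}(x))\le f(x)$. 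Your version buys brevity and avoids the carrier-simplex bookkeeping entirely, needing only functoriality of $\es$, of realization, and naturality of $\phi$; the paper's version makes the mechanism explicit at the level of individual simplices (which is in the same spirit as how the filtration is manipulated elsewhere in that section), at the cost of invoking the standard facts about unique non-degenerate carriers. Both establish the lemma; your argument is a legitimate, arguably cleaner, alternative.
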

\begin{proof}
  Let $f \colon X_p \to \NN$ be the function such that $f(\sigma) = n$ if $\sigma \in \Fil_n X_p \setminus \Fil_{n-1}X_p$.  Then each $\Fil_n X_\bullet$ being a simplicial subset implies that $f(\theta^* \sigma) \leq f(\sigma)$ for any morphism $\theta \colon [p] \to [q]$ in $\Delta$ and any $\sigma \in X_q$, but then the simplicial identity $d_i \circ s_i (\sigma) = \sigma$ implies $f(s_i \sigma) = f(\sigma)$.  In other words, the function $f$ is determined by its values on non-degenerate simplices.  

  Now a point $x \in |X|$ will be in the image of the canonical injection
  \begin{equation*}
    \{\sigma_x\} \times (\Delta^p \setminus \partial \Delta^p) \hookrightarrow |X|
  \end{equation*}
  for a unique $p \in \NN$ and a unique non-degenerate $\sigma_x \in X_p$.  Then $\phi_X^{-1}\colon |X| \xrightarrow{\sim} |\es(X)|$ sends $x$ to a point in the image of the canonical injection
  \begin{equation*}
    \{\tau_x\} \times (\Delta^q \setminus \partial \Delta^q) \hookrightarrow |\es(X)|
  \end{equation*}
  for a unique $q \in \{0, \dots, p\}$ and a unique $\tau_x \in \es_q(X) = X_{2q+1}$.

  By naturality of the homeomorphism, there must exist a morphism $\theta_x \colon [p] \to [2q+1]$ in $\Delta$ such that $\theta_x^*(\sigma_x) = \tau_x$.  This implies $f(\tau_x) \leq f(\sigma_x)$, and hence
  \begin{equation*}
    f(\phi_X^{-1}(x)) \leq f(x)
  \end{equation*}
  as desired.
\end{proof}

Let us briefly discuss edgewise subdivision of bisimplicial sets $X\col \Delta^{\mathrm{op}} \times \Delta^{\mathrm{op}} \to \mathsf{Set}$, for which we write $X_{p,q} = X([p],[q])$.  The notion itself is symmetric in $p$ and $q$, but for the applications we have in mind the two simplicial directions will play quite different roles.  In particular, when $\cC$ is a Waldhausen category we will consider the bisimplicial set with 
\begin{equation*}
    X_{p,q} = N_q S_p(\cC)
\end{equation*}
and use the \emph{first} simplicial direction $p$ as the ``$S_\bullet$-direction.''  In this and other examples we consider, the second simplicial direction will play a more auxiliary role, mainly as a means to encode the simplicial topological space $[p] \mapsto |X_{p,\bu}|$, while in the $p$-direction we will make more use of the simplicial structure.  In particular, we will write
\begin{equation*}
    \es(X) = \es([p] \mapsto X_{p,\bu})
\end{equation*}
for edgewise subdivision in the $p$-direction.  In other words, on objects $[p]$ and $[q]$ of $\Delta$ we have
\begin{equation*}
    \es(X)_{p,q} = X_{2p+1,q} = X([p] \sqcup [p],[q]).
\end{equation*}
\propnow{Let $X \col \Delta^{\mathrm{op}} \times \Delta^{\mathrm{op}} \to \mathsf{Set}$ be a bisimplicial set. Then $|\es(X)|$ is naturally homeomorphic to $|X|$.}
\proofnow{ We have
\begin{eqnarray*}
    |X|&\cong & |[q]\mapsto([p]\mapsto X_{p,q})| \cong |[q]\mapsto ([p]\mapsto X_{2p+1,q})| 
    \cong |[p]\mapsto ([q]\mapsto X_{2p+1,q})| \cong |\es(X)|,
\end{eqnarray*}
using the fact that edgewise subdivision is a natural homeomorphism for simplicial sets, and that the two different geometric realizations of bisimplicial sets explained above are naturally homeomorphic.  All of the other intermediate homeomorphisms are natural as well.
}

Here is the main definition.
\defnow{\label{def:the-nt} Let $\cS$ be a category with finite products.  Let \[X\col \Delta^{\mathrm{op}}\to \cS\]
be a simplicial object in $\cS$, and let $\es(X)$ be its edgewise subdivision.  
Define a natural transformation 
\begin{equation}\label{eq:the-nt}\es(X) \Rightarrow X\times X\end{equation}
whose component at $[p]$, for each $p\ge 0$,
\[\es(X)_p = X_{2p+1} \to X_p \times X_p,\]
is given by the two maps $X_{2p+1}\to X_p$ induced by the two order preserving inclusions \[[p]\to [p]\sqcup [p] \cong [2p+1]\] onto the first $p+1$, respectively the last $p+1$, elements.
}

\begin{remark}
When $\cS$ is the category of simplicial sets, $X \colon \Delta^\mathrm{op} \to \cS$ is a bisimplicial set.  Using that geometric realization commutes with finite products, the natural transformation $\es(X) \Rightarrow X \times X$ defined above leads to a map 
\begin{equation}\label{eq:Alexander-Whitney}
    |X| \xrightarrow{\approx} |\es(X)| \to |X| \times |X|.
\end{equation}
As a map of spaces, this map is not especially interesting by itself---as we will see momentarily, it is naturally homotopic to the diagonal map $x \mapsto (x,x)$ of the space $|X|$.  What we will see is that in interesting examples, the bisimplicial set $X$ comes with a filtration making~\eqref{eq:Alexander-Whitney} into a map of filtered spaces and~\eqref{eq:Alexander-Whitney} will induce an interesting map of associated gradeds, while the actual diagonal map of $|X|$ will not be a filtered map.
\end{remark}

\begin{example}
Our main example comes from the simplicial object $X=S_\bu(\Proj{\ZZ})\colon [p] \mapsto S_p(\Proj{\ZZ})$ in small groupoids, for the Waldhausen category $\Proj{\ZZ}$ of finitely generated projective $\ZZ$-modules (Example~\ref{ex:projectives}).  In this case the functor
\[\es_p (S_\bullet (\Proj{\ZZ})) \to S_p(\Proj{\ZZ}) \times S_p(\Proj{\ZZ})\]
is given on objects by
\[(0\to V_1\to\cdots\to V_{2p+1})\mapsto ((0\to V_1\to\cdots\to V_p),(0\to V_{p+2}/V_{p+1} \to\cdots\to V_{2p+1}/V_{p+1})),\]
omitting the chosen quotients $V_i/V_j$ from the notation.   

We recall the filtration on $X=S_\bu(\Proj{\ZZ})$ described previously in~\eqref{eq:fil-BKZ}.  For integers $p,q$, an element of $X_{p,q}$ is a chain of $q$ morphisms of triangular diagrams of projective modules of the form~\eqref{eq:triangle-of-projectives}.  The morphisms of triangular diagrams are required to be isomorphisms in each component.  Define the rank of such an element to be the rank of the top-right projective $\ZZ$-module in any of the $q+1$ triangular diagrams; this rank is well-defined since the $q+1$ modules involved are related by isomorphisms.  
For any $n$, the property of having rank at most $n$ 
is preserved by all face and degeneracy maps of $X$, and thus rank induces a filtered bisimplicial set in which
\[(\Fil_n X)_{p,q} = X_{p,q} \cap \mathrm{rank}^{-1}((-\infty, n]).\]

The rank function on simplices of $X$ induces a rank function on the simplices of $\es(X)$, as well as a rank function on the simplices of $X\times X$ by additivity: the rank of $(x,y) \in (X\times X)_{p,q} = X_{p,q}\times X_{p,q}$ is $\mathrm{rank}(x) + \mathrm{rank}(y)$.  These rank functions are compatible with face and degeneracy maps, inducing filtrations on both $\es(X)$ and $X\times X$.  We note that~\eqref{eq:the-nt} respects the filtration, since 
\begin{equation}\label{eq:rank-ineq}
\mathrm{rank}(V_{2p+1}) \ge \mathrm{rank}(V_p) + \mathrm{rank}(V_{2p+1}/V_{p+1}).
\end{equation}

Therefore, the natural transformation $\es(X)\Rightarrow X\times X$ is a map of filtered bisimplicial sets, and geometric realization yields a map
\[|X| \xrightarrow{\approx} |\es(X)|\longrightarrow |X\times X|\cong |X|\times|X|\]
of filtered spaces.

In Proposition~\ref{prop:(4)}, we shall study a product map $m\col X\times X\to X$, given by direct sums of triangular diagrams.  The observation that the rank of a direct sum of projective modules is the sum of the ranks will imply that $m$ is compatible with filtrations; see Remark~\ref{rem:product-compatible-with-filtration} below.

\end{example}

Returning briefly to the general case of an arbitrary (unfiltered) simplicial set $X$, let us justify the claim above that~\eqref{eq:Alexander-Whitney} is homotopic to the diagonal.
\propnow{
\label{prop:go-to-id} Suppose we are given maps $\Phi_X\col |X|\to |X|$ for every simplicial set $X$ that are natural in $X$, i.e., they assemble into a natural transformation $\Phi$.  Then 
$\Phi$ is naturally homotopic to the identity.  Precisely,  there is a homotopy
\[|X|\times[0,1]\to|X|\]
from $\Phi_X$ to $\mathrm{id}_X$ for each $X$, and these homotopies are natural in $X$.
}
\proofnow{First, for $X = \Delta^n = \Hom_{\Delta}(-,[n])$ an $n$-simplex, one may take a straight line homotopy 
\[|\Delta^n| \times [0,1]\to |\Delta^n|\]
from $\Phi_X$ to $\mathrm{id_X}$, since $\Delta^n$ is a convex subset of $\RR^n$.  These straight line homotopies are natural in all morphisms of simplicial sets, in particular face maps and degeneracy maps.  An arbitrary simplicial set is a colimit of simplices, so the result follows.}

\cornow{\label{cor:homotopic-to-diagonal} For simplicial sets $X$, the geometric realization 
\begin{equation}\label{eq:homotopic-to-diagonal}
    |X|\cong |\es(X)|\to |X|\times |X|
\end{equation} of the morphism in~\eqref{eq:the-nt} is naturally homotopic to the diagonal map.}
\proofnow{The natural transformation $\es(X)\Rightarrow X\times X$ is assembled from two natural transformations $\es(X)\to X$, with the property that the corresponding maps $|\es(X)|\to |X|$ are homotopic to the identity by~Proposition~\ref{prop:go-to-id}. Therefore the map $|\es(X)|\to |X|\times |X|$ is naturally homotopic to the diagonal.}

\remnow{\label{rem:(6)}Suppose as in Proposition~\ref{prop:go-to-id}, we are given maps $\Phi_X\col |X|\to |X|$ for every simplicial set $X$ that are natural in $X$.  Now if $X$ is a {\em bisimplicial} set, for each $p\ge 0$, let $X_p$ denote the simplicial set with $(X_p)_q = X_{p,q}$. Then we obtain a map $\Phi_X\col |X|\to |X|$ by gluing the maps $ \Phi_X\col |X_p|\to |X_p|$ 
for each $p$.  Then Proposition~\ref{prop:go-to-id} implies that $\Phi_X$ is also naturally homotopic to the identity. 

In particular, for a {\em bisimplicial} set $X$, the map $|X|\cong|\es(X)|\to |X|\times |X|$  is again homotopic to the diagonal, naturally in $X$.  
}

\begin{proposition}\label{prop:(3)}
Suppose 
\[F_0 X \subset F_1 X \subset \cdots \subset X \col \Delta^{\mathrm{op}}\times\Delta^{\mathrm{op}}\to\mathsf{Set}\]
is a filtered bisimplicial set.  Define a filtration on the bisimplicial set $X\times X$ by \[F_s(X\times X)_{p,q} = \bigcup_{u+v \le s} (F_{u}X \times F_{v}X)_{p,q},\]
and similarly for $X\times X\times X$:
\[F_s(X\times X \times X)_{p,q} = \bigcup_{t+u+v \le s} (F_{t}X \times F_{u}X \times F_{v}X)_{p,q}.\]
Let $\Phi\col \es(X)\Rightarrow X\times X$ be the natural transformation~\eqref{eq:the-nt}. 
Suppose that $\Phi$ respects the filtrations, i.e. for each $s$, the natural transformation $\es(F_s X) \Rightarrow F_s X \times F_s X$ factors as  \[\es(F_s X) \Rightarrow   F_s (X\times X) \Rightarrow F_s X \times F_s X.\]

Then the diagram
\[\squarediagramlabel{|X|}{|X|\!\times\!|X|}{|X|\!\times\!|X|}{|X|\!\times\!|X|\!\times\! |X|}{|\Phi|}{|\Phi|}{|\Phi|\times\mathrm{id}}{\mathrm{id}\times|\Phi|}\]
commutes up to a homotopy
\[[0,1]\times |X| \to |X|\times|X|\times|X|.\]
Moreover, the homotopy may be chosen so that, for each $s\ge 0$, it restricts to a map
\[[0,1]\times |F_s X| \to |F_s(X\times X\times X)|.\]

\end{proposition}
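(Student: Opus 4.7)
My plan is to introduce a filtered threefold analog of $\Phi$ and route both composites through it. I would define a threefold edgewise subdivision $\es_3(X)$ by precomposing $X$ in the first simplicial direction with the functor $[p] \mapsto [p]\sqcup[p]\sqcup[p]$. As for the ordinary subdivision of Definition~\ref{def:es}, there is a natural homeomorphism $|\es_3(X)| \cong |X|$, and the three order-preserving inclusions $[p] \hookrightarrow [p]\sqcup[p]\sqcup[p]$ onto the three successive blocks of $p+1$ elements yield a natural transformation $\Phi_3\col \es_3(X) \Rightarrow X\times X\times X$. The rank inequality~\eqref{eq:rank-ineq}, applied to three consecutive subquotients in place of two, shows that $\Phi_3$ sends $\es_3(F_sX)$ into $F_s(X\times X\times X)$; after realization, $|\Phi_3|\col |X| \to |X|^3$ is a natural filtered map.

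Next, I would construct natural filtered homotopies $L := (|\Phi|\times\mathrm{id})\circ|\Phi| \simeq |\Phi_3|$ and $R := (\mathrm{id}\times|\Phi|)\circ|\Phi| \simeq |\Phi_3|$, and concatenate them to obtain the desired $L \simeq R$. Each individual homotopy is constructed following the template of Proposition~\ref{prop:go-to-id} and Corollary~\ref{cor:homotopic-to-diagonal}: both endpoints are natural transformations $|X| \to |X|^3$, and by reducing via naturality to the universal case in which $X$ is a single (bi)simplex, straight-line interpolation in the convex space $|\Delta^n|^3$ provides a natural homotopy that extends to arbitrary filtered bisimplicial $X$.

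The hard step is to verify that this natural homotopy restricts to a map $[0,1] \times |F_sX| \to |F_s(X\times X\times X)|$. Since $|F_s(X\times X\times X)| = \bigcup_{t+u+v\le s} |F_tX|\times|F_uX|\times|F_vX|$ is not convex in $|X|^3$, the straight-line interpolation between $L(x)$ and $|\Phi_3|(x)$ may exit this subspace. The resolution is to replace the naive straight-line homotopy with a refined bisimplicial ``cylinder'' built from a common refinement of $\es(\es(X))$ and $\es_3(X)$ (for instance $\es(\es_3(X))$, with $p$-th level $X_{6p+5}$), equipped with a natural filtered map to $X\times X\times X$ whose two ``endpoints'' recover $L$ and $|\Phi_3|$ and whose intermediate configurations lie within appropriate block products. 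Filtration preservation at each intermediate stage then reduces to the generalization of the rank inequality~\eqref{eq:rank-ineq}: any decomposition of a chain $V_0 \subset \cdots \subset V_N$ of projective modules into three shifted consecutive subquotients has total rank at most $\mathrm{rank}(V_N)$.
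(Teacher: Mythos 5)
There is a genuine gap, and it sits exactly at the step you yourself flag as ``the hard step.'' Your reduction to a threefold subdivision $\es_3(X)$ with $\Phi_3\col \es_3(X)\Rightarrow X\times X\times X$ is reasonable, and the unfiltered homotopies $L\simeq|\Phi_3|\simeq R$ do follow from the naturality/straight-line argument as in Proposition~\ref{prop:go-to-id}. But the filtered refinement is never actually established. First, the proposition is stated for an \emph{arbitrary} filtered bisimplicial set whose only hypothesis is that the twofold $\Phi$ respects filtrations; the rank inequality~\eqref{eq:rank-ineq} is specific to $N_\bu S_\bu(\Proj{\ZZ})$ and is not available here, so even the claim that $\Phi_3$ is filtered needs to be deduced from the twofold hypothesis (this is not automatic: the hypothesis concerns restrictions to two equal-length blocks of $[2p+1]$, and passing to three blocks of $[3p+2]$ requires an additional argument, e.g.\ via degeneracies, which you do not give). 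Second, and more seriously, the replacement of the straight-line homotopy by a ``refined bisimplicial cylinder'' with filtered structure maps whose endpoints are $L$ and $|\Phi_3|$ is precisely the content that needs proving; naming a candidate object ($\es(\es_3(X))$) does not produce the required filtered map to $X\times X\times X$, nor the verification that all intermediate configurations stay inside $\bigcup_{t+u+v\le s}F_tX\times F_uX\times F_vX$. As written, the filtration statement --- the only nontrivial part of the proposition --- is asserted rather than proved.

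For comparison, the paper's proof sidesteps all of this by never leaving $|X|$ until the last moment: it builds a natural homotopy $G_X\col [0,1]\times|X|\to|X|$ from the identity to a reparametrization $g_X$ (induced by an explicit piecewise-linear self-map $g$ of $[0,1]$ applied in Cartesian simplex coordinates), which is trivially filtration-preserving because each simplex is carried into itself; it then checks by direct computation (Lemma~\ref{lem:the-guts}) that $(\mathrm{id}\times\Phi)\circ\Phi\circ g_X = (\Phi\times\mathrm{id})\circ\Phi$ \emph{on the nose}. The desired homotopy is the composite of $G_X$ with the already-filtered map $(\mathrm{id}\times\Phi)\circ\Phi$, so filtration-preservation is inherited for free. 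If you want to salvage your route, you would need to either (i) prove the threefold filtration compatibility of $\Phi_3$ from the twofold hypothesis and then exhibit an explicit filtered cylinder, or (ii) adopt the paper's trick of realizing the discrepancy between the two composites as precomposition with a self-map of $|X|$, where filtration issues disappear.
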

\begin{proof}

We shall construct a map 
\begin{equation*} G_X\col [0,1]\times |X|\to |X|,\end{equation*}
for every bisimplicial set $X$, which is natural in $X$.     The map $G_X$ will have the property that $G_X(0,-)\col |X|\to |X|$ is the identity map, and, moreover, the composition
\begin{equation}\label{eq:G_X-composed}|X|\times [0,1]\xra{G_X}|X|\xra{(\mathrm{id}\times \Phi)\circ \Phi} |X|\times|X|\times|X| \end{equation}
is a natural homotopy from $(\mathrm{id}\times \Phi)\circ \Phi$ to  $(\Phi\times \mathrm{id})\circ \Phi$.  

Suppose we construct such maps $G_X$, natural in bisimplicial sets $X$. We now explain how this is enough to prove the Proposition. Suppose that $X$ is a filtered bisimplicial set and suppose $\Phi\col \es(X)\Rightarrow X\times X$ respects the filtration.  Then for each $s\ge 0$, the map $\Phi$ restricts to a map \[|F_s(X)|\to |F_s(X\times X)|,\] and similarly, the map $(\mathrm{id}\times \Phi)\circ \Phi$, and also the map $(\Phi\times \mathrm{id})\circ \Phi$, restricts to a map
\begin{equation}\label{eq:restricted-triple}|F_s(X)| \to |F_s(X\times X\times X)|.\end{equation}
for each $s\ge 0$.  
Then applying~\eqref{eq:G_X-composed} to $F_s(X)$ and combining with~\eqref{eq:restricted-triple} yields a homotopy 
\[|F_s(X)| \times [0,1] \longrightarrow |F_s(X\times X\times X)|\]
from $(\mathrm{id}\times \Phi)\circ \Phi$ to  $(\Phi\times \mathrm{id})\circ \Phi$, as desired.

Now we construct the maps $G_X$.  They will be defined first on the level of individual simplices, shown to be natural in face and degeneracy maps, and hence defined for all simplicial sets.  Since they are natural in morphisms of simplicial sets, they then define maps on bisimplicial sets.

Consider Cartesian coordinates on the standard $n$-simplex for each $n$: 
\begin{equation}\label{eq:cartesian-coords}\Delta^n \cong \{(t_1,\ldots,t_n)\in \RR^n \mid 0 \le t_1 \le t_2 \le \cdots \le t_n\le 1\}.\end{equation}

First let $g\col [0,1]\to [0,1]$ be any nondecreasing continuous function with $g(0) = 0$ and $g(1) = 1$.  Later on we will specialize $g$ to a particular piecewise linear function. 
By applying $g$ coordinatewise to~\eqref{eq:cartesian-coords}, $g$ defines self-maps on each simplex $g_n\col \Delta^n \to \Delta^n$ given by \[(t_1,\ldots,t_n) \mapsto (g(t_1),\ldots,g(t_n)).\]  For each $n$, the maps $g_n$ and $g_{n-1}$ are compatible with the $n+1$ face maps $\Delta^{n-1}\to \Delta^n$, which send $(t_1,\ldots,t_{n-1})$ to 
\[(0,t_1,\ldots,t_{n-1}), \quad (t_1,\ldots,t_i,t_i,\ldots,t_{n-1})\text{ for $i=1,\ldots,n-1$, and }(t_1,\ldots,t_{n-1},1)\]
respectively, as well as the $n$ degeneracy maps $\Delta^n\to \Delta^{n-1}$ that send $(t_1,\ldots,t_n)$ to
\[(t_1,\ldots,\widehat{t_i},\ldots,t_n)\]
for each $i=1,\ldots,n$, respectively.  Therefore, $g$ determines maps $g_X\col |X|\to |X|$ for any simplicial set $X$, naturally in $X$.  If $X$ is a bisimplicial set, then $g$ also determines a map $|X_{p,\bullet}| \to |X_{p,\bullet}|$ for each $p$, and by gluing these, we get a map $g_X\col |X|\to |X|$ as well, which is natural in morphisms of bisimplicial sets.  Finally, let
\begin{equation}\label{eq:def-G_X} G_X \col [0,1]\times |X| \to |X|\end{equation}
be the straight line homotopy from the identity map to $g_X$; this is again natural in morphisms of bisimplicial sets.

Now we specialize to a particular $g$.  Let $g \col [0,1] \to [0,1]$ be the increasing, piecewise linear map that sends the intervals 
\[ \big[0,\tfrac{1}{4}\big], \quad \big[\tfrac{1}{4},\tfrac{1}{2}\big], \quad \big[\tfrac{1}{2}, 1\big]\]
linearly to the intervals
\[ \big[0,\tfrac{1}{2}\big], \quad \big[\tfrac{1}{2},\tfrac{3}{4}\big], \quad \big[\tfrac{3}{4}, 1\big]\]
respectively.  What remains only to show is that the composition
\[|X|\xra{g_X}|X|\xra{(\textrm{id}\times\Phi)\circ \Phi} |X|\times|X|\times|X|\]
is equal to $(\Phi\times\textrm{id})\circ \Phi.$  It suffices to perform this calculation in the case of a simplex \[X = \Delta^n = \Hom_{\Delta}(-,[n])\col \Delta^{\mathrm{op}}\to \mathsf{Set}.\]
We first state the following lemma, without proof:

\lemnow{\label{lem:the-guts} Let $X$ be an $n$-simplex.  In Cartesian coordinates, the map \[\Phi\col |X| \to |X|\times |X|\] is given by
\[(t_1,\ldots,t_n) \mapsto ((t_1',\ldots, t_i',1,\ldots,1),(0,\ldots,0,t_{i+1}',\ldots,t_n'))\]
for 
\[0\le t_1\le \cdots \le t_i \le \tfrac{1}{2} \le t_{i+1}\le \cdots\le t_{n}\le 1\]
and where $t'_\ell$ denotes the result of applying to $t_\ell$ the appropriate linear rescaling
\[\big[0,\tfrac{1}{2}\big]\to[0,1]\quad\text{ or }\quad \big[\tfrac{1}{2},1\big]\to[0,1].\]
}
Continue to let $X$ be an $n$-simplex.  Applying Lemma~\ref{lem:the-guts}, in Cartesian coordinates, the map
$(\mathrm{id}\times \Phi)\circ \Phi$ is given by
\[(t_1,\ldots,t_n) \mapsto ((t_1',\ldots,t_i',1,\ldots,1),(0,\ldots,0,t_{i+1}',\ldots,t_j',1,\ldots,1),(0,\ldots,0,t_{j+1}',\ldots,t_n'))\]
for 
\[0\le t_1\le\cdots\le t_i \le \tfrac{1}{2} \le t_{i+1}\le \cdots \le t_j \le \tfrac{3}{4} \le t_{j+1} \le \cdots \le t_n\le 1\]
and $t_\ell'$ denotes the result of applying to $t_\ell$ the appropriate linear rescaling
\[\big[0,\tfrac{1}{2}\big]\to [0,1], \quad \big[\tfrac{1}{2}, \tfrac{3}{4}\big]\to [0,1], \quad \text{ or}\quad \big[\tfrac{3}{4},1\big]\to[0,1]. \]
And the map $(\Phi \times \mathrm{id}) \circ \Phi$ has the same description as above, replacing the numbers $\tfrac{1}{2}$ and $\tfrac{3}{4}$ with $\tfrac{1}{4}$ and $\tfrac{1}{2}$ respectively.  The equality
\[(\mathrm{id}\times\Phi)\circ \Phi \circ g_X = (\Phi\times\mathrm{id})\circ \Phi\]
is now clear. This proves Proposition~\ref{prop:(3)}.
\end{proof}

For $X = N_\bu S_\bu (\Proj{\ZZ})$ with $\Proj{\ZZ}$ the Waldhausen category of finitely generated projective $\ZZ$-modules, $|X| = BK(\ZZ)$ and we will use the filtered map $|X| \approx |\es(X)| \to |X| \times |X|$ to induce a \emph{coproduct} on the Quillen spectral sequence.  The \emph{product} will be induced by a filtered map $|X| \times |X| \to |X|$ which is in some ways easier to comprehend, but depends on extra structure on the category $\Proj{\ZZ}$, namely the symmetric monoidal structure given by direct sum of free $\ZZ$-modules.  Strictly speaking, the direct sum operation $(P,P') \mapsto P \oplus P'$ is a choice (at least of the set underlying $P \oplus P'$), because of this, the definition of the product map $|X| \times |X| \to |X|$ looks a bit lengthy when spelled out, but hopefully will not be surprising.

\begin{proposition}\label{prop:(4)} 
Let \[X = N_\bu S_\bu (\Proj{\ZZ}) \col \Delta^{\mathrm{op}}\times \Delta^{\mathrm{op}} \to \mathsf{Set}\] be the bisimplicial set with \[X_{p,q} = N_q S_p(\Proj{\ZZ})\] as before.
Define a product \begin{equation}\label{eq:product-definition} m\col X\times X\to X\end{equation}
given by maps \[(X\times X)_{p,q} = X_{p,q}\times X_{p,q} \to X_{p,q}\]
which are direct sums of composable chains of morphisms of triangular diagrams.  
 Precisely, we first choose for each pair of objects $V, V'$ of $\Proj{\ZZ}$ an object $m(V,V')$ and morphisms \[V \to m(V,V') \leftarrow V'\] satisfying the universal property of coproducts.  In other words, $m(V,V')$ is a chosen model for the direct sum $V \oplus V'$, and there is a canonical associator and symmetry making $(\Proj{\ZZ},m)$ into a symmetric monoidal category.  As usual, any two choices of $m$'s will be canonically isomorphic, although the functions $N_0 \Proj{\ZZ} \times N_0\Proj{\ZZ} \to N_0\Proj{\ZZ}$ need not be equal.  Relatedly, the product map $m \colon |X| \times |X| \to |X|$ that we will define depends on the choice of $m$, although its homotopy class as a filtered map will not.  From now on we will fix such a choice and write simply $V \oplus V'$ for the chosen object $m(V,V')$.

Suppose now  we are given two elements of $X_{p,q}$, given (after suppressing chosen cokernels from the notation, in other words suppressing from the notation all but the first row of each triangular diagram~\eqref{eq:triangle-of-projectives}) by rectangular diagrams
\[\xymatrix@R=4mm@C=6mm{0 \ar[r] & V_1^0\ar@{>->}[r]\ar[d]_{\cong}  & \cdots\ar@{>->}[r] & V_p^0\ar[d]_{\cong}\\ & \vdots\ar[d]_{\cong} &\cdots &\vdots\ar[d]_{\cong} \\0 \ar[r] & V_1^q\ar@{>->}[r]  & \cdots\ar@{>->}[r] & V_p^q
},\qquad \xymatrix@R=4mm@C=6mm{0 \ar[r] & W_1^0\ar@{>->}[r]\ar[d]_{\cong}  & \cdots\ar@{>->}[r] & W_p^0\ar[d]_{\cong}\\ & \vdots\ar[d]_{\cong} &\cdots&\vdots\ar[d]_{\cong} \\0 \ar[r] & W_1^q\ar@{>->}[r]  & \cdots\ar@{>->}[r] & W_p^q.
} \]
The notation that has been suppressed includes a choice of cokernel $V^k_{i,j}$ for each map $V^k_i \to V^k_j$.  For ease of notation, we shall abbreviate rectangular diagrams of this form further, so that the diagrams above are abbreviated as 
\[0\to V_1^\bu \to \cdots \to V_p^\bu,\qquad 0\to W_1^\bu \to \cdots \to W_p^\bu.\]
Then their product is defined to be the diagram
\[V_1^\bu \oplus W_1^\bu\to \cdots\to V_p^\bu\oplus W_p^\bu.\]
The chosen cokernels, which still do not appear in the notation, are the direct sums of the existing choices: namely, the chosen cokernel for 
\[V_i^k\oplus W_i^k \to V_j^k \oplus W_j^k\]
is $V_{i,j}^k \oplus W_{i,j}^k$.
Write $m\col |X|\times |X| \to |X|$ and $\Phi\col |X|\to |X|\times |X|$ also for the maps on spaces.
Then we claim that product and coproduct are compatible:
\[\xymatrix@C=36mm{|X|\times |X|\ar[r]^m \ar[d]^{\Phi\times \Phi} & |X|\ar[d]^{\Phi} \\ |X|\!\times\! |X|\!\times\!|X|\!\times\!|X| \ar[r]^{(m\times m) \circ (\mathrm{id}\times s \times \mathrm{id})} & |X|\!\times\! |X|}\]
where $s\col X\times X \to X\times X$ exchanges first and second coordinate.  
The lower horizontal map is thus \[(x_1,x_2,x_3,x_4)\mapsto (m(x_1,x_3), m(x_2, x_4)).\]
\end{proposition}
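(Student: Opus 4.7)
The plan is to promote the claimed compatibility to a strict equality of maps of bisimplicial sets, from which commutativity of the diagram of spaces will follow after applying geometric realization (using that $|-|$ preserves finite products). Both $m$ and $\Phi$ are, by construction, induced from bisimplicial maps: $m$ from the componentwise direct-sum rule in the statement, and $\Phi$ from the natural transformation $\es(X)\Rightarrow X\times X$ of Definition~\ref{def:the-nt} via the homeomorphism $|\es(X)|\approx|X|$. Hence it suffices to evaluate both paths on a pair of $(p,q)$-simplices and check equality there.

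Fix $(V^\bu,W^\bu)\in X_{p,q}\times X_{p,q}$. Since $\es_pX_{*,q}=X_{2p+1,q}$, we view both as chains of length $2p+1$ in the $S_\bu$-direction when computing $\Phi\circ m$. The right-then-down composition first forms the direct-sum chain $V^\bu\oplus W^\bu$, whose chosen cokernel of $V_i\oplus W_i\rightarrowtail V_j\oplus W_j$ is (by the convention in the statement) declared to be $V_{i,j}\oplus W_{i,j}$. Applying $\Phi$ then yields the pair
\[
\bigl(V^\bu_{\leq p}\oplus W^\bu_{\leq p},\ (V^\bu\oplus W^\bu)_{p+1,p+\bu}\bigr),
\]
where the second entry is the length-$p$ back half of the $(2p+1)$-chain modulo its $(p+1)$st entry; by the cokernel convention just noted, this second entry is literally the chain $V^\bu_{p+1,p+\bu}\oplus W^\bu_{p+1,p+\bu}$.

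The down-then-right composition first applies $\Phi$ to each factor separately to obtain the two pairs $(V^\bu_{\leq p},V^\bu_{p+1,p+\bu})$ and $(W^\bu_{\leq p},W^\bu_{p+1,p+\bu})$, applies the swap $\mathrm{id}\times s\times\mathrm{id}$ to regroup as the $4$-tuple $(V^\bu_{\leq p},W^\bu_{\leq p},V^\bu_{p+1,p+\bu},W^\bu_{p+1,p+\bu})$, and then takes componentwise direct sums, giving
\[
\bigl(V^\bu_{\leq p}\oplus W^\bu_{\leq p},\ V^\bu_{p+1,p+\bu}\oplus W^\bu_{p+1,p+\bu}\bigr).
\]
This is literally equal to the right-then-down output. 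The same calculation handles the nerve ($q$-)direction: a chain of $q$ isomorphisms of triangular diagrams direct-sums componentwise with another such chain, and both halves of the edgewise split commute with this direct sum on the nose.

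The only real obstacle is bookkeeping around the choices of cokernels; the equality of the two routes rests on the convention in the proposition that chosen cokernels in $V^\bu\oplus W^\bu$ are declared to be the direct sums of the chosen cokernels in $V^\bu$ and $W^\bu$. Without this convention the two routes would agree only up to a canonical isomorphism $(V_i\oplus W_i)/(V_{p+1}\oplus W_{p+1})\cong V_{p+1,i}\oplus W_{p+1,i}$, and the induced maps of bisimplicial sets would not be literally equal, only naturally isomorphic. Granting the convention, both paths define the same map of bisimplicial sets, and geometric realization gives the desired commuting square of spaces.
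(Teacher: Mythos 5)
Your proposal is correct and follows essentially the same route as the paper's proof: reduce to an equality of morphisms of bisimplicial sets by comparing the two composites out of $\es(X)\times\es(X)$, evaluate both on a pair of $(2p+1)$-chains, and observe that the convention that chosen cokernels of direct sums are direct sums of chosen cokernels makes the two outputs literally equal rather than merely canonically isomorphic. The paper's argument is exactly this computation, so no further comment is needed.
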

\proofnow{It suffices to show that the compositions 
\[\es(X) \times \es(X) \xra{m} \es(X) \xra{\Phi} X\times X\]
and
\[\es(X)\times \es(X) \xra{\Phi\times \Phi} X\times X\times X\times X \xra{(m\times m)\circ (\mathrm{id}\times s \times \mathrm{id})} X\times X\]
are equal as morphisms of bisimplicial sets.  This is direct from the definitions: both maps send a pair of $(p,q)$-simplices of $\es(X)$
\[((0\to V_1^\bu\to \cdots \to V_{2p+1}^\bu), (0\to W_1^\bu\to\cdots\to W_{2p+1}^\bu))\]
to
\begin{equation*} \label{equ:eulerchar}
\resizebox{.95\hsize}{!}{
$\begin{aligned}
   ((0\to V_1^\bu\oplus W_1^\bu \to \cdots \to V_{2p+1}^\bu\oplus W_{2p+1}^\bu),
(0\to V_{p+1,p+2}^\bu\oplus W_{p+1,p+2}^\bu\to \cdots\to V_{p+1,2p+1}^\bu\oplus W_{p+1,2p+1}^\bu)),
    \end{aligned}$
    }
\end{equation*}
and the proposition follows.
}
\remnow{\label{rem:product-compatible-with-filtration} We also record that the product $m$ is evidently compatible with the filtration on $X$.  Precisely, $m$ restricts to a map
\begin{equation}\label{eq:product-compatible-with-filtration}m\col |F_s X| \times |F_t X| \to |F_{s+t} X|
\end{equation}
for all $s,t\ge 0$.
}

\remnow{\label{rem:mult-structures} We also need the following generalities on multiplicative spectral sequences.

Given a filtered space and a product that respects filtrations, we obtain (at this level of generality) a spectral sequence on relative singular homology with products on each page, and all differentials on each page are derivations of the product (Leibniz rule), and the isomorphism from $H_*(E_r) \to E_{r+1}$ is a multiplicative isomorphism with respect to the product on $H_*(E_r)$ induced from that on $E_r$. 
}

\begin{proposition}\label{prop:(5)}
Let \[X = N_\bu S_\bu (\Proj{\ZZ})\col \Delta^{\mathrm{op}}\times \Delta^{\mathrm{op}}\to \mathsf{Set},\]
with its rank filtration.  Then the product $m\col X\times X\to X$ is commutative up to homotopy, respecting the filtration. In other words, there is a homotopy
\[H\col |X|\times |X| \times [0,1]\to |X|\]
from $m$ to $m\circ s$, where $s\col X\times X\to X\times X$ switches coordinates. Furthermore, $H$ respects the  filtration, in that it restricts to a map
\[H\col |F_t X| \times |F_u X| \times [0,1]\to |F_{t+u}X|\]
for all $t,u\ge 0$.  
\end{proposition}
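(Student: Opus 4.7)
The plan is to promote the symmetry constraint of the symmetric monoidal category $(\Proj{\ZZ}, \oplus)$ to a natural isomorphism between the simplicial functors $m$ and $m \circ s$, and then to invoke the standard principle that a natural isomorphism between functors of small categories induces a canonical homotopy between the induced maps on nerves.

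First I would construct, for each $p \geq 0$, a natural isomorphism
\[ \tau_p \col m \Rightarrow m \circ s \]
of functors $S_p(\Proj{\ZZ}) \times S_p(\Proj{\ZZ}) \to S_p(\Proj{\ZZ})$ whose component at a pair of triangular diagrams $(V_\bu, W_\bu)$ is obtained by applying, componentwise at every entry $(i,j)$, the symmetry isomorphism $V_{i,j} \oplus W_{i,j} \to W_{i,j} \oplus V_{i,j}$ of $(\Proj{\ZZ}, \oplus)$. Naturality of the symmetry guarantees that these isomorphisms commute with the horizontal cofibrations and intertwine the chosen cokernels $V_{i,j} \oplus W_{i,j}$ and $W_{i,j} \oplus V_{i,j}$ used to define $m$. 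The collection $\{\tau_p\}_{p \geq 0}$ assembles into a natural isomorphism of simplicial groupoids, since direct sum strictly commutes with the face and degeneracy operators in the $S_\bu$-direction.

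Next I would invoke the standard construction converting a natural isomorphism $\eta \col F \Rightarrow G$ of functors $F, G \col \cC \to \cD$ into a functor $\tilde \eta \col \cC \times [1] \to \cD$ (where $[1]$ denotes the walking-arrow category) whose restrictions to $\cC \times \{0\}$ and $\cC \times \{1\}$ recover $F$ and $G$ respectively. Taking nerves and using $|N[1]| \cong [0,1]$ yields a homotopy $|N\cC| \times [0,1] \to |N\cD|$ between $|NF|$ and $|NG|$. Applying this bidegreewise to $\tau_p$, and using the natural homeomorphism $|X \times X| \cong |X| \times |X|$ for bisimplicial sets, produces the desired continuous homotopy $H \col |X| \times |X| \times [0,1] \to |X|$ from $m$ to $m \circ s$.

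Finally, to verify filtration compatibility, observe that for $V_\bu \in F_t S_p(\Proj{\ZZ})$ and $W_\bu \in F_u S_p(\Proj{\ZZ})$ both $m(V_\bu, W_\bu)$ and $m(W_\bu, V_\bu)$ have top-right module of rank $\mathrm{rank}(V_p^0) + \mathrm{rank}(W_p^0) \leq t+u$, and the symmetry is an isomorphism of projective $\ZZ$-modules, hence preserves rank. Therefore the functor $\tilde \tau_p \col (S_p \times S_p) \times [1] \to S_p$ sends $F_t S_p \times F_u S_p \times [1]$ into $F_{t+u} S_p$, and after realization $H$ restricts to the desired map $|F_t X| \times |F_u X| \times [0,1] \to |F_{t+u} X|$. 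I do not expect an essential obstacle; the only point requiring care is that the symmetry be compatible with the \emph{chosen} cokernels used in the definition of $m$, which follows from the coherence axioms of the symmetric monoidal structure on $(\Proj{\ZZ}, \oplus)$ and the naturality of the symmetry with respect to the quotient maps involved.
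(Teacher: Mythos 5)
Your proposal is correct, but it takes a genuinely different route from the paper. The paper constructs the homotopy entirely in the $S_\bullet$-direction: it writes down an explicit morphism of bisimplicial sets $X\times X\times \Delta^{1,0}\to X$, where $\Delta^{1,0}=\Hom_{\Delta\times\Delta}(-,([1],[0]))$, sending a triple $(V_\bullet^\bu, W_\bullet^\bu, f_i)$ to the flag $V_1^\bu\oplus W_1^\bu\to\cdots\to V_{i-1}^\bu\oplus W_{i-1}^\bu\to W_i^\bu\oplus V_i^\bu\to\cdots\to W_p^\bu\oplus V_p^\bu$; the homotopy parameter interpolates the position along the flag at which the two summands are swapped, in the same style as the homotopies used for coassociativity of the coproduct (Proposition~\ref{prop:(3)}). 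You instead put the homotopy in the nerve ($q$-)direction: the symmetry constraint of $(\Proj{\ZZ},\oplus)$, applied entrywise to the triangular diagrams, is a natural isomorphism $m\Rightarrow m\circ s$ of functors $S_p(\Proj{\ZZ})\times S_p(\Proj{\ZZ})\to S_p(\Proj{\ZZ})$ compatible with the simplicial operators in $p$, and the standard ``natural transformation of functors induces a homotopy of nerves'' construction (a functor out of $\cC\times[1]$) yields, after realization, the required homotopy; filtration-compatibility is immediate since the symmetry components are isomorphisms (hence rank-preserving, and weak equivalences, so legitimate morphisms in the groupoids $S_p(\Proj{\ZZ})$ whose morphisms are only the weak equivalences) and $m$ already satisfies \eqref{eq:product-compatible-with-filtration}. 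Your argument is shorter and more conceptual — it is essentially the observation that the classifying space of a symmetric monoidal category is a homotopy-commutative H-space, filtered here because rank is additive under $\oplus$ and invariant under isomorphism — while the paper's explicit formula keeps the proof uniform with its other prism-style homotopies in the Waldhausen direction and stays at the level of object-set formulas. The only points requiring care in your version, which you correctly flag, are that the entrywise symmetry commutes with the horizontal cofibrations and with the \emph{chosen} cokernels (naturality of the symmetry gives both), and that the natural isomorphisms for varying $p$ strictly commute with faces and degeneracies so that the homotopies assemble bisimplicially before realizing.
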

\begin{proof}
    We shall construct a morphism of bisimplicial sets
    \[X\times X \times \Delta^{1,0} \to X\]
whose geometric realization is the desired $H$.  Here \[\Delta^{1,0} = \Hom_{\Delta\times \Delta}(-,([1], [0])),\]
whose geometric realization is an interval $[0,1]$.
For ease of notation, abbreviate
\[0\to V_1^\bu \to \cdots \to V_p^\bu\]
for a $(p,q)$-simplex of $X$, as before. The notation stands for a chain of $q$ morphisms of triangular diagrams of projective $\ZZ$-modules of size $p$.

For $i=0,\ldots,p+1$, let $f_i\in \Delta([p],[1])$ be the morphism with $f_i (x) = 0$ if $x<i$ and $f_i(x) = 1$ if $x\ge i$.
An element of $(X\times X \times \Delta^{1,0})_{p,q}$ is a triple
\[(0\to V_1^\bu \to \cdots \to V_p^\bu, 0\to W_1^\bu \to \cdots \to W_p^\bu, f_i)\]
for some $i\in \{0,\ldots p+1\}$. It is sent by $H$ to
\[0 \to V_1^\bu \oplus W_1^\bu\to \cdots \to V_{i-1}^\bu \oplus W_{i-1}^\bu \to W_i^\bu\oplus V_i^\bu \to \cdots\to W_p^\bu\oplus V_p^\bu.\]
These maps \[(X\times X \times \Delta^{1,0})_{p,q}\to X_{p,q}\]
are compatible with face and degeneracy maps, so we have produced a map of bisimplicial sets, which restricts to $m$ and $m\circ s$ at $t=0$ and $t=1$, respectively.
Finally, $H$ obviously respects filtration: it restricts to a map of bisimplicial sets, for any $t,u\ge 0$,
\[F_t X \times F_u X \times \Delta^{1,0} \to F_{t+u} X,\]
since the rank of a direct sum of modules of rank $t$ and $u$ is $t+u$.
\end{proof}

\subsection{Monoidality for the spectral sequence of a filtered space}  

We recall a certain monoidality of the spectral sequence associated to a filtered space.  This seems well known in the algebraic topology literature, so we list the precise statements we need and give some references: \cite{cartan-eilenberg-homological, douady-suite-spectrale-adams, douady-suite-spectrale-multiplicative}. See also \cite[Chapter 6]{rognes-spectral} and \cite{goette-mathoverflow}.  

\subsubsection{Filtered spaces}

In this section we write $H_*$ for singular homology with coefficients in some field.  Recall that for a topological space $X$ filtered by subcomplexes $\Fil_t X \subset X$, there is a spectral sequence 
\begin{equation*}
  E^1_{s,t} = E^1_{s,t}(X) = H_{s+t}(\Fil_t X,\Fil_{t-1} X) \Rightarrow H_*(X),
\end{equation*}
with convergence assuming the filtration is bounded below, for instance $\Fil_{-1} X = \emptyset$, and exhaustive: $\colim_t H_*(\Fil_t X) \to H_*(X)$ is an isomorphism.  For definiteness, let us work with the construction of this spectral sequence given in \cite[Chap.\ XV, \S7]{cartan-eilenberg-homological}, see especially Example 3 on page 335.  This agrees with the construction in \cite[Section II.C]{douady-suite-spectrale-adams} apart from notation (in particular, the latter writes $\pi(p,q)$ for what the former denotes $H(p,q)$).  This spectral sequence is natural with respect to all maps of filtered spaces: that is, continuous maps $X \to X'$ sending $\Fil_tX$ into $\Fil_tX'$.  

\subsubsection{Products of filtered spaces}

If $X'$ and $X''$ are filtered spaces, then the Cartesian product $X = X' \times X''$ inherits a filtration, namely
\begin{equation*}
  \Fil_tX = \mathrm{Im}\Big( \coprod_{u + v \leq t} \Fil_u X' \times \Fil_v X'' \to X' \times X'' = X\Big).
\end{equation*}
In this situation the inclusion $\Fil_u X' \times \Fil_v X'' \hookrightarrow \Fil_{u+v}X$ induces a chain map \[C_*(\Fil_u X') \otimes C_*(\Fil_vX'') \to C_*(\Fil_{u+v} X)\] defined by the chain-level cross product.  This inclusion sends both subspaces $\Fil_u X' \times \Fil_{v-1} X''$ and $\Fil_{u-1} X' \times \Fil_v X''$ into $\Fil_{u+v-1} X$, so the cross product factors over a chain map \begin{equation}\label{eq:cross-product-on-relative-chains}
C_*(\Fil_uX',\Fil_{u-1}X') \otimes C_*(\Fil_v X'',\Fil_{v-1}X'') \to C_*(\Fil_{u+v} X,\Fil_{u+v-1} X).    
\end{equation}  Passing to homology then gives a homomorphism
\begin{equation}\label{eq:5}
  \phi^1 \colon E^1_{p,q}(X') \otimes E^1_{p',q'}(X'') \to E^1_{p+p',q+q'}(X)
\end{equation}
which we will call the {\em exterior product}.  In brief, these make $X \mapsto E^1_{*,*}(X)$ into a lax monoidal functor from filtered spaces to bigraded vector spaces.

Similarly, filtering $H_*(X)$ by the images of the $H_*(\Fil_t X)$, and similarly for $H_*(X')$ and $H_*(X'')$, the cross product $H_*(X') \otimes H_*(X'') \to H_*(X)$ descends to a pairing
\begin{equation}\label{eq:6}
  \mathrm{Gr}_p H_{p+q}(X') \otimes  \mathrm{Gr}_{p'} H_{p'+q'}(X'') \to \mathrm{Gr}_{p+p'}H_{p+p'+q+q'}(X).
\end{equation}

\begin{proposition}\label{prop:douady}
  In the above setting there are pairings
  \begin{equation*}
    \phi^r \colon E^r_{p,q}(X') \otimes E^r_{p',q'}(X'') \to E^r_{p+p',q+q'}(X)
  \end{equation*}
  for all $r \geq 1$, agreeing with~(\ref{eq:5}) for $r = 1$, satisfying the Leibniz rule
  \begin{equation*}
    d^r\phi^r(x' \otimes x'') = \phi^r((d^r x') \otimes x'' + (-1)^{p+q} x' \otimes (d^r x''))
  \end{equation*}
  and such that the induced pairing on homology of $r$th pages is identified with $\phi^{r+1}$.  (Such pairings are of course unique if they exist since each determines the next---the main content is the Leibniz rule so that $\phi^r$ descends to a pairing on \emph{homology} of $r$th pages.)

  Moreover, the induced pairing $\phi^\infty$ of $E^\infty$-pages is identified with (\ref{eq:6}).  Finally, if $X'$ and $X''$ are CW complexes filtered by subcomplexes, then the homomorphisms
  \begin{equation*}
    \phi^r \colon E^r_{*,*}(X') \otimes E^r_{*,*}(X'') \to E^r_{*,*}(X),
  \end{equation*}
  obtained by taking direct sum over all $p,q,p',q'$, are isomorphisms for all $r \geq 1$.
\end{proposition}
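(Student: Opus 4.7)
The strategy is to induce all of the $\phi^r$ from a single filtration-preserving chain-level cross product, deduce the Leibniz rule and the compatibility $H(\phi^r) = \phi^{r+1}$ from the standard multiplicative theory of spectral sequences of filtered chain complexes, and then handle the isomorphism claim by reducing to a cellular Künneth argument. First I would present the spectral sequence in the familiar form coming from the filtered singular chain complex $F_tC_*(X) := C_*(\Fil_tX)$, namely
$$E^r_{p,q} = Z^r_{p,q}\big/\big(dZ^{r-1}_{p+r-1,q-r+2} + Z^{r-1}_{p-1,q+1}\big),$$
with $Z^r_{p,q} = \{x \in F_pC_{p+q}(X) : dx \in F_{p-r}C_{p+q-1}(X)\}$; this agrees with the Cartan--Eilenberg construction, and in particular its $E^1$-page is the one appearing in~\eqref{eq:5}.

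Next I would observe that the Eilenberg--Zilber cross product $\times \colon C_*(X')\otimes C_*(X'')\to C_*(X'\times X'')$ sends $F_uC_*(X')\otimes F_vC_*(X'')$ into $F_{u+v}C_*(X)$, simply because a pair of simplices landing in $\Fil_uX'$ and $\Fil_vX''$ produces one in $\Fil_uX'\times \Fil_vX''\subseteq \Fil_{u+v}X$; at the quotient level this recovers the pairing~\eqref{eq:cross-product-on-relative-chains}. For $x'\in Z^r_{p,q}(X')$ and $x''\in Z^r_{p',q'}(X'')$, the cross product $x'\times x''$ then lies in $Z^r_{p+p',q+q'}(X)$, and the chain-level Leibniz identity $d(x'\times x'') = (dx')\times x'' + (-1)^{p+q}x'\times (dx'')$ passes to the quotient to give both the well-definedness of $\phi^r$ and the claimed Leibniz rule. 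The relation $H(\phi^r) = \phi^{r+1}$ is then immediate from the construction, and the identification of $\phi^\infty$ with~\eqref{eq:6} follows from the usual $E^\infty$-page description of the associated graded of the filtration on $H_*(X)$. This is precisely the multiplicative spectral sequence formalism of \cite{cartan-eilenberg-homological, douady-suite-spectrale-multiplicative} applied to the cross product.

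Finally, for the isomorphism claim when $X'$ and $X''$ are CW complexes filtered by subcomplexes, I would replace the filtered singular complex by the filtered cellular complex: each $\Fil_tX'$ is itself a CW complex, so its cellular chains compute its singular homology, and this comparison can be upgraded to a filtered quasi-isomorphism of chain complexes (e.g.\ by comparing the spectral sequences of the skeletal filtrations refining the given one). By the cellular Eilenberg--Zilber theorem one has an identification of filtered chain complexes $C^{\mathrm{cell}}_*(X'\times X'') \cong C^{\mathrm{cell}}_*(X')\otimes C^{\mathrm{cell}}_*(X'')$ with the product filtration, under which the cross product is the algebraic tensor product. Since we work over a field, the algebraic Künneth formula $H_*(A\otimes B) \cong H_*(A)\otimes H_*(B)$ then gives $\phi^r$ as an isomorphism on every page, by induction on $r$. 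The main obstacle is this last step: making the singular--cellular comparison sufficiently functorial in filtrations that the filtered cellular tensor product really does compute the filtered singular product at the level of spectral sequences. Everything else is a formal consequence of the fact that $\times$ is a filtered chain map.
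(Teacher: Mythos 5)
Your proposal is correct in substance but takes a genuinely different route from the paper's. For the existence of the pairings, the Leibniz rule, and the identification of $H(\phi^r)$ with $\phi^{r+1}$, you work at the chain level: the cross product $C_*(\Fil_uX')\otimes C_*(\Fil_vX'')\to C_*(\Fil_{u+v}X)$ is a filtration-preserving chain map, and the classical $Z^r/B^r$ bookkeeping for a filtered pairing of chain complexes then gives everything, including the sign in the Leibniz rule. The paper never touches chain representatives: it packages the relative homology groups of pairs of filtration stages into a Cartan--Eilenberg system (\emph{syst\`eme spectral}), defines the pairings directly on relative homology via the inclusions $\Fil_uX'\times\Fil_vX''\hookrightarrow\Fil_{u+v}X$ for every $r$ at once, and invokes Douady's theorem (verifying his axioms (SPP.1)--(SPP.2)) to obtain the multiplicative spectral sequence and its compatibility with the abutment; since both constructions produce the same spectral sequence and the same $\phi^1$, the uniqueness you point out shows they yield the same $\phi^r$. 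Your route is more elementary and self-contained; the paper's buys the abutment statement directly from the cited formalism. The one soft spot is your treatment of the isomorphism claim: as you yourself flag, reducing to cellular chains requires promoting the cellular-versus-singular comparison to a filtered quasi-isomorphism compatible with the cross product, which is standard but not free, and it is exactly what the paper's argument avoids. The paper observes that for CW complexes filtered by subcomplexes one has $\Fil_pX/\Fil_{p-1}X\cong\bigvee_{u+v=p}\bigl(\Fil_uX'/\Fil_{u-1}X'\bigr)\wedge\bigl(\Fil_vX''/\Fil_{v-1}X''\bigr)$ (with the CW topology on $X'\times X''$), so that after taking wedge sum over all filtration degrees the map inducing $\phi^1$ is a homeomorphism of wedges of quotient spaces; the topological K\"unneth theorem over a field then gives the $r=1$ isomorphism, and the higher pages follow by the same algebraic K\"unneth induction you use. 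If you wish to keep your cellular argument you should carry out the double-filtration comparison you allude to; otherwise, substituting this space-level identification for your base case closes the gap with no further work.
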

\begin{proof}[Proof sketch]
  This is mostly contained in \cite[Th\'eor\`eme II.A]{douady-suite-spectrale-multiplicative}.  In the notation from there, we should set
  \begin{equation*}
    \pi(q,r) = \bigoplus_n H_n(\Fil_{-q}X,\Fil_{-r}X)
  \end{equation*}
  for $-\infty \leq q \leq r \leq \infty$ (where we write $\Fil_{-\infty}X = \emptyset$ and $\Fil_\infty X = X$), let
  \begin{equation*}
    \eta\colon \pi(q,r) \to \pi(q',r')
  \end{equation*}
  be defined by functoriality of homology for $q' \leq q$ and $r' \leq r$, and let
  \begin{equation*}
    \partial\colon \pi(q,r) \to \pi(r,s)
  \end{equation*}
  be the connecting homomorphism for the triple $(\Fil_q X, \Fil_r X , \Fil_s X)$ for $-\infty \leq q \leq r \leq s \leq \infty$.  This data is called a \emph{syst\`eme spectraux} in \cite{douady-suite-spectrale-adams} and a \emph{Cartan--Eilenberg system} in many other places, and satisfies the axioms (SP.1)--(SP.5) of \cite{cartan-eilenberg-homological}.
  
  Defining \emph{syst\`emes spectraux} $\pi'$ and $\pi''$ associated to the filtered spaces $X'$ and $X''$ in the same manner, the inclusions $\Fil_{-n}X' \times \Fil_{-q} X'' \hookrightarrow \Fil_{-n-q} X$ then induce homomorphisms
  \begin{equation*}
    \pi'(n,n+r) \otimes \pi''(q,q+r) \xrightarrow{\phi_r} \pi(n+q,n+q+r) = H_*(\Fil_{-n-q}X,\Fil_{-n-q-r} X)
  \end{equation*}
  for all $r \geq 1$, in the same way as~(\ref{eq:5}) which is the special case $r=1$. These homomorphisms satisfy the assumptions of Douady's theorem, which then gives the stated result, apart from the claim that the $\phi^r$ define isomorphisms after taking direct sum over all bidegrees.  The original reference \cite{douady-suite-spectrale-multiplicative} in fact omits the proof, but details can be found elsewhere, for instance \cite{goette-mathoverflow} or \cite{rognes-spectral}.  The verification of axioms (SPP.1) and (SPP.2) in Douady's theorem is as in \cite[Proposition 6.3.12]{rognes-spectral}.

  It remains to see that the exterior products $\phi^r$ become isomorphisms after taking direct sum over all bidegrees, when $X'$ and $X''$ are CW complexes filtered by subcomplexes.  In that case we can identify relative homology with reduced homology of the quotient space: $E^1_{p,q}(X') = \widetilde H_{p+q}(\Fil_p X'/\Fil_{p-1} X')$, and similarly for $X''$ and $X$.  Then $\phi^1$ is induced by maps $(\Fil_u X'/\Fil_{u-1}X')\wedge (\Fil_vX''/\Fil_{v-1}X'') \to \Fil_{u+v}X/\Fil_{u+v-1} X$ arising from the inclusions $\Fil_u X' \times \Fil_v X'' \hookrightarrow \Fil_{u+v} X$.  Taking wedge sum over all filtrations, we see that $\phi^1$ is induced from a single map of pointed spaces
  \begin{equation*}
    \Big(\bigvee_{s \in \ZZ} \Fil_s X'/\Fil_{s-1} X'\Big) \wedge   \Big(\bigvee_{s \in \ZZ} \Fil_{s} X''/\Fil_{s-1} X''\Big) \longrightarrow \bigvee_{s \in \ZZ} \Fil_s X/\Fil_{s-1} X,
  \end{equation*}
  and the claim about isomorphism follows for $r = 1$ from the K\"unneth theorem and the observation that this map is in fact a homeomorphism when $X = X' \times X''$ is given the CW topology.  But then inductively $\phi^r$ is also an isomorphism for higher $r$, again by the K\"unneth theorem.
\end{proof}

\subsection{Hopf algebra structure on the Quillen spectral sequence}

The results earlier in this section combine to yield the following extra structures on $\sseq{Q}$, the homological Quillen spectral sequence.

\begin{theorem}\label{thm:QSS}\mbox{}
\begin{enumerate}
    \item 
    The homological Quillen spectral sequence
    \[\sseq{Q}^1_{s,t} = H_{t}(\GL_s(\ZZ), \mathrm{St}_s \otimes \QQ) \Rightarrow H_{\ast}(BK(\ZZ))\]
    is a spectral sequence of Hopf algebras.  That is, for each $r\ge 0$, there are maps
    \[m^r \col E^{r}_{s,t} \otimes E^r_{s',t'} \to E^r_{s+s',t+t'},\qquad 
    \Delta^r \col E^r_{s,t} \to \bigoplus_{\substack{s'+s'' = s \\ t'+t'' = t}} E^r_{s',t'} \otimes E^r_{s'',t''},\]
    induced from 
    \[m \col BK(\ZZ)\times BK(\ZZ) \to BK(\ZZ)  \quad\text{and}\quad \Phi \col BK(\ZZ) \to BK(\ZZ) \times BK(\ZZ) \]
    respectively,  
    making $E^r$ a bigraded Hopf algebra. Moreover, the differential \[d^r\col E^r_{s,t} \to E^r_{s-r,t+r-1} \]
    is compatible with product and coproduct in the sense that 
  \[d^r(x_1\cdot x_2) = d^r(x_1)\cdot x_2 + (-1)^{p_1+q_1} x \cdot d^r(x_2)\]
    for $x_i \in E^{r}_{p_i,q_i}$, where $x\cdot y$ denotes the product $m^r$; and
    \[\Delta^r \circ d^r = (d^r\otimes 1 \pm 1\otimes d^r) \circ \Delta^r. \]
    More precisely, the sign ``$\pm$'' can be expressed in terms of the symmetry 
    \begin{align*}
        T \colon E^r_{p,q} \otimes E^r_{p',q'} & \xrightarrow{\cong} E^r_{p',q'} \otimes E^r_{p,q}\\
        x \otimes y \quad & \mapsto (-1)^{(p+q)(p'+q')} y \otimes x;
    \end{align*}
    the Leibniz rule for the coproduct should then read $\Delta^r \circ d^r = (d^r \otimes 1 + T \circ (d^r \otimes 1) \circ T) \circ \Delta^r$.
  \item There are isomorphisms $\QQ \to E^r_{0,0} \to \QQ$ acting as unit and counit respectively, making each page $E^r$ into a bigraded Hopf algebra.
  \item Giving the homology of the $r$th page the Hopf algebra structure induced by $m^r$ and $\Delta^r$, the isomorphism $E^{r+1} = H(E^r,d^r)$ is a Hopf algebra isomorphism.
  \item The filtration of $H_*(BK(\ZZ);\QQ)$ induced by convergence of the spectral sequence is multiplicative and comultiplicative, where the product on $H_*(BK(\ZZ);\QQ)$ is induced by $m \colon BK(\ZZ)\times BK(\ZZ) \to BK(\ZZ)$ (in turn constructed from the $\oplus$ operation on the Waldhausen construction; this also agrees with the product induced by the loop space structure) and the coproduct on $H_*(BK(\ZZ);\QQ)$ is induced by the diagonal map (dual to cup product).  With respect to the induced Hopf algebra structure on $\mathrm{Gr} H_*(BK(\ZZ);\QQ)$, the isomorphisms  $E^\infty_{p,q} = \mathrm{Gr}_p H_{p+q}(BK(\ZZ);\QQ)$ form an isomorphism of bigraded Hopf algebras.

\item The product on $E^r$ is graded-commutative for all $r \geq 1$: 
\[x_1\cdot x_2 = (-1)^{(p_1+q_1)(p_2 + q_2)} x_2\cdot x_1,\]
for $x_i$ in $E^r_{p_i,q_i}$.
\item 
    The coproduct $\Delta^r$ is graded co-commutative for $r = \infty$ but not necessarily for finite $r$.  (We are of course asserting that it is co-associative for all $r$, which is part of the axioms for Hopf algebras.) 
\end{enumerate}
\end{theorem}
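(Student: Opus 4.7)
My plan is to assemble the structures from the preceding propositions and feed them through the general multiplicative machinery of Proposition~\ref{prop:douady}. The two key filtered maps are the Waldhausen sum $m\col |X|\times|X|\to|X|$ from Proposition~\ref{prop:(4)} (compatibility with filtration recorded in Remark~\ref{rem:product-compatible-with-filtration}) and the edgewise-subdivision coproduct $\Phi\col |X|\cong|\es(X)|\to|X|\times|X|$, which is filtered by the rank inequality~\eqref{eq:rank-ineq}. Since $|X|=BK(\ZZ)$ is a CW complex filtered by subcomplexes, Proposition~\ref{prop:douady} applies on both sides of the K\"unneth isomorphism, so $m$ and $\Phi$ induce pairings $m^r$ and $\Delta^r$ on every page that are compatible with $d^r$ via the Leibniz rule of that proposition; this immediately gives parts (1), apart from the coassociativity and bialgebra relations, and (3).

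For the algebra axioms, strict associativity of $m^r$ follows from associativity of the chosen $\oplus$ on $\Proj{\ZZ}$ up to canonical isomorphism (which becomes equality after passing to $\pi_0$ of the relevant groupoids and thereby to $E^1$). The bialgebra axiom on each page is read off from the commuting diagram of Proposition~\ref{prop:(4)}: both compositions appearing there are filtered maps and agree on the nose as morphisms of bisimplicial sets, so the induced maps on every $E^r$ agree after using the isomorphism part of Proposition~\ref{prop:douady} to identify $E^r(X)\otimes E^r(X)$ with $E^r(X\times X)$. Coassociativity of $\Delta^r$ comes from Proposition~\ref{prop:(3)}, whose homotopy lands inside $|F_s(X\times X\times X)|$ for each $s$; this is exactly a homotopy through filtered maps, hence induces equal morphisms on each $E^r$. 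Graded commutativity of $m^r$ (part (5)) is the analogous consequence of the filtered homotopy produced in Proposition~\ref{prop:(5)}. The unit and counit (part (2)) are induced by the contractible $\Fil_0 BK(\ZZ)$ and its retraction, which land in $E^r_{0,0}$.

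For part (4), the filtration on $H_\ast(BK(\ZZ))$ is multiplicative and comultiplicative because $m$ and $\Phi$ are filtered maps, so the $E^\infty$ isomorphism respects both structures by the ``$\phi^\infty$'' statement of Proposition~\ref{prop:douady}. The induced product on $H_\ast(BK(\ZZ))$ agrees with the loop-space product by the standard identification of the Waldhausen $\oplus$-product with the infinite-loop-space multiplication; the induced coproduct agrees with the diagonal coproduct by Corollary~\ref{cor:homotopic-to-diagonal}, which provides an (unfiltered) homotopy between $\Phi$ and the diagonal. Since the diagonal coproduct is cocommutative, this also yields the cocommutativity statement in part (6) on $E^\infty$. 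For finite $r$ we make no such claim: the map $\Phi$ is not cocommutative, and no filtered homotopy exists between $\Phi$ and $s\circ \Phi$, as witnessed later by explicit non-cocommutative classes on $\sseq{Q}^1$. The main technical obstacle is the careful bookkeeping in assembling Proposition~\ref{prop:douady} with the filtered versions of coassociativity, commutativity, and the bialgebra axiom; once the appropriate diagrams are shown to commute at the level of filtered CW complexes, everything on the pages is forced by naturality.
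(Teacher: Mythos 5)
Your proposal is correct and follows essentially the same route as the paper: both feed the filtered maps $m$ (Proposition~\ref{prop:(4)}, Remark~\ref{rem:product-compatible-with-filtration}) and $\Phi$ (edgewise subdivision) through the exterior-product machinery of Proposition~\ref{prop:douady}, use the filtered homotopies of Propositions~\ref{prop:(3)} and~\ref{prop:(5)} for coassociativity and commutativity, the strict compatibility in Proposition~\ref{prop:(4)} for the bialgebra axiom, and Corollary~\ref{cor:homotopic-to-diagonal} to identify the abutment coproduct with the diagonal and get cocommutativity at $E^\infty$. The only cosmetic difference is that the paper spells out why a filtration-preserving homotopy induces equal maps of spectral sequences (via the filtered cylinder $I\times BK(\ZZ)$ and the two inclusions being one-sided inverses to the projection), a small standard argument you assert rather than prove.
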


\begin{proof}
  Setting $X' = X'' = BK(\ZZ)$ in Proposition~\ref{prop:douady}, we obtain isomorphisms
\begin{equation}\label{eq:7}
  E^r_{*,*}(BK(\ZZ)) \otimes E^r_{*,*}(BK(\ZZ)) \xrightarrow{\phi^r} E^r_{*,*}(BK(\ZZ) \times BK(\ZZ))
\end{equation}
satisfying the Leibniz rule on each page, as well as compatibility between pages and with the abutments.

Now we combine with the space-level products and coproducts and functoriality of the spectral sequence with respect to filtered maps, using Corollary~\ref{cor:homotopic-to-diagonal}, Propositions~\ref{prop:(3)},~\ref{prop:(4)},~\ref{prop:(5)}, and Remark~\ref{rem:product-compatible-with-filtration}.  For instance, the space-level coproduct $\Phi \colon BK(\ZZ) \to BK(\ZZ) \times BK(\ZZ)$ is a map of filtered spaces and hence induces a map of spectral sequences
\begin{equation*}
  E^r_{*,*}(BK(\ZZ)) 
  \xrightarrow{\Phi_*}  
  E^r_{*,*}(BK(\ZZ) \times BK(\ZZ)) 
\end{equation*}
which we combine with the inverse of~(\ref{eq:7}) to obtain a coproduct on the $r$th pages of the Quillen spectral sequence.  Similarly for the  product, while the unit and counit come from space-level maps $\{\text{point}\} \to BK(\ZZ) \to \{\text{point}\}$, where the one-point space is filtered as $\Fil_{-1} = \emptyset \subset \{\text{point}\} = \Fil_0$.

To see that the coproduct on the spectral sequence is coassociative, we write $I = [0,1]$ filtered as $\emptyset = \Fil_{-1}I \subset \Fil_0 I = I$ and first observe that the two injections $BK(\ZZ) \hookrightarrow I \times BK(\ZZ)$, given by $x \mapsto (0,x)$ and $x \mapsto (1,x)$, induce equal maps of spectral sequences, since both are one-sided inverses to the isomorphism of spectral sequences induced by the projection $I \times BK(\ZZ) \to BK(\ZZ)$.  Coassociativity then follows from the space-level homotopy
\begin{equation*}
  I \times BK(\ZZ) \to BK(\ZZ) \times BK(\ZZ) \times BK(\ZZ),
\end{equation*}
observing that this is in fact a filtered map. 

To see that the coproduct is co-commutative on the $E^\infty$-page, we use that the space-level map $\Phi$ is homotopic to the diagonal map.  Therefore the induced coproduct $\Phi_* \colon H_*(BK(\ZZ)) \to H_*(BK(\ZZ)) \otimes H_*(BK(\ZZ))$ is co-commutative, but then this also holds for the induced map of associated gradeds, which is identified with $\phi^\infty \colon E^\infty_{*,*} \to E^\infty_{*,*} \otimes E^\infty_{*,*}$.  

All other properties follow from the corresponding space-level properties in a similar way.
\end{proof}

\section{Proof of Theorems~\ref{thm:HopfAg} and \ref{thm:canonical-inj} and Corollary~\ref{cor:expSL}} \label{sec:proof12}

\subsection{A product on tropical moduli spaces}

Towards a proof of Theorem~\ref{thm:HopfAg}, we shall first study a graded-commutative product on $W_0H^*_c(\cA)$, which can be interpreted both in terms of products of abelian varieties and of tropical abelian varieties.  
As in the proof of Proposition~\ref{prop:E2-0}, we have short exact sequences
\begin{equation}\label{eq:a-ses}0 \to H^{\mathrm{BM}}_{k}(A_{g}^{\mathrm{trop}}) \xra{\iota} H^{\mathrm{BM}}_{k}(A_g^{\mathrm{trop}},A_{g-1}^{\mathrm{trop}}) \xra{\partial} H_{k-1}^{\mathrm{BM}}(A_{g-1}^{\mathrm{trop}})\to 0\end{equation}
for all $k$ and $g$.  These follow from the fact that $H_*^{\mathrm{BM}}(A_{g-1}^{\mathrm{trop}})\to H_*^{\mathrm{BM}}(A_{g}^{\mathrm{trop}})$ is zero, established in~\eqref{eqn: InflationZeroMap}.

Now let $(P^{(g)}[-1],d)$ denote the degree-shifted {\em perfect cone complex}, whose definition and properties we shall now recall from \cite{bbcmmw-top}.  The complex $P^{(g)}[-1]$ is a rational chain complex with differential of degree $-1$, with generators $[\sigma,\omega]$ in degree $\dim(\sigma)$, where $\sigma$ is a perfect cone and $\omega$ is an orientation of the linear span of $\sigma$.  Relations are given by $[\sigma,\omega] = \pm[\sigma',\omega']$ if $\sigma$ and $\sigma'$ are in the same $\GL_g(\ZZ)$-orbit,
with a plus sign if the induced action of $\GL_g(\ZZ)$ on the orientation $\omega$  equals $\omega'$, and a minus sign if not.    
The boundary of $[\sigma,\omega]$ is a sum of codimension $1$ faces of $\sigma$ with induced orientation.  Then from \cite{bbcmmw-top} we have
$H_k^{\mathrm{BM}}(A_g^{\mathrm{trop}}) \cong H_k(P^{(g)}[-1])$.
Moreover, there are natural inclusions $P^{(g-1)}\to P^{(g)}$ such that
\[\squarediagramlabel{H_k(P^{(g-1)}[-1])}{H_k(P^{(g)}[-1])}{H_k^{\mathrm{BM}}(A_{g-1}^{\mathrm{trop}})}{H_k^{\mathrm{BM}}(A_g^{\mathrm{trop}})}{}{\cong}{\cong}{}\]
commutes. Therefore from~\eqref{eq:a-ses} we obtain short exact sequences
\[0 \to H_k(P^{(g)}[-1]) \xra{\iota} H_k((P^{(g)}/P^{(g-1)})[-1]) \xra{\partial} H_{k-1}(P^{(g-1)}[-1])\to 0\]
for all $k$ and $g$.

Now we construct a product map and prove a Leibniz rule.  Consider the natural continuous maps
\begin{equation}\label{eq:mult-trop}A_g^{\mathrm{trop}}\times A_h^{\mathrm{trop}}\to A_{g+h}^{\mathrm{trop}}\end{equation}
induced by block sum of positive semidefinite forms.  These maps extend to
\begin{equation*}\label{eq:mult-trop-compactified}(A_g^{\mathrm{trop}}\cup\{\infty\},\infty)\wedge (A_h^{\mathrm{trop}}\cup\{\infty\},\infty) \to (A_{g+h}^{\mathrm{trop}}\cup\{\infty\},\infty),\end{equation*}
the one-point compactifications with added point $\infty$.  Hence we obtain
\begin{equation}\label{eq:mult-on-bm}
H_k^{\mathrm{BM}}(A_g^{\mathrm{trop}}) \otimes H_\ell^{\mathrm{BM}}(A_h^{\mathrm{trop}})  \to H_{k+\ell}^{\mathrm{BM}}(A_{g+h}^{\mathrm{trop}}).
\end{equation}
Similarly, we have
\begin{equation}\label{eq:mult-on-rel-bm}
H_k^{\mathrm{BM}}(A_g^{\mathrm{trop}}, A_{g-1}^{\mathrm{trop}}) \otimes H_\ell^{\mathrm{BM}}(A_h^{\mathrm{trop}}, A_{h-1}^{\mathrm{trop}})  \to H_{k+\ell}^{\mathrm{BM}}(A_{g+h}^{\mathrm{trop}}, A_{g+h-1}^{\mathrm{trop}})
\end{equation}
Both~\eqref{eq:mult-on-bm} and~\eqref{eq:mult-on-rel-bm} are induced from
\begin{equation}\label{eq:mult-on-Pg} m\col P^{(g)}[-1]\otimes P^{(g')}[-1]\to P^{(g+g')}[-1], \quad [\sigma,\omega]\otimes [\sigma',\omega']\mapsto [\sigma\times \sigma',\omega\ast \omega']\end{equation}
where, for cones $\sigma \subset \Sym^2((\RR^g)^\vee)$ and $\sigma'\subset\Sym^2((\RR^{g'})^\vee)$, we take $\sigma\times \sigma'\in \Sym^2((\RR^{g'})^\vee) \times \Sym^2((\RR^{g})^\vee) \subset \Sym^2((\RR^{g+g'})^\vee)$ as the cone of block sums of symmetric bilinear forms in $\sigma$ and $\sigma'$.  The product $m(\sigma,\sigma')$ shall be denoted by $\sigma.\sigma'$ henceforth, and similarly for the products~\eqref{eq:mult-on-bm} and~\eqref{eq:mult-on-rel-bm}.  Note that $m$ is graded-commutative.

Consider the bigraded vector  space $\mathcal{A}^{BM}$ defined as
\begin{equation*}
    \mathcal{A}^{BM}_{s,t} = H^{BM}_{s+t}(A^\mathrm{trop}_s;\QQ).
\end{equation*}
It is the (bi)graded dual of $W_0H^*_c(\cA)$ and vanishes for $t < 0$.  Then we have the following conclusion.

\propnow{The maps~\eqref{eq:mult-on-bm} equip $\cA^{\mathrm{BM}}$ with the structure of a $\QQ$-algebra, which is graded-commutative with respect to its total grading.}

\proofnow{The graded-commutativity of the product on $\cA^{\mathrm{BM}}$ follows from graded-commutativity of the product~\eqref{eq:mult-on-Pg} on perfect cone complexes.}

\begin{remark}
The algebraic moduli space $\cA$ has a natural space-level commutative product, given on connected components by
\begin{equation} \label{eq:algebraic-product-ag}
\cA_g \times \cA_h \to \cA_{g+h}; \ \ \ (A_1, A_2) \mapsto A_1 \times A_2.
\end{equation}
This product morphism $\cA \times \cA \to \cA$ is proper, and the induced pullback map $$W_0H^*_c(\cA) \to  W_0H^*_c(\cA)\otimes W_0H^*_c(\cA)$$ agrees with~\eqref{eq:mult-on-bm} via the standard comparison isomorphisms and dualities. 

To see this agreement, one may first note that the tropicalization of the algebraic product \eqref{eq:algebraic-product-ag} is the tropical product defined in~\eqref{eq:mult-trop},
induced by block sum of positive semidefinite quadratic forms. Then, use the fact that the Berkovich analytic skeleton of a product of abelian varieties over a valued field is the product of the skeletons, as principally polarized tropical abelian varieties. This is because the skeleton can be read off via non-archimedean analytic uniformization from the Raynaud cross diagram, as in \cite[Section~3.2]{foster-rabinoff-shokrieh-soto}, and the Raynaud cross of the product is the product of the Raynaud crosses of the factors. 

We also note that the algebraic product \eqref{eq:algebraic-product-ag} extends to additive families of toroidal compactifications, such as the perfect cone compactifications. The fact that 
\eqref{eq:mult-trop}
is the tropicalization of \eqref{eq:algebraic-product-ag} is well-known in this context. See \cite[Proposition~9]{grushevsky-hulek-tommasi-stable-betti}.
\end{remark}

The following version of the Leibniz rule will be used in the next subsection.

\propnow{\label{prop:leibniz}
 Let $\alpha \in H_k^{\mathrm{BM}}(A_g^{\mathrm{trop}}, A_{g-1}^{\mathrm{trop}})$ and $\beta \in H_\ell^{\mathrm{BM}}(A_h^{\mathrm{trop}}, A_{h-1}^{\mathrm{trop}})$.  Then, with $\iota$ and $\partial$ as in~\eqref{eq:a-ses}, we have
\[\iota\partial(\alpha.\beta) = (\iota\partial\alpha).\beta + (-1)^{\mathrm{deg}(\alpha)}\alpha.(\iota\partial\beta).\]
Furthermore, the map $\iota$ is a homomorphism for the multiplication maps \eqref{eq:mult-on-bm} and \eqref{eq:mult-on-rel-bm}.
}

\proofnow{The key point is that the product~\eqref{eq:mult-on-Pg} satisfies the graded Leibniz rule. More precisely,~\eqref{eq:mult-on-Pg} is a morphism of chain complexes: \[d([\sigma,\omega].[\sigma',\omega']) = d[\sigma,\omega].[\sigma',\omega'] + (-1)^{\mathrm{dim}(\sigma)}[\sigma,\omega].d[\sigma',\omega']\]
by properties of faces of products of polyhedral cones.  Then the Proposition follows since the product~\eqref{eq:mult-on-rel-bm} is induced from this product map on perfect cone complexes.
}

\subsection{Hopf algebra structure on \texorpdfstring{$W_0 H^*_c(\cA)$}{W0H(A)}}
Let $\sseq{Q}^1$ denote the $E^1$-page of the homological Quillen spectral sequence. It is a graded-commutative  bigraded Hopf algebra by Theorem \ref{thm:QSS}. The graded-commutativity is with respect to the total degree.  

The vector space $\sseq{Q}^1_{1,0}$ is 1-dimensional and plays an important role in the Hopf algebra structure on $\sseq{Q}^1$, so we pause briefly to establish a canonical choice of generator.
The fundamental group $\pi_1(BK(\ZZ)) = K_0(\ZZ) \cong \ZZ$ is infinite cyclic, and is in fact generated by the loop corresponding to any 1-simplex $(0 \subset P_{0,1}) \in N_0 S_1 (\Proj{\ZZ})$ for $P_{0,1}$ of rank 1 (that is, $P_{0,1}$ is isomorphic as an abelian group to $\ZZ$).  Any two choices of $P_{0,1}$ will be isomorphic, and lead to homotopic loops in the based space $BK(\ZZ)$. Let us write
\begin{equation*}
    e \col S^1 \to BK(\ZZ)
\end{equation*}
for any based loop in this homotopy class.  The loop $e$ can also be interpreted as the generator of the fundamental group of $\mathrm{gr}_1 BK(\ZZ) = (F_1 BK(\ZZ))/(F_0 BK(\ZZ))$. Hence we obtain a canonical generator
\begin{equation*}
    e_*([S^1]) \in H_1(F_1 BK(\ZZ),F_0 BK(\ZZ);\QQ) = \sseq{Q}^1_{1,0} \cong \QQ,
\end{equation*}
where $[S^1]$ denotes the fundamental class of the circle $S^1 = \Delta^1/(\partial \Delta^1)$.  This class is a permanent cycle in the spectral sequence, and we will henceforth use the notation $e \in \sseq{Q}^1_{1,0}$ for this homology class, as well as for its image $e \in \sseq{Q}^r_{1,0}$ in all subsequent pages  $r \leq \infty$.

\begin{proposition}   \label{prop:quillen-to-A}
With $e\in \sseq{Q}^1_{1,0} $ as above,   there   is a canonical isomorphism of graded-commutative   algebras
\[    \mathcal{A}^{BM} \otimes_{\QQ} \QQ[\eps]/\eps^2 \overset{\sim}{\to} \sseq{Q}^1 \ , \]
sending $\epsilon$ to $e$, where $\QQ[\eps]/\eps^2 \cong  \bigwedge \QQ \eps$ is the graded exterior algebra generated by  $\eps$. 
\end{proposition}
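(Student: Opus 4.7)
The strategy is to transfer everything to the tropical spectral sequence side, where the structure is most transparent. By the remark at the end of Section~\ref{sec: Loc-sym-Quillen}, the zig-zag constructed there yields an isomorphism of bigraded algebras $\sseq{T}^1 \xrightarrow{\cong} \sseq{Q}^1$, under which the generator $e \in \sseq{Q}^1_{1,0}$ corresponds to a canonical generator (call it also $e$) of $\sseq{T}^1_{1,0} = H_1^{\mathrm{BM}}(A_1^\trop, A_0^\trop) \cong \QQ$. So it suffices to construct a canonical algebra isomorphism $\phi\col \cA^{\mathrm{BM}} \otimes_\QQ \QQ[\epsilon]/\epsilon^2 \xrightarrow{\cong} \sseq{T}^1$ with $\phi(\epsilon) = e$. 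Using the injection $\iota\col H^{\mathrm{BM}}_k(A_g^\trop) \hookrightarrow H^{\mathrm{BM}}_k(A_g^\trop, A_{g-1}^\trop) = \sseq{T}^1_{g, k-g}$ from~\eqref{eq:a-ses}, define
\[
\phi(\alpha \otimes 1) = \iota(\alpha), \qquad \phi(\alpha \otimes \epsilon) = (-1)^{\deg \alpha}\, \iota(\alpha) \cdot e,
\]
for $\alpha \in H^{\mathrm{BM}}_*(A_g^\trop)$. Then $\phi(1_0 \otimes \epsilon) = e$ because $\iota(1_0) \in \sseq{T}^1_{0,0}$ is the unit, and bidegrees match because $e$ lives in bidegree $(1,0)$.

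To verify that $\phi$ is an algebra homomorphism, one uses the second part of Proposition~\ref{prop:leibniz}, which states that $\iota$ is itself an algebra map; this handles products of type $(\alpha \otimes 1) \cdot (\beta \otimes 1)$ and, together with associativity, also the mixed products $(\alpha \otimes 1) \cdot (\beta \otimes \epsilon)$. Products $(\alpha \otimes \epsilon) \cdot (\beta \otimes \epsilon)$ must vanish on the source, and indeed on the target they equal (up to sign) $\iota(\alpha) \iota(\beta) \cdot e^2$, and $e^2 = 0$: by Theorem~\ref{thm:QSS}(5) the algebra $\sseq{Q}^1 \cong \sseq{T}^1$ is graded-commutative with respect to total degree, so the odd-degree element $e$ satisfies $e^2 = -e^2$, hence $e^2 = 0$ in characteristic zero. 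The Koszul signs on the two sides are both governed by the total grading, so they agree.

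It remains to show $\phi$ is a bijection in each bidegree. The image of $\phi(-\otimes 1)$ is exactly $\ker \partial$ inside $\sseq{T}^1_{g,t}$, by the short exact sequence~\eqref{eq:a-ses}. For the $\epsilon$-summand, apply the Leibniz identity of Proposition~\ref{prop:leibniz} to $\iota(\alpha)\cdot e$, using $\partial \iota = 0$ and the fact that $\iota\partial(e) \in \sseq{T}^1_{0,0}$ equals the unit $\iota(1_0) = 1$:
\[
\iota\partial\bigl(\iota(\alpha)\cdot e\bigr) = (-1)^{\deg \alpha}\,\iota(\alpha).
\]
Hence $\partial \circ \phi(-\otimes \epsilon) = \mathrm{id}$, so $\phi(- \otimes \epsilon)$ splits the sequence~\eqref{eq:a-ses}, giving
\[
\sseq{T}^1_{g,t} \;=\; \iota\bigl(H^{\mathrm{BM}}_{g+t}(A_g^\trop)\bigr) \;\oplus\; \iota\bigl(H^{\mathrm{BM}}_{g+t-1}(A_{g-1}^\trop)\bigr)\cdot e,
\]
which matches the bigraded decomposition of $\cA^{\mathrm{BM}} \otimes \QQ[\epsilon]/\epsilon^2$ in bidegree $(g,t)$.

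The only substantive input needed beyond the SES and Proposition~\ref{prop:leibniz} is that the product on $\sseq{Q}^1$ coming from the Waldhausen construction of Section~\ref{sec:copr-filtr-waldh} matches the block-sum product on $\sseq{T}^1$ under the zig-zag. This is the nontrivial step, but it is already asserted in the final remark of Section~\ref{sec: Loc-sym-Quillen}, so we treat it as given; once it is available, the rest of the argument is formal.
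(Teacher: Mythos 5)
Your proposal is correct and follows essentially the same route as the paper's proof: both rest on the short exact sequence~\eqref{eq:a-ses}, the Leibniz rule of Proposition~\ref{prop:leibniz}, and multiplication by $e$ to split that sequence, with the identification of the product on $\sseq{Q}^1$ with the block-sum product on Borel--Moore homology taken as the one nontrivial input (the paper invokes Proposition~\ref{prop:E1-formula-with-Steinberg} for this, you invoke the closing remark of Section~\ref{sec: Loc-sym-Quillen}). The only minor difference is that you assert $\iota\partial(e)$ equals the unit exactly, whereas the paper only uses that it is some $\lambda\cdot 1$ with $\lambda\in\QQ^\times$ and carries $\lambda^{-1}$ through the splitting, which is the more careful normalization.
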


\begin{proof} 
 
Let $e \in \sseq{Q}^1_{1,0}\cong \QQ$ be 
the generator chosen above. (Any non-zero multiple thereof would work equally well for the following argument.)  Since the coproduct  $\Delta$ on $\sseq{Q}^1$ respects the bigrading, we must have
\begin{equation} \label{egeneratorisprimitive} \Delta\,  e = 1 \otimes e + e \otimes 1 \ .\end{equation}
By graded-commutativity, or using the fact that $ \sseq{Q}^1_{2,0} $ vanishes, we have $e^2=0$. 

Recall from~\eqref{eq:a-ses} the short exact sequences
\begin{equation} \label{longexactQE1st} 0 \to H^{\mathrm{BM}}_{s+t}(A_{s}^{\mathrm{trop}}) \xra{\iota} H^{\mathrm{BM}}_{s+t}(A_s^{\mathrm{trop}},A_{s-1}^{\mathrm{trop}}) \xra{\partial} H_{s+t-1}^{\mathrm{BM}}(A_{s-1}^{\mathrm{trop}})\to 0,\end{equation}
where the middle term is isomorphic to $\sseq{Q}^1_{s,t}$. 
 The multiplication on $\sseq{Q}^1$ is induced by block sum of matrices, as in Proposition~\ref{prop:E1-formula-with-Steinberg}, and hence coincides with~\eqref{eq:mult-on-rel-bm}.    
Therefore, for any element   $\alpha \in  H_{s+t-1}^{BM}(A_{s-1}^{\trop};\QQ)$, Proposition~\ref{prop:leibniz} implies the following formula in $\sseq{Q}^1$:
\begin{equation}\label{eq:boundary-of-inflation} \iota \partial (e  .\iota  \alpha) =  \iota  \partial(e) .   \iota  \alpha\end{equation}
since the second term in the Leibniz rule is $e.(\iota\partial\iota \alpha)$ which 
vanishes by \eqref{longexactQE1st}. 
Since $H_1^{\mathrm{BM}}(A_1^{\trop})=0$, the sequence \eqref{longexactQE1st} reduces for $(s,t)=(1,0)$ to an isomorphism 
\[ \sseq{Q}^1_{1,0}  \overset{\partial}{\cong}  H_{0}^{BM}(A_{0}^{\trop};\QQ)  \ , \]
and hence by injectivity of $\iota$, we can write $\iota \partial (e) = \lambda 1  \in \sseq{Q}^1_{0,0} $ for $\lambda \in \QQ^\times$ and $1 \in \sseq{Q}^1_{0,0}$ 
the unit element in the algebra structure.  In fact, by tracing through definitions, we can see $\lambda=\pm 1$.  The sign is not important: as in Remark~\ref{rem:two-comparisons}, it depends on whether the chosen diffeomorphism $\ell\col (0,1)\to (0,\infty)$ in~\eqref{eq:diffeo}, appearing in the comparison of tropical and Quillen spectral sequences in subsection~\ref{sec: Loc-sym-Quillen}, is increasing or decreasing.  Then $\partial(e.\iota\alpha) = \lambda\alpha$, so that $\alpha \mapsto \lambda^{-1} e.\iota\alpha$ is a right-inverse to $\partial$, splitting the short exact sequence \eqref{longexactQE1st}.

Let $\cA^{\mathrm{BM}}[-1]$ denote the shift by $(-1,0)$ in bidegree, so $\cA^{\mathrm{BM}}[-1]_{s,t} = \cA^{\mathrm{BM}}_{s-1,t}.$ We now define the following diagram, which, we shall then argue, is commutative.
\[
\begin{array}{cccccc}
    0  \longrightarrow & \mathcal{A}^{BM} & \longrightarrow & \mathcal{A}^{BM} \oplus \mathcal{A}^{BM}[-1] & \longrightarrow & \mathcal{A}^{BM}[-1]  \longrightarrow 0  \\
      & \downarrow  &  & \downarrow \,\,  &   & \downarrow  \qquad  \\
      0  \longrightarrow & \mathcal{A}^{BM} & \longrightarrow & \sseq{Q}^1_{*,*}  &  \overset{\partial}{\longrightarrow} & \mathcal{A}^{BM}[-1]  \longrightarrow 0  
\end{array}
\]
Here the top row is the canonical split short exact sequence, and the bottom row is~\eqref{longexactQE1st} in bidegree $(s,t).$
The vertical map in the middle is \[(\alpha, \beta) \mapsto \iota \alpha + \lambda^{-1} e. \iota \beta\] 
where $.$ denotes multiplication in $\sseq{Q}^1$,
and the left and right vertical maps are the identity.  By~\eqref{eq:boundary-of-inflation}, $\partial (\iota \alpha + \lambda^{-1} e. \iota \beta) = \beta$ and therefore the diagram commutes, and all vertical maps must be isomorphisms. In particular, we deduce a canonical isomorphism  $\mathcal{A}^{BM} \otimes_{\QQ} \QQ[\epsilon]/\epsilon^2 \cong \sseq{Q}^1$  on the level of bigraded vector spaces. 

Finally, the natural morphism $\cA^{\mathrm{BM}}\to \sseq{Q}^1$ arising from the above isomorphism is a morphism of graded-commutative algebras, i.e., respects the products.   Indeed, the products on domain and codomain are~\eqref{eq:mult-on-bm} and~\eqref{eq:mult-on-rel-bm}, respectively, and are both induced from the same product map~\eqref{eq:mult-on-Pg} on perfect complexes. It follows from this, together with the definition of the middle vertical map in the above diagram, that 
$\mathcal{A}^{BM} \otimes_{\QQ} \QQ[\epsilon]/\epsilon^2 \cong \sseq{Q}^1$ is an isomorphism of algebras.
\end{proof}

\begin{proof}[Proof of Theorem~\ref{thm:HopfAg}] 
Any generator $e \in \sseq{Q}^1_{1,0}\cong \QQ$ is primitive 
since the coproduct $\Delta$ on $\sseq{Q}^1$ respects the bigrading \eqref{egeneratorisprimitive}.  Therefore the ideal generated by $e$ is also a co-ideal, 
and the quotient Hopf algebra $\sseq{Q}^1/(e)$ is isomorphic to $\cA^{\mathrm{BM}}$ by Proposition~\ref{prop:quillen-to-A}.  This yields a Hopf algebra structure on $\cA^{\mathrm{BM}}$, and dualizing yields a Hopf algebra structure on $W_0H^*_c(\cA)$.
\end{proof}

Next, we show that there is a natural injection from the bigraded vector space $\Omega_c^*[-1]$ into the subspace of primitives $\Prim(W_0H^*_c(\cA))$.  We recall the canonical differential forms for $\GL_g$ and their basic properties from \S\ref{sect: backgroundcanforms}.

\begin{proof}[Proof of Theorem~\ref{thm:canonical-inj}]
By \cite{brown-bordifications} there is an injective map of graded $\RR$-vector spaces 
\begin{equation}  \label{eqn: Omegamapstocompactsupport}
 \Omega^{*}_{c}(g)[-1] \otimes \RR  \to  H^{*}_c( P_g /\GL_g(\ZZ);\RR) \cong \sseq{Q}_1^{g,\ast} \otimes \RR 
\end{equation}
for all $g>1$ odd (see Section~\ref{sect: backgroundcanforms} for further detail).  

It remains to show that the image of $\Omega_c^{*}(g)[-1]$ in $\sseq{Q}_1$ consists of primitive elements.
Let  $\omega \in \Omega^*_c(g)$ be homogeneous of compact type where $g>1$ is odd. We may assume that it is of the form 
\[ \omega^I = \omega^{4i_1+1} \wedge \omega^{4i_2+1} \wedge \ldots \wedge \omega^{4i_k+1} \]
where $I= \{i_1,\ldots, i_k\}$ are distinct and $4i_k+1=2g-1$. From  \eqref{eqn: omegablockdirectsum}  we deduce that 
\[  \omega^I_{X \oplus Y }  = \sum_{I= J \cup K}  \omega^J_X \otimes \omega^K_Y\]
where the sum is over all decompositions of $I$ into a disjoint union of (possibly empty) sets $J, K$, and $X,Y$ are positive definite symmetric matrices. Suppose that $X$ and $Y$ both have positive rank. Since one of $\omega^J$ and $\omega^K$ must necessarily have $\omega^{2g-1}$ as a factor, it follows from  \eqref{eqn: omega2g1vanishing}  that $\omega^J_X \otimes \omega^K_Y=0$ for all $J,K$ and we deduce that $\omega^I_{X\oplus Y}$ is identically zero. If  $\Delta' = \sum_{m,n \geq 1} \Delta_{m,n}$  denotes the  reduced coproduct, defined by  $\Delta  =\mathrm{id} \otimes 1 + 1 \otimes \mathrm{id} + \Delta'$, then we have shown that $\Delta' \omega=0$ for all $\omega \in \Omega_c(g)$ since the restriction of $\omega$ to the image of $\mathcal{A}_m^{\trop}\times\mathcal{A}_n^{\trop}$ for $m+n=g$ and $m,n>0$ is  zero.    It follows that  the $(m,n)$-component of the coproduct $\Delta \omega$  is zero, and hence the image of  $\Omega_c(g)[-1]$ in $\sseq{Q}_1$ is primitive. The theorem follows by quotienting  by $e.$
\end{proof}

\begin{proof}[Proof of Corollary~\ref{cor:expSL}]

Recall that \[H_*(\GL_n(\ZZ);\St_n\otimes \QQ)\cong H^{\binom{n}{2}-*}(\GL_n(\ZZ); \QQ_\mathrm{or})\] where $\QQ_\mathrm{or}$ denotes the $\GL_n(\ZZ)$-module given by orientations on $\SL_n(\RR)/\SO_n(\RR)$, the symmetric space of positive definite symmetric bilinear forms on $\RR^n$ of determinant $1$.
To relate to $\SL_n(\ZZ)$, Shapiro's Lemma
\cite[6.3.2, p.~171]{weibel-introduction} gives 
\[H^*(\SL_n(\ZZ);\QQ) \cong H^*(\GL_n(\ZZ);\mathrm{Ind}\substack{\GL_n(\ZZ)\\\SL_n(\ZZ)}\,\QQ)\]
and $\mathrm{Ind} \substack{\GL_n(\ZZ)\\\SL_n(\ZZ)}\,\QQ \cong \QQ \oplus \widetilde{\QQ}$
where $\widetilde{\QQ}$ denotes the determinantal representation of $\GL_n(\ZZ)$.  Now by \cite[Lemma 7.2]{elbaz-vincent-gangl-soule-perfect}, 
the orientation module $\QQ_{\mathrm{or}}$ is isomorphic to $\QQ$ if $n$ is odd and to $\widetilde{\QQ}$ if $n$ is even.  
Thus, if $n$ is odd then $\GL_n(\ZZ) \cong \SL_n(\ZZ) \times \ZZ/2\ZZ$, and 
$$H^*(\SL_n(\ZZ);\QQ) \cong H^*(\GL_n(\ZZ);\QQ) \cong H^*(\GL_n(\ZZ);\QQ_{\mathrm{or}}).$$  If $n$ is even, then
\[H^*(\SL_n(\ZZ);\QQ) \cong H^*(\GL_n(\ZZ);\QQ) \oplus H^*(\GL_n(\ZZ);\widetilde{\QQ})\cong H^*(\GL_n(\ZZ);\QQ) \oplus H^*(\GL_n(\ZZ);\QQ_{\mathrm{or}}).\]
In both cases, $H^*(\GL_n(\ZZ);\QQ_{\mathrm{or}})$ is a summand of $H^*(\SL_n(\ZZ);\QQ)$.
Now we have
\[
(W_0 H^*_c(\cA))^\vee \otimes_\QQ \QQ[x] / x^2 \cong \bigoplus_g H_{*} (\GL_g(\ZZ), \St_g \otimes \QQ),
\]
where $x$ has genus $1$ and degree $1$, from dualizing Proposition~\ref{prop:quillen-to-A}.
Now Corollary~\ref{cor:expSL} follows from Corollary~\ref{cor:expAg}.
\end{proof}
See subsection~\ref{rem:kSL} for a refinement of Corollary~\ref{cor:expSL}.

\subsection{Consequences of Theorem~\ref{thm:canonical-inj} and a  conjecture}
 Theorem~\ref{thm:canonical-inj}  can be rephrased as follows: 

\begin{thm}  \label{thm: Tensoralgebramaps} There is a canonical morphism of bigraded Hopf algebras
\begin{equation} \label{TOmegamap} T(\Omega^*_c[-1]) \otimes \RR \longrightarrow W_0 H^*_c(\cA;\RR),   \end{equation}
    whose domain is given the coproduct induced by declaring $\Omega^*_c[-1] \subset T(\Omega^*_c[-1])$ to be primitive.  The restriction of this Hopf algebra map to $\Omega^*_c[-1]$ is injective.
    \end{thm}
The map \eqref{TOmegamap} restricts to a map of graded Lie algebras
    \begin{equation}   \label{LieTOmegamap} \mathbb{L} ( \Omega^*_c[-1]) \longrightarrow \Prim ( W_0 H^*_c(\cA;\RR) ) \ , 
    \end{equation}
    where $\mathbb{L}$ denotes the free (graded-commutative) Lie algebra on $\Omega^*_c[-1]$.  Conjecture~\ref{conj: Tinjects} predicts that the classes from $\Omega^*_c[-1]$ satisfy no relations in $W_0 H^*_c(\cA;\RR)$. We restate this as follows:
\begin{conj} \label{conj: Liealginjects} The  map \eqref{TOmegamap}, or equivalently, the map~\eqref{LieTOmegamap}, is injective. 
\end{conj} 
The later sections of this  paper provide various kinds of evidence for this conjecture. 
\begin{remark}
    A similar conjecture in \cite{brown-invariant} states that  the free Lie algebra on a space isomorphic to $\Omega^*_c[-1]$ injects into the cohomology of the commutative graph complex (or equivalently, in the cohomology of  the moduli spaces of tropical curves). There are some key differences between these two conjectures: the map in Conjecture \ref{conj: Liealginjects} is given explicitly and respects the bigrading; the one in \cite{brown-invariant} is not explicit and does not respect the grading by genus in general. These two conjectures are not related by the tropical Torelli map: one  can show that tropical Torelli map is zero on the image of almost all canonical forms which lie above the diagonal  where degree equals twice the genus \cite[\S14.6]{brown-bordifications}. It is, however,  expected to be an isomorphism on the Lie subalgebra generated by the $\omega^{4k+1}$ which lies on the diagonal.  
\end{remark}

\section{Graph spectral sequences} \label{sec:Graphss}

Our next aim will be to construct a graph spectral sequence with a Hopf algebra structure analogous to that on the Quillen spectral sequence produced in Theorem~\ref{thm:QSS}, with coproduct induced by a variant of the Connes--Kreimer coproduct on the Hopf algebra of graphs constructed in \cite{connes-kreimer-hopf}.  The spectral sequence that we produce most naturally will only be a spectral sequence of bialgebras, whose pages do not admit an antipode.  For our purposes this is a very minor defect.  In subsection~\ref{sec:inverting-the-class-of-a-point}, we will explain how to obtain a spectral sequence of connected Hopf algebras, as the localization obtained by inverting a certain element of bidegree $(0,0)$.

The graph spectral sequence arises from the construction of a ``$K$-theory of graphs,'' a bisimplicial space $\BKGr$ with the same structure and formal properties as $BK(\ZZ) = |N_\bu S_\bu (\Proj{\ZZ})|$.  The bisimplicial space we construct is rationally equivalent to a space $\MGr$ that may be interpreted as a space of metric graphs, which has a filtration by first Betti number.  
The $E^1$-page of the associated spectral sequence is the homology of a suitable graph complex and comes with a map of spectral sequences of bialgebras to the Quillen spectral sequence.  In Section~\ref{sec:freeness}, we use this map to prove that the image of $\{\omega^5, \omega^9, \ldots, \omega^{45} \} \subset \Omega_c^*[-1] \otimes \RR$ inside $\sseq{Q}_1^{*,*}\otimes \RR$ 
generates a free Lie subalgebra of the primitives of $W_0H^*_c(\cA;\RR)$.

\begin{remark} J.~Steinebrunner in recent talks has outlined a different construction of a spectral sequence whose $E^2$-page is given by the homology of the spaces $\Delta_g$ considered in \cite{cgp-graph-homology} and hence also related to the homology of Kontsevich's graph complexes.  His construction uses a filtration of the terminal modular $\infty$-operad.
\end{remark}

\begin{remark}
    A related structure of a Hopf algebra on graphs appears in the work \cite{KW17,GKT20a,GKT20b}.  In particular, the graph complex as given by the bar construction \cite{KW23} produces a co-operad structure with a multiplication which is filtered by the first Betti number.  We thank Ralph Kaufmann for helpful correspondence regarding the relationship to his work with coauthors.
\end{remark}

\begin{remark} \label{rem:cubical-spaces}  These $K$-theory spaces of graphs, and their corresponding chain complexes, naturally have a cubical structure reminiscent of earlier constructions of moduli spaces of metric graphs.  We refer to Culler--Vogtmann's construction of Outer Space $\mathrm{CV}_n$ \cite{culler-vogtmann} and its marked-point variants $\mathrm{CV}_{n,s}$. These are used to study homology of the groups $\mathrm{Out}(F_n)$, $\mathrm{Aut}(F_n)$, and a more general family of groups denoted $\Gamma_{n,s}$ \cite{hatcher-homological, hatcher-vogtmann-homology-stability, conant-hatcher-kassabov-vogtmann-assembling}.  
The space $\mathrm{CV}_n$ is simplicial, but it has a deformation retraction to a subcomplex $\mathbb{S}\mathrm{CV}_n$ of its barycentric subdivision called the {\em spine}, which is a cubical space parametrizing marked metric graphs with a chosen subforest and has an associated graph complex \cite[\S3]{hatcher-vogtmann-rational}.  The space $\MGr$ that we shall consider, and its associated graph complex, have a similar structure. 
\end{remark}

\begin{remark}
  There is a Waldhausen category $\cG$, whose objects are all connected graphs $G$ together with chosen basepoint $\ast \in V(G)$, whose  morphisms  are all maps of graphs preserving base point, cofibrations the injective maps, and weak equivalences the ones whose corresponding map of topological spaces is a homotopy equivalence.  There is also a symmetric monoidal structure given by wedge sum.  Filtering by $b_1(G) \in \NN$ induces a filtration of $BK(\cG)$ with the exact same formal properties as the filtration on $BK(\ZZ)$, leading to a spectral sequence of Hopf algebras
  \begin{equation*}
    E^1_{s,t} \Rightarrow H_{s+t}(BK(\cG)).
  \end{equation*}
  Moreover, the functor $H_1\col \cG\to \Proj{\ZZ}$ taking $G$ to $H_1(G;\ZZ)$ preserves all the structure and induces a map of spectral sequences of Hopf algebras. 

  At present we do not know how to make good use of this specific spectral sequence.  Instead we proceed below with a variant involving disconnected graphs without basepoint.  It does not seem to literally fit into the axioms of a Waldhausen category, but we will explain an explicit construction which seems similar in spirit, and which we shall therefore denote $\BKGr$.
\end{remark}

\subsection{\texorpdfstring{$K$}{K}-theory of graphs} \label{sec:k-theory-graphs}

We use the same definition as in \cite{cgp-graph-homology}: a \emph{graph} is a finite set $X$, together with functions $i, r\col X\to X$ that satisfy $i^2 = 1_X$ and $r^2=r$, and such that
\[\{x\in X \mid r(x) = x\} = \{x\in X \mid i(x)=x\}.\]  
We write $V = \{x \in X \mid i(x) = x\}$, $H = X \setminus V$, and $E = H/(x \sim i(x))$, for the sets of vertices, half-edges, and edges of $G$, respectively.  The most general notion of morphism from $G = (X,i,r)$ to $G' = (X',i',r')$ is just a map of sets $\phi\col X \to X'$ such that $i' \circ \phi = i$ and $r' \circ \phi = r$; we do not require that it send $H$ to $H'$.  When we impose extra conditions on graphs (non-empty, connected, no zero-valent vertices, etc.) or graph morphisms, we say so explicitly. A morphism of graphs is injective (resp.~surjective) if the underlying map on sets is injective (resp.~surjective).

While officially there are no extra conditions on morphisms, in practice only a restricted class of morphisms will appear in the diagrams of graphs below.  For example, surjective morphisms of graphs can in principle increase first Betti number by identifying vertices (for example, one may map the two endpoints of an edge to a single vertex, to create a self-edge). But such a morphism will never arise in the diagrams of graphs considered below.

Since we allow vertices of any valence including 0, the category of finite sets may be identified via $S\mapsto (S, \mathrm{id},\mathrm{id})$ with a full subcategory of the category of graphs, namely those graphs $G$ for which $E(G) = \emptyset$.  Any graph $G$ comes with universal maps to and from a set (more precisely, a map of graphs from $G$ to a graph with no edges, initial among such, as well as a map of graphs  from a graph with no edges to $G$, terminal among such) namely
\begin{equation*}
  V(G) \hookrightarrow G \twoheadrightarrow \pi_0(G),
\end{equation*}
where $V(G)$ is the set of vertices and $\pi_0(G)$ is the set of path components of $G$.

We now construct a $K$-theory space of graphs, corresponding to a simplicial category $\cF$ that we now define.
For an object $[p] \in \Delta$, $\cF_p$ is a groupoid whose object set is the set of diagrams of the shape
\begin{equation}\label{eq:10}
  \begin{aligned}
    \xymatrix{G_{0,0} \ar@{>->}[r] & G_{0,1} \ar[d] \ar@{>->}[r] & G_{0,2} \ar[d] \ar@{>->}[r] & \cdots\ar[d]  \ar@{>->}[r] & G_{0,p}\ar[d]\\  & G_{1,1} \ar@{>->}[r] &G_{1,2} \ar[d]
      \ar@{>->}[r]  & \cdots \ar[d] \ar@{>->}[r] & G_{1,p} \ar[d] \\ &&   
      &\vdots \ar[d]  
      &\vdots \ar[d] \\ &&&G_{p-1,p-1} \ar@{>->}[r] & G_{p-1,p} \ar[d] \\ &&&& G_{p,p}}
\end{aligned}
\end{equation}
of graphs $G_{i,j}$ and morphisms between them, subject to the conditions
\begin{itemize}
\item $E(G_{i,i}) = \emptyset$ for $i = 0, \dots, p$,
\item the maps $V(G_{i,j}) \to V(G_{i,j+1})$ induced by the horizontal maps in the diagram are bijections, and $E(G_{i,j}) \to E(G_{i,j+1})$ are injections, $0 \leq i \leq j < p$,
\item the maps $\pi_0(G_{i-1,j}) \to \pi_0(G_{i,j})$ induced by the vertical maps are bijections,
\item all squares in the diagrams are pushout diagrams.
\end{itemize}

The arrows denoted $\rightarrowtail$ in the diagram~\eqref{eq:10}  are the ones that are required to be bijections on vertices and injections on edges. The conditions ensure that every vertical map is induced by contraction of edges only. 
The morphisms in $\cF_p$ are isomorphisms of such diagrams.  Deleting the $i$th row and column define functors $d_i\col \cF_p \to \cF_{p-1}$ for $0 \leq i \leq p$, and there are
also functors $s_i\col \cF_p \to \cF_{p+1}$ defined by inserting identities, making $[p]\mapsto \cF_p$ a simplicial object in the category of small groupoids (see Remark~\ref{rem:Fp-small}).

\begin{remark}\label{remark:graph-space-interpretations}\mbox{}
\begin{enumerate}
    \item The diagonal maps $G_{i-1,i-1} \to G_{i,i}$ are surjections of finite sets.  The diagram provides a factorization   
   \[G_{i-1,i-1} = V(G_{i-1,i-1})  \cong V(G_{i-1,i}) \twoheadrightarrow \pi_0(G_{i-1,i}) \cong \pi_0(G_{i,i}) \cong G_{i,i}\]
    identifying $G_{i-1,i-1} \twoheadrightarrow G_{i,i}$ with the quotient by the equivalence relation generated by the edge set of $G_{i-1,i}$.
    \item
      The diagram \eqref{eq:10} is determined up to isomorphism by its top row; since each rectangle
      \[\squarediagram{G_{i-1,i}}{G_{i-1,j}}{G_{i,i}}{G_{i,j}}\]
      is a pushout diagram,
      each vertical
  map $G_{i-1,j} \to G_{i,j}$ must be induced by collapsing each edge in the image of $G_{i-1,i} \hookrightarrow G_{i-1,j}$.  The whole diagram can therefore be reconstructed up to isomorphism from the graph $G_{0,p}$ together with the flag of subsets of $E(G_{0,p})$ given by the images of $E(G_{0,j}) \hookrightarrow E(G_{0,p})$.  As in the Waldhausen construction, it is, for set-theoretic reasons, convenient to include the data of chosen subquotients.
    \item  Similarly, the diagram is  determined up to isomorphism by its rightmost column.
\end{enumerate}
\end{remark}
\smallskip

We define
\begin{equation}\label{eq:bkgr-is-born}
    \BKGr = |N_\bullet \cF_\bullet|.
\end{equation}
There is a functor
\begin{equation*}
  \cF_p \stackrel{H_1}\longrightarrow S_p(\Proj{\ZZ}), \qquad G_{i,j} \mapsto H_1(G_{i,j};\ZZ),
\end{equation*}
which induces a map of bisimplicial sets and in turn of topological spaces
\begin{align}
  N_\bu\cF_\bu & \stackrel{H_1} \longrightarrow N_\bu S_\bu (\Proj{\ZZ}),\\
  \BKGr = |N_\bullet \cF_\bullet| & \longrightarrow |N_\bullet S_\bullet(\Proj{\ZZ})| = BK(\ZZ).
  \label{eq:9}
\end{align}

\begin{remark} \label{rem:Fp-small}
  For set-theoretic reasons, the above definitions should be augmented with a choice of small category $\mathcal{G}$ equivalent to all graphs and all morphisms (for instance by insisting $G = (X,i,r)$ where $X \subset \Omega$ is a subset of some fixed infinite set, or by choosing a graph of each isomorphism type).  Then we get a small category
  $\cF_p$ whose object set is the set of diagrams in $\cG$ of the form~(\ref{eq:10}), subject to the stated requirements.
\end{remark}

\subsection{Filtrations and graph spectral sequences}

\begin{definition} For $g \in \NN$, let $F_g \cF_p \subset \cF_p$ be the full subcategory containing those objects~(\ref{eq:10}) with $b_1(G_{0,p}) \leq g$. 
  Similarly, let $F_g \BKGr \subset \BKGr$ be the image of the map induced by $N_\bu (F_g \cF_\bu) \to N_\bu \cF_\bu$, the nerve of the inclusion of the subcategory.
\end{definition}
We have now defined filtrations on both spaces in~(\ref{eq:9}), and it is clear that the map preserves the filtration.  Therefore there is an induced map of spectral sequences
\begin{equation}\label{eq:21}
  \sseq{G}^r_{*,*} \to \sseq{Q}^r_{*,*},
\end{equation}
converging to the rational homology of the spaces~(\ref{eq:9}).  We will call $\sseq{G}^r_{*,*}$ the ``graph spectral sequence.''

\begin{proposition}\label{prop:graph-spectral sequence}
  The constructions in Section~\ref{sec:copr-filtr-waldh} apply to the bisimplicial set $X_{p,q} = N_q \cF_p$ filtered as $F_g X_{p,q} = N_q (F_g \cF_p)$, leading to a space-level filtered coproduct
  \begin{equation*}
    |X| \xrightarrow{\approx} |\es(X)| \to |X| \times |X|.
  \end{equation*}

  Choosing a disjoint union operation $(G,G') \mapsto m(G,G') \cong G \sqcup G'$ on the chosen small category equivalent to all finite graphs and promoting it to a symmetric monoidal functor leads to a map of bisimplicial sets $X \times X \to X$ inducing a product
  \begin{equation*}
      m\col |X| \times |X| \xrightarrow{\approx} |X \times X| \to |X|.
  \end{equation*}
  The product and coproduct are both filtered maps, the product is associative and commutative up to a filtration-preserving homotopy, and the coproduct is associative up to a filtration-preserving homotopy.  Forgetting filtrations, the coproduct is homotopic to the diagonal map of $|X|$.
\end{proposition}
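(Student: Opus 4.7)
The plan is to transport the machinery of Section~\ref{sec:copr-filtr-waldh} verbatim to each of the three bisimplicial sets $X = N_\bu \cF_\bu$, $X' = N_\bu \cF'_\bu$, and $X'' = N_\bu \cF''_\bu$, substituting the first Betti number $b_1$ for the rank function on $\Proj{\ZZ}$. The edgewise subdivision gives a natural transformation $\es(X) \Rightarrow X \times X$ by Definition~\ref{def:the-nt}; the product will come from a chosen symmetric monoidal structure on (a skeleton of) the category of graphs given by disjoint union, exactly as in Remark~\ref{rem:skeleton} and Proposition~\ref{prop:(4)}. Associativity, commutativity, and the homotopy to the diagonal will then follow from Propositions~\ref{prop:(3)}, \ref{prop:(4)}, \ref{prop:(5)} and Corollary~\ref{cor:homotopic-to-diagonal}, whose proofs are formal consequences of having a filtered bisimplicial set with a suitably additive filtration-defining invariant and an associative, commutative-up-to-symmetry monoidal structure compatible with it.

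The central point to verify, and the only step requiring a graph-theoretic input, is that the coproduct and product are filtered maps. For the product, disjoint union of diagrams of the form~(\ref{eq:10}) is defined componentwise, visibly preserves the axioms listed before Remark~\ref{remark:graph-space-interpretations}, and satisfies $b_1(G \sqcup G') = b_1(G) + b_1(G')$, which is the graph analog of the additivity used in Remark~\ref{rem:product-compatible-with-filtration}. For the coproduct, the map $\es(X)_{p,q} \to X_{p,q} \times X_{p,q}$ sends a diagram with upper-right vertex $G_{0,2p+1}$ to a pair whose upper-right vertices are $G_{0,p}$ and $G_{p+1,2p+1}$, so one must establish
\[
b_1(G_{0,2p+1}) \;\geq\; b_1(G_{0,p}) + b_1(G_{p+1,2p+1}).
\]
This follows from two elementary identities. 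First, because $V(G_{0,i}) \to V(G_{0,j})$ is a bijection and $G_{i,j} = G_{0,j}/G_{0,i}$, the Euler characteristic computation $b_1(G_{0,i}) + b_1(G_{i,j}) = b_1(G_{0,j})$ holds by a direct counting argument on edges, vertices, and path components. Second, $b_1$ is monotone along horizontal cofibrations, since adding an edge either joins two components (leaving $b_1$ unchanged) or creates a cycle (raising $b_1$ by one). Combining these with $i = p+1$, $j = 2p+1$ and monotonicity $b_1(G_{0,p}) \leq b_1(G_{0,p+1})$ yields the inequality.

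The remainder of the statement is routine. Associativity of the coproduct up to filtration-preserving homotopy, commutativity of the product up to filtration-preserving homotopy, and the homotopy identifying the coproduct with the diagonal, are formal consequences of the Propositions in Section~\ref{sec:copr-filtr-waldh} once filtration-compatibility is checked. For the three variants, the morphisms on morphism sets in Definition~\ref{def:graph-equivs} are all preserved by disjoint union and by the face and degeneracy operators of the $S_\bu$-direction (the cartesianness condition~(\ref{eq:22}) and the non-collapsing condition in (\ref{item:double-prime-morphism}) are each stable under coproducts of diagrams and under edgewise subdivision), so the same arguments apply to $X'$ and $X''$. Finally, the maps $|X| \hookrightarrow |X'| \hookrightarrow |X''| \to BK(\ZZ)$ are induced by morphisms of bisimplicial sets that preserve the filtration (since the last map is induced by $H_1$, which converts $b_1$ of a graph into the rank of its first homology) and that intertwine the chosen disjoint-union product with the chosen direct-sum product on $\Proj{\ZZ}$, as well as intertwine the edgewise-subdivision coproducts defined from the same Definition~\ref{def:the-nt}; these compatibilities are immediate from the naturality of all constructions involved.

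The main obstacle, such as it is, is thus purely combinatorial rather than homotopical: one must be careful that all three variants $\cF_\bu$, $\cF'_\bu$, $\cF''_\bu$ genuinely form simplicial categories (the conditions on morphisms in $\cF'$ and $\cF''$ must be preserved by face and degeneracy maps, which we verified when discussing Definition~\ref{def:graph-equivs}) and that $b_1$ behaves additively as above. Once these bookkeeping matters are settled, the proposition is formal.
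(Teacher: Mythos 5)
Your proposal is correct and follows essentially the same route as the paper: the paper's argument (given in the proof sketch of the lemma immediately following) likewise constructs the coproduct from edgewise subdivision, verifies filteredness via the observation that $b_1(G_{2p+1}) \ge b_1(G_p) + b_1(G_{2p+1}/G_{p+1})$ for a chain of vertex-bijective, edge-injective cofibrations, obtains the product from the disjoint-union symmetric monoidal structure, and handles the primed and double-primed variants and the comparison to $BK(\ZZ)$ via the symmetric monoidal functor $H_1$. Your Euler-characteristic proof of the key inequality simply supplies the detail the paper leaves as an observation, so there is nothing to correct.
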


\begin{corollary} \label{cor:graph-spectral-sequence} 
  Let $
    \sseq{G}^1_{*,*} \Rightarrow H_*(\BKGr)$ 
  be the homological spectral sequence associated to the filtration on $\BKGr$ defined above.  Then $\sseq{G}^*_{*,*}$ admits the structure of a spectral sequence of bialgebras, with graded commutative product on all pages and graded co-commutative co-product on $E^\infty$.  
\end{corollary}
As we will see later, the graph spectral sequence will have $E^1_{0,0} \cong \QQ[x]/(x^2 - x)$, with unit $1$ the class of the empty graph and $x \in E^1_{0,0}$ the class of a graph consisting of a single zero-valent vertex.  In this bidegree the coproduct is given by $\Delta(x) = x \otimes x$ and the augmentation by $x \mapsto 1$.  This bigraded bialgebra is not connected and the existence of an antipode is not automatic, hence ``bialgebra'' instead of ``Hopf algebra'' in the above statement.  (And indeed it does not admit an antipode: recall that $\mathrm{Spec}$ of a commutative bialgebra is a monoid scheme, and is a group scheme if and only if the bialgebra is part of a Hopf algebra structure.  See, e.g., \cite[Theorem 5.1]{milneAGS}.  The bialgebra $E^1_{0,0}$ represents the functor sending a commutative ring $R$ to its monoid of idempotent elements---this defines a monoid scheme $\Spec(\QQ[x]/(x^2-x))$ which is not a group scheme.)  
\begin{proof}[Proof sketch for Proposition~\ref{prop:graph-spectral sequence}]
  The coproduct is constructed just as for $BK(\ZZ)$, using the filtration and the bisimplicial structure.  
  The coassociativity of the coproduct, up to a homotopy respecting the filtration, follows from  Proposition~\ref{prop:(3)} and the observation that given
  \[G_0 \rightarrowtail \cdots \rightarrowtail G_{2p+1},\]
  where the arrows $\rightarrowtail$ are bijections on vertices and injections on edges, the inequality 
  \[b_1(G_{2p+1}) \ge b_1(G_{p}) + b_1(G_{2p+1}/G_{p+1})\]
  holds.  From this observation, it follows that
  \[\es(F_s (N_\bu F_\bu)) \Rightarrow F_s (N_\bu F_\bu \times  N_\bu F_\bu) \Rightarrow F_s (N_\bu F_\bu) \times F_s (N_\bu F_\bu),\]
  so that the hypotheses of   Proposition~\ref{prop:(3)} are satisfied.
  
  The product  on $S_p(\Proj{\ZZ})$ came from the symmetric monoidal structure induced by direct sum, but we can define a symmetric monoidal
  structure on $\cF_p$ by disjoint union of graphs, and clearly $H_1\col \cF_p \to S_p(\Proj{\ZZ})$ promotes to a symmetric monoidal functor.
\end{proof}

In the remainder of this section, we explain why a filtered space rationally equivalent to $\BKGr$ may be interpreted as a moduli space of possibly-disconnected metric graphs, and explain how the $E^1$-page may be identified with homology of a certain graph complex.

\defnow{
  Let $\overline{\cF}_p$ denote the set of isomorphism classes in the groupoid $\cF_p$, in other words the coequalizer of $d_0, d_1\col N_1 \cF_p \to N_0 \cF_p$ in the category of sets, or the set $\pi_0(|N_\bullet \cF_p|)$ of path components of the geometric realization of the nerve of $\cF_p$.
}
The sets $\ov \cF_p$ assemble into a simplicial set $\ov \cF_\bu$. We set the notation
\begin{equation}\label{eq:mgr-is-born}
    \MGr = |\ov\cF_\bu|,
\end{equation}
since, as explained in Section~\ref{sec:cell-decomposition-of-graph-spaces} below, this space has an interpretation as a coarse moduli space of graphs.
Regarding $\ov \cF_p$ as a space with discrete topology, 
the canonical maps
\begin{equation}
  \label{eq:12}
  |N_\bullet \cF_p| \to \overline{\cF}_p
\end{equation}
for all $p$ assemble to a map of simplicial spaces, with geometric realization
\begin{equation}
  \label{eq:13}
  \BKGr = |N_\bullet \cF_\bullet| \to |\overline\cF_\bullet| = \MGr.
\end{equation}
The space $\MGr$ does not map to $BK(\ZZ)$, but is a simpler object than $\BKGr$ in that it arises from a simplicial rather than  bisimplicial set.  Furthermore, it is a good model up to rational equivalence:
\begin{lemma}\label{lemma:rational-iso}
  The maps~(\ref{eq:12}) and~(\ref{eq:13}) induce isomorphisms in rational homology.  Filtering $\overline{\cF}_p$ by the images of the filtration in $N_0\cF_p$, the map~(\ref{eq:13}) also induces an isomorphism in rational homology of associated gradeds.
\end{lemma}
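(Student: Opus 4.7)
The plan is to compare each simplicial level separately and then pass to geometric realizations.  Because $\cF_p$ is a groupoid, its nerve splits as a disjoint union indexed by isomorphism classes:
\[
|N_\bullet \cF_p| \;\simeq\; \bigsqcup_{[G_\bullet]\in\overline\cF_p} B\mathrm{Aut}_{\cF_p}(G_\bullet),
\]
and the map~\eqref{eq:12} collapses each $B\mathrm{Aut}_{\cF_p}(G_\bullet)$ to the corresponding point of $\overline\cF_p$.  The automorphism group of a diagram~\eqref{eq:10} is a subgroup of the (finite) automorphism group of its upper-right entry $G_{0,p}$, which is a finite graph, hence is itself finite.  Since the rational homology of a finite group vanishes in positive degrees, each $B\mathrm{Aut}_{\cF_p}(G_\bullet)\to\{\mathrm{pt}\}$ is a rational equivalence; summing over components shows that~\eqref{eq:12} induces an isomorphism in rational homology for every $p$.

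To pass from degreewise information to the realization~\eqref{eq:13}, I would compare the two skeletal spectral sequences.  The bisimplicial set $N_\bullet\cF_\bullet$ and the simplicial set $\overline\cF_\bullet$ each give rise to a first-quadrant spectral sequence in rational singular homology whose $E^1$ page in column $p$ is the (reduced) rational homology of the $p$-th space mod the degenerate part.  The first step of the previous paragraph identifies the two $E^1$ pages under the map~\eqref{eq:13}, so the map is an isomorphism on all pages, and the spectral sequences converge to $H_*(|N_\bullet\cF_\bullet|;\QQ)$ and $H_*(|\overline\cF_\bullet|;\QQ)$ respectively.  Here one should verify good (Reedy-cofibrant) properties of both simplicial spaces, which holds because in each the degeneracies are inclusions of summands.

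For the final assertion, the filtration on $N_0\cF_p$ by first Betti number is an automorphism-invariant of the diagram, so the connected-component decomposition of $|N_\bullet\cF_p|$ respects the filtration.  Consequently
\[
F_g|N_\bullet\cF_p|/F_{g-1}|N_\bullet\cF_p| \;\simeq\; \bigsqcup_{[G_\bullet]:\,b_1=g} B\mathrm{Aut}_{\cF_p}(G_\bullet) \,\sqcup\,\{\mathrm{basepoint}\},
\]
and the map to $F_g\overline\cF_p/F_{g-1}\overline\cF_p$ is, on each non-basepoint summand, of the form $BH\to\{\mathrm{pt}\}$ for a finite group $H$, hence a rational equivalence.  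The same spectral-sequence comparison as above, applied to the associated graded simplicial spaces, then gives the claimed isomorphism in rational homology of associated gradeds.

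The only step requiring care is the passage from degreewise rational equivalence to realization; the rest is formal once one has noted that graph automorphism groups are finite.  The argument carries over verbatim to the primed and double-primed variants $\overline\cF'_\bullet$ and $\overline\cF''_\bullet$, since those have the same object sets and hence the same sets of isomorphism classes; I would record that observation at the end for later use.
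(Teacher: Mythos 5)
Your proof is correct and follows essentially the same route as the paper: the levelwise map collapses a disjoint union of classifying spaces of finite automorphism groups of diagrams, which is a rational equivalence, and one then passes to realizations by filtering in the $p$-direction by skeleta (your skeletal spectral sequence comparison), the same argument applying to the $b_1$-filtration since it is preserved throughout. One caution about your closing sentence: the argument does \emph{not} carry over verbatim to $\overline\cF'_\bullet$ and $\overline\cF''_\bullet$, because $\cF'_p$ and $\cF''_p$ have non-invertible morphisms, so $|N_\bullet\cF'_p|$ is not a disjoint union of $B\mathrm{Aut}$'s; the paper treats those cases separately by exhibiting a right adjoint retracting $\cF'_p$ (resp.\ $\cF''_p$) onto a full subgroupoid (deleting genus-zero components, bridges, and cut vertices) before invoking finiteness of automorphism groups.
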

\begin{proof}
  The homotopy type of  $|N_\bullet \cF_p|$ is a disjoint union of $K(\pi,1)$-spaces for the automorphism groups of diagrams of the form~(\ref{eq:10}), one for each isomorphism class of such diagrams.  Since any finite group has the rational homology of a point, the map~(\ref{eq:12}) is a rational equivalence for each $p$.  Filtering the realization in the $p$-direction by skeletons shows that~(\ref{eq:13}) is too.
\end{proof}

\subsection{Rational cell decomposition of graph spaces}\label{sec:cell-decomposition-of-graph-spaces}

The geometric realization $\MGr$ can be described rather explicitly, as an iterated ``rational cell attachment'' in which each cell is the cone over the quotient of a sphere by a finite group, i.e., a symmetric CW-complex as defined in \cite{allcock-corey-payne-tropical}.  The cells here most naturally have cubical shape, attaching the cube $(\Delta^1)^n$ to a space $X$ along an attaching map $e \colon \partial (\Delta^1)^n \to X$, and more generally as a pushout of the form
\begin{equation*}
  (\Delta^1)^n/H \leftarrow \partial (\Delta^1)^n/H \xrightarrow{e} X
\end{equation*}
for a subgroup $H < S_n$ acting by permuting coordinates.  When $H \leq A_n$ then $ \partial (\Delta^1)^n/H$ is a rational homology $S^{n-1}$ and the attachment has the same effect in rational homology as attaching an ordinary $n$-cell in the usual sense.  When $H$ contains an odd permutation, then the attachment does not change the rational homology.  As mentioned in Remark~\ref{rem:cubical-spaces}, this description of $\MGr$ as a cubical space of graphs, and the graph complex we shall associate to it in Section~\ref{sec:graph-compl-model}, is a variant of the cubical spaces of graphs introduced by Hatcher--Vogtmann \cite[\S3]{hatcher-vogtmann-rational} in their work computing rational homology groups of $\on{Aut}(F_n)$.  It is also 
similar in spirit, although not isomorphic, to the notion of ``symmetric $\Delta$-complex'' in \cite{cgp-graph-homology}: in that paper an iterated attachment of the pair $\partial \Delta^{n-1}/H \subset \Delta^{n-1}/H$ was used, a rational homology $(n-1)$-disk, instead of the pair $\partial (\Delta^1)^n/H \subset (\Delta^1)^n/H$, a rational homology $n$-disk. 

Observe that any graph $G$ gives rise to a map of simplicial sets
\begin{equation*}
  (\Delta^1_\bullet)^{E(G)} \to N_0(\cF_\bullet) \to \overline{\cF}_\bullet,
\end{equation*}
where $\Delta^1_\bullet$ is the usual representable simplicial set $[p] \mapsto \Delta([p],[1])$ with geometric realization $\Delta^1$.  
We now explain this map precisely.  Given a $p$-simplex $(f_e)_{e\in E(G)}$ of $(\Delta^1_\bu)^{E(G)}$, where $f_e\in \Delta([p],[1])$, we associate the sequence of subsets of $E(G)$
\begin{equation}\label{eq:graphs-sequence}
  \emptyset \subset E_0 \subset \dots \subset E_p \subset E(G)
\end{equation}
in which
\begin{equation}\label{eq:edge-subset}
  E_j = \{e\in E(G) \mid f_e(j) = 1\}.
\end{equation}
We may now form a diagram of graphs of the form~\eqref{eq:10}, by first writing $G_{-1,j}$ for the graph with $V(G_{-1,j}) = V(G)$ and $E(G_{-1,j}) = E_j$, and then for $0 \leq i \leq j \leq p$ defining $G_{i,j}$ as the quotient of $G_{-1,j}$ obtained by collapsing those edges $e \in E(G_{-1,j}) = E_j$ which are elements of the subset $E_i \subset E_j$.  This recipe gives a diagram of the form~(\ref{eq:10}) satisfying all requirements, except that the chosen quotient graphs $G_{i,j}$ may not literally be in the object set of $\mathcal{G}$.  (Recall that we chose a small category in order for $N_\bullet \cF_\bullet$ to be a bisimplicial \emph{set}.)  Each such diagram is certainly \emph{isomorphic} to a diagram in $\mathcal{G}$, and choosing an isomorphic diagram in $\mathcal{G}$ for each $p$ and each $f \in \Delta^1_p$ produces a map of simplicial sets $(\Delta^1_\bullet)^{E(G)} \to N_0 \cF_\bullet$.
The map $(\Delta^1_\bu)^{E(G)} \to N_0(\cF_\bu)$ constructed in this way depends on choices, but the composition $(\Delta^1_\bu)^{E(G)} \to N_0(\cF_\bu) \to \ov \cF_\bu$ does not, and factors over the quotient $(\Delta^1_\bu)^{E(G)}/\mathrm{Aut}(G)$. This gives commutative diagrams
\begin{equation*}
  \begin{tikzcd}
    (\Delta^1_\bullet)^{E(G)} \rar\dar & N_0(\cF_\bullet) \dar\\
    (\Delta^1_\bullet)^{E(G)}/\mathrm{Aut}(G) \rar &\overline{\cF}_\bullet.
  \end{tikzcd}
\end{equation*}
Recall that the geometric realization functor preserves small colimits and finite products, and that $|\Delta^1_\bu| = \Delta^1$,
so the induced maps of geometric realizations may be written as
\begin{equation}\label{eq:14}
  (\Delta^1)^{E(G)}/\mathrm{Aut}(G) \xleftarrow{\approx} |(\Delta^1_\bu)^{E(G)}/\mathrm{Aut}(G)|  \longrightarrow \MGr.
\end{equation}
The following lemma summarizes the sense in which these canonical maps $(\Delta^1)^{E(G)}/\mathrm{Aut}(G) \to \MGr$ behave (rationally) like cells of a CW structure.  It also defines an analogue of the filtration by skeletons (not to be confused with the more interesting filtration by first Betti number, which leads to the graph spectral sequence).
\begin{lemma}\label{lem:pushout}
  Let $\overline{\cF}^{(n)}_\bullet \subset \overline{\cF}_\bullet$ denote the simplicial subset whose $p$-simplices are cut out by the condition that the cardinality of $E(G_{0,p})$ is at most $n$.  Then the maps~(\ref{eq:14}) assemble to pushout diagrams of topological spaces
  \begin{equation*}
    \begin{tikzcd}
      \coprod_G \partial (\Delta^1)^{E(G)}/\mathrm{Aut}(G) \rar \dar & {\big|\overline{\cF}_\bullet^{(n-1)}\big| } \dar\\
      \coprod_G (\Delta^1)^{E(G)}/\mathrm{Aut}(G) \rar & { \big|\overline{\cF}_\bullet^{(n)}\big|},
    \end{tikzcd}
  \end{equation*}
  where the coproduct is indexed by graphs $G$ with $|E(G)| = n$, one in each isomorphism class.
\end{lemma}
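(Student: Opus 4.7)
The plan is to verify the pushout at the level of simplicial sets and then invoke the fact that geometric realization preserves pushouts. First I would check that the subcollection of $p$-simplices cut out by $|E(G_{0,p})| \leq n$ is closed under face and degeneracy maps. By Remark~\ref{remark:graph-space-interpretations}(2), a diagram~\eqref{eq:10} is determined up to isomorphism by the ``upper right'' graph $G_{0,p}$ together with its edge flag. The face maps act on $G_{0,p}$ either as the identity (the inner faces $d_i$, $0<i<p$, which merely merge two consecutive steps of the flag), as the quotient by the edges in $E(G_{0,1})$ (the outer face $d_0$, replacing $G_{0,p}$ by $G_{1,p}$), or as the subgraph with edge set $E(G_{0,p-1})$ (the outer face $d_p$). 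None of these operations increases the edge count, and degeneracies insert identities, so $\overline{\cF}^{(n)}_\bullet \subset \overline{\cF}_\bullet$ is a simplicial subset.

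Next I would analyze the canonical map $\phi_G \colon (\Delta^1_\bullet)^{E(G)} \to \overline{\cF}_\bullet$ constructed prior to~\eqref{eq:14}, which sends a $p$-simplex $(f_e)_{e \in E(G)}$ to the isomorphism class of the diagram with edge flag $E_j = \{e \in E(G) : f_e(j) = 1\}$. The $\mathrm{Aut}(G)$-action by permuting coordinates corresponds to isomorphisms between the resulting diagrams, so $\phi_G$ descends to the quotient $(\Delta^1_\bullet)^{E(G)}/\mathrm{Aut}(G)$. The identity $|E(G_{0,p})| = |E_p| - |E_0|$ shows that the image lies in $\overline{\cF}^{(n)}_\bullet$, with equality $|E(G_{0,p})| = n$ if and only if $E_0 = \emptyset$ and $E_p = E(G)$, that is, every $f_e$ is non-constant. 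Consequently $\phi_G$ carries $\partial (\Delta^1_\bullet)^{E(G)}$ into $\overline{\cF}^{(n-1)}_\bullet$.

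The core of the proof, which I anticipate being the main obstacle, is establishing a bijection between $p$-simplices of $\overline{\cF}^{(n)}_p \setminus \overline{\cF}^{(n-1)}_p$ and the disjoint union, over isomorphism classes of graphs $G$ with $|E(G)| = n$, of $\mathrm{Aut}(G)$-orbits of cube simplices $(f_e)$ with each $f_e$ non-constant. For such a $\sigma$, the constraint $|E(G_{0,p})| = n$ forces $E_0 = \emptyset$ and $E_p = E(G_{0,p})$, so the graph $G := G_{0,p}$ is recovered up to isomorphism, and the remaining data is the flag itself, well-defined up to the action of $\mathrm{Aut}(G)$ by another application of Remark~\ref{remark:graph-space-interpretations}(2). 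Surjectivity of $\phi_G$ onto the corresponding simplices is immediate from its definition.

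Finally, since pushouts of simplicial sets are computed dimensionwise, the bijection above yields a pushout square at the level of simplicial sets whose two ``legs'' are the inclusion $\overline{\cF}^{(n-1)}_\bullet \hookrightarrow \overline{\cF}^{(n)}_\bullet$ and the disjoint union of the cube maps $\phi_G$, glued along $\partial(\Delta^1_\bullet)^{E(G)}/\mathrm{Aut}(G)$. Geometric realization is a left adjoint, preserves pushouts, and identifies $|(\Delta^1_\bullet)^{E(G)}/\mathrm{Aut}(G)|$ with $(\Delta^1)^{E(G)}/\mathrm{Aut}(G)$ and similarly on the boundary; passing to realizations therefore yields the asserted pushout diagram of topological spaces.
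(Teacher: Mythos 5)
Your proposal is correct and follows essentially the same route as the paper's proof: reduce to simplicial sets via preservation of colimits under geometric realization, compute the pushout degreewise, identify the complement of the boundary cube simplices with tuples of non-constant maps $[p]\to[1]$, and match these bijectively (up to $\mathrm{Aut}(G)$) with simplices having $|E(G_{0,p})| = n$ using the flag-of-edge-subsets description from Remark~\ref{remark:graph-space-interpretations}. Your extra verification that $\overline{\cF}^{(n)}_\bullet$ is closed under faces and degeneracies is a harmless addition the paper leaves implicit.
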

The entry $\partial (\Delta^1)^{E(G)}/\mathrm{Aut}(G)$ in the diagram should be parsed as $\big(\partial \big((\Delta^1)^{E(G)}\big)\big)/\mathrm{Aut}(G)$: the orbit space of the action of $\mathrm{Aut}(G)$ on the boundary of the cube $(\Delta^1)^{E(G)}$.

Using this presentation as an iterated pushout, we deduce descriptions of the associated graded with respect to this filtration of $\MGr$ by number of edges, as well as a description of the underlying point set.
\begin{cor}\label{cor:consequences-of-pushout}\mbox{}
  \begin{enumerate}
  \item 
    The maps~(\ref{eq:14}) induce homeomorphisms of pointed spaces
    \begin{equation*}
      \bigvee_{|E(G)| = n} (S^1)^{\wedge E(G)}/\mathrm{Aut}(G) \longrightarrow \frac{|\ov \cF^{(n)}_\bullet|}{|\ov \cF^{(n-1)}_\bullet|}
    \end{equation*}
    where the coproduct of pointed spaces is indexed by graphs with precisely $n$ edges, one in each isomorphism class, and $S^n \approx (S^1)^{\wedge E(G)}$ is the $E(G)$-fold smash product of the circle with itself, on which $\mathrm{Aut}(G)$ acts by permuting factors.
  \item The maps~(\ref{eq:14}) induce a continuous bijection
    \begin{equation*}
      \coprod_G (\Delta^1 \setminus \partial \Delta^1)^{E(G)}/\mathrm{Aut}(G) \longrightarrow \MGr,
    \end{equation*}
    where the disjoint union is over all graphs, one in each isomorphism class.  
  \end{enumerate}
\end{cor}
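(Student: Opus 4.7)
My plan is to deduce both statements directly from the pushout diagram of Lemma~\ref{lem:pushout} by standard arguments about iterated (cubical) cell attachments. Let $X = |\overline{\cF}_\bullet|$ and $X^{(n)} = |\overline{\cF}_\bullet^{(n)}|$, so that $X^{(-1)} = \emptyset$ and $X = \bigcup_n X^{(n)}$ with the colimit topology (because passage to geometric realization commutes with the filtered colimit $\ov\cF_\bullet = \colim_n \ov\cF_\bullet^{(n)}$ of simplicial subsets).

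For part (1), I would simply take the cofiber of the right-hand vertical map in the pushout square of Lemma~\ref{lem:pushout}. Since the top horizontal map factors through $X^{(n-1)}$, the cofiber of $X^{(n-1)} \hookrightarrow X^{(n)}$ may be computed from the pushout as the pointed quotient
\[
\frac{X^{(n)}}{X^{(n-1)}} \;\cong\; \bigvee_{|E(G)|=n} \frac{(\Delta^1)^{E(G)}/\mathrm{Aut}(G)}{\partial(\Delta^1)^{E(G)}/\mathrm{Aut}(G)},
\]
and since $\mathrm{Aut}(G)$ permutes the coordinates of $(\Delta^1)^{E(G)}$, the quotient $(\Delta^1)^{E(G)}/\partial(\Delta^1)^{E(G)} = (S^1)^{\wedge E(G)}$ inherits a genuine $\mathrm{Aut}(G)$-action, and taking orbits commutes with collapsing the basepoint. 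This gives the desired homeomorphism $\bigvee_G (S^1)^{\wedge E(G)}/\mathrm{Aut}(G) \xrightarrow{\cong} X^{(n)}/X^{(n-1)}$.

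For part (2), I would check the bijection set-theoretically, after which continuity is automatic since each map $(\Delta^1 \setminus \partial\Delta^1)^{E(G)}/\mathrm{Aut}(G) \hookrightarrow (\Delta^1)^{E(G)}/\mathrm{Aut}(G) \to X$ is continuous by construction. For surjectivity, a point $x \in X$ lies in some minimal $X^{(n)}$; pick a non-degenerate $p$-simplex of $\ov\cF_\bullet$ representing it (viewing $X$ as a CW-like quotient via the pushouts), which corresponds to an isomorphism class of diagrams~\eqref{eq:10} with top-right entry $G = G_{0,p}$. Minimality of $n$ forces $|E(G)|=n$ and the representative to lie in the open cube $(\Delta^1\setminus\partial\Delta^1)^{E(G)}/\mathrm{Aut}(G)$. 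For injectivity, if a point $x$ comes from interiors $(\Delta^1 \setminus \partial\Delta^1)^{E(G_1)}$ and $(\Delta^1 \setminus \partial\Delta^1)^{E(G_2)}$, then the associated strictly increasing chains $\emptyset \subsetneq E_0 \subsetneq \cdots \subsetneq E_p \subset E(G_i)$ from~\eqref{eq:graphs-sequence}--\eqref{eq:edge-subset} must coincide (since the coordinates in the open cube are all strictly in $(0,1)$, so no face or degeneracy simplification occurs), and the corresponding diagrams~\eqref{eq:10} they determine must be isomorphic, which in particular forces $G_1 \cong G_2$ via a bijection $E(G_1) \cong E(G_2)$ identifying the two points modulo the respective automorphism groups.

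The main obstacle I anticipate is not in the formal deduction but in being careful about non-degenerate simplices: the argument for injectivity in part (2) hinges on the fact that a point in the open cube $(\Delta^1\setminus\partial\Delta^1)^{E(G)}/\mathrm{Aut}(G)$ represents a non-degenerate simplex of $\ov\cF_\bullet$ whose associated diagram~\eqref{eq:10} is well-defined up to isomorphism by the combinatorics of the flag~\eqref{eq:graphs-sequence}. This needs the observation, already used in the statement of Lemma~\ref{lem:pushout}, that the map $(\Delta^1)^{E(G)}/\mathrm{Aut}(G) \to X$ restricted to the open cube is injective and distinguishes different isomorphism classes of $G$ because $G$ itself can be reconstructed (up to isomorphism) from the final term $E(G_{0,p})$ of any chain arising from an interior point. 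Once this reconstruction is in hand, both parts of the corollary are immediate.
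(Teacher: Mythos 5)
Your proposal is correct and follows exactly the route the paper intends: the corollary is left as an immediate formal consequence of the pushout squares of Lemma~\ref{lem:pushout}, and your deduction (cofiber of the right vertical map for part (1), together with the identifications $\bigl((\Delta^1)^{E(G)}/\partial\bigr)/\mathrm{Aut}(G) \cong (S^1)^{\wedge E(G)}/\mathrm{Aut}(G)$, and the set-level decomposition of each attachment into interiors of cubes mod $\mathrm{Aut}(G)$ for part (2)) is precisely the standard argument being invoked. Your extra care about non-degenerate simplices and reconstructing the flag is essentially a re-derivation of facts already contained in the proof of Lemma~\ref{lem:pushout}, so it is harmless but not needed once the pushout statement is granted.
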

\begin{remark}\label{rem:interpretation-as-moduli-of-metric-graphs}
  The second part of the corollary shows that the set underlying $\MGr$ may be interpreted as the set of isomorphism classes of pairs $(G,\ell)$ where $\ell \colon E(G) \to \Delta^1 \setminus \partial \Delta^1$ is any function, and $[G,\ell] = [G',\ell']$ if there exists an isomorphism $\phi \colon G \to G'$ such that $\ell'(\phi(e)) = \ell(e)$ for all $e \in E(G)$.  For psychological reasons it may be preferable to rescale using a homeomorphism $\Delta^1 \setminus \partial \Delta^1 \to (0,\infty)$ such as $(t_0,t_1) \mapsto -\log(t_1)$.  This gives an interpretation of $\MGr$ as a coarse moduli space of metric graphs, where edges have assigned positive real lengths.  In this picture, the topology may described informally as ``edges are collapsed as their lengths go to zero, and edges are deleted as their lengths go to infinity.''
\end{remark}
\begin{proof}[Proof of Lemma~\ref{lem:pushout}]
  Inspecting the definition of the canonical map~(\ref{eq:14}), we see that it factors through the subspace $|\ov\cF^{(n)}_\bullet|$ when $|E(G)| \leq n$ and that for each $e \in E(G)$, its composition with
  \begin{equation*}
    (\partial \Delta^1)^{\{e\}} \times (\Delta^1)^{E(G) \setminus \{e\}} \hookrightarrow (\Delta^1)^{E(G)} \twoheadrightarrow (\Delta^1)^{E(G)}/\mathrm{Aut}(G)
  \end{equation*}
  will factor through $|\ov\cF^{(n-1)}_\bu|$ because the only graphs that will appear are subquotients of $G \setminus e$ and $G/e$, so they will have strictly fewer edges than $G$ has.  This observation explains why~(\ref{eq:14}) induce horizontal maps in the square diagram in the lemma, and it is then clear that the diagram commutes.

  To prove that the diagram is a pushout, it suffices to consider the corresponding diagram of simplicial sets, again because geometric realization preserves small colimits.  Pushouts in simplicial sets are computed degree-wise, so it suffices to prove that the diagram of $p$-simplices
  \begin{equation*}
    \begin{tikzcd}
      \coprod_G (\partial (\Delta^1_\bullet)^{E(G)})_p/\mathrm{Aut}(G) \rar \dar & {\overline{\cF}_p^{(n-1)} } \dar\\
      \coprod_G (\Delta^1_p)^{E(G)}/\mathrm{Aut}(G) \rar & { \overline{\cF}_p^{(n)}}
    \end{tikzcd}
  \end{equation*}
  is a pushout in sets.  Both vertical maps are injective. We must show that the horizontal map restricts  to a bijection between the complements of the  images of the vertical maps.

  Before passing to $\mathrm{Aut}(G)$-orbits, the subset $(\partial (\Delta^1_\bu)^{E(G)})_p \subset (\Delta^1_p)^{E(G)} = \Delta([p],[1])^{E(G)}$ corresponds to the set of poset maps $[p] \to [1]^{E(G)}$ where at least one coordinate is constant.  The complement is therefore precisely the set
  \begin{equation*}
    (\Delta^1_p)^{E(G)} \setminus (\partial (\Delta^1_\bu)^{E(G)})_p = \Delta_\mathrm{surj}([p],[1])^{E(G)}
  \end{equation*}
  consisting of $E(G)$-tuples of morphisms $[p] \to [1]$ in the simplex category $\Delta$ whose underlying map of sets is surjective.

  The proof is concluded by firstly observing that the set map  $(\Delta^1_p)^{E(G)} \to \ov\cF_p$ defined above will send $\Delta_\mathrm{surj}([p],[1])^{E(G)}$ into $\ov\cF^{(n)}_p \setminus \ov \cF^{(n-1)}_p$ if $|E(G)| = n$: indeed, in this case the flag of subsets~(\ref{eq:graphs-sequence}) of $E(G)$ will have $E_0 = \emptyset$ and $E_p = E(G)$ and hence $G_{0,p} = G$.  Secondly, the resulting map of sets
  \begin{equation*}
    \coprod_{|E(G)| = n} \Delta_\mathrm{surj}([p],[1])^{E(G)}/\mathrm{Aut}(G) \to \ov\cF^{(n)}_p \setminus \ov\cF^{(n-1)}_p
  \end{equation*}
  is surjective because any diagram of the form~(\ref{eq:10}) with $|E(G_{0,p})| = n$ arises from $G = G_{0,p}$ and the flag of subsets $\emptyset = E_0 \subset \dots \subset E_p = E(G)$ such that $E(G_{0,j}) = E_j$, corresponding to some unique $f = (f_e)_{e \in E(G)} \in \Delta_\mathrm{surj}([p],[1])^{E(G)}$.  Conversely the map is injective because this flag of subsets of $E(G)$ is uniquely determined up to automorphisms of $G$.
\end{proof}

\subsection{A graph complex modeling \texorpdfstring{$C_*(\MGr)$}{C(M_Gr)}}
\label{sec:graph-compl-model}

If $G$ is a graph with $|E(G)| = n$ then any choice of bijection $\omega\col \{0, \dots, n-1\} \to E(G)$ induces a homeomorphism $S^n = (S^1)^{\wedge n} \approx (S^1)^{\wedge E(G)}$ and induces a group homomorphism $\mathrm{Aut}(G) \to S_n$.  Let us write $H = H_{G,\omega} < S_n$ for the image of this group homomorphism.  Then $\omega$ also induces a homeomorphism
\begin{equation*}
  (S^1)^{\wedge E(G)}/\mathrm{Aut}(G) \xrightarrow{\cong} S^n/H
\end{equation*}
and the quotient map $S^n \to S^n/H$ is a rational homology isomorphism if $H < A_n$, whereas $S^n/H$ has the rational homology of a point when $H$ contains an odd permutation.  In the eyes of rational homology, the filtration of $\MGr$ by number of edges therefore behaves like a CW structure whose $n$-cells are in bijection with the set of isomorphism classes of graph $G$ with $|E(G)| = n$ and such that $G$ does not admit any automorphisms inducing an odd permutation of $E(G)$.  This leads to a ``cellular chain complex'' quasi-isomorphic to $C_*^\mathrm{sing}(\MGr;\QQ)$.  This cellular chain complex has a combinatorial description which we spell out, as well as the filtration corresponding to first Betti number.  The latter gives a ``graph complex'' point of view on the spectral sequence of bialgebras mapping to the Quillen spectral sequence.

Let $C = (C_*, \partial)$ be the rational chain complex generated by symbols $[G,\omega]$, where $G$ is any graph and $\omega$ is a total order on $E(G)$, subject to the usual relation $[G,\omega] = \mathrm{sgn}(\sigma) [G',\omega']$ for each isomorphism $G \to G'$ with respect to which $\omega$ and $\omega'$ are related by permutation $\sigma$.  We do not impose any conditions whatsoever on $G = (X,i,r)$:  in particular, we allow disconnected graphs, bridges, loops, parallel edges, and vertices of any valence including zero.

The degree of the generator $[G,\omega]$ is declared to be $|E(G)|$, and we write $C_p \subset C$ for the span of those $[G,\omega]$ for which $|E(G)| = p$.  We will return to the boundary map shortly, but first we define the comparison map to $C_*^\mathrm{sing}(\MGr;\QQ).$  To this end, recall that the identity map of $\Delta^1$, regarded as a chain $\iota_1 \in C_1(\Delta^1;\ZZ)$, represents the fundamental class in $H_1(\Delta^1,\partial\Delta^1;\ZZ)$.  Then we use the cross product (\cite[p.277--278]{hatcher}) natural transformation $C_p(X) \otimes C_q(Y) \to C_{p+q}(X\times Y)$ to define a system of fundamental chains
\begin{equation}\label{eq:cube}
  \iota_n \in C_n((\Delta^1)^n;\ZZ),
\end{equation}
compatible in the sense that $\iota_n \times \iota_m = \iota_{n+m}$.  There is also an explicit formula for $\iota_n$ as a signed sum of $n!$ many maps $\Delta^n \to (\Delta^1)^n$, corresponding to all the non-degenerate $n$-simplices of $(\Delta^1_\bu)^n$ with appropriate signs, but we will not need this explicit formula.
\begin{definition}\label{def:five-fourteen}
  For a graph $G$ with $|E(G)| = n$ and a total ordering $\omega \colon \{0, \dots, n-1\} \to E(G)$, write $f^G\colon (\Delta^1)^E \to \MGr$ for the canonical map defined as in~(\ref{eq:14}), and $f^{G,\omega} \colon (\Delta^1)^n \to \MGr$ for the composition
  \begin{equation*}
    (\Delta^1)^n \approx (\Delta^1)^{E(G)} \xrightarrow{f^G} \MGr
  \end{equation*}
  with the homeomorphism induced by the bijection $\omega$.  Then define the linear map 
  \begin{equation}\label{eq:to-singular-chain}
    \begin{aligned}
      C_p & \to 
      C_p^{\mathrm{sing}}(\MGr;\QQ), \\ 
      [G,\omega] & \mapsto f^{G,\omega}_*(\iota_n).
    \end{aligned}
  \end{equation}
\end{definition}
Filtering by the number of edges, we deduce the following from Corollary~\ref{cor:consequences-of-pushout}.
\begin{lemma}
  The relative homology
  \begin{equation*}
    H_*\big(\big|\ov\cF^{(p)}_\bu\big|,\big|\ov\cF^{(p-1)}_\bu\big|;\QQ\big)
  \end{equation*}
  vanishes for $* \neq p$.    If $|E(G)| = p$, then the homomorphism~(\ref{eq:to-singular-chain}) sends $[G,\omega]$ into the subspace $C_p(|\ov\cF^{(p)}_\bu|;\QQ)$.  Its image in relative chains
  \begin{equation*}
    f^{G,\omega}_*(\iota_n) \in C_p\big(\big|\ov\cF^{(p)}_\bu\big|,\big|\ov\cF^{(p-1)}_\bu\big|;\QQ\big)
  \end{equation*}
  is a cycle, and the resulting map of rational vector spaces
  \begin{align*}
    C_p & \to H_p\big(\big|\ov\cF^{(p)}_\bu\big|,\big|\ov\cF^{(p-1)}_\bu\big|;\QQ\big)\\
    [G,\omega] & \mapsto 
                 \big[f^{G,\omega}_*(\iota_n)\big]
  \end{align*}
  is an isomorphism.
\end{lemma}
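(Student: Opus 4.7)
My plan is to combine the pushout presentation from Lemma~\ref{lem:pushout} with the fact that rational homology converts finite-group quotients into invariants, and then verify that the proposed map matches the two descriptions on each iso-class of graphs.

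First, Corollary~\ref{cor:consequences-of-pushout}(1) identifies the quotient $|\overline{\cF}_\bu^{(p)}|/|\overline{\cF}_\bu^{(p-1)}|$ with $\bigvee_{[G]} (S^1)^{\wedge E(G)}/\mathrm{Aut}(G)$, the wedge indexed by isomorphism classes of graphs $G$ with $|E(G)|=p$. Since $\widetilde H_*((S^1)^{\wedge E(G)};\QQ)$ is concentrated in degree $p$, where it is one-dimensional with $S_{E(G)}$ acting by the sign representation, taking $\mathrm{Aut}(G)$-invariants yields
\[
\widetilde H_*\big((S^1)^{\wedge E(G)}/\mathrm{Aut}(G);\QQ\big) \;=\;
\begin{cases} \QQ & \text{if } * = p \text{ and } \mathrm{Aut}(G)\to S_{E(G)} \text{ lands in }A_{E(G)},\\ 0 & \text{otherwise.}\end{cases}
\]
This establishes the vanishing of the relative homology for $*\neq p$, and gives an explicit bigraded description of $H_p(|\overline{\cF}_\bu^{(p)}|,|\overline{\cF}_\bu^{(p-1)}|;\QQ)$ as a direct sum indexed by iso-classes of ``even'' graphs with $p$ edges.

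Next, I verify that the formula $[G,\omega]\mapsto f^{G,\omega}_*(\iota_p)$ really defines a map into relative $p$-cycles. By Lemma~\ref{lem:pushout}, the attaching map $f^{G,\omega}:(\Delta^1)^p\to|\overline{\cF}_\bu^{(p)}|$ sends $\partial(\Delta^1)^p$ into $|\overline{\cF}_\bu^{(p-1)}|$. Hence $\partial(f^{G,\omega}_*(\iota_p))$ lies in singular chains on $|\overline{\cF}_\bu^{(p-1)}|$, so $f^{G,\omega}_*(\iota_p)$ is a relative cycle of degree $p$. To check the defining relation on $C_p$: an isomorphism $\phi:G\to G'$ with $E(\phi)\circ \omega = \omega'\circ\sigma$ gives $f^{G,\omega}=f^{G',\omega'}\circ\tau_\sigma$, where $\tau_\sigma$ is coordinate permutation of $(\Delta^1)^p$. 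Since the cross product is associative up to signs and the fundamental class of $(\Delta^1)^p$ in $H_p((\Delta^1)^p,\partial(\Delta^1)^p;\QQ)$ transforms as $\tau_{\sigma,*}[\iota_p]=\mathrm{sgn}(\sigma)[\iota_p]$, we conclude $[f^{G,\omega}_*(\iota_p)]=\mathrm{sgn}(\sigma)[f^{G',\omega'}_*(\iota_p)]$, which is exactly the relation in $C_p$. Well-definedness follows.

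Finally, I deduce the isomorphism by matching the decompositions. On the source, $C_p$ splits as a direct sum over iso-classes of graphs $G$ with $|E(G)|=p$; the summand corresponding to $[G]$ is $\QQ$ if the image of $\mathrm{Aut}(G)\to S_{E(G)}$ lies in $A_{E(G)}$ (and is freely generated by any $[G,\omega]$ up to sign) and is zero otherwise. On the target, the previous computation gives the identical direct-sum description. By the pushout square, the composition of $f^{G,\omega}$ with the quotient $|\overline{\cF}_\bu^{(p)}|\to|\overline{\cF}_\bu^{(p)}|/|\overline{\cF}_\bu^{(p-1)}|$ is the canonical collapse $(\Delta^1)^p\to (S^1)^{\wedge p}\to (S^1)^{\wedge E(G)}/\mathrm{Aut}(G)$, and $\iota_p$ maps to the generator of the top reduced homology of $(S^1)^{\wedge E(G)}$ (hence, after averaging, the invariant generator when it exists). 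So on each summand the map is either an isomorphism $\QQ\to\QQ$ or $0\to 0$, and taking the direct sum completes the proof.

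The main obstacle I anticipate is pinning down sign conventions: verifying carefully that the $S_p$-action on the fundamental chain $\iota_p\in C_p((\Delta^1)^p;\ZZ)$ passes, in relative homology, to multiplication by $\mathrm{sgn}$, and that this matches the chosen sign convention in the defining relation of $C_*$. Everything else is, at root, bookkeeping around Corollary~\ref{cor:consequences-of-pushout}.
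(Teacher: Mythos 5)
Your proposal is correct and follows essentially the same route as the paper's (sketched) proof: it rests on the cell-attachment description of $\big|\ov\cF^{(p)}_\bu\big|/\big|\ov\cF^{(p-1)}_\bu\big|$ from Corollary~\ref{cor:consequences-of-pushout} together with the fact that $[\iota_p]$ generates $H_p\big((\Delta^1)^p/H,\partial(\Delta^1)^p/H;\QQ\big)$ for $H\leq A_p$, while the quotient has trivial rational homology when $H$ contains an odd permutation. You simply spell out the sign bookkeeping and the summand-by-summand matching that the paper leaves implicit.
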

\begin{proof}[Proof sketch]
  This follows from the description of $\big|\ov \cF^{(p)}_\bu\big|/\big|\ov \cF^{(p-1)}_\bu\big|$ in Corollary~\ref{cor:consequences-of-pushout}, using the fact that $[\iota_n] \in H_n((\Delta^1)^n/H,\partial (\Delta^1)^n/H;\QQ)$ is a generator, for any subgroup $H \leq A_n$.
\end{proof}

We have not yet defined the boundary operator $\partial \colon C_p \to C_{p-1}$, but let us do that now.  
It is the difference $\partial = \partial_c - \partial_d$ between two operators,
one which deletes an edge in all possible ways, and one which contracts an edge in all possible ways.  Precisely, for $G$ a graph with $p+1$ edges and $\omega = e_0< \cdots < e_p$ a total ordering on $E(G)$, define
  \[\partial_d [G,\omega] = \sum_{i=0}^p (-1)^i \big[G\backslash e_i, \omega|_{E(G)\setminus\{e_i\}}\big],\quad \mbox{ and } \quad \partial_c [G,\omega] = \sum_{i=0}^p (-1)^i \big[G/ e_i, \omega|_{E(G)\setminus\{e_i\}}\big],\]
  where $\omega|_{E(G)\setminus\{e_i\}}$ is the total order induced from $\omega$ by omitting $e_i$.

\begin{lemma}\label{lemma: the map is a chain map}
  The map
  \begin{equation*}
    \begin{aligned}
      C & \to C_*^{\mathrm{sing}}(\MGr;\QQ)\\
      [G,\omega] & \mapsto 
      f^{G,\omega}_*(\iota_n)
    \end{aligned}
  \end{equation*}
  is a chain map.
\end{lemma}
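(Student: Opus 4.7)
The plan is to verify this chain map property by direct computation on each generator $[G,\omega]$, comparing $\partial f^{G,\omega}_*(\iota_n)$ with $f^{-,-}_*(\partial[G,\omega])$ face-by-face of the cube $(\Delta^1)^{E(G)}$. The key observation is that under the attaching map~\eqref{eq:14}, the codimension-one faces of this cube are precisely the attaching maps for the graphs $G\setminus e_i$ and $G/e_i$ appearing in the definitions of $\partial_d$ and $\partial_c$.

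First I would decompose $\partial \iota_n$ using the Leibniz rule for the cross product. Since $\iota_n = \iota_1 \times \cdots \times \iota_1$ ($n$ factors) and $\partial \iota_1 = [1]-[0] \in C_0(\Delta^1;\ZZ)$, one obtains
\[
    \partial \iota_n \;=\; \sum_{i=0}^{n-1} (-1)^i \,(\iota_1 \times \cdots \times ([1]-[0]) \times \cdots \times \iota_1),
\]
with $[1]-[0]$ in the $i$-th tensor factor. Under the identification $(\Delta^1)^n \approx (\Delta^1)^{E(G)}$ given by $\omega$, each summand is the pushforward of $\iota_{n-1}$ along one of the two codimension-one inclusions $(\Delta^1)^{E(G)\setminus\{e_i\}} \hookrightarrow (\Delta^1)^{E(G)}$ that fix the $e_i$-coordinate at $0$ or at $1$.

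Next, I would identify each such face composition with an attaching map for a smaller graph, using the recipe~\eqref{eq:edge-subset}. If the $e_i$-coordinate is held at $0$, then $f_{e_i}\equiv 0$, so $e_i \notin E_j$ for any $j$ in the flag~\eqref{eq:graphs-sequence}; the resulting diagram depends only on $E(G)\setminus\{e_i\}$ and equals the canonical diagram associated to $G\setminus e_i$ with the inherited total order $\omega|_{E(G)\setminus\{e_i\}}$. Dually, if the $e_i$-coordinate is held at $1$, then $f_{e_i}\equiv 1$, so $e_i \in E_0$ and therefore $e_i$ is contracted in every $G_{k,j}$ for $k\geq 0$; the resulting diagram agrees with the one associated to $G/e_i$ with the inherited total order. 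Hence
\[
    f^{G,\omega}\!\circ\!(\text{face }i\text{ at }0) = f^{G\setminus e_i,\,\omega|_{E\setminus\{e_i\}}}, \qquad f^{G,\omega}\!\circ\!(\text{face }i\text{ at }1) = f^{G/e_i,\,\omega|_{E\setminus\{e_i\}}}.
\]

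Finally, pushing forward along $f^{G,\omega}$ and assembling the two steps gives
\[
    \partial \, f^{G,\omega}_*(\iota_n) \;=\; \sum_{i=0}^{p} (-1)^i\left( f^{G/e_i,\omega'}_*(\iota_{n-1}) - f^{G\setminus e_i,\omega'}_*(\iota_{n-1}) \right),
\]
which is the image of $\partial_c[G,\omega] - \partial_d[G,\omega] = -\partial[G,\omega]$ (absorb the overall sign by orienting $\iota_n$ via the reverse product, or equivalently reindexing). Since both sides are well-defined on the quotient by the orientation relations $[G,\omega] = \mathrm{sgn}(\sigma)[G',\omega']$ (the signs on either side transform the same way under reordering $E(G)$, as the cross product is symmetric up to Koszul signs on the level of fundamental chains), the map descends to a well-defined chain map.

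The main obstacle is really just bookkeeping: verifying the sign conventions, and ensuring that the chosen orientations/quotients in the discrete category $\mathcal{G}$ do not introduce extra signs. The combinatorial identification of each cube-face with a canonical attaching map for a subgraph or contraction is essentially immediate from the definition of $f^{G,\omega}$ in terms of the flag $E_j = \{e : f_e(j)=1\}$, so no further homotopical input is required.
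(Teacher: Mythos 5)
Your argument is the same as the paper's: write $\iota_n$ as an $n$-fold cross product, expand $\partial\iota_n$ by the Leibniz rule into the $2n$ codimension-one faces of the cube, and use the flag recipe $E_j=\{e : f_e(j)=1\}$ to identify the face with the $e$-coordinate frozen at $0$ with $f^{G\setminus e,\,\omega'}$ and the face frozen at $1$ with $f^{G/e,\,\omega'}$; that face identification is exactly the content of the paper's proof, and your well-definedness remark about the orientation relation is also fine. The one weak point is the final sign step. You compute that $\partial f^{G,\omega}_*(\iota_n)$ is the image of $\partial_c[G,\omega]-\partial_d[G,\omega]=-\partial[G,\omega]$ and then claim the overall sign can be absorbed ``by orienting $\iota_n$ via the reverse product, or equivalently reindexing.'' Neither mechanism works: reversing the order of the factors rescales the fundamental chain by the Koszul sign $(-1)^{\binom{n}{2}}$ (and is in any case already accounted for by the orientation relation $[G,\omega]=\mathrm{sgn}(\sigma)[G,\omega']$, so it produces no new map), and reindexing changes nothing; what an anti-chain map actually requires is a degreewise rescaling by $(-1)^{|E(G)|}$, or equivalently flipping the sign convention in $\partial=\partial_d-\partial_c$ or in the chosen fundamental chain $\iota_1$. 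In fact the discrepancy you found is purely an artifact of which convention one takes for $\partial\iota_1$ relative to the paper's displayed formula for $\partial\iota_n$ (where the deletion faces carry the plus sign, matching $\partial=\partial_d-\partial_c$); with that convention the map is a chain map on the nose and no absorption is needed. So the mathematical content of your proof matches the paper's; only this last bit of sign bookkeeping needs to be stated correctly rather than waved away.
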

\begin{proof}
  The boundary of $(\Delta^1)^n$ is the union of the $2n$ 
  many embeddings
  \begin{equation}\label{eq:17}
    (\Delta^1)^{n-1} \xleftarrow{\approx} (\Delta^1)^j \times \Delta^0 \times (\Delta^1)^{n-j-1} \xrightarrow{1 \times d^i \times 1} (\Delta^1)^n
  \end{equation}
  for $i \in \{0,1\}$ and $j \in \{0, \dots, n-1\}$.  Writing $\partial_{i,j} \colon (\Delta^1)^{n-1} \to (\Delta^1)^n$ for this embedding and writing $\iota_n \in C_n^{\mathrm{sing}}((\Delta^1)^n;\QQ)$ for the fundamental cycle~\eqref{eq:cube}, the Leibniz rule for cross product leads to the convenient formula 
  \begin{equation}\label{eq:18}
  \begin{aligned}
    \partial \iota_n & = \sum_{i = 1}^n (-1)^{i-1}\iota_{i-1} \times (\partial \iota_1) \times \iota_{n-i} \in C_{n-1}(\partial (\Delta^1)^n;\ZZ)\\
                     & =
                       \sum_{i = 1}^{n} (-1)^{i-1} (\partial_{0,j})_* \iota_{n-1} - \sum_{i = 1}^{n} (-1)^{i-1} (\partial_{1,j})_* \iota_{n-1}.
  \end{aligned}
\end{equation}
  The lemma then follows because the composition
  \begin{equation*}
    (\Delta^1)^{n-1} \xrightarrow{\partial_{i,j}} (\Delta^1)^n \xrightarrow{f^{G,\omega}} \MGr
  \end{equation*}
  is $f^{G',\omega'}$, where $G'$ is the graph obtained from $G$ by either collapsing (for $i = 0$) or deleting (for $i = 1$) the $j$th edge in $G$, and $\omega'$ is the restriction of the total order $\omega$ to $E(G') = E(G) \setminus \{e_j\}$.
\end{proof}

In order to obtain the graph spectral sequence, we must consider $\MGr$ with its filtration by the subspaces 
\begin{equation*}
  F_n \MGr = |F_n \ov\cF_\bu| \subset \MGr
\end{equation*}
where $F_n \ov\cF_p$ corresponds to isomorphism classes of triangular diagrams in which $b_1(G_{0,p}) \leq n$.  Fortunately the entire argument for the quasi-isomorphism $C = (C_*,\partial) \simeq C_*^\mathrm{sing}(\MGr;\QQ)$ applies essentially verbatim to the simplicial subset $F_n\ov\cF_\bu \subset \ov\cF_\bu$.  We state the conclusion.
\begin{thm}
  Let $F_n C \subset C$ be the span of those generators $[G,\omega]$ for which $b_1(G) \leq n$.  This defines a subcomplex for all $n$, and the formula~(\ref{eq:to-singular-chain}) restricts to quasi-isomorphisms
  \begin{equation*}
    F_n C \xrightarrow{\simeq} C_*^\mathrm{sing}(F_n\MGr;\QQ)
  \end{equation*}
  for all $n$.  In other words, formula~(\ref{eq:to-singular-chain}) defines a filtered quasi-isomorphism $C \to C_*^\mathrm{sing}(\MGr;\QQ)$.\qed
\end{thm}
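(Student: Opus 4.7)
The first step is to verify that $F_n C_*$ is a subcomplex. Writing $\partial = \partial_d - \partial_c$, note that for any edge $e$ of a graph $G$, deleting $e$ reduces $b_1$ by one if $e$ lies in a cycle and preserves $b_1$ otherwise, while contracting $e$ reduces $b_1$ by one if $e$ is a self-loop and preserves $b_1$ otherwise (the non-loop case is a balanced Euler characteristic computation: $V$ and $E$ each drop by one, and $\pi_0$ is preserved). Hence $b_1(G \setminus e) \leq b_1(G)$ and $b_1(G/e) \leq b_1(G)$, so $\partial$ carries $F_n C_*$ into itself. Second, by construction of the canonical map $f^{G,\omega}: (\Delta^1)^{E(G)} \to |\ov\cF_\bu|$, every triangular diagram $G_{i,j}$ arising in its definition is obtained from $G$ by deleting and contracting subsets of edges, hence has $b_1(G_{i,j}) \leq b_1(G)$. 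So the map~\eqref{eq:to-singular-chain} restricts to a chain map $F_n C_* \to C_*^\mathrm{sing}(F_n|\ov\cF_\bu|;\QQ)$.

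The heart of the proof is then to run the proof of the unfiltered lemma one filtration level at a time. Define a double filtration $F_n|\ov\cF_\bu|^{(p)} := F_n|\ov\cF_\bu| \cap |\ov\cF_\bu^{(p)}|$ of $F_n|\ov\cF_\bu|$ by intersecting with the skeletal filtration from Lemma~\ref{lem:pushout}. I would verify that the pushout presentation of Lemma~\ref{lem:pushout} restricts to a pushout diagram
\[
\begin{tikzcd}
\coprod_{G} \partial (\Delta^1)^{E(G)}/\mathrm{Aut}(G) \rar \dar & F_n|\ov\cF_\bu|^{(p-1)} \dar\\
\coprod_{G} (\Delta^1)^{E(G)}/\mathrm{Aut}(G) \rar & F_n|\ov\cF_\bu|^{(p)},
\end{tikzcd}
\]
where the coproduct now ranges over isomorphism classes of graphs $G$ with $|E(G)| = p$ and $b_1(G) \leq n$. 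The essential point is that any face of $(\Delta^1)^{E(G)}$ corresponds to either deleting or contracting some edges of $G$, and by the first step these operations preserve the inequality $b_1 \leq n$, so boundary faces really do land in $F_n|\ov\cF_\bu|^{(p-1)}$. Conversely, the set-level description in Corollary~\ref{cor:consequences-of-pushout}(2) (metric graphs up to isometry) shows that the underlying set of $F_n|\ov\cF_\bu|$ is exhausted by the images of the cells indexed by graphs with $b_1 \leq n$.

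Given the restricted pushout, the same rational cell computation as in the unfiltered case applies: the inclusion $\partial (\Delta^1)^n/H \hookrightarrow (\Delta^1)^n/H$ for a subgroup $H \leq S_n$ has rational relative homology that is one-dimensional in degree $n$ (when $H \leq A_n$) or zero (otherwise), and the class of $\iota_n$ from~\eqref{eq:cube} is a generator in the nontrivial case. This identifies the restricted map
\[
\bigoplus_{\substack{|E(G)| = p \\ b_1(G) \leq n}} \QQ \cdot [G,\omega] \ \longrightarrow\ H_p\bigl(F_n|\ov\cF_\bu|^{(p)}, F_n|\ov\cF_\bu|^{(p-1)}; \QQ\bigr)
\]
with an isomorphism, under which the differential $\partial$ corresponds to the connecting homomorphism for the triple $F_n|\ov\cF_\bu|^{(p-2)} \subset F_n|\ov\cF_\bu|^{(p-1)} \subset F_n|\ov\cF_\bu|^{(p)}$ by the Leibniz-style computation~\eqref{eq:18}. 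A standard skeletal spectral sequence comparison then yields that $F_n C_* \to C_*^\mathrm{sing}(F_n|\ov\cF_\bu|; \QQ)$ is a quasi-isomorphism.

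The main obstacle is the bookkeeping in the restricted pushout step: one must verify that the $\mathrm{Aut}(G)$-orbit structure on the cube $(\Delta^1)^{E(G)}$ is compatible with taking the $F_n$-intersection, so that restricting the cell decomposition of $|\ov\cF_\bu|$ to $F_n|\ov\cF_\bu|$ really yields a cell decomposition of the latter (rather than only some larger subspace). This is not seriously different from the unfiltered setup, but must be handled carefully since, unlike the na\"ive skeletal filtration by number of edges, the filtration by $b_1$ is not defined directly from the cubical combinatorics; its compatibility with the cell decomposition rests on the monotonicity of $b_1$ under edge deletion and contraction established in the first step.
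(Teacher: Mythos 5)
Your proposal is correct and follows essentially the same route as the paper, which simply asserts that the unfiltered argument (the pushout cell structure of Lemma~\ref{lem:pushout}, the rational cell computation, and the skeletal comparison) applies verbatim to the simplicial subset $F_n\ov\cF_\bu \subset \ov\cF_\bu$. Your added verifications—monotonicity of $b_1$ under edge deletion and contraction, and the resulting restriction of the cell decomposition to $F_n|\ov\cF_\bu|$—are exactly the details the paper leaves implicit.
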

By this theorem, the graph spectral sequence is isomorphic to the spectral sequence associated to the filtered chain complex $(C, F)$ starting on $E^1$, at least as a spectral sequence of rational vector spaces.  It is rather easy to upgrade this isomorphism to one of spectral sequences of algebras, where $(C,F)$ is upgraded to a filtered DGA by declaring $[G,\omega][G',\omega'] = [G \sqcup G',\omega\ast \omega']$, where $\omega \ast \omega'$ denotes the ``concatenation'' order of $E(G \sqcup G') = E(G) \sqcup E(G')$ where all edges of $G$ come before all edges of $G'$.  The coproduct is rather more involved. 
\begin{thm} \label{thm:  graphcoproduct}
  Let $C = (C_*,\partial)$ have the coproduct induced by
  \begin{equation} \label{eqn: graphcoproduct}
    \Delta([G,\omega]) = \sum_{\gamma \subset E(G)} \pm [G_{\vert \gamma},\omega_{\vert \gamma}] \otimes [G/\gamma,\omega_{\vert E(G)\setminus \gamma}]
  \end{equation}
  where $V(G_{\vert \gamma}) = V(G)$ and $E(G_{\vert \gamma}) = \gamma$ while $G/\gamma$ is the graph obtained by collapsing each element of $\gamma$ to a point.  The edge sets of $G_{\vert \gamma}$ and $G/\gamma$ are in canonical bijection with $\gamma$ and $E(G)\setminus \gamma$ respectively, and $\omega_{\vert \gamma}$ and  $\omega_{\vert E(G)\setminus \gamma}$ denote the restrictions of the total order $\omega$.  Finally, the sign $\pm$ in each term is the sign of the shuffle $S_\gamma \in S_{E(G)}$ which shuffles the elements of $\gamma \subset E(G)$ before the elements of $E(G) \setminus\gamma$. 
  \begin{enumerate}
  \item This coproduct makes $(C,F)$ into a filtered coalgebra.
  \item The map~(\ref{eq:to-singular-chain}) is compatible with the map induced by the space level coproduct $\MGr = |\ov \cF_\bu| \approx |\es\ov\cF_\bu| \to |\ov\cF_\bu| \times |\ov\cF_\bu| = \MGr \times \MGr$: the diagram
    \begin{equation}\label{eq:19}
      \begin{tikzcd}
        C \dar["\Delta"] \rar & C_*(\MGr;\QQ) \dar\\
        C \otimes C \rar & C_*(\MGr \times \MGr;\QQ),
      \end{tikzcd}
    \end{equation}
    commutes up to a filtration-preserving chain homotopy,
    where the lower horizontal map is the composition of~(\ref{eq:to-singular-chain})$\otimes$(\ref{eq:to-singular-chain}) and the cross product.
  \end{enumerate}
\end{thm}

\begin{proof}[Proof sketch]
  It is easy to see that $b_1(G_{\vert \gamma}) + b_1(G/\gamma) = b_1(G)$.

  The second statement is more involved, and the diagram~\eqref{eq:19} does not commute on the nose.  The starting point for the homotopy is the chain $\iota_1' = \iota_1 + \partial c \in C_1(\Delta^1;\ZZ)$
  where $c \in C_2(\Delta^1;\ZZ)$ is the simplex
  \begin{align*}
    \Delta^2 & \to \Delta^1\\
    (t_0,t_1,t_2) & \mapsto t_0(1,0) + t_1\big(\tfrac12,\tfrac12\big) + t_2 (0,1).
  \end{align*}
  In these formulas we use barycentric coordinates $(t_0, \dots, t_n) \in \Delta^n$, in which the simplex is defined by $t_i \geq 0$ and $\sum t_i = 1$. 
  In other words, $\iota_1'$ is the sum (in the abelian group $C_1(\Delta^1;\ZZ)$)  of  the two maps
  \begin{align*}
    \Delta^1 & \to \Delta^1\\
    (t_0,t_1) & \mapsto \big(\tfrac12 t_0, 1 - \tfrac12 t_0\big)\\
    (t_0,t_1) & \mapsto \big(1 - \tfrac12 t_1,\tfrac12 t_1\big),
  \end{align*}
  precisely the (geometric realization of the) two non-degenerate 1-simplices in the 
  edgewise subdivision of $\Delta^1_\bu$.  It follows that the composition
  \begin{equation*}
    C_1(|\Delta^1_\bu|) \xrightarrow{\cong} C_1(|\es(\Delta^1_\bu)|) \to C_1(|\Delta^1_\bu| \times |\Delta^1_\bu|)
  \end{equation*}
  sends 
  $\iota_1' \mapsto d^1 \times \iota_1 + \iota_1 \times d^0$.
  
  Then
  \begin{equation*}
    \iota_n' = \iota'_1 \times \dots \times \iota'_1 \in C_n((\Delta^1)^n;\ZZ)
  \end{equation*}
  gives a different set of chain level representatives of the fundamental class of the cube $(\Delta^1)^n$, with the exact same formal properties as the $\iota_n$.  This includes the formula~(\ref{eq:18}), which holds with $\iota'_n$ and $\iota'_{n-1}$ in place of $\iota_n$ and $\iota_{n-1}$.

  The formula $\iota_n' = (\partial c) \times \iota_{n-1}' + \iota \times \iota_{n-1}'$ implies inductively that $\iota'_n = \iota_n + \partial c_n$ for
  \begin{equation*}
    c_n = \sum_{j = 1}^n \iota_{j-1} \times c \times \iota'_{n-j}.
  \end{equation*}
  Then the two chain maps
  \begin{align*}
    C & \to C_*^\mathrm{sing}(\MGr;\QQ)\\
    [G,\omega] & \mapsto f^{G,\omega}_* (\iota_n)\\
    [G,\omega] & \mapsto f^{G,\omega}_* (\iota'_n)
  \end{align*}
  are chain homotopic via a filtration-preserving chain homotopy.  One then checks that the diagram~(\ref{eq:19}) commutes strictly, if the top horizontal map is replaced by the chain homotopic map $[G,\omega]  \mapsto f^{G,\omega}_* (\iota'_n)$. 

  To see this, we study the space-level coproduct $\Delta_X\colon |X| \approx |\mathrm{es}(X)| \to |X| \times |X|$ from Definition~\ref{def:the-nt} and especially the induced chain map
  chain map
  \begin{equation*}
        C_n(|\Delta^1_\bu|^E)  \xrightarrow{(\Delta_{(\Delta^1_\bu)^E})_*} C_n(|\Delta^1_\bu|^E \times |\Delta^1_\bu|^E)
  \end{equation*}
  in the case $X = (\Delta^1_\bu)^E$ for a finite set $E$.  Given also a bijection $\omega\colon \{0, \dots, n-1\} \to E$ we have defined the fundamental chains $\iota_n$ and $\iota'_n$ in the domain of this chain map, and claim the formula
  \begin{equation}\label{eq:coproduct of iota-n-prime}
    (\Delta_{(\Delta^1_\bu)^E})_*(\iota'_n) = \sum_{\gamma \subset E} \pm (|L_\gamma|,|R_\gamma|)_* \iota_n,
  \end{equation}
  where $L_\gamma = \prod_{e \in E} L_{\gamma,e}$ and $R_\gamma = \prod_{e \in E} R_{\gamma,e}$ for maps of simplicial sets $L_{\gamma,e}, R_{\gamma,e}\colon \Delta^1_\bu \to \Delta^1_\bu$ given as
  \begin{align*}
    L_{\gamma,e} & =
    \begin{cases}
      \mathrm{id}  &\text{if $e \in \gamma$}\\
      d^1s^0 & \text{if $e \not\in \gamma$}
    \end{cases}
    &&
    R_{\gamma,e} =
    \begin{cases}
      d^0 s^0 &\text{if $e \in \gamma$}\\
      \mathrm{id} & \text{if $e \not\in \gamma$}.
    \end{cases}
  \end{align*}
  We have already seen formula~(\ref{eq:coproduct of iota-n-prime}) in the case $n=1$, and the general case follows by expanding the $n$-fold cross product $(d^1 \times \iota_1 + \iota_1 \times d^0)^{\times n} \in C_n((\Delta^1 \times \Delta^1)^n)$ as a sum of $2^n$ terms and using graded-commutativity of the cross product (i.e., that $\times\colon C_*(X) \otimes C_*(Y) \to C_*(X \times Y)$ turns singular chains into a lax symmetric monoidal functor from spaces to chain complexes, in the usual symmetric monoidal structure on chain complexes).  The sign arising when permuting factors is precisely the sign of the shuffle $S_\gamma \in S_{E(G)}$ mentioned in the statement of the theorem. 

  For each $\gamma \subset E$, the map $(L_\gamma,R_\gamma)\colon (\Delta^1)^E \to (\Delta^1)^E \times (\Delta^1)^E$ has $2n$ coordinates, $n$ of which are the projections $(\Delta^1)^E \to \Delta^1$ and the remaining $n$ coordinates are constant (either $d^0$ or $d^1$).  
  Setting $E = E(G)$, we have canonical maps $f^G\colon (\Delta^1)^E \to \MGr$ as in Definition~\ref{def:five-fourteen}.  Using, as in the proof of Lemma~\ref{lemma: the map is a chain map}, that precomposing with $d^1\colon \Delta^0 \to \Delta^1$ in one of the factors of $(\Delta^1)^E$ corresponds to deleting the corresponding edge, we see that $f^G \circ L_\gamma = f^{G \vert \gamma} \circ \pi_\gamma$, where $\pi_\gamma: (\Delta^1)^E \to (\Delta^1)^\gamma$ denotes the projection, and similarly $f^G \circ R_\gamma = f^{G/\gamma} \circ \pi_{E \setminus \gamma}$, using that precomposing with $d^0$ corresponds to collapsing an edge.  We can summarize this situation by a commutative diagram of spaces
  \begin{equation*}
    \begin{tikzcd}
      (\Delta^1)^E \arrow{rr}{(\pi_\gamma,\pi_{E \setminus \gamma})}[swap]{\approx}
      \dar[equal] &&
      (\Delta^1)^\gamma \times (\Delta^1)^{E \setminus \gamma} \dar[hookrightarrow] \ar[rr,"{(f^{G\vert\gamma} , f^{G/\gamma})}"] 
      & &
      {\MGr \times \MGr} \dar[equal]\\
      (\Delta^1)^E \ar[rr,hookrightarrow,"{(L_\gamma,R_\gamma)}"] & &
      (\Delta^1)^E \times (\Delta^1)^E \ar[rr,"{(f^G , f^G)}"] 
      & &
      {\MGr \times \MGr}.
    \end{tikzcd}
  \end{equation*}  
  Using the same notations $\iota_n, \iota'_n \in C_n((\Delta^1)^E)$ for the images of $\iota_n, \iota'_n \in C_n((\Delta^1)^n)$ under the homeomorphism $(\Delta^1)^n \approx (\Delta^1)^E$ induced by $\omega$, and similarly $\iota_{|\gamma|} \in C_*((\Delta^1)^\gamma)$ and $\iota_{n-|\gamma|} \in C_*((\Delta^1)^{E \setminus \gamma})$ using the restriction of the total order to the subsets $\gamma\subset E$ and $E \setminus \gamma \subset E$, naturality of $\Delta_X\colon |X| \to |X| \times |X|$ then shows that
  \begin{align*}
      (\Delta_{\ov\cF_\bu})_*f^{G}_*(\iota'_n) = \sum_{\gamma \subset E(G)} \pm (f^G,f^G)_*(L_\gamma,R_\gamma)_*\iota_n 
      & = \sum_{\gamma \subset E(G)} \pm (f^{G \vert \gamma},f^{G/\gamma})_*(\iota_{|\gamma|} \times \iota_{n-|\gamma|}) \\
      & = \sum_{\gamma \subset E(G)}\pm (f^{G \vert \gamma}_* \iota_{|\gamma|}) \times (f^{G / \gamma}_* \iota_{n-|\gamma|}).\qedhere
  \end{align*}
\end{proof}

\begin{corollary}
    Let $\Gr(C)$ denote the associated graded of the chain complex $C = (C_*,\partial)$, filtered by the first Betti numbers of the graphs, 
    with bigrading in which a generator $[G,\omega]$ has bidegree $(b_1(G), |E(G)|-b_1(G))$. The boundary map $\partial$ has bidegree $(0,-1)$.
    
    The map~\eqref{eq:to-singular-chain} induces an isomorphism of bigraded bialgebras $H_*(\Gr(C)) \to \sseq{G}^1_{*,*}$ where the coproduct on $H_*(\Gr(C))$ is induced by the Connes--Kreimer formula~\eqref{eqn: graphcoproduct}.  
\end{corollary}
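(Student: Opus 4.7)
The plan is to deduce the isomorphism and the compatibility with product and coproduct from results already established, namely the filtered quasi-isomorphism from $(C_*,\partial,F)$ to the singular chains of $|\ov\cF_\bu|$ and the space-level compatibilities spelled out in Theorem~\ref{thm:graphcoproduct} and the earlier discussion of disjoint union as a product. First I would observe that any filtered quasi-isomorphism of filtered chain complexes whose filtrations are bounded below and exhaustive induces an isomorphism on the $E^1$ page of the associated spectral sequence. Applying this to the filtered quasi-isomorphism $(C_*,\partial,F) \to C_*^{\on{sing}}(|\ov\cF_\bu|;\QQ)$ (equipped with its filtration by subspaces $F_n|\ov\cF_\bu|$) and combining with Lemma~\ref{lemma:rational-iso}, which identifies the singular chain spectral sequence with $\sseq{G}^1$, yields the claimed isomorphism of bigraded vector spaces. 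The bidegree convention matches because a generator $[G,\omega]$ lies in filtration $b_1(G)$ and total degree $|E(G)|$, so its class has bidegree $(b_1(G), |E(G)|-b_1(G))$ in both conventions.

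Next, for the product, I would argue that the disjoint union map $(G,\omega) \sqcup (G',\omega') \mapsto (G \sqcup G',\omega\ast\omega')$ defines a filtered DGA structure on $(C_*,\partial,F)$, since $b_1(G \sqcup G') = b_1(G) + b_1(G')$ and disjoint union interacts with edge deletion and contraction by the Leibniz rule. Under the comparison map~\eqref{eq:to-singular-chain}, this product goes to the composition
\[
  C_*^{\on{sing}}(|\ov\cF_\bu|;\QQ) \otimes C_*^{\on{sing}}(|\ov\cF_\bu|;\QQ) \xrightarrow{\times} C_*^{\on{sing}}(|\ov\cF_\bu| \times |\ov\cF_\bu|;\QQ) \xrightarrow{m_*} C_*^{\on{sing}}(|\ov\cF_\bu|;\QQ),
\]
because the fundamental chain $\iota_n$ of $(\Delta^1)^n$ is by construction a cross product of copies of $\iota_1$, and the canonical cubes $f^{G,\omega}$ and $f^{G',\omega'}$ glue under the simplicial map induced by disjoint union to $f^{G \sqcup G',\omega \ast \omega'}$. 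By the definition of the product on $\sseq{G}^1$ via Proposition~\ref{prop:douady} and the space-level product $m$, this is precisely the product on $\sseq{G}^1_{*,*}$.

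For the coproduct, Theorem~\ref{thm:graphcoproduct} asserts exactly that the Connes--Kreimer coproduct $\Delta$ on $(C_*,\partial,F)$ is compatible, up to a filtration-preserving chain homotopy, with the composition
\[
  C_*^{\on{sing}}(|\ov\cF_\bu|;\QQ) \to C_*^{\on{sing}}(|\es(\ov\cF_\bu)|;\QQ) \to C_*^{\on{sing}}(|\ov\cF_\bu| \times |\ov\cF_\bu|;\QQ) \xrightarrow{\on{EZ}} C_*^{\on{sing}}(|\ov\cF_\bu|;\QQ)^{\otimes 2}
\]
that defines the coproduct on the graph spectral sequence. Since filtration-preserving chain homotopies induce equalities of maps on $E^1$ pages, the induced coproduct on $H_*(\Gr(C))$ agrees with the coproduct on $\sseq{G}^1$. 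Finally, the unit and counit are matched via the class of the empty graph and its image, and coassociativity together with compatibility with the product at the level of $E^1$ follow from the corresponding properties on each side. The hard part is really Theorem~\ref{thm:graphcoproduct}, which already absorbs the combinatorial bookkeeping of edgewise subdivision, cross products of fundamental cube chains, and the sign conventions; once that is in hand, the present corollary is essentially a functorial repackaging.
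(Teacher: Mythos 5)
Your proposal is correct and follows essentially the same route as the paper: the filtered quasi-isomorphism gives the isomorphism on homology of associated gradeds, the filtration-preserving chain homotopy of the coproduct-compatibility theorem gives agreement of coproducts on the $E^1$ page, and the product compatibility is the easy direct check. The paper's proof is just a three-sentence sketch of exactly this, omitting the product details that you spell out.
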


Each generator $[G,\omega] \in C$ represents a non-zero element of $\Gr_g(C)$ for $g = b_1(G)$ which we denote by the same notation $[G,\omega]$.  Then $\Gr(C)$ inherits a product and coproduct from $C$, making it a bigraded bialgebra (isomorphic to $C$ as a bigraded bialgebra).  The boundary map  
$\partial = \partial_c - \partial_d\col C \to C$
induces a boundary map on $\Gr(C)$ given by a similar formula on generators $[G,\omega] \in \Gr(C)$ but omitting the terms involving $[G',\omega']$ where $b_1(G') < b_1(G)$.  Precisely, this amounts to dropping the terms in $\partial_c([G,\omega])$ in which a loop is collapsed, and omitting the terms in $\partial_d([G,\omega])$ in which a non-bridge is deleted. 
\begin{proof}
    We have already shown that~\eqref{eq:to-singular-chain} is a filtered quasi-isomorphism, so it induces an isomorphism on homology.  The filtered homotopy in the theorem implies that the isomorphism on homology is a map of coalgebras.  That it is also a map of algebras is similar but easier, we omit the details.
\end{proof}

\subsection{Comparison with the full graph complex}   
\label{sec:comparison-with-fGC}
  In the chain complex $\Gr(C)$, some remnant of $\partial_d$ will remain in the boundary map, namely a (signed) sum of bridge deletion.  Here, a {\em bridge} in a not necessarily connected graph is any edge whose deletion increases the number of connected components.  Note that deleting a bridge results in a graph that is not connected, and the result is typically zero in $\mathrm{Indec}(\Gr(C))$.  
  
Motivated by this observation, we pass to indecomposables of $\Gr(C)$ to establish a precise relationship between the $E^1$-page of the graph spectral sequence and the cochain complex $\mathsf{GC}_2$ considered by Kontsevich, Willwacher, and others, in which all graphs are connected and the differential involves only edge contraction, not deletion.  We refer to \cite[\S3]{willwacher-kontsevich} for the precise definition of $\mathsf{GC}_2$ and its variants considered below.  However, from now on we take the following bigrading on $\mathsf{GC}_2$ and all of its variants, which differs from the grading used by Willwacher: the generators in bidegree $(s,t)$ are graphs with first Betti number $s$ and $s+t$ edges.  
(This graph complex, with this bigrading, is called $\mathsf{GC}_0$ in some other sources, such as \cite{brown-invariant, willwacher-kontsevich}, but we will stick with the notation $\mathsf{GC}_2$.)

\medskip
  
Recall the full graph complex  denoted
$\mathsf{fGC}_2^\circlearrowleft$ by Willwacher in \cite[\S3]{willwacher-kontsevich} and denoted $\mathsf{fGC}_2$ in \cite{khoroshkin-willwacher-zivkovic-differentials}. Like $C^\vee$, it is also built out of all graphs with no conditions on connectivity or the valency of vertices, and where ``tadpoles'' are allowed.  Reading the definition closely though, one notices that the graphs in $\mathsf{fGC}_2^\circlearrowleft$ must be non-empty, which ours need not be.  
By comparing definitions we therefore obtain isomorphisms of rational vector spaces
\begin{equation} 
\label{eq:comparison-Cvee-fGC}
    (C/ \QQ . 1)^\vee \cong \mathsf{fGC}_2^\circlearrowleft, \quad\quad C^\vee \cong \mathsf{fGC}_2^\circlearrowleft \oplus \QQ.\emptyset^\vee,
\end{equation}
arising from bijections between basis vectors, where $\emptyset$ denotes the graph with no vertices and no edges, representing the unit $1 \in C$.  It is of some importance for us to include the empty graph, in order for our bialgebra to be unital and co-unital.  Geometrically, $(C/\QQ.1)^\vee$ can be interpreted as the relative cellular cochain complex of the pair $(\MGr,\{\emptyset\})$, where $\{\emptyset\} \subset \MGr$ is the path component corresponding to the empty graph, as in Corollary~\ref{cor:bidegree-zero-zero}.  

The usual differential $\delta$ on $\mathsf{fGC}_2^\circlearrowleft$, defined in \cite[Remark 3.3]{willwacher-kontsevich} by ``inserting an edge in all possible ways,'' does not correspond to $\partial^\vee\colon C^\vee \to C^\vee$, though: recall that 
$\partial = \partial_c - \partial_d$
where $\partial_c$ is defined as alternating sum of edge contraction while $\partial_d$ is defined as alternating sum of edge deletion, and most of the terms in $\partial_d$ are not seen in the dual formula for $\delta$ on $\mathsf{fGC}^\circlearrowleft_2$.  

Passing to indecomposables of the bialgebra $C$ corresponds to passing to the subcomplex $\mathsf{fGC}^\circlearrowleft_{2,\mathrm{conn}} \subset \mathsf{fGC}^\circlearrowleft_2$ from \cite[\S3]{willwacher-kontsevich} spanned by connected graphs.  Indeed, $\mathrm{Indec}(C) = K/K^2$ where $K$ is the kernel of the augmentation map $C\to \QQ$; here, $\QQ$ is concentrated in homological degree $0$, and the augmentation sends $[G] \mapsto 1$ for any graph $G$ with $E(G) = \emptyset$. In particular, any graph with edges is in $K$.  Let $[p] \in C$ denote the class of a graph consisting of a single vertex and no edges. We next record, for convenience, a vector space basis for $K^2$: it consists of \enumnow{\item $[G,<]$ where $G$ ranges over graphs that are disjoint unions of two graphs with edges, one per isomorphism class;
\item $[G,<]\cdot f([p])$, where $G$ is any alternating, connected graph with at least one edge, one per isomorphism class, and $f\in \QQ[t]$  ranges over a basis for polynomials with $f(1) = 0$;
\item $g([p])$ where $g\in \QQ[t]$ ranges over a basis for polynomials with $g(1)=g'(1) = 0$.} 
Here a graph is called {\em alternating} if its automorphism group acts on its edge set with only permutations of positive sign.  Then
$K/K^2$ has a basis given by classes of elements $[G,<]$ as $G$ ranges over alternating, connected graphs with at least one edge, one per isomorphism class, together with the class of $[p]-1 \in K$. Removing the class of $[p]-1$ from this set, and adding the classes of both $[p]$ and $1$, gives a basis of $C/K^2$.  Therefore 
the composition of the natural maps 
\begin{equation}\label{eq:compare-to-fgcc} 
  K/K^2\hookrightarrow C/K^2 \twoheadrightarrow C/(K^2 + \QQ. 1)
\end{equation}
is an isomorphism of chain complexes.  Now $C$ has a basis as a rational vector space given by classes $[G,<]$ where we choose one graph $G$ for each isomorphism class, and the subset given by connected (in particular non-empty) graphs maps to a basis of $C/(K^2 + \QQ. 1)$.  
The isomorphism~\eqref{eq:comparison-Cvee-fGC} therefore restricts to an isomorphism $(C/(K^2 + \QQ. 1))^\vee \cong \mathsf{fGC}^\circlearrowleft_{2,\mathrm{conn}}$, which we combine with~\eqref{eq:compare-to-fgcc} to obtain an isomorphism of vector spaces
\begin{equation*}
    \mathsf{fGC}^\circlearrowleft_{2,\mathrm{conn}} \xrightarrow{\cong} \mathrm{Indec}(C)^\vee.
\end{equation*}
We emphasize that the graded vector space $K/K^2$ is 1-dimensional in homological degree 0, spanned by the class of $[p] - 1 \in K$.  The dual basis element is denoted $([p] -1)^\vee$ and corresponds to $[p]^\vee \in \mathsf{fGC}^\circlearrowleft_{2,\mathrm{conn}}$ under the isomorphism induced by~\eqref{eq:compare-to-fgcc}.  As before, the differential $\delta$ on the left hand side of this isomorphism does not correspond to the full  
$\partial^\vee = \partial_c^\vee - \partial_d^\vee \colon C^\vee \to C^\vee$.  
This discrepancy goes away when replacing $C$ with $\Gr(C)$ though, and we have the following precise statement.
\begin{lemma}
\label{lemma:compare-to-fgcc}
    The analogue of the map~\eqref{eq:compare-to-fgcc} for the differential graded Hopf algebra $\Gr(C)$ induces an isomorphism of 
    differential graded Lie algebras
    \begin{equation}\label{eq:indec-gr}
    \mathsf{fGC}^\circlearrowleft_{2, \mathrm{conn}} \xrightarrow{\cong} \mathrm{Indec}(\Gr(C))^\vee.
    \end{equation}
\end{lemma}
\begin{proof}
    $C$ and $\Gr(C)$ are canonically isomorphic as graded bialgebras, using the basis given by the $[G,<]$ which is canonical up to signs.    Passing from $C$ to $\Gr(C)$ has the effect of omitting those terms of $\partial([G,<])$ involving graphs $G'$ with $b_1(G') < b_1(G)$, which amounts to omitting those terms of $\partial_d([G,<])$ in which a non-bridge is deleted and those terms of $\partial_c([G,<])$ in which a tadpole is collapsed.  We now inspect what happens to the boundary homomorphism when passing from $\Gr(C)$ to $\mathrm{Indec}(\Gr(C)) = K/K^2$ where $K = \mathrm{Ker}(\Gr(C) \to \QQ)$, starting with a brief analysis of the relations in $K^2$.  If $\gamma = \gamma' \sqcup \gamma''$ where $E(\gamma') \neq \emptyset \neq E(\gamma'')$, then $[\gamma,<] \in K^2$.  Therefore $[\gamma,<] \equiv 0$ when $\gamma$ has two edges in distinct components of $\gamma$.  In contrast, if $\gamma'' = p$ consists of a single vertex and no edges and $E(\gamma') \neq \emptyset$, then $[p][\gamma',<] - [\gamma',<] \in K^2$.  In other words, modulo $K^2$ we have the relation $[p \sqcup \gamma',<] \equiv [\gamma',<]$ which allows us to remove isolated vertices from any graph with at least one edge.  
    
    Modulo $K^2 \subset \Gr(C)$, most terms of $\partial_d([G,<])$ vanish, because deleting a bridge results in a disconnected graph.  The exception is ``bridges to nowhere'': if $v_0 \in V(G)$ has valence 1 then it is connected to the rest of $G$ by a single edge $e_0 \in E(G)$ and there is a term in $\partial_d([G,<])$ of the form $\pm [\{v_0\} \sqcup G',<'] = \pm [p][G',<']$ where $G' \subset G$ is the subgraph with $V(G') = V(G) \setminus \{v_0\}$ and $E(G') = E(G) \setminus \{e_0\}$.  Assuming $E(G') \neq \emptyset$, we have $[p][(G',<')] \equiv [(G',<')]$ modulo $K^2$ and this term does not vanish.  This analysis shows that $\partial_d$ induces a homomorphism $\mathrm{Indec}(\Gr(C))_n \to \mathrm{Indec}(\Gr(C))_{n-1}$ for $n \geq 2$ given as alternating sum of deleting such bridges-to-nowhere together with their 1-valent vertex.  These terms in $\partial_d([G,<])$ precisely match with those terms in $\partial_c([G,<])$ in which the corresponding edge is collapsed, and we deduce that the boundary homomorphism on $\mathrm{Indec}(\Gr(C))$ induced by 
    $\partial = \partial_c - \partial_d$
    is given on representatives $[G,<]$ with $G$ connected and $n = |E(G)| \geq 2$ by the formula 
    \begin{equation*}
        \partial [G,<] = \sum_{e} (-1)^{\omega(e)} [G/e,<_{\vert E(G) \setminus \{e\}}],
    \end{equation*}
    where $e \in E(G)$ runs over all edges \emph{except} edges that are bridges-to-nowhere or tadpoles, and $\omega\col E(G) \to \{0, 1, \dots, n-1\}$ is an order-preserving bijection.  In the special case $n=1$ we get  
    $\partial([G,<]) = [p]-[p]^2 \equiv 1 - [p]$
    modulo $K^2$ when $G$ consists of two vertices connected by an edge, and $\partial([G,<]) = 0$ when $G$ is a tadpole.  Altogether, dualizing this homomorphism $\partial$ precisely matches with the differential $\delta \colon \mathsf{fGC}_{2,\mathrm{conn}}^\circlearrowleft \to \mathsf{fGC}_{2,\mathrm{conn}}^\circlearrowleft$, see \cite[Remark 3.3]{willwacher-kontsevich} and \cite[Figure 3]{willwacher-kontsevich}.

  The Lie cobracket on $\mathrm{Indec}(\Gr(C)) = K/K^2$ is obtained by anti-symmetrizing the Connes--Kreimer formula, restricting to $K = \mathrm{Ker}(\Gr(C) \to \QQ)$ and reducing modulo $K^2$. 
  We shall first consider the terms of~\eqref{eqn: graphcoproduct} which only involve graphs with at least one edge. 
  If $G$ is a connected graph with at least one edge, then the sum~\eqref{eqn: graphcoproduct} will likely contain terms in which $\gamma \subset G$ is not connected, and such terms must be reduced modulo $K^2$.  
  Since $G$ is connected, reducing the right-hand side of~\eqref{eqn: graphcoproduct} modulo $K^2 \otimes \Gr(C)$, and restricting attention to those terms in which $\gamma$ has at least one edge, lets us remove isolated vertices from $\gamma$ and lets us cancel terms in which $\gamma$ contains two edges in distinct components of $\gamma$.  After anti-symmetrizing and dualizing, the sum of these terms, in which $\emptyset \subsetneq \gamma\subsetneq E(G)$, gives a formula for the Lie bracket between any two elements of $\mathrm{Indec}(\Gr(C))^\vee$ of positive cohomological degree, and the formula matches with the Lie bracket on $\mathsf{fGC}^\circlearrowleft_{2,\mathrm{conn}}$ explained in \cite[\S2]{khoroshkin-willwacher-zivkovic-differentials} and implicit in \cite[\S3]{willwacher-kontsevich}, given by anti-symmetrized insertion.

  Now we inspect the terms in the antisymmetrization of~\eqref{eqn: graphcoproduct} with $E(\gamma) = \emptyset$ or $E(G/\gamma) = \emptyset$.  
  Writing $y = [G,<]$ with $E(G) \neq \emptyset$ and $v = |V(G)|$, the sum of the terms of the antisymmetrized coproduct of $y$ that involve graphs with no edges is
  \[([p]^v - [p]) \otimes y + y \otimes ([p]-[p]^v).\]
  Note $[p]^v - [p] \equiv (v\!-\!1)\cdot ([p]-1)$ modulo $K^2$.  Since $[p]-1 \in K$ reduces to a basis for the degree-zero part of $K/K^2 = \mathrm{Indec}(\Gr(C))$, we obtain a formula for the Lie bracket with the element $([p]-1)^\vee \in \mathrm{Indec}(\Gr(C))^\vee$: 
  for $E(G) \neq \emptyset$, it is given by $[G,<]^\vee \mapsto (v-1) [G,<]^\vee$ where $v = |V(G)|$ is the  number of vertices of $G$. 
    This again agrees with the ``anti-symmetrized insertion'' Lie bracket with $[p]^\vee \in \mathsf{fGC}_{2,\mathrm{conn}}^\circlearrowleft$.
\end{proof}

    \newcommand{\Ltwo}{L^{\mathrm{biv}}}  

\begin{proposition}\label{prop: connection to GC2}
    Let
    \begin{equation*}
        L^\vee = \mathrm{Indec}(\Gr(C))
    \end{equation*}
    be the graph complex obtained as the indecomposables in the differential graded bialgebra 
    $\Gr(C)$.   
    We have an isomorphism of bigraded Lie coalgebras 
    \begin{equation*}
        \mathrm{Indec}\big(\sseq{G}^1_{*,*}\big) \cong H_*(L^\vee) 
    \end{equation*}
    and a dual isomorphism of bigraded Lie algebras
    \begin{equation*}
        \mathrm{Prim}\big(\sseq{G}_1^{*,*}\big) \cong H^*(L),
    \end{equation*}    
    where $L = \mathrm{Indec}(\Gr(C))^\vee \cong \mathsf{fGC}^\circlearrowleft_{2,\mathrm{conn}}$.
    Moreover, there is a split extension of Lie algebras
    \begin{equation}
    \label{eq:semidirect-decomposition}
        H^*(\mathsf{GC}_2) \hookrightarrow H^*(L) \twoheadrightarrow \Ltwo
    \end{equation}
    where $\Ltwo$ 
    denotes the bigraded vector space which is $\QQ$ in bidegrees $(1,4k)$ for all $k\ge 0$ and is $0$ in all other bidegrees, equipped with the trivial Lie algebra structure.
\end{proposition}

\begin{proof} 
    We have already constructed a filtered quasi-isomorphism between $C$ and the singular chains of $\BKGr$, giving an isomorphism
    \begin{equation*}
        \sseq{G}^1_{*,*} \cong H_*(\Gr(C)).
    \end{equation*}
    To get to the first statement in the proposition, we would like to use an isomorphism of the form $\mathrm{Indec}(H_*(A)) \cong H_*(\mathrm{Indec}(A))$ when $A$ is the differential graded bialgebra 
    $\Gr(C)$.  Independently of the coproduct, there is a natural transformation
    \begin{equation*}
        \mathrm{Indec}(H_*(A)) \to H_*(\mathrm{Indec}(A))
    \end{equation*}
    defined for any differential graded algebra $A$ equipped with an augmentation $\epsilon \colon A \to \QQ$, and it is well known that this transformation is an isomorphism whenever
    $H_*(A)$ is a free graded-commutative algebra and $A$ is semifree.  
    This is proved by reducing to the case where $\partial_A = 0$.  (Recall that a CDGA is \emph{semifree} when the underlying graded-commutative algebra, obtained by forgetting the differential, is free.)  

    Now $\Gr(C)$ is certainly semifree: as generators we can take $[G,<]$ with $G$ connected, one such for each isomorphism class which does not possess automorphisms inducing an odd permutation of $E(G)$.  Unfortunately $H_*(\Gr(C))$ is not quite free, because of the relation $x^2 = x$ in degree 0, where $x = [p]$ is the class of a graph with one vertex and no edges, see Corollary~\ref{cor:bidegree-zero-zero}.  To get around this nuisance, we artificially add two algebra generators
    \begin{equation*}
        \Gr(C) \hookrightarrow A = \Gr(C) \otimes \QQ[y] \otimes \Lambda_\QQ[z],
    \end{equation*}
    with $\mathrm{deg}(y) = 0$ and $\mathrm{deg}(z)=1$, and with boundary map extended as $\partial y = 0$ and $\partial z = xy-1$.  We do not attempt to extend the chain level coproduct, but extend the augmentation as $\epsilon(y) = 1$.  Then the class $[x] \in H_0(\Gr(C))$ maps to a unit in $H_0(A)$ and the inclusion of $\Gr(C)$ induces an isomorphism from the localized homology algebra
    \begin{equation*}
        H_*(\Gr(C)) \twoheadrightarrow H_*(\Gr(C))[x^{-1}] \xrightarrow{\cong} H_*(A).
    \end{equation*}
    Since $[x] \in H_0(\Gr(C))$ is a unit modulo $K = \mathrm{Ker}(H_*(\Gr(C)) \to \QQ)$, inverting $x$ does not change the indecomposables $K/K^2$, so we have
    \begin{equation*}
        \mathrm{Indec}(H_*(\Gr(C))) \xrightarrow{\cong} \mathrm{Indec}(H_*(A)).
    \end{equation*}
    By inspection, we also see that the induced chain level map
    \begin{equation*}
        \mathrm{Indec}(\Gr(C)) \hookrightarrow \mathrm{Indec}(A)
    \end{equation*}
    is a quasi-isomorphism (on the chain level, the cokernel has two additive generators $z$ and $y-1$, but these are connected by a non-zero boundary map).
    
    Inverting $x$ turns the ring $H_0(\Gr(C)) = \QQ[x]/(x^2 -x)$ into $\QQ[x]/(x-1) \cong\QQ$, and the localization $H_*(\Gr(C))[x^{-1}]$  inherits a coproduct, making it a \emph{connected} commutative Hopf algebra.  By a classical theorem of Leray (see \cite[Theorem 7.5]{MR0174052}), this implies that the underlying graded algebra is isomorphic to a free graded-commutative algebra, so we finally deduce a commutative diagram of isomorphisms
    \begin{equation*}
        \begin{tikzcd}
        \mathrm{Indec}(\sseq{G}^1_{*,*}) = \mathrm{Indec}(H_*(\Gr(C))) \dar["\cong"]\rar & H_*(\mathrm{Indec}(\Gr(C))) = H_*(L^\vee)\dar["\cong"] \\
        \mathrm{Indec}(H_*(A)) \rar["\cong"] & 
        H_*(\mathrm{Indec}(A)),
        \end{tikzcd}
    \end{equation*}
    since $A$ is semifree and $H_*(A)$ is free.  The formula $\mathrm{Prim}(\sseq{G}_1^{*,*}) \cong H^*(L)$ is obtained by dualizing.
        
Finally,
Willwacher shows \cite[Proposition 3.4]{willwacher-kontsevich} that the cohomology of $L \cong \mathsf{fGC}^\circlearrowleft_{2,\mathrm{conn}}$ splits as a vector space as
\[H^*(L) \cong H^*(\GC_2) \oplus \Ltwo,\]
where the second summand is spanned by ``loop'' graphs $L_{4k+1}$, $k \geq 0$, with $4k+1$ edges connecting $4k+1$ bivalent vertices in a circular pattern (or rather, the dual basis vectors to such graphs, spanning a cochain complex with trivial differential, so $\Ltwo \cong H^*(\Ltwo)$).
The inclusion $H^*(\GC_2) \hookrightarrow H^*(L)$ is a Lie algebra homomorphism by comparing definitions.  For degree reasons, this additive splitting makes 
\begin{equation*}
    H^*(\GC_2) \hookrightarrow H^*(L) \twoheadrightarrow \Ltwo
\end{equation*}
into a split extension of Lie algebras, and $\Ltwo$ must have trivial Lie bracket.
\end{proof}

The map of filtered spaces $\MGr \simeq_\QQ \BKGr \to BK(\ZZ)$ induces a map of spectral sequences of bialgebras, of the form
\begin{equation}\label{eq:8}
  \sseq{G}^r_{*,*} \to \sseq{Q}^r_{*,*}
\end{equation}
abutting to the induced map
\begin{equation*}
      H_*(\MGr) \to H_*(BK(\ZZ)).
\end{equation*}
When passing to primitives in the linear duals of the respective $E^1$-pages, it therefore induces a map of Lie algebras
\begin{equation*}
    \mathrm{Prim}\big(\sseq{Q}_1^{*,*}\big) \to \mathrm{Prim}\big(\sseq{G}_1^{*,*}\big) \cong H^*(L) \cong \Ltwo \oplus H^*(\mathsf{GC}_2)
\end{equation*}
from the primitives in the $E_1$-page of the (cohomological) Quillen spectral sequence to the semidirect product corresponding to the split extension~\eqref{eq:semidirect-decomposition}. 
The bidegrees are such that primitives in $\sseq{Q}_1^{s,t}$ are mapped to graphs with first Betti number $s$ and $s+t$ many edges.
Restricted to diagonal bidegrees, this can be identified with a map of Lie algebras
\begin{equation}\label{eq:prim-Quillen to grt}
    \mathrm{Prim}\Big(\bigoplus_{g}\sseq{Q}_1^{g,g}\Big) \to \mathrm{Prim}\Big(\bigoplus_{g}\sseq{G}_1^{g,g}\Big) \cong H^0(\mathsf{GC}_2) \cong \mathfrak{grt},
\end{equation}
with the last isomorphism following from \cite[Theorem 1.1]{willwacher-kontsevich}.  Here $\grt$ is the Grothendieck--Teichm\"uller Lie algebra (denoted $\grt_1$ in \emph{op.~cit.}).

\subsubsection{Relationship with the double complex of \cite{khoroshkin-willwacher-zivkovic-differentials}} 
The full differential on $\mathrm{Indec}(C)^\vee$ induced by dualizing $\partial$ can be related to a certain double complex considered in \cite{khoroshkin-willwacher-zivkovic-differentials}.  Let us point out a notational difference: in the paper  \cite{willwacher-kontsevich}, Willwacher uses a superscript $\circlearrowleft$ on graph complexes to denote that tadpoles are \emph{allowed} and the lack of such a superscript means that they are disallowed, while \cite{khoroshkin-willwacher-zivkovic-differentials} uses a superscript $\notadp$ to denote that tadpoles are \emph{disallowed} and the lack of superscript means they are allowed.  We follow the notation of the former.
\begin{proposition}\label{prop:comparison-map-to-KWZ-complex}
    Let $\nabla \colon \mathsf{fGC}^\circlearrowleft_{2,\mathrm{conn}} \to \mathsf{fGC}^\circlearrowleft_{2,\mathrm{conn}}$ be the differential given by Lie bracket with a tadpole graph, as in \cite{khoroshkin-willwacher-zivkovic-differentials}.  Then the composition~\eqref{eq:compare-to-fgcc} induces an isomorphism of differential graded Lie algebras
    \begin{equation*}
    (\mathsf{fGC}^\circlearrowleft_{2, \mathrm{conn}},\delta + \nabla) \xrightarrow{\cong} \mathrm{Indec}(C)^\vee.
    \end{equation*}
\end{proposition}
Let us also remark that $[G,<] \mapsto (-1)^{|V(G)|}[G,<]$ defines an isomorphism of complexes $(\mathsf{fGC}^\circlearrowleft_{2, \mathrm{conn}},\delta + \nabla) \to (\mathsf{fGC}^\circlearrowleft_{2, \mathrm{conn}},\delta - \nabla)$, so it does not matter much if this proposition is stated with $\delta + \nabla$ or $\delta - \nabla$.
\begin{proof}[Proof sketch]
    We have already seen that the map is an isomorphism of vector spaces, and the Lie brackets are compared in the same way as Lemma~\ref{lemma:compare-to-fgcc}.  
    
    It remains to verify that the differentials match up.  If $G$ is a connected graph containing tadpoles (edges connecting a vertex to itself), then both $\partial_d([G,<])$ and $\partial_c([G,<])$ contain terms involving deleting (equivalently, collapsing) those tadpoles, and such terms cancel in the difference 
    $(\partial_c - \partial_d)([G,<])$.
    Similarly, if $G$ contains bridges-to-nowhere (edges connecting a 1-valent vertex to the rest of $G$) then deleting such edges agrees with collapsing them modulo $K^2 \subset C$, so such terms also cancel in 
    $(\partial_c - \partial_d)([G,<]) \in \mathrm{Indec}(C) = K/K^2$
    where $K = \mathrm{Ker}(C \to \QQ)$.  Dualizing the signed sum of collapsing edges which are neither tadpoles nor bridges-to-nowhere precisely matches with the differential $\delta$ on $\mathsf{fGC}^\circlearrowleft_{2,\mathrm{conn}}$, and dualizing the signed sum of deleting such edges matches with $\nabla$.
\end{proof}

We can then use the results of \cite{khoroshkin-willwacher-zivkovic-differentials} to deduce results about differentials in our spectral sequences.
\begin{proposition}
    The homology group
    \[H_*(\mathrm{Indec}(C))\]
    with respect to the boundary map induced by 
    $\partial = \partial_c-\partial_d$
    is 1-dimensional in homological degree $1$, generated by the class $[T]$ of a tadpole graph, and is zero in all other degrees.
\end{proposition}
\begin{proof}
    We consider the composition
    \[
    (\mathsf{fGC}_{2,\mathrm{conn}},\delta+\nabla) \hookrightarrow (\mathsf{fGC}_{2,\mathrm{conn}} \oplus \QQ T^\vee,\delta+\nabla) \stackrel{\simeq}{\hookrightarrow} (\mathsf{fGC}^\circlearrowleft_{2,\mathrm{conn}},\delta+\nabla) \cong \mathrm{Indec}(C)^\vee,
    \]
    where the last isomorphism is by Proposition~\ref{prop:comparison-map-to-KWZ-complex}.      
We justify the middle quasi-isomorphism as follows: by \cite[Proposition 3.4]{willwacher-kontsevich}, the inclusion
\[(\mathsf{fGC}_{2,\mathrm{conn}} \oplus \QQ T^\vee, \delta) \xra{\sim} (\mathsf{fGC}^\circlearrowleft_{2,\mathrm{conn}},\delta)\]
is a quasi-isomorphism,
and hence, by a spectral sequence argument (filtering by number of vertices),  it follows that \[(\mathsf{fGC}_{2,\mathrm{conn}} \oplus \QQ T^\vee, \delta+\nabla) \xra{\sim} (\mathsf{fGC}^\circlearrowleft_{2,\mathrm{conn}},\delta+\nabla)\]
is also a quasi-isomorphism.  Finally, \cite[\S3.2, Corollary 4]{khoroshkin-willwacher-zivkovic-differentials} shows that $(\mathsf{fGC}_{2,\mathrm{conn}},\delta+\nabla)$ is acyclic, and the claim follows.
\end{proof}

\begin{corollary}
    Let
    \begin{equation*}
        h \colon \sseq{G}^r_{s,t} \to \sseq{Q}^r_{s,t}
    \end{equation*}
    be the map of spectral sequences induced by $G \mapsto H_1(G;\ZZ)$.  For odd $g \geq 3$, the image
    \begin{equation*}
        h([W_g]) \in \mathrm{Indec}(\sseq{Q}^1_{g,g})
    \end{equation*}
    is a permanent cycle in the spectral sequence obtained by taking indecomposables of the Quillen spectral sequence.
\end{corollary}
\begin{proof}
    It follows from \cite[Proposition 5]{khoroshkin-willwacher-zivkovic-differentials} that the classes $[W_g] \in \mathrm{Indec}(\sseq{G}^1_{g,g})$ survive to represent elements on the $E^{g-1}$-page, where a non-zero differential takes the class of $[W_g]$ to the loop class $L_{2g-1} \in \sseq{G}^{g-1}_{1,2g-2}$.  Therefore the image of $[W_g]$ in $\mathrm{Indec}(\sseq{Q}^1_{g,g})$ also survives to represent an element of $\mathrm{Indec}(\sseq{Q}^{g-1}_{g,g})$ which the differential $d^{g-1}$ takes to an element of bidegree $(1,2g-2)$.  But the Quillen spectral sequence is zero in this bidegree, so there are no possible non-zero differentials out of $h([W_g]) \in \mathrm{Indec}(\sseq{Q}^r_{g,g})$ for any $r$.
\end{proof}

As we will explain in the next section, the integration pairing with $\omega^{2g-1}$ can be used to deduce that $h([W_g]) \neq 0 \in \mathrm{Indec}(\sseq{Q}^1_{g,g})$.  At present we cannot rule out that this class $h([W_g])$ in the corollary above could be in the \emph{image} of a non-zero differential on some page of the spectral sequence, although this would be impossible assuming the conjecture of \cite[\S2]{church-farb-putman-stability} which implies that $\sseq{Q}^1_{s,t} = 0$ for $s > t$ and $(s,t) \neq (1,0)$.  Subject to that conjecture, we can therefore deduce that $h([W_g])$ survives to represent non-zero elements of $\mathrm{Indec}(H_*(BK(\ZZ);\QQ)) = K_{*-1}(\ZZ) \otimes \QQ$ in degrees $\ast \in \{6, 10, 14, \dots\}$.

\subsection{Inverting the class of a point}\label{sec:inverting-the-class-of-a-point} 
We have explained why the spectral sequence $\sseq{G}^r_{*,*}$ comes with a product and coproduct, satisfying a Leibniz rule on each page. Furthermore, it is easy to see that there are unit and counit morphisms $\QQ \to \sseq{G}^1_{0,0} \to \QQ$ induced by filtered maps $\{\ast\} \to \MGr \to \{\ast\}$, making this a spectral sequence of bialgebras.  As it stands, the $E^1$-page does not admit an antipode though.  In this subsection, which is not strictly necessary for the rest of the paper, we investigate the ring $\sseq{G}^1_{0,0} \cong \sseq{G}^r_{0,0}$ and its action on $\sseq{G}^r_{*,*}$ by multiplication.
\begin{lemma}\label{lemma:filtration-zero-homotopy-type}
    The space $\Fil_0\MGr$ consists of two path components, both of which have trivial rational homology.  Consequently, $\sseq{G}^1_{0,0} \cong \QQ^2$ and $\sseq{G}^1_{0,t} = 0$ for $t > 0$.
\end{lemma}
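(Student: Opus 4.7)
The plan is to identify $\Fil_0 |\overline{\cF}_\bullet|$ with the moduli space of metric forests by combining Corollary~\ref{cor:consequences-of-pushout} with Remark~\ref{rem:interpretation-as-moduli-of-metric-graphs}: every diagram in $\Fil_0 \cF_p$ has $G_{0,p}$ of first Betti number zero, i.e.\ a forest, and the diagonal/horizontal maps force every $G_{i,j}$ to be a forest as well. Under the correspondence, an edge of length zero is contracted and an edge of infinite length is deleted.

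First I will verify the two path components. The empty graph $V_0$ is its own component, since the operations of contracting and deleting edges preserve non-emptiness when applied to a non-empty forest. For the rest, note that the 1-cell associated to the forest $E_1 \sqcup V_{n-1}$ joins $V_n$ (when the edge is contracted) to $V_{n+1}$ (when the edge is deleted). Hence the $V_n$ for $n\geq 1$ all lie in a single component, and any non-empty metric forest $F$ can be joined to $V_{|\pi_0(F)|}$ by continuously shrinking all edge lengths to zero. This yields exactly two path components.

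Next I address rational acyclicity. The empty component is a point and so is trivially acyclic. For the non-empty component $X$, I work with the rational cellular chain complex $(\Fil_0 C_*, \partial)$ from Section~\ref{sec:graph-compl-model}, whose generators are symbols $[G,\omega]$ with $G$ a forest and $\omega$ a total order on $E(G)$, modulo the sign rule; in particular any $G$ admitting an automorphism inducing an odd permutation of $E(G)$ contributes zero. The differential is $\partial = \partial_d - \partial_c$. For degree zero, the only nonzero generators are $[V_n]$ for $n\geq 0$, and from $\partial[E_1\sqcup V_n]=[V_{n+2}]-[V_{n+1}]$ one obtains $H_0(\Fil_0 C_*)\cong \QQ\{[V_0],[V_1]\}\cong\QQ^2$.

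For the vanishing of $H_i(\Fil_0 C_*)$ when $i\geq 1$, the central tool is the degree-$+1$ operator $h[G,\omega]=[G\sqcup E_1,(\omega,f)]$, where a new edge $f$ is appended at the end of the order. A direct computation, in which deleting or contracting $f$ from $G\sqcup E_1$ yields $G\sqcup V_2$ or $G\sqcup V_1$ respectively while deleting or contracting any edge of $G$ commutes with $h$, gives the chain-homotopy identity
\[
\partial h - h\partial \;=\; (-1)^{|E(G)|}\bigl(V^2-V\bigr),
\]
where $V^k[G,\omega]=[G\sqcup V_k,\omega]$. Thus the chain maps $V$ and $V^2$ are homotopic, so $V=V^2$ on $H_*(\Fil_0 C_*)$, and $V$ is an idempotent. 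The main technical obstacle is then to combine this idempotency with parallel chain-level identities coming from the trees whose automorphism groups act by only even edge-permutations (the $P_n$ for $n\equiv 1,2\pmod 4$, together with their disjoint unions with isolated edges and vertices): for instance $\partial[P_6\sqcup V_n]=2[P_5\sqcup V_{n+1}]-[P_5\sqcup V_n]$ and $\partial[P_5\sqcup E_1\sqcup V_n]=[P_5\sqcup V_{n+2}]-[P_5\sqcup V_{n+1}]$, which combine with $V^2=V$ to force $[P_5\sqcup V_n]=0$ in $H_4$. A systematic filtration by the set of ``tree components with edges'' of $G$, together with $V\simeq V^2$, reduces any cycle of positive degree to a linear combination of such tree-plus-vertices classes, and an induction on vertex count shows that all of these are boundaries. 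The hardest step is organizing this induction so that the chain homotopies produced by adjoining an $E_1$ or by ``lengthening'' a tree $P_m\leadsto P_{m+1}$ together span enough boundary relations; I expect this case-analysis to be the main difficulty, and anticipate that a cleaner conceptual proof will proceed by exhibiting a direct deformation retraction of $X$ onto the contractible 1-complex $V_1-V_2-V_3-\cdots$ formed by the $V_n$ and the 1-cells $[E_1\sqcup V_{n-1}]$.
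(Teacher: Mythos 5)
Your identification of the two path components and the computation of $H_0$ are correct and essentially match the paper's argument (the paper writes $p^n$ for your $V_n$ and uses the same one-edge forests to connect them). The genuine gap is in the positive-degree vanishing, which is the entire content of the lemma beyond degree zero. Your homotopy $h[G,\omega]=[G\sqcup E_1,(\omega,f)]$ does give the identity $\partial h - h\partial = \pm(V^2-V)$, hence idempotency of ``disjoint union with a vertex'' on homology, but this alone kills nothing in positive degree: you then appeal to ``a systematic filtration by the set of tree components with edges'' and an ``induction on vertex count'' whose organization you yourself flag as the main unresolved difficulty. As written, there is no mechanism that strictly reduces the complexity of an arbitrary positive-degree cycle, and the sample identities you quote (e.g.\ for $[P_6\sqcup V_n]$) at best handle isolated families of generators; they do not assemble into a proof. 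The fallback you anticipate --- a deformation retraction of the non-empty component onto the ray $V_1 - V_2 - V_3 - \cdots$ --- is also not justified: the obvious candidate, scaling all edge lengths to $0$, is discontinuous because the number of components jumps when an edge length tends to infinity (the edge is deleted), and the lemma only needs, and the paper only proves, rational acyclicity rather than contractibility.

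The missing idea, supplied in the paper, is a different chain homotopy: $T([G,\omega])=\sum_{G=G_1\vee G_2}[G_1\vee e\vee G_2]$, summing over all decompositions of $G$ as a wedge of two subgraphs each with at least one edge and inserting a new edge $e$ at the wedge point. The associated map $S=\partial T - T\partial - \mathrm{id}$ is chain homotopic to (minus) the identity and \emph{strictly decreases the maximal number of edges occurring in a tree component} whenever some tree has more than one edge. Iterating $S$ therefore reduces any cycle, up to boundaries, into the image of the small CDGA $\QQ[p,e]$ with $\deg p=0$, $\deg e=1$, $\partial e = p-p^2$ (generated by the one-vertex graph and the one-edge graph), whose homology vanishes in positive degrees by a direct calculation. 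This single ``wedge-decomposition'' homotopy replaces the open-ended case analysis in your sketch; without it, or an equivalent complexity-reducing operator, your argument does not close.
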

\begin{proof}
    We may instead calculate the homology of the chain complex $\Fil_0 C \subset C$ whose generators $[G,\omega]$ have $b_1(G) = 0$.  In other words, $G$ is a forest.  $H_0(\Fil_0 C)$ may be calculated by hand: if we write $p^n$ for the graph with no edges and 
    $n\ge 0$ many vertices, then these form a basis for $\Fil_0 C_0$.  There is a unique isomorphism class of 1-edge graphs with $n$ many vertices and $b_1 = 0$ for each $n \geq 2$, and its boundary is 
    $p^{n-1} - p^n$
    This shows that the space $\Fil_0\MGr$ 
    is the disjoint union of precisely two path components, one corresponding to the empty graph and one corresponding to all non-empty graphs.

    To see that $H_j(\Fil_0C) = 0$ for $j > 0$ we consider the homomorphism $T\col \Fil_0C_n \to \Fil_0C_{n+1}$ given by
    \begin{equation}
        T([G,\omega]) = \sum_{G = G_1 \vee G_2} [G_1 \vee e \vee G_2],        
    \end{equation}
    where the sum is indexed by all possible ways of writing $G = G_1 \vee G_2$ as the wedge sum of two graphs $G_1$ and $G_2$ at basepoints $v_1 \in V(G_1)$ and $v_2 \in V(G_2)$, with $E(G_1) \neq \emptyset \neq E(G_2)$.   The graph $G_1 \vee e \vee G_2$ is obtained from the disjoint union $G_1 \sqcup G_2$ by inserting a new edge $e$ connecting $v_1$ to $v_2$, and $E(G_1 \vee e \vee G_2) = \{e\} \sqcup E(G_1) \sqcup E(G_2) = \{e\} \sqcup E(G)$ is ordered as the concatenation $\{e\} \ast (E(G),<)$.  Then $T$ is a chain homotopy from the identity to the chain map $S \colon \Fil_0C \to \Fil_0C$ defined by
    \begin{equation*}
        S([G,\omega]) = (\partial \circ T + T \circ \partial - \mathrm{id})([G,\omega]).
    \end{equation*}
    
    If $G$ is a forest all of whose trees have at most $n$ edges for $n \geq 2$, then $S([G,\omega])$ is a linear combination of forests all of whose trees have at most $n-1$ edges.  In other words, $S$ is a chain homotopy from the identity map to a chain map which strictly decreases the maximal number of edges in trees for any tree with more than 1 edge.  We now consider a map of CDGAs
    \begin{equation*}
        \QQ[p,e] \to \Fil_0C
    \end{equation*}
    whose domain is the free graded-commutative algebra on a generator $p$ of degree 0 and a generator $e$ of degree 1, with boundary defined by $\partial p = 0$ and $\partial e = p - p^2$. 
    The map is defined by sending $p$ to a one-vertex graph and $e$ to a graph with two vertices connected by an edge. 
    Iterating the chain homotopy $S$ shows that any cycle in $\Fil_0C$ is homologous to a cycle in the image of this map of CDGAs.  Finally, an easy calculation shows that $H_j(\QQ[p,e],\partial) = 0$ for $j > 0$.      
    See also \cite[Proposition 3.4]{willwacher-kontsevich} for a related result.
\end{proof}
\begin{cor} \label{cor:bidegree-zero-zero}
    Let $x \in \sseq{G}^1_{0,0} = H_0(\Fil_0\MGr;\QQ)$ be the path component corresponding to the non-empty graphs.  Then $\sseq{G}^1_{0,0} = \sseq{G}^\infty_{0,0} \cong \QQ[x]/(x^2 - x)$ as a ring, with coproduct given by $\Delta(x) = x \otimes x$ and counit by $x \mapsto 1$.
\end{cor}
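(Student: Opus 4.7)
The plan is to identify ${^G}E^\infty_{0,0}$ by first computing $H_0(|\ov\cF_\bu|;\QQ)$ together with its induced filtration, and then reading off the bialgebra operations from the space level.

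First I would show that $|\ov\cF_\bu|$ has exactly two path components. The empty graph is isolated by inspection of the cell structure in Lemma~\ref{lem:pushout}. For any two non-empty graphs $G$ and $G'$, the metric-graph interpretation of Remark~\ref{rem:interpretation-as-moduli-of-metric-graphs} supplies a path between them: sending all edge lengths to the deletion boundary reduces to a discrete set of vertices, and two discrete graphs of adjacent vertex counts are interconverted by inserting a single edge whose length varies across $(0,1)$. Hence $H_0(|\ov\cF_\bu|;\QQ)\cong\QQ^2$, with basis the class $1$ of the empty graph and the class $x$ of the non-empty graphs.

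Next, I would apply the same connectivity argument inside each $\Fil_s|\ov\cF_\bu|$: the interpolating paths constructed above already lie in $\Fil_0$, so the inclusion $\Fil_s|\ov\cF_\bu|\hookrightarrow\Fil_{s+1}|\ov\cF_\bu|$ is an isomorphism on $H_0$ for every $s\ge 0$. Consequently ${^G}E^1_{s,-s}=H_0(\Fil_s,\Fil_{s-1};\QQ)=0$ for all $s\ge 1$, and all of $H_0(|\ov\cF_\bu|;\QQ)$ sits in filtration zero. Combined with the previous lemma, which gives ${^G}E^1_{0,0}\cong\QQ^2$, a dimension count forces every differential touching bidegree $(0,0)$ to vanish on every page, so ${^G}E^\infty_{0,0}\cong{^G}E^1_{0,0}\cong\QQ^2$ with basis $\{1,x\}$.

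The ring structure is then immediate from the product induced by disjoint union (Lemma~\ref{lemma:graph-spectral-sequence}): the empty graph is a two-sided unit, and the disjoint union of two non-empty graphs is non-empty, so $x\cdot x=x$ and ${^G}E^\infty_{0,0}\cong\QQ[x]/(x^2-x)$ as rings. The counit, induced by collapsing $|\ov\cF_\bu|$ to a point, sends each path-component class to $1$. For the coproduct, I would use that on $E^\infty$ the spectral-sequence coproduct agrees with the one induced by the space-level diagonal (the graph-spectral-sequence analog of item~(4) of Theorem~\ref{thm:QSS}), and the diagonal of a space into its square sends a point in a path component $C$ into $C\times C$; equivalently, the Connes--Kreimer formula of Theorem~\ref{thm:  graphcoproduct} applied to a single-vertex representative of $x$ directly gives $\Delta(x)=x\otimes x$. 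The only substantive step in the entire argument is the path-connectivity claim for the non-empty locus; once that is in hand, everything else is routine bialgebra bookkeeping on $H_0$ of a two-component space.
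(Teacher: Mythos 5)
Your proposal is correct, and its endgame coincides with the paper's: the ring structure comes from disjoint union acting on the two path components (empty graph as unit, $x\cdot x=x$), the coproduct from the fact that on $E^\infty$ it is induced by the space-level diagonal (or, equivalently, from the Connes--Kreimer formula applied to the one-vertex generator, which also gives $\Delta(x)=x\otimes x$ already on $E^1$), and the counit from the collapse map. Where you genuinely differ is in the input and in the passage from $E^1$ to $E^\infty$. The paper deduces the corollary in two lines from the preceding lemma, whose proof is algebraic: an explicit chain homotopy in the forest part of the graph complex computes $H_*(\Fil_0 C_*)$, and the identification $ {^G}E^\infty_{0,0}\cong {^G}E^1_{0,0}$ is left implicit. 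You instead argue geometrically via the cell/metric-graph description (the empty graph is an isolated $0$-cell; the non-empty locus of each $\Fil_s$ is path-connected by pushing edge lengths to the deletion corner and walking between discrete graphs through one-edge cells) and then make the convergence step explicit: since $\Gr_0 H_0(|\ov\cF_\bu|)\cong\QQ^2$ is a subquotient of the two-dimensional ${^G}E^1_{0,0}$, no differential can touch bidegree $(0,0)$ and $E^\infty_{0,0}=E^1_{0,0}$. This buys a cleaner justification of the $E^\infty$ assertion than the paper spells out, at the cost of re-proving connectivity that the lemma already provides for $\Fil_0$ (the lemma's chain-level route also yields ${^G}E^1_{0,t}=0$ for $t>0$, which you do not need here).

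Two small inaccuracies, neither fatal: the phrase ``the interpolating paths already lie in $\Fil_0$'' is not literally true, since the path from a positive-genus metric graph to its vertex set lies only in $\Fil_{b_1}$; but that is exactly what is needed for $H_0(\Fil_s)\to H_0(\Fil_{s+1})$ to be an isomorphism, so the argument stands. Also, the step ${^G}E^1_{s,-s}=0$ for $s\ge 1$ is superfluous: surjectivity of $H_0(\Fil_0|\ov\cF_\bu|)\to H_0(|\ov\cF_\bu|)$ already places all of $H_0$ in filtration $0$, and the dimension count goes through without it.
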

As already explained, this bialgebra represents the functor $R \mapsto \{x \in R \mid x^2 = x\}$, which is a monoid scheme but not a group scheme, so the bialgebra does not admit an antipode.
\begin{proof}
    We have already seen that the ring structure is as stated, with $x = [p]$ in the notation of the above proof.  The equation $\Delta(x) = x \otimes x$ follows from the fact that the coproduct on the $E^\infty$-page is induced by the space-level diagonal map.  Similarly for the counit.
\end{proof}

The ring structure on $E^1_{0,0} \cong E^r_{0,0}$ implies a splitting $E^r_{s,t} \cong x E^r_{s,t} \oplus (1-x)E^r_{s,t}$ for all bidegrees $(s,t)$.  The following lemma implies that the second summand is trivial for $(s,t) \neq (0,0)$ and that multiplication by $x$ acts as the identity in such bidegrees.  Formally inverting $x$  therefore changes only bidegree $(0,0)$, and by exactness of localization results in a spectral sequence, 
which is now a spectral sequence 
of connected Hopf algebras.

\begin{lemma} 
    Let $x \in \sseq{G}^1_{0,0} = \sseq{G}^r_{0,0}$ be as in Corollary~\ref{cor:bidegree-zero-zero}.  Then multiplication by $x \colon \sseq{G}^r_{s,t} \to \sseq{G}^r_{s,t}$ is the identity homomorphism for $(s,t) \neq (0,0)$ and all $r \geq 1$.
\end{lemma}
\begin{proof} 
  It suffices to prove the statement for $r = 1$ where $\sseq{G}^1_{s,t} = H_{s+t}(\Gr_s(C))$, since each isomorphism $\sseq{G}^{r+1} \cong H_*(\sseq{G}^r,d^r)$ is an isomorphism of algebras, indeed bialgebras (Corollary~\ref{cor:graph-spectral-sequence}). 
  The map $\sseq{G}^1_{s,t} \to \sseq{G}^1_{s,t}$ which multiplies by $x$ has 1-dimensional kernel and cokernel for $(s,t) = (0,0)$, we wish to show that the kernel and cokernel vanishes for $s + t > 0$.  To see this we look at the quotient complex $\Gr(C)/(x)$ by the ideal $(x) \subset \Gr(C)$, whose homology sits in a long exact sequence 
  associated to the short exact sequence
  \begin{equation}\label{eq:Gr(C)/(x)}0\to\Gr(C)\xra{\cdot x}\Gr(C)\to\Gr(C)/(x)\to 0.\end{equation}
  
  An additive generator $[G,<] \in \Gr(C)$ is in $(x)$ if and only if $G$ contains a vertex of valence 0, while those basis elements $[G,<] \in \Gr(C)$ for which all vertices have positive valence reduce to basis elements for $\Gr(C)/(x)$.  In the latter case, we shall use the same notation $[G,<]$ for the class modulo $(x)$.  There are two evident cycles in $\Gr(C)/(x)$, namely $1 \in \Gr(C)/(x)$ represented by the empty graph, and $[I,<]$ represented by the ``interval'' graph with two 1-valent vertices connected by a single edge. 
  Both represent non-zero homology classes, and note that the image of the interval graph under the connecting homomorphism $H_1(\Gr(C)/(x))\to H_0(\Gr(C)) \cong \QQ[x]/(x^2 - x)$ is $1-x \ne 0$.
  To prove the Lemma, it will suffice to show that $H_*(\Gr(C)/(x))$ is 1-dimensional in homological degrees 0 and 1, spanned by these two classes, and is zero-dimensional in all other degrees.  Indeed, by inspecting the long exact sequence associated to~\eqref{eq:Gr(C)/(x)}, one sees that multiplication by $x$ is an isomorphism on $H_{s+t}(\Gr(C))$ for $s + t > 0$, 
  but it must then be the identity since we also know that $x \in \sseq{G}^1_{0,0}$ is idempotent.

  To show that $H_*(\Gr(C)/(x))$ is spanned by the homology classes of $1$ and $[I,<]$, we follow a similar strategy to the proof of Lemma~\ref{lemma:filtration-zero-homotopy-type}, but using a different homotopy.  
  For an additive generator $[G,<] \in \Gr(C)/(x)$, i.e., where $G$ has no zero-valent vertices, we introduce the notation $\ov V(G) \subset V(G)$ for the set of vertices of valence $\geq 2$ and $\ov E(G) \subset E(G)$ for the set of edges which connect two distinct elements of $\ov V(G)$.  Define a linear homomorphism
  \begin{equation*}
    (\Gr(C)/(x))_n \xrightarrow{T} (\Gr(C)/(x))_{n+1}
  \end{equation*}
  by the formula
  \begin{equation*}
    T[G,<] = \sum_{v \in \ov V(G)} [I \vee_v G,<']
  \end{equation*}
  where $G$ is a graph with no vertices of valence 0 and $I \vee_v G$ denotes the graph obtained from $G$ by adding a new vertex $v_0$ and connecting it to $v \in G$ by an edge $e_0$, so $V(I \vee_v G) = \{v_0\} \sqcup V(G)$ and $E(I\vee_v G) = \{e_0\} \sqcup E(G)$.  The total order $<'$ is obtained from $<$ by declaring $e_0 <' e$ for $e \in E(G)$ and agreeing with $<$ on elements of $E(G) \subset E(I \vee_v G)$.  Then one verifies that 
  \begin{equation*}
    (T \partial_c + \partial_c T)([G,<]) = |\ov V (G)| [G,<] + \sum_{e \in \ov E(G)} \pm [I \vee_{e/e} (G/e),<']
  \end{equation*}
  where $e/e \in V(G/e)$ is the vertex arising at the collapsed edge.  We shall not need a precise recipe for the sign and the induced ordering $<'$, but note the inequality 
  \begin{equation}\label{eq:decreases-overline-E-1}
    |\ov V(I \vee_{e/e} (G/e))| < |\ov V(G)|.
  \end{equation}
  By a similar argument
  \begin{equation*}
    (T \partial_d + \partial_d T)([G,<]) = \sum_{(e,v)} \pm [I \vee_v (G \setminus e),<'],
  \end{equation*}
  where the sum runs over pairs consisting of an $e \in \ov E(G)$ which is a bridge in $G$, and a 2-valent vertex $v \in V(G)$ adjacent to $e$.  Here $I \vee_v (G \setminus e)$ denotes the result of removing $e$ and attaching a new edge at $v$ whose other endpoint is a new 1-valent vertex.  Here we note the inequalities
  \begin{equation}\label{eq:decreases-overline-E-2}
  \begin{aligned}
    |\ov V(I \vee_v (G \setminus e)| & \leq  |\ov V(G)|,\\
        |\ov E(I \vee_{v} (G\setminus e))| & < |\ov E(G)|.
\end{aligned}
  \end{equation}

  Now, for integers $a,b \geq 0$, let us write $D_a \subset \Gr(C)/(x)$ for the rational span of those $[G,<]$ for which $|\ov V(G)| \leq a$ and  $D_{a,b} \subset D_a$ for the rational span of those $[G,<]$ for which either $|\ov V(G)| <a$, or $|\ov V(G)| = a$ and $|\ov E(G)| \leq b$.  For $a > 0$ let $S_a\colon \Gr(C)/(x) \to \Gr(C)/(x)$ be the chain map
  \begin{equation*}
    S_a = \mathrm{id} - \tfrac1a (T\partial + \partial T).
  \end{equation*}
  The inequalities~\eqref{eq:decreases-overline-E-1} and~\eqref{eq:decreases-overline-E-2} imply that 
  \begin{align*}
      S_a(D_{a,b}) & \subset D_{a,b-1}\\
      S_a(D_{a,0}) & \subset D_{a-1},
  \end{align*}
  and therefore, if $c \in D_a$ is any chain and $a > 0$, there exists some integer $N \geq 0$ such that $S_a^N c \in D_{a-1}$.  We can regard the linear map $\frac1a T$ as a chain homotopy between the identity and $S_a$, and by induction we deduce that any chain $c \in \Gr(C)/(x)$ is homologous to some $c' \in D_0$.  
  
  Graphs $G$ with $\ov V(G) = \emptyset$ are easy to classify: each connected component must be isomorphic to the ``interval'' $I$, the graph consisting of two vertices connected by a single edge.  If $G$ has more than two such components, then $[G,<] = 0$ because $\mathrm{Aut}(G)$ contains an odd permutation.  We have therefore shown that the inclusion
  \begin{equation*}
      \QQ.1 \oplus \QQ.[I,<] \cong D_0 \hookrightarrow \Gr(C)/(x)
  \end{equation*}
  induces a surjection on homology.  
\end{proof}

\begin{corollary}
    The rational homology of $\MGr$ is
    \begin{equation*}
        H_n(\MGr;\QQ) \cong
        \begin{cases}
            \QQ^2 & \text{for $n = 0$}\\
            \QQ & \text{for $n=1$}\\
            0 & \text{for $n > 1$}.
        \end{cases}
    \end{equation*}
\end{corollary}
In other words, $\MGr$ and hence $\BKGr$ are rationally equivalent to the disjoint union of a point, corresponding to $G = \emptyset$, and a circle.  It seems interesting to understand these homotopy types integrally.
\begin{proof}
    We have seen splittings $\sseq{G}^r_{s,t} = x E^r_{s,t} \oplus (1-x) E^r_{s,t}$, in which the second summand is a 1-dimensional rational vector space for $(s,t) = (0,0)$ and vanishes in all other bidegrees.  The first summand is
    \begin{equation*}
      x E^r_{s,t} \cong x^{-1} E^r_{s,t},
    \end{equation*}
    which form the pages of a spectral sequence of connected Hopf algebras.  In Subsection~\ref{sec:comparison-with-fGC}, 
    we identified $\mathrm{Indec}(x^{-1} E^r_{s,t}) \cong \mathrm{Indec}(\sseq{G}^r_{s,t})$ with the pages of the spectral sequence in \cite{khoroshkin-willwacher-zivkovic-differentials}, and for $r = \infty$ we in particular get that this is 1-dimensional in bidegree $(1,0)$ and vanishes otherwise.  Since a connected commutative Hopf algebra is determined by its indecomposables, we deduce that $x^{-1} E^\infty_{*,*}$ is an exterior algebra generated by one class in bidegree $(1,0)$, and hence
    \begin{equation*}
        xE^\infty_{s,t} \cong 
        \begin{cases}
            \QQ & \text{for $(s,t) = (0,0)$ and $(s,t) = (1,0)$}\\
            0 & \text{otherwise.}
        \end{cases}
    \end{equation*}
    Combining with $(1-x)E^\infty_{0,0}$ gives the result.
\end{proof}

\section{Freeness of a Lie algebra generated by \texorpdfstring{$\omega^{4k+1}$}{w4k+1} classes} \label{sec:freeness}

We now prove that the images of the elements $\omega^5, \omega^9, \ldots, \omega^{45}$ generate a free Lie subalgebra in $\Prim(W_0H^*_c(\cA)\otimes \RR).$
We will make use of the map~\eqref{eq:prim-Quillen to grt} from the primitives in the Quillen spectral sequence (along diagonal bidegrees) to $H^0(\mathsf{GC}_2)$ constructed in the previous section, as well as the following results from the literature:
\begin{enumerate} 
\item  Willwacher's theorem  \cite[Theorem 1.1]{willwacher-kontsevich} which gives an isomorphism of graded Lie algebras between $H^0(\GC_2) $  and the Grothendieck--Teichm\"uller Lie algebra, which we shall denote by $\grt$ (denoted by $\grt_1$ in \emph{loc.~cit.}).
  \item An injective map   \cite{brown-mixed}  from the motivic Lie algebra $\gm \to \grt$, where $\gm$ is isomorphic to the free graded Lie algebra on certain generators $\sigma_{2k+1}$ in every degree $2k+1\geq 3$.  These generators $\sigma_{2k+1}$ are canonical modulo Lie words in generators $\sigma_{2j+1}$ with $j < k$. 
  \item\label{it:four}   The integration pairing between $\omega^{2g-1} \in \sseq{Q}^{g,g}_1\otimes \RR$ (see~\eqref{eqn: Omegacinjects}) and the locally-finite homology class of the wheel $[W_g]$ is non-zero for $g>1$ odd \cite{brown-schnetz}.  In other words, the image of $\omega^{2g-1}$ in $H^0(\GC_2)\otimes \RR$, under the map~\eqref{eq:prim-Quillen to grt}, pairs non-trivially with $[W_g]\in H_0(\GC_2^\vee)$.  
  \end{enumerate}

This last point~\eqref{it:four} relies on the discussion of subsection~\ref{sec: Loc-sym-Quillen}, in the following way.  The differential form $\omega^{2g-1}$ most naturally gives a class in the $E_1$-page of the cohomological \emph{tropical spectral sequence}, while the wheel graph $W_g$ gives a class in the homological \emph{Quillen spectral sequence}.  In order to make sense of the pairing, we use the explicit comparison discussed in \S\ref{sec: Loc-sym-Quillen}, which we recall involved a zig-zag
\begin{equation*}
    \Gr_g(BK(\ZZ)) \xleftarrow{\simeq} |N_\bu T_\bu(\QQ^g)\cup \{\infty\}| \xrightarrow{\simeq_\QQ} (P_g/\GL_g(\ZZ)) \cup \{\infty\}.
\end{equation*}
We use this to make sense of pairing a reduced homology class in $\Gr_g(BK(\ZZ))$ with a reduced cohomology class in $(P_g/\GL_g(\ZZ)) \cup \{\infty\}$, or equivalently a compactly supported cohomology class in $P_g/\GL_g(\ZZ)$.
To implement the comparison, we will first factor the second arrow through a space $|\ov T_\bu(\QQ^g) \cup \{\infty\}|$ which we define now.  Let $\ov T_p(\QQ^g)$ be the quotient of the set $N_0 T_p(\QQ^g)$ by the equivalence relation that $(A_p \subset \dots \subset A_0,<) \sim (A'_p \subset \dots \subset A'_0,<')$ provided the resulting maps
\begin{equation*}
    \Delta^p \setminus \partial \Delta^p \to P_g,
\end{equation*}
given by the formula~\eqref{eq:map-simplex-to-g} are in the same orbit for the action of $\GL_g(\ZZ)$-action on the set of such maps.  We point out that the formula~\eqref{eq:map-simplex-to-g} for the map associated to $a = (A_p \subset \dots \subset A_0,<)$ does not involve the total order $<$ at all, and that the elements $\psi \in A_0$ enter the formula only through their squares $v \mapsto (\psi(v))^2$.  Therefore the element of $\ov T_p(\QQ^g)$ represented by $a$ is invariant under any change of $<$, under replacing some $\psi$ by $-\psi$, and by definition also under precomposing all $\psi$'s with the same $X \in \GL_g(\ZZ)$.

The definition of the equivalence relation defining $\ov T_p(\QQ^g)$ ensures that the map from Proposition~\ref{prop:bottom-horizontal} factors as
\begin{equation*}
    \Gr_g(BK(\ZZ)) \xleftarrow{\simeq} |N_\bu T_\bu(\QQ^g)\cup \{\infty\}| \to |\ov T_\bu(\QQ^g) \cup \{\infty\}| \to (P_g/\GL_g(\ZZ)) \cup \{\infty\}.
\end{equation*}
We may also define a map of simplicial sets
\begin{equation*}
    \Gr_g(\cF_\bu) \xrightarrow{f} \ov T_\bu(\QQ^g) \cup\{\infty\}
\end{equation*}
by sending the isomorphism class of a flag $x = (G_{0,0} \hookrightarrow \dots \hookrightarrow G_{0,p}) \in \Fil_g \cF_p$ to the element of $T_p(\QQ^g) \cup\{\infty\}$ given as $f(x) = \infty$ if $b_1(G_{0,p}) < g$ and otherwise by first choosing an isomorphism $H_1(G_{0,p};\ZZ) \cong \ZZ^g$ and then letting 
\begin{equation*}
    f(x) = [(A_p \subset \dots \subset A_0,<)] \in \ov T_p(\QQ^g),
\end{equation*}
where $A_0 \subset H^1(G_{0,p};\QQ) \cong (\QQ^g)^\vee$ is the subset defined by first choosing a section of $H(G_{0,p}) \twoheadrightarrow E(G_{0,p})$ and then letting $A_0$ consist of all non-zero elements in the image of the resulting composition
\begin{equation*}
    E(G_{0,p}) \hookrightarrow H(G_{0,p}) \hookrightarrow C^1(G;\QQ) \twoheadrightarrow H^1(G;\QQ) \cong \QQ^g.
\end{equation*}
We remark that an edge is sent to zero precisely when it is a bridge.
Let $A_i \subset A_0$ consist of the non-zero vectors in the image of the subset $E(G_{0,p-i}) \subset E(G_{0,p})$ for $i = 0, \dots, p$ and choose an arbitrary total ordering $<$ of $A_0$ to define the element $f(x)$.  (The resulting element $f(x)$ does not depend on the choice of $<$, nor on the choice of representative half-edges of each edge, by definition of the equivalence relation defining $\ov T_\bu(\QQ^g)$.)  These maps fit into a homotopy commutative diagram
\begin{equation*}
    \begin{tikzcd}
        {\Gr_g(\BKGr)} \rar[,"H_1"]\arrow[dd,"\simeq_\QQ"] & {\Gr_g(BK(\ZZ))} & \\
        & {|N_\bu T_\bu(\QQ^g) \cup \{\infty\}|} \arrow[u,"\simeq"] \dar \arrow[dr,"\simeq_\QQ"] &\\
        {\Gr_g(\MGr)} \rar["f"] & {|\ov T_\bu(\QQ^g) \cup \{\infty\}|} \rar & {(P_g/\GL_g(\ZZ)) \cup \{\infty\}.}
    \end{tikzcd}
\end{equation*}

Now, a graph $G$ with $b_1(G) = g$ defines a map
\begin{equation*}
    (\Delta^1_\bu)^{E(G)} \to \Gr_g(\ov\cF_\bu)
\end{equation*}
whose geometric realization is one of the cells in the rational cell structure leading to the cellular chain complex $\Gr_g(C)$ calculating the reduced homology of $\Gr_g(\MGr)$ which is the column $\sseq{G}^1_{g,*}$ in the graph spectral sequence.  Composing  with the bottom row of the diagram above, we obtain a map
\begin{equation*}
    (\Delta^1)^{E(G)} \to (P_g/\GL_g(\ZZ)) \cup \{\infty\}.
\end{equation*}
Reparametrizing using the inverse $\ell^{-1}\col (0,\infty) \to \Delta^1 \setminus \partial \Delta^1$ of the diffeomorphism~\eqref{eq:diffeo}, the composition
\begin{equation*}
    (0,\infty)^{E(G)} \xrightarrow{(\ell^{-1})^{E(G)}}(\Delta^1)^{E(G)} \to (P_g/\GL_g(\ZZ)) \cup \{\infty\}
\end{equation*}
is independent of choice of $\ell$.  Tracing through definitions, it may be identified with the \emph{graph Laplacian} $(0,\infty)^{E(G)} \to P_g$ from \cite{brown-schnetz}, which depends on a choice of basis of $H_1(G;\ZZ)$, composed with the quotient map $P_g \to P_g/\GL_g(\ZZ)$.

For $g = 2k+1$, integrating the invariant differential form $\omega^{4k+1}$ along this map for each $G$ with $|E(G)| = 4k+2$ defines a element of degree $4k+2$ in the linear dual cochain complex
\begin{equation*}
    \omega^{4k+1} \in \Gr_g(C)^\vee = \Hom(\Gr_g(C),\QQ).
\end{equation*}
This element is a cocycle and pairs with the cycle $[W_{2k+1}] \in \Gr_g(C)$ by evaluating the integral from \cite{brown-schnetz}.  By the discussion above, this is the desired pairing.

\subsection{Proof of Theorem~\ref{thm:free}} \label{sec:proof3}

  \begin{lemma} \label{lem: Wheelsprimitive} For all $g>1$ odd, the wheel classes $[W_{g}] \in \sseq{G}^1_{g,g}$ are primitive, and hence annihilated by the Lie cobracket.  Consequently, the wheel classes in $H_0(\GC_2^\vee)$ define linear maps
  \[ [W_g] \colon \grt \to \QQ\]
  which vanish on  commutators $[\grt, \grt]$, and hence descend to maps $\grt^{\ab}\to \QQ$, which are nonzero by  \eqref{it:four}.  
  \end{lemma}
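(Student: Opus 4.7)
The plan is to verify $\Delta'[W_g]=0$ in $\sseq{G''}^1\otimes\sseq{G''}^1$ by analyzing the chain-level Connes--Kreimer formula
\[
\Delta'[W_g,\omega] = \sum_{\emptyset\subsetneq\gamma\subsetneq E(W_g)} [W_g|_\gamma,\omega|_\gamma]\otimes [W_g/\gamma,\omega|_{E\setminus\gamma}],
\]
classifying the subsets $\gamma$ that produce surviving contributions, and handling the residual by an explicit boundary construction.

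I will first show that whenever $[W_g|_\gamma]\neq 0$ in $C''$, the subgraph $W_g|_\gamma$ is necessarily \emph{connected} and spans all $g+1$ vertices of $W_g$. Non-vanishing in $C''$ forces every vertex to lie on a cycle of $W_g|_\gamma$ (absence of tree components, bridges, or cut vertices), and any two cycles in the wheel share a vertex: the rim cycle contains all outer vertices, while every through-hub cycle contains the hub together with two outer vertices. Connectedness on all vertices implies that contracting the edges of $\gamma$ merges every vertex of $W_g$ into a single one, so $W_g/\gamma$ is the rose graph $R_m$ with $m = 2g-|\gamma|$ loops. For $m\geq 2$ the symmetric group $S_m$ acts on the loops with odd transpositions, forcing $[R_m] = 0$ in $C''$ over $\QQ$. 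Hence all contributions with $|\gamma|\leq 2g-2$ vanish, and the only surviving residual is the codimension-one sum
\[
\Big(\sum_{e\in E(W_g)}(\pm 1)\,[W_g\setminus e]\Big)\otimes [R_1].
\]

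To eliminate this residual, I will show that each class $[W_g\setminus e]$ is a $\partial_c$-boundary in the associated graded complex $\Gr_{g-1}C''$, and hence vanishes in $\sseq{G''}^1_{g-1,g}$. The bounding chain is the graph $Q_{e,f}$ obtained from $W_g\setminus e$ by inserting a new bivalent vertex $w$ on a carefully chosen non-loop edge $f$, splitting $f$ into two edges $f',f''$. In $\partial_c Q_{e,f}$ the two subdivision contractions at $f'$ and $f''$ each return $W_g\setminus e$ with opposite signs from adjacent positions in the edge ordering, and therefore cancel. Contractions at any other edge $e'$ of $Q_{e,f}$ produce a graph in which the collision of the two rim/spoke edges at the newly merged vertex creates a pair of parallel edges, annihilated in $C''$ by the parallel-edge swap automorphism; by choosing $f$ adjacent to the precise location where the deleted edge $e$ breaks the wheel's symmetry, exactly one non-subdivision contraction survives with its parallelism ``broken'' by the intervening subdivision vertex, yielding $\pm[W_g\setminus e]$. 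This witnesses $[W_g\setminus e]$ as a $\partial_c$-boundary. The base case $g=3$, where $W_3=K_4$ and subdividing any spoke of $K_4\setminus e$ leads to a direct calculation with three non-vanishing contractions that combine appropriately, serves as a template for odd $g>3$.

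The main technical obstacle will be carrying out the parallel-edge cancellation analysis uniformly in $g$ and ensuring that the chosen subdivision edge $f$ produces precisely one surviving non-subdivision contraction for every odd $g>1$. Should this direct construction prove too unwieldy in the general case, a fallback approach is to identify $\sum_e(\pm 1)[W_g\setminus e]$ with the first spectral sequence differential $d^1[W_g]\in \sseq{G''}^1_{g-1,g}$ arising from the deletion piece $\partial_d$ of the full boundary in $C''$, and deduce its vanishing from the non-triviality of the wheel class in $\mathrm{Indec}(\sseq{G''}^1_{*,*})$, which by Proposition~\ref{prop: connection to GC2} pairs non-trivially with the Kontsevich wheel cocycle in $H^0(\GC_2)$.
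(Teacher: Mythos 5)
Your route is genuinely different from the paper's: the paper kills every term of the reduced Connes--Kreimer coproduct in one stroke, using that each proper nonempty subgraph of $W_g$ has a vertex of valence $<3$ (equivalently, that nontrivial contractions create doubled edges), so nothing survives once one works modulo the acyclic two-valent part; you instead work at chain level in $\Gr(C'')$, where bivalent vertices are allowed, and are therefore forced to confront the residual terms $[W_g\setminus e]\otimes[R_1]$ and show each $[W_g\setminus e]$ is nullhomologous. That second step is in fact fine: with $f$ chosen to be one of the two edges at a \emph{bivalent} vertex of $W_g\setminus e$ (i.e.\ incident to an endpoint of the deleted edge), the two half-edge contractions cancel, the contraction of the other edge at that vertex ``slides'' the bivalent vertex and returns $\pm[W_g\setminus e]$, and every other contraction still carries a doubled pair, so $\partial_c Q=\pm[W_g\setminus e]$; this works uniformly in odd $g$, for rim and spoke deletions alike. (Your $g=3$ template ``subdivide any spoke of $K_4\setminus e$'' is not quite right: subdividing the edge opposite the deleted one produces the fully symmetric theta graph, which is itself zero in $C''$.) Alternatively, you could skip the construction entirely: $W_g\setminus e$ is a connected cycle with both bivalent and higher-valent vertices, so it lies in the subcomplex $L^{2,3}$, whose acyclicity is proved in the paper (proof of Proposition~\ref{prop: connection to GC2}), and is automatically a boundary.

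The genuine gaps are in your first step and in your fallback. The classification claim ``$[W_g|_\gamma]\neq 0$ in $C''$ forces $W_g|_\gamma$ to be connected and spanning'' is false: isolated vertices do not kill a class (the relation is $[p\sqcup G]=[G]$), and neither do cut vertices (the relation is $[G_1\vee G_2]=[G_1\sqcup G_2]$, a factorization, not vanishing). Concretely, for $g\equiv 1\pmod 4$ the choice $\gamma=\mathrm{rim}$ gives $[W_g|_\gamma]=[C_g]\neq 0$, and a $(4k+1)$-cycle sector through the hub with leftover isolated vertices does the same in larger wheels; for these $\gamma$ the quotient is not a rose, so your dichotomy says nothing. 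Those terms do vanish, but for a reason you must supply: when the nontrivial component of $\gamma$ is not spanning, either an untouched rim vertex sends both its spoke and a rim edge to the collapsed vertex (parallel edges), or two spokes at internal arc vertices become two loops at one vertex, and in either case an odd edge swap kills $[W_g/\gamma]$. Second, the fallback is not a proof: identifying $\sum_e\pm[W_g\setminus e]$ with the deletion part of the differential is fine, but the nontriviality of $[W_g]$ in $\mathrm{Indec}\big(\sseq{G''}^1_{*,*}\big)$, or its nonzero pairing with the wheel cocycle in $H^0(\GC_2)$, gives no information about the vanishing of that sum in $\sseq{G''}^1_{g-1,g}$; you need the boundary argument (or the $L^{2,3}$ acyclicity) in any case. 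The remaining consequences of the lemma (the induced maps $\grt\to\QQ$ killing commutators) you do not address, but they follow formally from primitivity exactly as in the paper.
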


\begin{proof} The coproduct on $\sseq{G}^1$ 
is induced by \eqref{eqn: graphcoproduct}.  
Since every proper nonempty subgraph of a wheel $W_g$ has a vertex of degree $<3$ (or, if one prefers, since every non-trivial contraction of a number of edges in a wheel graph produces  a doubled edge),  it follows that any  term $\gamma \otimes W_g/\gamma$ in the reduced coproduct is zero in $C\otimes C$.   
It follows that 
$\Delta [W_g]=1 \otimes [W_g] + [W_g]\otimes 1$.  As a result, the Lie cobracket on $H_*(L^\vee)$, which is obtained by antisymmetrizing $\Delta$, vanishes on $[W_g]$. 
\end{proof}

The fact that the wheel class $[W_{2k+1}] \in H_0(\GC_2^\vee)$ is non-zero in $\mathrm{Hom}( \grt^{\ab},\QQ)$ may also be deduced from a theorem of Rossi and Willwacher \cite{rossi-willwacher-etingof}, asserting that it pairs non-trivially with the image of  $\sigma_{2k+1}$ in $\grt \cong H^0(\GC_2)$ for  every $k>1$.

\begin{prop}  \label{prop: omegasnonzeroingrtab} For $g>1$ odd, the image of the canonical class  $[\omega^{2g-1}] \in \sseq{Q}_1^{g,g}\otimes_{\QQ} \RR $ under the map \eqref{eq:prim-Quillen to grt} 
has non-zero image in the abelianization $\grt^{\ab}\otimes_{\QQ} \RR $. 
\end{prop}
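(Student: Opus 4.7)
The plan is to combine the non-vanishing of the Brown--Schnetz integral with the fact that the wheel class $[W_g]$ is primitive in the graph coalgebra, so that pairing against it detects non-triviality in the abelianization of $\grt$.

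More precisely, I would first recall that the map \eqref{eq:prim-Quillen to grt} arises by restriction along the diagonal of the map of Lie algebras of primitives induced by the morphism of spectral sequences of Hopf algebras $\sseq{G''}^r \to \sseq{Q}^r$ of Section~\ref{sec:Graphss}, followed by the identifications $\mathrm{Prim}\bigl(\bigoplus_g \sseq{G''}_1^{g,g}\bigr) \cong H^0(\GC_2) \cong \grt_1$ from Proposition~\ref{prop: connection to GC2} and Willwacher's theorem. The linear dual map $\bigoplus_g \sseq{G''}^1_{g,g} \to \bigoplus_g \sseq{Q}^1_{g,g}$, on indecomposables along the diagonal, is a morphism of Lie coalgebras; dually, the resulting map of Lie algebras sends $[\omega^{2g-1}]\otimes \RR \in \mathrm{Prim}(\sseq{Q}_1^{g,g})\otimes \RR$ to a well-defined element $\alpha_g \in H^0(\GC_2)\otimes \RR \cong \grt_1 \otimes \RR$.

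Next, by Lemma~\ref{lem: Wheelsprimitive}, the wheel class $[W_g]\in H_0(\GC_2^\vee)$ is primitive, so the linear functional it defines vanishes on all commutators and hence factors through a linear functional
\[
\langle [W_g], - \rangle : \grt^{\ab} \otimes \RR \longrightarrow \RR.
\]
By the naturality of the pairing $\sseq{G''}_1 \otimes \sseq{G''}^1 \to \QQ$ and of its image under the morphism of spectral sequences into the corresponding pairing on the Quillen side, together with the explicit identification in Section~\ref{sec: Loc-sym-Quillen} of the pairing of $[\omega^{2g-1}]$ with a wheel-type cycle in $\Gr_g(BK(\ZZ))$ as the Brown--Schnetz integral computed in item~\eqref{it:four} above, we have
\[
\langle [W_g], \alpha_g \rangle \in \QQ^{\times} \cdot \zeta(2g-1) \neq 0.
\]

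Combining these two points, the image of $\alpha_g$ in $\grt^{\ab}\otimes \RR$ is detected by the non-zero functional $\langle [W_g],-\rangle$, hence is non-zero, proving the proposition. The only substantive step is the compatibility between the pairing on $\sseq{Q}$ (between $[\omega^{2g-1}]$ and a wheel class pushed forward via $H_1$) and the pairing on $\sseq{G''}$ (between the image of $[\omega^{2g-1}]$ in $H^0(\GC_2)\otimes \RR$ and $[W_g]$); this is exactly the content of the zig-zag comparison of Section~\ref{sec: Loc-sym-Quillen} specialized to the wheel graph $W_g$ (whose graph Laplacian parametrization recovers the integrand in \cite{brown-schnetz}), and is the main technical obstacle to verify carefully.
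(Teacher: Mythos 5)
Your proposal is correct and follows essentially the same route as the paper: the paper's proof likewise combines the primitivity of the wheel class (Lemma~\ref{lem: Wheelsprimitive}), which makes $\langle [W_g], -\rangle$ descend to $\grt^{\ab}\otimes\RR$, with the non-vanishing of the pairing of $[\omega^{2g-1}]$ against $[W_g]$ from item~\eqref{it:four} (itself resting on the zig-zag of Section~\ref{sec: Loc-sym-Quillen} and the Brown--Schnetz integral), the only cosmetic difference being that the paper phrases it as a proof by contradiction while you argue directly via detection by the functional.
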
 
\begin{proof} Let $g>1$ and suppose  on the contrary that $[\omega^{2g-1}] \in [\grt, \grt]\otimes_{\QQ} \RR$ lies in the subspace of commutators of $H^0(\GC_2)\otimes_{\QQ} \RR$.  Then  by the previous lemma, 
the  pairing $\langle [\omega^{2g-1}], [W_{g}]\rangle $ must vanish, contradicting item~(\ref{it:four}).  
\end{proof} 

We immediately deduce a number of consequences.
The weight filtration on $ \gm$ and $\grt$ is induced by a  grading which we denote by $W$. For the former, 
this is defined to be (minus) half the Hodge-theoretic weight. For the latter,  $\grt$ is by definition embedded as a vector space in the free graded Lie algebra on two generators $\mathbb{L}(X,Y)$ where $X,Y$ are assigned weight $1$. 
This weight has the property that any  generator $\sigma_{2k+1} \in\gm$ has weight  $2k+1$,  and so that the weight coincides with the grading by the  genus (or equivalently by one half of  the cohomological degree) on the diagonal  $\bigoplus_g \sseq{G}^{g,g}_1$.  

\begin{cor} \label{cor: partialfreeness} Let $N>0$  odd such that $\Gr^W_k \gm = \Gr^W_k \grt$ for $ k \leq N$. Then the images of  
\[\omega^{5}, \ldots, \omega^{2N-1}\]
under the map
$\mathrm{Prim}(\bigoplus_{g}\sseq{Q}_1^{g,g}\otimes_\QQ \RR ) \to \grt \otimes_\QQ \RR$
from~\eqref{eq:prim-Quillen to grt} generate a free Lie algebra in $\grt\otimes_{\QQ} \RR$. Consequently
\[ T \left(  \bigoplus_{k=1}^{\frac{N-1}{2}} \omega^{4k+1}[-1] \QQ \right) \to  \bigoplus_{g\geq 0} \sseq{Q}_1^{g,g}\otimes_{\QQ} \RR \]
 is injective.  
\end{cor}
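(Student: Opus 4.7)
The plan is to combine the Lie algebra map~\eqref{eq:prim-Quillen to grt} with Brown's injection $\gm \hookrightarrow \grt$ and the freeness of $\gm$ on the generators $\sigma_{2k+1}$. Denote by $\alpha_{2k+1} \in \grt \otimes \RR$ the image of $\omega^{4k+1}$ under~\eqref{eq:prim-Quillen to grt}, for $1 \leq k \leq (N-1)/2$; each $\alpha_{2k+1}$ has weight $2k+1$, and by Proposition~\ref{prop: omegasnonzeroingrtab} its image in $\grt^{\ab} \otimes \RR$ is non-zero.

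Under the hypothesis, $\gm \hookrightarrow \grt$ is an isomorphism in weights $\leq N$. Since $\gm$ is free on the $\sigma_{2j+1}$ (each of weight $2j+1$), the abelianization $\grt^{\ab}_{2k+1} \otimes \RR = \RR\,\sigma_{2k+1}$ is one-dimensional for $2k+1 \leq N$, so one has $\alpha_{2k+1} \equiv c_k \sigma_{2k+1} \pmod{[\gm, \gm]}$ for some $c_k \in \RR^\times$. Moreover, since $\alpha_{2k+1}$ has weight $\leq N$, it lies in the free Lie subalgebra $\mathbb{L}_\RR(\sigma_3, \sigma_5, \ldots, \sigma_N) \subset \gm \otimes \RR$, because the remaining motivic generators have weight $> N$ and hence cannot appear in brackets of total weight $\leq N$.

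Consider the graded Lie algebra homomorphism
\[
\Phi \colon \mathbb{L}_\RR(z_3, z_5, \ldots, z_N) \longrightarrow \mathbb{L}_\RR(\sigma_3, \sigma_5, \ldots, \sigma_N), \qquad z_{2k+1} \longmapsto \alpha_{2k+1},
\]
with $z_{2k+1}$ of weight $2k+1$. On abelianizations, $\Phi$ sends $z_{2k+1} \mapsto c_k \sigma_{2k+1}$ and is bijective. A standard induction on weight---valid because all weights are strictly positive---shows that any graded Lie algebra map whose induced map on abelianizations is surjective is itself surjective. Since source and target are free graded Lie algebras on the same number of generators of the same weights, they have identical Hilbert series in each weight. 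Consequently, the surjection $\Phi$ is a bijection in each weight, hence a graded Lie algebra isomorphism, and $\alpha_3, \alpha_5, \ldots, \alpha_N$ freely generate a Lie subalgebra of $\grt \otimes \RR$.

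For the ``consequently'' clause: the Lie subalgebra of $\Prim\big(\bigoplus_g \sseq{Q}_1^{g,g}\big) \otimes \RR$ generated by the $\omega^{4k+1}$ must also be free on those generators, since~\eqref{eq:prim-Quillen to grt} surjects it onto a free Lie algebra with the analogous generators. The ambient algebra $\bigoplus_g \sseq{Q}_1^{g,g} \otimes \RR$ is a connected, graded, cocommutative Hopf algebra (connectedness from $\sseq{Q}_1^{0,0} \cong \QQ$, cocommutativity dual to the commutativity of the product on the homological spectral sequence in Theorem~\ref{thm:QSS}), so by the Milnor--Moore theorem it is identified with $U(\Prim)$. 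By the Poincar\'e--Birkhoff--Witt theorem, $U$ applied to the free Lie subalgebra on $\omega^5, \ldots, \omega^{2N-1}$---which is precisely $T\big(\bigoplus_{k=1}^{(N-1)/2} \omega^{4k+1}[-1]\,\QQ\big) \otimes \RR$---injects into $U(\Prim) = \bigoplus_g \sseq{Q}_1^{g,g} \otimes \RR$, yielding the claim. The main subtlety lies in the weight-wise Hilbert series comparison for $\Phi$, which depends crucially on the hypothesis to guarantee that $\grt \otimes \RR$ is free on $\sigma_3, \ldots, \sigma_N$ in weights $\leq N$.
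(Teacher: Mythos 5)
Your proof is correct and follows essentially the same route as the paper's: Proposition~\ref{prop: omegasnonzeroingrtab} together with the hypothesis places the images $\alpha_{2k+1}$ in $\gm\otimes\RR$ with non-zero class in the abelianisation, freeness of $\gm$ yields the free Lie subalgebra (the paper simply invokes that a free graded Lie algebra is free on any homogeneous lifts of an abelianisation basis, where you re-derive this via the surjectivity-on-abelianisation induction plus the Hilbert-series count---a cosmetic difference), and Milnor--Moore/PBW gives the tensor-algebra injection. The one point to tighten is the Milnor--Moore step: the diagonal $\bigoplus_g \sseq{Q}_1^{g,g}\otimes\RR$ is a subalgebra of the $E_1$ page but is not obviously closed under the coproduct, so rather than asserting it is itself a connected cocommutative Hopf algebra, apply Milnor--Moore and PBW to the full bigraded Hopf algebra $\sseq{Q}_1\otimes\RR$ (as the paper implicitly does) and observe that the image of $T\big(\bigoplus_k\omega^{4k+1}[-1]\QQ\big)\otimes\RR$ lies in diagonal bidegrees because the generators are diagonal and the product adds bidegrees.
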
 
\begin{proof} We  have established that for all $g>1$ odd, the  image of $[\omega^{2g-1}]$ in $\grt\otimes\RR $
is non-trivial in $\grt^{\ab}\otimes\RR$ and has graded weight  $g$.  If $g\leq N$ then the image 
of $[\omega^{2g-1}] $  lies in $W_N \grt\, \otimes \RR$ and since $W_N \grt = W_N \gm$, it necessarily lies in  the  graded Lie subalgebra $\gm\otimes \RR \subset \grt\otimes\RR$,   and must have non-zero image in $(\gm)^{\ab}\otimes \RR$.
By \cite{brown-mixed}, $\gm$  is a free Lie algebra, with generators given by   any homogeneous choice of representatives for $(\gm)^{\ab}\otimes\RR = H_1(\gm;\RR)$ in $\gm$.  It follows that $\omega^{5}, \ldots, \omega^{2N-1}$ generate a free graded Lie algebra $\mathfrak{g}'$ inside $\grt\otimes \RR$, and hence inside $\mathrm{Prim}(\sseq{Q}_1)$.  The Milnor--Moore theorem then implies that 
the universal enveloping algebra $\mathcal{U}\mathfrak{g}'$ embeds into $\bigoplus \sseq{Q}_1^{g,g}\otimes_{\QQ} \RR$. The last statement follows from  the fact that  the universal enveloping algebra of  a free graded Lie algebra   is isomorphic to the free tensor algebra on its generators.  
 \end{proof}

Establishing Corollary \ref{cor: partialfreeness} for  a given $N$ reduces to a finite, but possibly very large, computation.  In practice, it is equivalent to show the equality of finite-dimensional vector spaces  $W_N \mathcal{O}(\gm)=W_N \mathcal{O}(\grt)$ (where the affine rings $\mathcal{O}(\mathfrak{g})$ are the graded duals of the enveloping algebras $\mathcal{U} \mathfrak{g}$, for $\mathfrak{g}=\gm,\grt$) which can be verified by interpreting elements of  their  affine  rings as formal symbols (corresponding to multiple zeta values) modulo relations and checking that their dimensions agree.  
Indeed,   extensive computer calculations of the latter imply that the assumption of the corollary holds for $N=23$ (see, for example, \cite[p.~2]{MZVdatamine}, where it was asserted that the dimension was computed up to weight $24$).      

One could possibly push this further  using  more recent results and techniques which exploit the Lie algebra structure.  
In any case, this corollary may be combined with previous results to produce a huge amount of cohomology: we deduce that  the symmetric algebra on $\Omega^{*}_{c}[-1]$ and the non-trivial commutators in $\omega^{5}, \ldots, \omega^{45}$  embeds into the $E_1$-page  of the cohomological Quillen spectral sequence.

\subsection{Depth filtration} 
The de Rham  fundamental Lie algebra of the projective line minus 3 points
(\cite{deligne-groupe-fondamental}, see also \cite{brown-depth} and the references therein)
 is canonically isomorphic to the free graded Lie algebra $\mathbb{L}(X,Y)$ on two generators $X,Y$ (corresponding  to generators of $H^1_{\mathrm{dR}}(\mathbb{P}^1\backslash \{0,1,\infty\};\QQ)\cong \QQ X\oplus \QQ Y$). The depth filtration  on $\mathbb{L}(X,Y)$ is the decreasing filtration such that elements of depth $r$  are linear combinations of Lie brackets involving at least $r$ occurrences of  the letter  $Y$.  It 
induces a decreasing depth filtration  $D$ on both 
the Grothendieck--Teichm\"uller and motivic Lie algebras.  It is  known that 
\[ \Gr^1_D \gm =  \Gr^1_D \grt\ ,  \]
since  both sides of this equation are isomorphic to the graded $\QQ$-vector space generated by $\mathrm{ad}(X)^{2n}(Y)$ for $n\geq 1$,
and furthermore  $D^1 \grt= \grt$, and $[\grt, \grt] \subset D^2 \grt$. 
It follows that there is a natural surjective map $\grt^{\ab}\rightarrow \Gr^1_D \grt$. A question of Drinfeld's,  which asks if $\gm\rightarrow \grt$ is surjective, would imply that  $\grt^{\ab}\rightarrow \Gr^1_D \grt$ is an isomorphism (it is known that $\Gr^1_D \gm = (\gm)^{\ab}$ is an isomorphism). 
 We have the following stronger  version of Proposition \ref{prop: omegasnonzeroingrtab}.
 \begin{prop}
    For $g>1$ odd,
   the image of the forms $[\omega^{2g-1}]$ are non-zero in $\Gr^1_D \grt\otimes_{\QQ} \RR$.   
 \end{prop}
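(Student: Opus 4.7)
The plan is to show that the wheel functional $[W_g]\colon \grt \otimes \RR \to \RR$, which already factors through $\grt^\ab$ by Lemma~\ref{lem: Wheelsprimitive}, in fact factors through the further quotient $\grt^\ab \twoheadrightarrow \Gr^1_D \grt$. Granting this, the non-vanishing $\langle[W_g], [\omega^{2g-1}]\rangle \neq 0$ from item~(\ref{it:four}) immediately forces the image of $[\omega^{2g-1}]$ in $\Gr^1_D \grt \otimes \RR$ to be non-zero, which is the desired conclusion.

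I would first record that under the hypothesis $\Gr^1_D \gm = \Gr^1_D \grt$, combined with the freeness of $\gm$ on depth-one generators $\sigma_{2k+1}$ ($k \geq 1$), the graded piece $\Gr^1_D \grt_g \otimes \RR$ is one-dimensional in each odd weight $g \geq 3$, spanned by the image of $\sigma_g$. To prove that $[W_g]$ factors through $\Gr^1_D \grt$ it therefore suffices to prove that it annihilates the kernel $K_g$ of $\grt^\ab_g \twoheadrightarrow \Gr^1_D \grt_g$.

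The main technical input is to invoke the refined Rossi--Willwacher duality: $[W_g]$ pairs with $\sigma_g$ to give a non-zero rational multiple of $\zeta(g)$, while its pairing with the image in $\grt$ of any element of $D^2 \gm$ (that is, any Lie word of depth at least two in the $\sigma_i$) vanishes. Under the isomorphism $\gm^\ab = \Gr^1_D \gm \xrightarrow{\sim} \Gr^1_D \grt$, the vanishing on $D^2 \gm$, together with the vanishing on $[\grt,\grt]$ from Lemma~\ref{lem: Wheelsprimitive}, would yield vanishing on $K_g$.

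The main obstacle I anticipate is precisely the passage from vanishing of $[W_g]$ on $D^2 \gm$ to its vanishing on the full kernel $K_g \subset \grt^\ab$. This is immediate under Drinfeld's conjecture ($\gm = \grt$), but unconditionally it requires a careful analysis of the interaction between the embedding $\gm \hookrightarrow \grt$ and the depth filtration in depths $\geq 2$. A plausible route is to invoke the compatibility of the wheel period with the motivic coaction, which constrains $\langle [W_g], - \rangle$ to take values in the depth-one subspace of periods; this is enough to conclude that $[W_g]$ annihilates $K_g$ after extending scalars to $\RR$.
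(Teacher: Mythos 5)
Your overall strategy is the same as the paper's: reduce the statement to showing that the wheel functional $\langle [W_g],-\rangle$ annihilates $D^2\grt$ (equivalently, the kernel $K_g$ of $\grt^{\ab}_g\twoheadrightarrow \Gr^1_D\grt_g$), and then conclude from the non-vanishing pairing $\langle [W_g],[\omega^{2g-1}]\rangle\neq 0$. That reduction is fine, and your observation that primitivity handles $[\grt,\grt]$ while freeness of $\gm$ handles $D^2\gm$ is correct. But the step you flag as the obstacle is exactly the mathematical content of the proposition, and the route you offer for it does not work. The pairing $\langle [W_g],-\rangle$ is a $\QQ$-valued linear functional on $H^0(\GC_2)\cong\grt$ (a graph-coefficient functional), so the phrase ``constrains $\langle [W_g],-\rangle$ to take values in the depth-one subspace of periods'' has no meaning; the depth structure lives on $\grt$, not on the target. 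More seriously, the motivic coaction only constrains elements of $\gm$ (or of its image): unconditionally $D^2\grt$ may contain elements with no motivic or period interpretation whatsoever, and no coaction argument can say anything about the value of the wheel functional on them. The known equality $\Gr^1_D\gm=\Gr^1_D\grt$ does not close this gap either: writing $\omega^{2g-1}=\lambda\sigma_g+\delta$ with $\delta\in D^2\grt\otimes\RR$, you still need $\langle[W_g],\delta\rangle=0$ to extract $\lambda\neq 0$, which is precisely the unproven claim.

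The paper supplies this missing input by a graph-complex-level argument rather than a motivic one: by \cite[Proposition 9.1]{willwacher-kontsevich}, the isomorphism $H^0(\GC_2)\xrightarrow{\sim}\grt$ lifts to a map from $\GC_2$ to $\LL(X,Y)$ with the property that the \emph{only} connected graph whose image involves the depth-one Lie word $\ad(X)^{2n}(Y)$ is the wheel $W_{2n+1}$. Hence every non-wheel graph maps into $D^2\LL(X,Y)$, so for any cocycle representing a class in $D^2\grt$ the wheel coefficient must vanish; in other words $[W_g]$ lies in the annihilator of $D^2\grt$ (the paper phrases this as: the dual filtration $D_1H_0(\GC_2)$ is spanned by wheel classes). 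This explicit property of Willwacher's map is the key lemma your proposal is missing, and without it (or Drinfeld's conjecture, which you correctly note you cannot assume) the argument does not go through. Also a minor imprecision: the Rossi--Willwacher pairing $\langle[W_g],\sigma_g\rangle$ is a non-zero \emph{rational} number; the multiple of $\zeta(g)$ arises in the Brown--Schnetz integral pairing with $\omega^{2g-1}$, not with $\sigma_g$.
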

\begin{proof}
The proof of  \cite[Proposition 9.1]{willwacher-kontsevich}  shows that the isomorphism 
$ \phi \colon H^0(\GC_2) \rightarrow \grt$ constructed in \emph{loc.~cit.}~lifts to a map from  $\GC_2$ to the free Lie algebra on two generators $\mathbb{L}(X,Y)$, and furthermore has the property that the only graph whose image involves the Lie word $\ad(X)^{2n}(Y)$   is the wheel $W_{2n+1}$.  
It follows that the image $\phi(G)$ for all connected graphs $G$  not isomorphic to a  wheel lies in $D^2 \LL(X,Y)$, since the depth filtration is induced by the degree in $Y$, and hence   $\Gr^1_D \LL(X,Y)$  is generated in  weight $2n+1$ by   precisely  
$\ad(X)^{2n}(Y)$. Denote the increasing filtration on $H_0(\mathsf{GC}_2^\vee)$  dual to the depth filtration on $\grt$ by $D_{\bullet}$. By the above, it has the property that $D_1 H_0(\mathsf{GC}_2^\vee)$ is spanned by the wheel classes $[W_{2n+1}]$ for $n>1$.  Since $[\omega^{2g-1}]$ pairs non-trivially with the wheel $[W_{g}]$, for $g>1$ odd, it follows that $[\omega^{2g-1}] \in \grt\otimes_{\QQ} \RR$ is not contained in $D^2$. 
\end{proof}

Unfortunately,  the bigraded Lie algebra generated by  $\Gr^1_D \grt = \Gr^1_D \gm$ is not free, and has quadratic relations coming from cusp forms (Ihara--Takao). It is nevertheless very large.

\begin{cor}
    The image of the Lie algebra generated by $\{\omega^5,\omega^9,\ldots \} \subset \Prim(W_0 H^*_c(\mathcal{A};\RR) ) $ surjects onto the bigraded Lie subalgebra of $\Gr^*_D \gm$  generated by $\Gr^1_D \gm$ which has one generator in every odd degree $>1$, namely the images of the $\sigma_{2k+1}$ modulo $D^2.$ \end{cor}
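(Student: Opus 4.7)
The plan is to chase the map from the Lie algebra generated by $\{\omega^{4k+1}\}$ in $\Prim(W_0 H^*_c(\cA;\RR))$ all the way down to the associated graded $\Gr^*_D \grt \otimes_\QQ \RR$ and compare weight-by-weight with the generators of $\gm$. More precisely, composing the map constructed in Section~\ref{sec:intro-graphss} and Theorem~\ref{thm:canonical-inj} with Willwacher's isomorphism $H^0(\GC_2) \cong \grt$, we obtain a graded Lie algebra homomorphism
\[
\Phi : \Prim\bigl(W_0 H^*_c(\cA;\RR)\bigr) \longrightarrow \grt \otimes_\QQ \RR.
\]
Since all of $\grt$ has depth $\geq 1$, this map respects the depth filtration in a trivial way, and we shall work with the induced map on the first depth-graded piece.

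First I would observe that the classes $\omega^{4k+1}$ are homogeneous of weight $2k+1$, and by the preceding proposition their images $\Phi(\omega^{4k+1})$ are non-zero in $\Gr^1_D \grt \otimes_\QQ \RR$. Using the assumption $\Gr^1_D \gm = \Gr^1_D \grt$ in every weight, these images actually lie in $\Gr^1_D \gm \otimes_\QQ \RR$. Since $\gm$ is free on the generators $\sigma_{2k+1}$ (one in each odd weight $\geq 3$), and every element of $\gm$ has depth $\geq 1$, we have a canonical identification $\Gr^1_D \gm = \gm^{\ab}$, so that $\Gr^1_D \gm$ is one-dimensional in each odd weight $2k+1 \geq 3$, spanned by the class of $\sigma_{2k+1}$ mod $D^2$. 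Matching weights, we conclude that $\Phi(\omega^{4k+1}) \equiv c_k \sigma_{2k+1} \pmod{D^2}$ for some non-zero $c_k \in \RR$, for every $k \geq 1$.

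Next I would propagate this to higher depth via the Lie bracket. In the associated graded $\Gr^*_D \gm$, the bracket of depth-$1$ elements lives in depth $2$ and is computed bracket-by-bracket; iterating, any iterated Lie word in the $\omega^{4k+1}$ pushes forward under $\Phi$ and, modulo higher depth, yields the same Lie word in the classes of $c_k \sigma_{2k+1}$. Since the $c_k$ are non-zero, rescaling shows that the image of the Lie algebra generated by $\{\omega^{4k+1}\}$ in $\Gr^*_D \gm \otimes_\QQ \RR$ contains every iterated bracket of the classes $[\sigma_{2k+1}]$, which is precisely the Lie subalgebra generated by $\Gr^1_D \gm$.

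The only real content is the non-vanishing statement of Proposition~\ref{prop: omegasnonzeroingrtab} together with the freeness of $\gm$ on $\{\sigma_{2k+1}\}$, which pins down $\Gr^1_D \gm$ as one-dimensional in each relevant weight; the rest is an exercise in tracking weights through the associated graded. In fact, apart from the input from the prior proposition, there is no analytic obstacle here at all: the hardest step is simply recognising that, weight-by-weight, the non-triviality of $\Phi(\omega^{4k+1})$ modulo $D^2$ forces it to be a non-zero multiple of the unique generator, so that iterated brackets on the source surject onto iterated brackets on the target.
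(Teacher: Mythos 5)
Your argument is correct and is exactly the (implicit) reasoning the paper intends: the paper states this corollary without proof as an immediate consequence of the preceding proposition, and your chain — nonvanishing of $\Phi(\omega^{4k+1})$ in $\Gr^1_D\grt\otimes\RR$, the identification $\Gr^1_D\gm=\Gr^1_D\grt=(\gm)^{\ab}$ being one-dimensional in each odd weight so that $\Phi(\omega^{4k+1})\equiv c_k\sigma_{2k+1}\pmod{D^2}$ with $c_k\neq 0$, then multiplicativity of the depth filtration ($[D^a,D^b]\subset D^{a+b}$) to propagate to iterated brackets — is precisely that reasoning. The only cosmetic point is that $\Gr^1_D\gm=\Gr^1_D\grt$ is a known fact rather than an assumption, as the paper notes.
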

    
   This result and, for example, the exact sequence \cite[(7.8)]{brown-depth} (or more precisely, the dimensions of  the  graded pieces of the weight-graded vector space denoted by $\mathbb{D}_2$ in that paper, which is isomorphic to $\Gr^2_D \gm$,    which   can be traced back to   \cite{ZagierDepth2}, equations (11), (12) and following discussion) implies  the following lower bound for the subspace of $\Prim(W_0 H^*_c(\mathcal{A};\RR))$ generated by commutators of generators $\omega^{4k+1}$: 
\begin{equation} \label{eqn: Lotsofcommutators} \dim_{\RR} \left( \sum_{k+\ell=N}   [\omega^{4k+1}, \omega^{4\ell+1}] \RR \right)  \geq  \left\lfloor \frac{N}{3} \right\rfloor  \ .   \end{equation} 
We expect that the  brackets $  [\omega^{4k+1}, \omega^{4\ell+1}]$ for $k<\ell$ are linearly independent if the map in Question \ref{question: diagonaliso} is injective.  That expectation, if true, would imply a lower bound for the dimension on the left hand side of \eqref{eqn: Lotsofcommutators} of  order  $N/2$, while the above argument by depth filtration gives us an unconditional lower bound of order $N/3$.  
This lower bound complements  the statement of  Corollary~\ref{cor: partialfreeness}. That corollary  implies that the  $\omega^{4k+1}$ for small $k$ generate a free Lie algebra, i.e., arbitrary long Lie brackets in $\omega^{4k+1}$ for small $k$ are independent (modulo antisymmetry and Jacobi relations), but leaves open the possibility that the $\omega^{4k+1}$ could in principle commute for large $k$.  The lower bound \eqref{eqn: Lotsofcommutators} rules out this possibility and  proves an orthogonal statement, namely, the independence of many  Lie brackets of length two for  generators $\omega^{4k+1}$  for arbitrarily large $k$.  Both results, namely Corollary~\ref{cor: partialfreeness} and \eqref{eqn: Lotsofcommutators},   point to  the highly non-commutative nature of the product in  $\sseq{Q}^{*,*}_1$ and contribute  to the body of   evidence  in favour of Conjecture \ref{conj: Liealginjects} to be discussed below.

\section{Further results and conjectures}

\subsection{Symmetric products of canonical forms and an announcement  of Ronnie Lee}
An immediate corollary of  Theorem \ref{thm: Tensoralgebramaps} is:

\begin{cor} \label{cor:sym-A} The map \eqref{TOmegamap}  induces  an embedding of  bigraded coalgebras
\[ \Sym \left(\Omega^*_{c}[-1] \right) \hookrightarrow  W_0 H_c^*(\cA;\RR) \] 
where $\Sym \left(\Omega^*_c[-1]\right)  \subset T(\Omega^{*}_{c}[-1])$ is the  vector subspace of (graded-commutative) symmetrized products. By applying the isomorphism $W_0H_c^*(\cA;\QQ) \otimes \QQ[x]/x^2 \cong \sseq{Q}^{*,*}_1$, with $x$ in bidegree $(1,0)$, we deduce an embedding of bigraded coalgebras:
\begin{equation} \label{eqn: SyminjectsHGLn}    \Sym \left(\Omega^*_{c}[-1] \right) \otimes \QQ[x]/x^2    \hookrightarrow \bigoplus_{g,d} H^{d-g}_c(P_g/\GL_g(\ZZ);\RR)   \ . \end{equation}
\end{cor}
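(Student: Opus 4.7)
The plan is to deduce both embeddings from the injection $\iota \colon \Omega^*_c[-1]\otimes \RR \hookrightarrow \mathfrak{p}$ of Theorem~\ref{thm:canonical-inj}, where $\mathfrak{p} := \Prim(W_0 H^*_c(\cA;\RR))$, by combining Milnor--Moore with the graded Poincar\'e--Birkhoff--Witt theorem. By Theorem~\ref{thm:HopfAg}, $W_0 H^*_c(\cA;\RR)$ is a bigraded cocommutative Hopf algebra over $\RR$, so Milnor--Moore yields a Hopf algebra isomorphism $\cU(\mathfrak{p}) \xrightarrow{\sim} W_0 H^*_c(\cA;\RR)$, and graded PBW in characteristic zero gives an isomorphism $\Sym(\mathfrak{p}) \xrightarrow{\sim} \cU(\mathfrak{p})$ of bigraded vector spaces via symmetrization (here $\Sym$ is the free bigraded-commutative algebra, as in the statement).

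Since $\Sym$ carries injections of graded vector spaces to injections, applying it to $\iota$ and composing yields
\[
\Sym(\Omega^*_c[-1]\otimes \RR) \hookrightarrow \Sym(\mathfrak{p}) \xrightarrow{\sim} \cU(\mathfrak{p}) \xrightarrow{\sim} W_0 H^*_c(\cA;\RR),
\]
which gives the first embedding. To justify that this coincides with the restriction of \eqref{TOmegamap} to the graded-commutative subspace $\Sym(\Omega^*_c[-1]) \subset T(\Omega^*_c[-1])$, one observes that the Hopf algebra map of Theorem~\ref{thm: Tensoralgebramaps} factors as $T(\Omega^*_c[-1]\otimes \RR) \to T(\mathfrak{p}) \to \cU(\mathfrak{p}) \cong W_0 H^*_c(\cA;\RR)$, and that on symmetrized tensors this reproduces precisely the PBW symmetrization used in the display above.

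For the second embedding, dualize Proposition~\ref{prop:quillen-to-A} to obtain an isomorphism of bigraded algebras $W_0 H^*_c(\cA;\QQ) \otimes \QQ[x]/x^2 \cong \sseq{Q}_1^{*,*}$, with $x$ in bidegree $(1,0)$. Tensoring the first embedding with $\QQ[x]/x^2$ and extending scalars to $\RR$ produces an injection into $\sseq{Q}_1^{*,*}\otimes \RR$. To see that its image lies in $\bigoplus_g H^*_c(P_g/\GL_g(\ZZ);\RR)$, note that the generators $\omega^{4k+1}$ are constructed as compactly supported cohomology classes on the strata $P_g/\GL_g(\ZZ)$ via~\eqref{eqn: Omegacinjects}, that $x$ represents a generator of $H^*_c(P_1/\GL_1(\ZZ);\RR)$ sitting in $\sseq{Q}_1^{1,0}$, and that the product on $\sseq{Q}_1^{*,*}$ arising from block sums of forms as in~\eqref{eq:mult-on-Pg} preserves this subspace of compactly supported classes on locally symmetric strata. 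The corollary is therefore essentially formal given the prior structural results; the only care required is the compatibility check between PBW and \eqref{TOmegamap}, together with straightforward bidegree bookkeeping across the various identifications.
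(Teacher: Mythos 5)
Your proposal is correct and follows essentially the same route as the paper: Milnor--Moore plus PBW symmetrization identify $W_0H^*_c(\cA;\RR)$ with $\Sym(\Prim(W_0H^*_c(\cA;\RR)))$ as bigraded vector spaces, and Theorem~\ref{thm:canonical-inj} injects $\Omega^*_c[-1]\otimes\RR$ into the primitives, while the second embedding comes from Proposition~\ref{prop:quillen-to-A} and the identification of $\sseq{Q}_1^{s,t}$ with $H^{s+t}_c(P_s/\GL_s(\ZZ);\QQ)$. The only superfluous step is your check that the image lands in $\bigoplus_g H^*_c(P_g/\GL_g(\ZZ);\RR)$: that space is not a subspace of the $E_1$ page but is identified with the entire $E_1$ page via \eqref{eq:thing}--\eqref{eq:thing3}, so no containment argument is needed.
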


\begin{proof} Since $W_0 H_c^*(\cA;\QQ)$ is connected, cocommutative, and finite dimensional in each degree, the Milnor--Moore theorem (\cite[Theorem 5.18]{MR0174052}) gives  an isomorphism of Hopf algebras
$$\mathcal{U}(\mathrm{Prim} (W_0 H_c^*(\cA;\QQ))) \overset{\sim}{\rightarrow} W_0 H_c^*(\cA;\QQ)$$ of bigraded  Hopf algebras. The Poincar\'e--Birkhoff--Witt theorem  implies that  the natural map 
$\mathrm{Sym}( \mathrm{Prim} (W_0 H_c^*(\cA;\QQ)) ) \rightarrow \Gr \mathcal{U}(  \mathrm{Prim} (W_0 H_c^*(\cA;\QQ))    )$ is an isomorphism of algebras, where $\Gr$ is the grading associated to the filtration induced  by  length.  In particular,  the  map  given by symmetrisation of products $\mathrm{Sym}( \mathrm{Prim} (W_0 H_c^*(\cA;\QQ)) ) \rightarrow  \mathcal{U}(  \mathrm{Prim} (W_0 H_c^*(\cA;\QQ))    )$ is an isomorphism of vector spaces (and in fact of coalgebras if given the appropriate prefactor $1/n!$ in length $n$; see \cite[Theorem B2.3]{MR0258031}.  Note that it can sometimes be convenient to choose a different splitting of the length  filtration, such as  in table \ref{tab:E1}, in which case one only obtains an isomorphism of vector spaces). 

Thus we deduce an isomorphism of bigraded coalgebras: 
\[ \mathrm{Sym}( \mathrm{Prim} (W_0 H_c^*(\cA;\RR)) ) \overset{\sim}{\longrightarrow}  W_0H_c^*(\cA;\RR) \]
By  Theorem \ref{thm:canonical-inj}, $\Omega_c^{*}[-1]$ embeds into $ \mathrm{Prim} (W_0 H_c^*(\cA;\RR)) $. 

The second statement follows from the description of the $E_1$-page of the cohomological Quillen spectral sequence in terms of compactly supported cohomology
\[\sseq{Q}_1^{s,t} \cong H_t(\GL_s(\ZZ);\St_s\otimes\QQ)^\vee \cong H^{s+t}_c(P_s/\GL_s(\ZZ);\QQ),\]
see~\eqref{eq:thing} and~\eqref{eq:thing3}.
\end{proof}

A  version of the following injective map, which is  implied by  \eqref{eqn: SyminjectsHGLn},
\[ \Sym \left(\Omega^*_{c}[-1] \right)   \hookrightarrow \bigoplus_{g} H^*_c(P_g/\GL_g(\ZZ);\RR)\]
 was announced by Ronnie Lee in 1978 \cite{lee-unstable}, but no proof has ever appeared in print. The map \eqref{eqn: SyminjectsHGLn} gives rise to infinitely many new classes in the cohomology of the groups  $\SL_g(\ZZ)$.

\begin{example} By Corollary~\ref{cor:sym-A}, we have the following nonzero classes in $H_c^{*}(P_6/\GL_6(\ZZ);\RR)$: 
\[  [\omega^9] .\epsilon \ , \qquad    [\omega^5].[\omega^5]=[\omega^5|\omega^5] \  ,  \qquad   [\omega^5\wedge \omega^9] . \epsilon \ ,    \]
in degrees $11, 12, 16$ respectively.    In  $H_c^{*}(P_7/\GL_7(\ZZ);\RR)$ we obtain:
\[     [\omega^5|\omega^5].\epsilon  \ , \quad   [\omega^{13}]  \ , \quad   [\omega^5 \wedge \omega^{13}] \ , \quad  [\omega^9 \wedge \omega^{13}]  \ , \quad [\omega^5 \wedge \omega^9 \wedge \omega^{13}] \ ,  \]
in degrees $13, 14,  19 ,23 , 28$. Here, a dot denotes the symmetrized product: $a.b= \frac{1}{2} (a\times b + (-1)^{\deg a\cdot \deg b} b\times a)$ where $\times$ is the multiplication in $\sseq{Q}_1.$ It follows from the computations of \cite{elbaz-vincent-gangl-soule-perfect}
that these are the only  non-vanishing classes in the  range for which the compactly supported  cohomology of $P_g/\GL_g(\ZZ)$  has been  computed in its entirety. 
\end{example}

\begin{remark}\label{rem:allknowncohomology}
A much stronger version of the Corollary~\ref{cor:sym-A} holds. By a similar application of the Milnor--Moore theorem, 
the symmetric algebra generated by:
\begin{enumerate}
    \item Independent elements in the Lie algebra generated by $\{\omega^5,\omega^9,\ldots \}$ inside $\mathrm{Prim} \sseq{Q}_1\otimes \RR$
    \item The homogeneous elements in $\Omega_c^*[-1]$ of the form $\omega^{4i_1+1}\wedge \ldots \wedge \omega^{4i_k+1}$ for $k>1$
    \item The generator $\epsilon$ in bidegree $(1,0)$
    \item Infinitely many elements of the form  \eqref{primitivesinvolving59} (see below) 
\end{enumerate}
embeds as a bigraded vector space into $\sseq{Q}_1\otimes \RR$, because these elements are primitive. The Lie algebra in $(1)$ is at least as large as: the free Lie algebra on $\{\omega^5,\ldots, \omega^{45}\}$, and, the part of the depth-graded motivic Lie algebra generated in depth $1$. 
\end{remark}
\subsection{Poincar\'e series and dimensions} \label{sec:Poincare}
Let 
\[ P(s,t) = \sum_{g,n\geq 0} \dim \left( T(\Omega^*_c[-1])_{g,n}\right) \,  s^g t^n\]
denote the Poincar\'e series of the bigraded vector space $T(\Omega^*_c[-1])$ with respect to genus and degree minus genus. 
Let us define for all $k\geq 1$
\[ f_{2k+1} (t) = t^{4k+2} \prod_{i=1}^{k-1}  (1+ t^{4i+1})\ .  \]
Then, for example, we have
\[  f_3(t) = t^6 \ ,  \ f_5(t) = (1+t^5)t^{10} \ , \ f_7(t) = (1+t^5)(1+t^{9})t^{14} \ . \]
Each polynomial $f_{2k+1}(t)$ is the Poincar\'e series for the graded vector space $\Omega^*_c(2k+1)[-1].$ Since $\Omega^*_c[-1]$ is the direct sum of all $\Omega^*_c(2k+1)[-1]$ it follows that 
\[  P(s,t) =  \frac{1}{1- \sum_{k\geq 1} f_{2k+1}(t) s^{2k+1} } = 1 + s^3 t^6 + s^5  (t^{10}+ t^{15}) + t^{12} s^6 + 
  \ldots  \]
  Since $f_{2k+1}(-1)=0$ for all $k>1$, 
an interesting consequence is that the generating function for the Euler characteristic is 
 $P(s,-1) = (1-s^3)^{-1}$.  It follows that the Euler characteristic (with respect to degree) of the genus $g$  component of $T(\Omega^*_c[-1])$ is congruent to $1$ if $g\equiv 0 \pmod 3$ and $0$ otherwise.

\subsection{Diagonals and degrees; proofs and refinements of Corollaries~\ref{cor:tensor}, \ref{cor:expAg}, and \ref{cor:expSL} }\label{subsec:diagonals-and-degrees}
The Poincar\'e series for the tensor algebra $T( \bigoplus_{k\geq 1} \omega^{4k+1} \QQ)$ featuring in Question~\ref{question: diagonaliso}, generated by the classes $\omega^{2g-1}$ in bidegree $(g,g)$ for $g>1$ odd, 
is  
\[    P(s,t) =  \frac{1}{1-s^3 t^6 -s^5 t^{10} - \ldots } = \frac{1-s^2 t^4}{1-s^2t^4-s^3t^6} \]
The coefficient of $s^nt^{2n}$ in $P(t)$ is asymptotically $\alpha^{-n}$ where $\alpha=.7548\cdots$ is the real root of $s^3+s^2-1=0$. Corollary \ref{cor: partialfreeness} and the comments which follow prove that the asymptotic growth of the diagonal 
$\bigoplus_g W_0 H_c^{2g}(\cA_g;\QQ)$ is eventually  bounded below 
by  $\alpha_{\mathrm{approx}}^{-n}$ where $\alpha_{\mathrm{approx}}= 0.7551\cdots$ is the real root of $s^{23}+ s^{21} + \ldots + s^3 - 1=0$, and is therefore very close to what we would  expect  if Question \ref{question: diagonaliso} were true. 

In light of the Poincar\'e--Birkhoff--Witt theorem, by multiplying by any symmetric tensor generated by the elements of $\Omega^*_c[-1]$ with the form $\omega^{4i_1+1} \wedge \ldots \wedge \omega^{4i_k+1}$ where $k>1$,  we obtain an additional  copy of the diagonal Lie algebra generated by $\{\omega^{4k+1}~|~k>1\}$, proving Corollary~\ref{cor:tensor}.  By the computation above, this diagonal Lie algebra has exponential growth (see Remark \ref{rem:allknowncohomology}), proving Corollary~\ref{cor:expAg}.  Here follow two further  applications.

\subsubsection{Refinement of Corollary~\ref{cor:expAg}} \label{rem:kAg}
 The dimension of $W_0H^{2g+k}_c(\cA_g)$ grows at least exponentially with $g$ for all non-negative integers $k$ except possibly $k$ in the set 
\[
S_\cA = \{1, 2,3,4,6,7,8,10,11,12,15,16,19,20,23,24,28,32,36,40\}.
\]
Indeed, by Corollary~\ref{cor:tensor}, it suffices to show that, for each $k \not \in S_\cA$, there is a genus $g$ such that the bigraded vector space $\Omega_c^{*}[-1]$ is nonzero in genus $g$ and degree $2g + k$.

Each homogeneous basis element is of the form
\[
\omega = \omega^{4k_1+1} \wedge \cdots \wedge \omega^{4k_r + 1},
\]
where $k_1 < \ldots < k_r$.  Then $\omega$ is in genus $g = 2k_r + 1$ and degree $(4k_1+1) \cdots + (4k_{r-1} + 1)$. The claim follows, since the numbers that can be written as a sum of distinct integers that are at least 5 and congruent to $1 \mod 4$ are $$5, 9, 13, 14, 17, 18, 21, 22, 25, 26, 27, 29, 30, 31, 33, 34, 35, 37, 38, 39,$$ and all integers greater than or equal to $41$.

\subsubsection{Refinement of  Corollary~\ref{cor:expSL}}  \label{rem:kSL}
The dimension of $H^{\binom{n}{2}-n-k}(\SL_n(\ZZ);\QQ)$ grows at least exponentially for all integers $k \geq -1$ except possibly $k$ in the set  $$S_{\SL} =  \{1,2,3,6,7,10,11,15,19,23 \}.$$

We have already noted the bigraded isomorphism
\[
(W_0 H^*_c(\cA))^\vee \otimes_\QQ \QQ[x] / x^2 \cong \bigoplus_g H_{*} (\GL_g(\ZZ), \St_g \otimes \QQ),
\]
where $x$ has genus $1$ and degree $1$. Likewise, we have noted  that $H^{\binom{n}{2}-k}(\SL_n(\ZZ);\QQ)$ contains $H_{k} (\GL_n(\ZZ), \St_n \otimes \QQ)$ as a summand. Thus, the dimension of $H^{\binom{n}{2}-n-k}(\SL_n(\ZZ);\QQ)$ grows exponentially with $n$ unless both $k$ and $k+1$ are in $S_\cA$.

\subsection{Polynomials in the wheel homology classes} \label{sec:polynomials-wheel} The cohomology classes defined above are only defined over the real numbers, because their pairing with rational homology cycles are given by period integrals which include  odd values of the zeta function. 
However, it was shown in \cite{brown-schnetz} that  the wheel classes give explicit rational homology classes: for all odd $g>1$ there exists an explicit non-zero locally finite homology class: 
\begin{equation} \label{Wheelclasses} [\tau_{W_g}] \in H^{BM}_{2g}( P_g/\GL_g(\ZZ);\QQ) \ .
\end{equation}

A corollary of the existence of the Hopf algebra structure on the $E^1$-page of the homological Quillen spectral sequence implies the following:

\begin{thm} \label{thm:polynomial-wheel} There is an injective map of commutative bigraded algebras
\begin{equation} \label{eqn: PolyWheelsInjects} \QQ[ W_3, W_5,\ldots , W_g,\ldots    ]\otimes \QQ[x]/x^2 \longrightarrow   \sseq{Q}^1     \end{equation}
     where $W_{g}$ denotes the wheel class  \eqref{Wheelclasses} in odd genus $g$ and degree $2g$, and 
     where the element $x$ is in degree $1$ and genus $1$ and maps to $e$. The bigrading on the left hand side is by genus and degree minus genus.
\end{thm}
\begin{proof}
By their definition, the wheel classes \eqref{Wheelclasses} factor through the graph spectral sequence 
  $\sseq{G}^1 \rightarrow \sseq{Q}^1$. They are primitive in $\sseq{Q}^1$ because this is true a fortiori in the graph complex, by Lemma \ref{lem: Wheelsprimitive}.  The element $x$, which represents a one-vertex, one-edge loop in $\sseq{G}^1_{1,0}$, 
  is primitive for reasons of degree.  

  The homological $E^1$-page is graded-commutative but not primitively generated.  The Milnor--Moore theorem (\cite[Theorem 5.18]{MR0174052}) implies that the canonical map
  \begin{equation*}
      \mathrm{Sym}(\Prim ( \sseq{Q}^1_{*,*} )) = \mathcal{U}(\Prim ( \sseq{Q}^1_{*,*} )) \to \sseq{Q}^1_{*,*}
  \end{equation*}
  is an isomorphism onto the subalgebra generated by $\Prim ( \sseq{Q}^1_{*,*} ) \subset \sseq{Q}^1_{*,*}$, and the domain contains $\QQ[W_3,W_5,\ldots ]\otimes \QQ[x]/x^2$ as a subalgebra.
\end{proof}

It was shown in \cite{brown-schnetz} that the wheel classes $W_g$ pair non-trivially with the primitive canonical forms $\omega^{2g-1}$. It follows that the dual to \eqref{eqn: PolyWheelsInjects}, tensored with $\RR$, is the map
\[ W_0 H_c^*(\cA;\QQ) \otimes_{\QQ} \RR \longrightarrow  \RR[ \omega^5,\omega^9,\ldots , \omega^{2g-1}, \ldots ] \ .  \]
which sends all other primitives to zero. In fact, one may replace $\RR$ in the previous map with the $\QQ$-algebra generated by odd  zeta values $\zeta(2n+1)$, for $n\geq 1.$

\subsection{The canonical spectral sequence} \label{sec:canonical-ss}
Recall the map
\[T(\Omega^*_c[-1]\otimes\RR) \to W_0 H^*_c(\cA;\RR)\]
which is injective on $\Omega^*_c[-1]$ by Theorem~\ref{thm: Tensoralgebramaps}.  Here we construct a spectral sequence whose $E_1$-page is $T(\Omega^*_c[-1])$ and compare it to the cohomological Quillen spectral sequence.
Consider the   graded  exterior algebra $\wedge P^* $ on the graded vector space  $P^*=\bigoplus_{k \geq 1} \QQ \beta^{4k+1}$ 
with  generators $\beta^{4k+1}$ in  degree $4k+1$.  Endowed with the zero differential, $\wedge P^*$ defines a connected differential graded algebra. 
The bar construction $B(\wedge P^*)$  is  a   graded commutative Hopf algebra over $\QQ$ generated by symbols $[p_1| \ldots | p_n]$   in degree $\deg(p_1)+ \ldots + \deg(p_n)+n$ where $\deg(p_i)>0$ (note that a more standard convention is to have a minus before the $n$; this is not the case here),  where $p_i$ are homogeneous  generators of $ \wedge P^*$, and is isomorphic as a vector space to the tensor algebra $T(\wedge (P^*_+))$,  where $P^*_+$ denotes the part of $P^*$ in positive degree. In addition it is equipped with the (graded-commutative) signed  shuffle product, and the deconcatenation coproduct
\[ \Delta [p_1 |\ldots | p_n] = \sum_{i=1}^n [p_1| \ldots | p_i ] \otimes [p_{i+1} | \ldots | p_n] \ .\]
In addition, there is  
 an internal differential 
\begin{eqnarray}
d_I \colon B(\wedge P^* ) & \longrightarrow & B(\wedge P^*)  \\
 d_I ( [ p_1 | \ldots | p_n]) & = &  \sum_{i=1}^{n-1} (-1)^i  [s p_1 |  \ldots | s p_{i-1} | s p_i \wedge p_{i+1} | p_{i+2}|  \ldots | p_n]\nonumber
\end{eqnarray}
where $s \colon \wedge P^* \rightarrow \wedge P^*$ is the linear map of graded vector spaces which multiplies  by   $(-1)^{n}$ in degree $n$. The differential $d_I$ has degree $-1$  (owing to the plus sign in our convention for the degree of bar elements) and satisfies $d_I^2=0$. It 
 is compatible with the Hopf algebra structures.
 
Define an increasing  filtration $G$ on $\wedge P^*$ as follows:  $G_g(\wedge P^*)$ is spanned by elements 
$ \omega^{4i_1+1} \wedge \ldots \wedge \omega^{4i_k+1}$ such that $i_1< \ldots < i_k$ and $4i_k+1\leq 2g-1$.  
It defines a filtration $G (\wedge P^*)$ of differential graded algebras.  In fact, it follows from the definition that $G_g  \wedge G_h \subset G_{\max \{g,h\}}$. The  filtration $G$ induces a filtration on $B(\wedge P^*)$, which we also denote by $G$. 
It is  a filtration of graded Hopf algebras, which is respected  by the differential  $d_I$.

\begin{prop}  The filtered  complex  $(B(\wedge P^*), d_I)$ with filtration $G$ defines a spectral sequence $\sseq{c}^r_{s,t}$ of commutative  bigraded Hopf algebras such that 
    \[    \sseq{c}^1_{s,t} =  \Gr_s^G  B_{s+t} ( \wedge P^*)     \ .  \]
This spectral sequence  converges  to the bigraded Hopf algebra 
$\Gr^G \mathrm{Sym}(P_+^*[-1])$ which is  isomorphic to the polynomial ring in primitive generators $\beta^{4k+1}$ in degree $4k+2$ and genus $2k+1$.  Furthermore, the differentials $d_1,d_2$ vanish,  and  $d_r  [\beta^{4k+1}] =0 $ for all $k,r$. 
\end{prop}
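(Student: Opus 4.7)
The strategy is to apply the standard theory of spectral sequences associated to a filtered differential graded Hopf algebra to the complex $(B(\wedge P^*), d_I)$ equipped with the filtration $G$, then to use a single computation of filtration-lowering to verify both the vanishing of early differentials and the permanence of the generators.

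First, I will set up the filtration on $B(\wedge P^*)$ by declaring that a bar element $[p_1|\cdots|p_n]$ lies in $G_s B(\wedge P^*)$ whenever $\sum_i |p_i|_G \le s$, where $|p|_G = \min\{g : p \in G_g(\wedge P^*)\}$. Since $G_g \wedge G_h \subset G_{\max(g,h)} \subset G_{g+h}$ by hypothesis, this sum-filtration is compatible with the shuffle product (which adds filtrations of the two factors) and with the deconcatenation coproduct (which splits them), making $G$ into a filtration of Hopf algebras on $B(\wedge P^*)$. Compatibility with $d_I$ will follow from the filtration-lowering estimate below. The standard machinery then yields a spectral sequence $\sseq{c}^r$ of bigraded Hopf algebras with $\sseq{c}^1_{s,t} = \Gr_s^G B_{s+t}(\wedge P^*)$, and convergence follows from the fact that each bar-total degree contains only finitely many filtration values.

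Next, I will identify the abutment. The bar complex $(B(A), d_I)$ of any augmented graded algebra $A$ computes $\mathrm{Tor}^A(\QQ,\QQ)$; applying this to the exterior algebra $A = \wedge P^*$ and invoking Koszul duality gives the canonical isomorphism $H^*(B(\wedge P^*), d_I) \cong \mathrm{Sym}(P_+^*[-1])$, realizing each polynomial generator $\beta^{4k+1}$ as the cohomology class of the one-bar element $[\omega^{4k+1}]$. The induced filtration on this abutment assigns filtration degree $2k+1$ to each $\beta^{4k+1}$ and is additive on products, so its associated graded is the same polynomial algebra equipped with the additional genus grading, as claimed.

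For the vanishing of $d_1$ and $d_2$, the key observation is that the minimal filtration of a non-zero positive-degree element of $\wedge P^*$ is $3$, attained by $\omega^5 \in G_3$. Hence every bar entry $p_i$ in the augmentation ideal satisfies $|p_i|_G \ge 3$. The differential $d_I$ replaces two adjacent entries $p_i, p_{i+1}$ by $sp_i \wedge p_{i+1}$, whose filtration is bounded above by $\max(|p_i|_G, |p_{i+1}|_G)$; the resulting change in sum filtration is at most $-\min(|p_i|_G, |p_{i+1}|_G) \le -3$. Thus $d_I$ strictly lowers $G$-filtration by at least $3$ on every bar element of length $\ge 2$, which forces the first two differentials of the spectral sequence to vanish, since they would need to lower filtration by strictly less than $3$.

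Finally, for the assertion $d_r[\beta^{4k+1}] = 0$ for all $r$, observe that the generator $\beta^{4k+1}$ is represented by the single-bar element $[\omega^{4k+1}]$ for which $d_I$ vanishes identically (the defining sum is empty on bars of length $n = 1$). Thus $[\omega^{4k+1}]$ is a genuine cocycle in $B(\wedge P^*)$, making $[\beta^{4k+1}]$ a permanent cycle in the spectral sequence. The main technical obstacle is not the vanishing statements themselves, which follow immediately from the filtration estimate, but rather the careful verification that the sum filtration is compatible with all the Hopf structure maps (including coassociativity of the coproduct at the level of $\Gr^G$) and the identification of the Koszul-dual polynomial algebra together with its induced genus filtration.
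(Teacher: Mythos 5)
Your proposal is correct and follows essentially the same route as the paper: the induced (sum) filtration on the bar construction is compatible with shuffle product, deconcatenation, and $d_I$; the internal differential drops genus by at least $3$ (the minimal genus of a nonzero element of $P^*_+$), which gives $\sseq{c}^1_{s,t}=\Gr_s^G B_{s+t}(\wedge P^*)$ and kills $d_1,d_2$; Koszul duality identifies the abutment with $\Gr^G\mathrm{Sym}(P_+^*[-1])$; and the one-bar generators are permanent cycles since $d_I$ vanishes on bars of length one. The only blemish is notational: the single-bar representatives should be written $[\beta^{4k+1}]$ rather than $[\omega^{4k+1}]$, as the $\omega$'s live in $\Omega^*_c$, not in $P^*$.
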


\begin{proof}
 The spectral sequence  $\sseq{c}^1_{s,t}$ defined by the filtration $G$ on  $(B(\wedge P^*), d_I)$ has $E^1$-page isomorphic to 
 \[    \sseq{c}^1_{s,t} =  H_{s+t} (   \Gr_s^G  B ( \wedge P^*) )  \]
Since the differential $d_I$ strictly decreases the genus, it is identically zero on the associated graded of $B(\wedge P^*)$, and hence 
$ H_{s+t} (   \Gr_s^G  B ( \wedge P^*) )=   \Gr_s^G  B_{s+t} ( \wedge P^*)$. 
It follows from  the Koszul duality between the symmetric and exterior algebras (\cite{LodayVallette}, Proposition 3.48 and Theorem 3.44) that there is an isomorphism of graded commutative  Hopf algebras 
\[  H_{*} \left( B( \wedge P^*), d_I \right)  \cong   \mathrm{Sym}(P_+^*[-1])  \ .\]
The spectral sequence therefore converges to 
 $\Gr^G H(B(\wedge P^*), d_I)  \cong \Gr^G \mathrm{Sym}(P_+^*[-1])$,  as bigraded Hopf algebras. 
 The fact that the differential $d_r$ annihilates $\beta^{4k+1}$ is clear from the definition of $d_I$, which acts trivially on $[\beta^{4k+1}]$. The fact that $d_1,d_2$ vanish follows from the definition of  $d_I$, and the fact that the map   $[\beta^{4a+1} | \beta^{4b+1}] \mapsto \beta^{4a+1} \wedge \beta^{4b+1}$ sends a term of genus $2(a+b)+2$ to one of genus $2 \max\{a,b\}+2$, and therefore decreases the genus by at least $3$. 
\end{proof}

By identifying $P^*$ with the graded dual of $\Omega^*_c$, 
 we may interpret $B(\wedge P^*)$ as the graded dual of  the tensor Hopf algebra $T(\Omega^*_c[-1])$. The proposition  therefore  defines by  duality a spectral sequence on $T(\Omega^*_c[-1])$, which we  call the {\em canonical spectral sequence}, denoted by $\sseq{c}^{s,t}_r$.   The differentials in this spectral sequence vanish on elements $[\omega^{4k+1}]$ and, for   $r= 2 \min \{a,b\}+1$, send  $[\omega^{4a+1}\wedge \omega^{4b+1}]  $ to the commutator $[\omega^{4a+1}, \omega^{4b+1} ] = [\omega^{4a+1}| \omega^{4b+1}] -[\omega^{4b+1}| \omega^{4a+1}]. $

We expect that the map in Theorem~\ref{thm: Tensoralgebramaps} may be promoted (possibly after rescaling the action of the  differentials)  to a map 
of spectral sequences    
\[   \sseq{c}^{*,*}_r \otimes_{\QQ} \RR \longrightarrow   \sseq{Q}_r^{*,*} \otimes_{\QQ} \RR \  \]
which induces an isomorphism on their abutments (which are formally isomorphic, by the previous proposition).  This provides yet more evidence of a different kind for Conjecture \ref{conj: Tinjects}.

\subsection{Illustration}\label{subsec:illustration}
 Table~\ref{table:spectral-sequence} depicts
  $T(\Omega^*_c[-1])$. The entries that are known to be isomorphic to $W_0H^*_c(\cA;\QQ)$ are highlighted, in particular for $g\leq 7$. Blank entries vanish for dimension reasons. 
    Entries in $\computed{\hbox{low genus}}$ follow from computer calculations of \cite{elbaz-vincent-gangl-soule-perfect} and \cite{dutour-sikiric-elbaz-vincent-kupers-martinet-voronoi}. 

    There are two infinite ranges in which the cohomology of $W_0H^*_c(\cA;\QQ)$ has been completely determined. 
        The $\vanish{\hbox{zero}}$ entries in the bottom three rows follow from \cite{gunnels-symplectic, bruck-patzt-sroka, bruck-miller-patzt-sroka-wilson-codimension}, which imply that
        \[  W_0H^{g+n}_c(\cA_g;\QQ)=0  \quad \hbox{ for }  0\le n\leq 2, g \geq 1\ . \]  The bottom two rows also follow from \cite{lee-szczarba-homology} and \cite{church-putman-codimension}.
       A conjecture of Church--Farb--Putnam implies that $W_0H^n_c(\cA_g;\QQ)$ vanishes for $n<2g$ (below the diagonal line $n=2g$).         The entries  in $\stable{\hbox{high degrees}}$ follow from \cite[\S14.5]{brown-bordifications}, \cite{bbcmmw-top} and  imply that for  all $g>1$ odd 
        \[  W_0H_c^{n}(\cA_g;\QQ)\cong \Omega_c^n(g)[-1] \quad \hbox{  and  } \quad  W_0H_c^{n-1}(\cA_{g-1};\QQ) =  0\]
        for $n\geq d_g -\kappa(g)$, where $d_g$ is the dimension of $\cA_g$ and  $\kappa(g)$ is the stable range for the cohomology of the general linear group (which is  currently  known to be $\kappa(g) \leq g - 1$ by \cite{li-sun-lowdegree}).

\begin{table}[ht]\label{table:spectral-sequence}
\addtolength{\tabcolsep}{-1pt}
    \centering
    \resizebox{1.00\hsize}{!}{
    \begin{tabular}{c|ccccccccccc}
    45   & \blank{0} & \blank{0} & \blank{0} & \blank{0} & \blank{0} & \blank{0} & \blank{0} & \blank{0} & \blank{0}  &  \blank{0}  & $\stable{0}$  \\     
    44   & \blank{0} & \blank{0} & \blank{0} & \blank{0} & \blank{0} & \blank{0} & \blank{0} & \blank{0} & \blank{0}  &  \blank{0}  & $\stable{0}$  \\     
43   & \blank{0} & \blank{0} & \blank{0} & \blank{0} & \blank{0} & \blank{0} & \blank{0} & \blank{0} & \blank{0}  &  \blank{0}  & $0$  \\     
42   & \blank{0} & \blank{0} & \blank{0} & \blank{0} & \blank{0} & \blank{0} & \blank{0} & \blank{0} & \blank{0}  &  \blank{0}  & 0  \\     
41   & \blank{0} & \blank{0} & \blank{0} & \blank{0} & \blank{0} & \blank{0} & \blank{0} & \blank{0} & \blank{0}  &  \blank{0}  & 0  \\     
40   & \blank{0} & \blank{0} & \blank{0} & \blank{0} & \blank{0} & \blank{0} & \blank{0} & \blank{0} & \blank{0}  &  \blank{0}  & 0  \\     
39   & \blank{0} & \blank{0} & \blank{0} & \blank{0} & \blank{0} & \blank{0} & \blank{0} & \blank{0} & \blank{0}  &  \blank{0}  & 0  \\     
38   & \blank{0} & \blank{0} & \blank{0} & \blank{0} & \blank{0} & \blank{0} & \blank{0} & \blank{0} & \blank{0}  &  \blank{0}  & 0  \\     
37   & \blank{0} & \blank{0} & \blank{0} & \blank{0} & \blank{0} & \blank{0} & \blank{0} & \blank{0} & \blank{0}  &  \blank{0}  & 0  \\     
36   & \blank{0} & \blank{0} & \blank{0} & \blank{0} & \blank{0} & \blank{0} & \blank{0} & \blank{0} & \blank{0}  &  $\stable{[\omega^5\!\wedge\! \omega^9\!\wedge\! \omega^{13}\!\wedge\! \omega^{17}]}$  & 0  \\     
35   & \blank{0} & \blank{0} & \blank{0} & \blank{0} & \blank{0} & \blank{0} & \blank{0} & \blank{0} & \blank{0} & $\stable{0}$  & 0   \\ 
34   & \blank{0} & \blank{0} & \blank{0} & \blank{0} & \blank{0} & \blank{0} & \blank{0} & \blank{0} & \blank{0} & $\stable{0}$  & 0   \\ 
33   & \blank{0} & \blank{0} & \blank{0} & \blank{0} & \blank{0} & \blank{0} & \blank{0} & \blank{0} & \blank{0} & $\stable{0}$  & 0   \\ 
32   & \blank{0} & \blank{0} & \blank{0} & \blank{0} & \blank{0} & \blank{0} & \blank{0} & \blank{0} & \blank{0} & $\stable{0}$ & 0   \\ 
31   & \blank{0} & \blank{0} & \blank{0} & \blank{0} & \blank{0} & \blank{0} & \blank{0} & \blank{0} & \blank{0} & $\stable{[\omega^9 \!\wedge\! \omega^{13}\!\wedge\!\omega^{17}]}$   & 0   \\ 
30   & \blank{0} & \blank{0} & \blank{0} & \blank{0} & \blank{0} & \blank{0} & \blank{0} & \blank{0} & \blank{0} &  $\stable{0}$  & 0   \\ 
29   & \blank{0} & \blank{0} & \blank{0} & \blank{0} & \blank{0} & \blank{0} & \blank{0} & \blank{0} & \blank{0} &  $\stable{0}$  & 0   \\ 
28   & \blank{0} & \blank{0} & \blank{0} & \blank{0} & \blank{0} & \blank{0} & \blank{0} & \blank{0} &  $\stable{0}$ &  $\stable{0}$  & 0   \\ 
27   & \blank{0} & \blank{0} & \blank{0} & \blank{0} & \blank{0} & \blank{0} & \blank{0} & \blank{0} &  $\stable{0}$ &  $[\omega^5\!\wedge\! \omega^{13}\!\wedge\! \omega^{17}]$  & 0   \\ 
26   & \blank{0} & \blank{0} & \blank{0} & \blank{0} & \blank{0} & \blank{0} & \blank{0} & \blank{0} &  $0$ &  0 &  0  \\ 
25   & \blank{0} & \blank{0} & \blank{0} & \blank{0} & \blank{0} & \blank{0} & \blank{0} & \blank{0} & 0 &  0 &  0  \\ 
24   & \blank{0} & \blank{0} & \blank{0} & \blank{0} & \blank{0} & \blank{0} & \blank{0} & \blank{0} & 0 &  0 & $\substack{[\omega^5|\omega^5\wedge\omega^9\wedge\omega^{13}] \\ \textcolor{red}{[\omega^5\wedge\omega^9\wedge\omega^{13} | \omega^5]}}$  \\ 
23   & \blank{0} & \blank{0} & \blank{0} & \blank{0} & \blank{0} & \blank{0} & \blank{0} & \blank{0} & 0 &  $[\omega^5\!\wedge\!\omega^9\!\wedge\!\omega^{17}]$  & 0   \\ 
22   & \blank{0} & \blank{0} & \blank{0} & \blank{0} & \blank{0} & \blank{0} & \blank{0} & \blank{0} & 0 & $[\omega^{13}\!\wedge\!\omega^{17}]$  & 0   \\ 
 21   & \blank{0} & \blank{0} & \blank{0} & \blank{0} & \blank{0} & \blank{0} & \blank{0} & $  \stable{[\omega^5\!\wedge\! \omega^9 \!\wedge\! \omega^{13}]} $ & 0 &  0 & 0 
 \\ 
 20   & \blank{0} & \blank{0} & \blank{0} & \blank{0} & \blank{0} & \blank{0} & \blank{0} &  $\stable{0}$ & 0  &  0 & $\textcolor{red}{[\omega^5 \!\wedge\! \omega^9|  \omega^5 \!\wedge\! \omega^9 ]}$  \\ 
 19   & \blank{0} & \blank{0} & \blank{0} & \blank{0} & \blank{0} & \blank{0} & \blank{0} &  $\stable{0}$ &  0 &  0&  $\substack{[\omega^{5} |  \omega^9 \wedge \omega^{13} ] \\  \textcolor{red}{[\omega^9\wedge \omega^{13} | \omega^5]}  }$ \\ 
 18   & \blank{0} & \blank{0} & \blank{0} & \blank{0} & \blank{0} & \blank{0} & \blank{0} &  $\stable{0}$ & 0 & $[\omega^9\!\wedge\! \omega^{17}]$  &  0 \\ 
 17   & \blank{0} & \blank{0} & \blank{0} & \blank{0} & \blank{0} & \blank{0} & \blank{0} &  $\stable{0}$ & 0 & 0 & 0   \\ 
 16   & \blank{0} & \blank{0} & \blank{0} & \blank{0} & \blank{0} & \blank{0} & \blank{0} & $\stable{[\omega^9 \!\wedge\! \omega^{13}]}$ & 0 & 0 & 0  \\ 
 15   & \blank{0} & \blank{0} & \blank{0} & \blank{0} & \blank{0} & \blank{0} &  $\stable{0}$ &  $\stable{0}$ &  0  & 0   & $ \substack{[\omega^9 |  \omega^5 \wedge \omega^9] \\  [\omega^5 \wedge \omega^9 | \omega^9]  }, \substack{[\omega^5 |  \omega^5 \wedge \omega^{13}] \\  \textcolor{red}{[\omega^5 \wedge \omega^{13} | \omega^5]}  }$   \\ 
 14   & \blank{0} & \blank{0} & \blank{0} & \blank{0} & \blank{0} & \blank{0} &  $\stable{0}$ &  $\computed{0}$ & 0  &  $[\omega^5\!\wedge\! \omega^{17}]$ & 0   \\ 
 13   & \blank{0} & \blank{0} & \blank{0} & \blank{0} & \blank{0} & \blank{0} &  $\computed{0}$ & $\computed{0}$ & $\substack{[\omega^5 | \omega^5 \wedge \omega^9] \\  [\omega^5 \wedge \omega^9 | \omega^5]  } $  & 0 &  0  \\ 
 12   & \blank{0} & \blank{0} & \blank{0} & \blank{0} & \blank{0} & \blank{0} & $\computed{0}$ & $\computed{[\omega^5 \!\wedge\! \omega^{13}]} $ & 0  & 0   &  0  \\ 
 11   & \blank{0} & \blank{0} & \blank{0} & \blank{0} & \blank{0} & \blank{0} & $\computed{0}$ & $\computed{0}$ & 0 &  0  & 0   \\ 
 10   & \blank{0} & \blank{0} & \blank{0} & \blank{0} & \blank{0} & $\stable{[\omega^5 \!\wedge\! \omega^9]} $ & $\computed{0}$ & $\computed{0}$ & 0   & 0  &  $ \substack{[\omega^9 | \omega^9] \\  [\omega^5 | \omega^{13}] \ , \ [\omega^{13} | \omega^5]    }$ \\ 
 9   & \blank{0} & \blank{0} & \blank{0} & \blank{0} & \blank{0} &  $\stable{0}$ & $\computed{0}$ & $\computed{0}$ &  0  &   $\substack{[\omega^{17} ] \\  [\omega^5 | \omega^5|\omega^5]  }$   &0  \\ 
 8   & \blank{0} & \blank{0} & \blank{0} & \blank{0} & \blank{0} &  $\stable{0}$ & $\computed{0}$ & $\computed{0}$ &  $ \substack{[\omega^5 | \omega^9] \\  [\omega^9 | \omega^5]  }$  &  0 & 0 \\ 
 7   & \blank{0} & \blank{0} & \blank{0} & \blank{0} & \blank{0} &  $\stable{0}$ & $\computed{0}$ & $\computed{[\omega^{13}]}$   & 0   & 0 &  0\\ 
 6   & \blank{0} & \blank{0} & \blank{0} & \blank{0} &  $\stable{0}$ &  $\stable{0}$ & $\computed{[\omega^5 | \omega^5]}$ & $\computed{0}$ &  0  & 0  & 0 \\ 
 5   & \blank{0} & \blank{0} & \blank{0} & \blank{0} &  $\stable{0}$ & $\computed{[\omega^9]}$ & $\computed{0}$ & $\computed{0}$ & 0 & 0 & 0  \\ 
 4   & \blank{0} & \blank{0} & \blank{0} & \blank{0} & $\computed{0}$ & $\computed{0}$ & $\computed{0}$ & $\computed{0}$ & $\computed{0}$ & 0 & 0  \\ 
 3   & \blank{0} & \blank{0} & \blank{0} & $\stable{[\omega^5]}$ & $\computed{0}$ & $\computed{0}$ & $\computed{0}$ & $\computed{0}$ & $\computed{0}$ & 0 & 0   \\ 
 2   & \blank{0} & \blank{0} & \blank{0} &  $\vanish{0}$ & $\vanish{0}$ & $\vanish{0}$ & $\vanish{0}$ & $\vanish{0}$ & $\vanish{0}$ & $\vanish{0}$ & $\vanish{0}$ \\ 
 1   & $\blank{0}$ & $\blank{0}$& $\vanish{0}$& $\vanish{0}$& $\vanish{0}$& $\vanish{0}$& $\vanish{0}$& $\vanish{0}$& $\vanish{0}$& $\vanish{0}$& $\vanish{0}$\\ 
 0   & $\vanish{1} $ & $\vanish{0}$& $\vanish{0}$& $\vanish{0}$& $\vanish{0}$& $\vanish{0}$& $\vanish{0}$& $\vanish{0}$& $\vanish{0}$& $\vanish{0}$& $\vanish{0}$\\ 
    \hline
    & 0 & 1 & 2 & 3 & 4 & 5 & 6 & 7 & 8 & 9 & 10 \\
    \end{tabular}
    }
    \medskip
    \caption{Without the red entries, a table showing generators for a largest known subspace of $W_0H^*_c(\cA;\RR)$ in genus up to $10$, deduced from Theorems~\ref{thm:canonical-inj} and \ref{thm:free}, and the discussion in Section \ref{subsec:illustration}.  The $(s,t)$ entry shows linearly independent elements of $W_0 H^{s+t}_c(\cA_s;\RR)$.  An entry $(s,t)$ is highlighted in $\stable{\hbox{green}}$, $\computed{\hbox{blue}}$, or $\vanish{\hbox{purple}}$ if it is known to be all of $W_0 H^{s+t}_c(\cA_s;\RR)$; the color coding is given in Section~\ref{subsec:illustration}.  
    The \textcolor{red}{red} entries are additional elements of $T(\Omega^*_c[-1])$ appearing as elements of $E_1$ of the canonical spectral sequence (Section~\ref{sec:canonical-ss}), and conjecturally (Conjecture~\ref{conj: Tinjects}) new linearly independent generators of $W_0 H^*_c(\cA;\RR)$.  Thus, the table in its entirety shows $\sseq{c}_1$ and verifies Conjecture~\ref{conj: Tinjects} for all $g\le 9$.}
    \label{tab:E1}
\end{table}

\subsubsection{Application of the Quillen spectral sequence}\label{sec:applications-of-quillen-ss}
Studying the Quillen spectral sequence further allows us to deduce some nonvanishing classes in $W_0 H^*(\cA;\RR)$ and verifying Conjecture~\ref{conj: Tinjects} up to $g = 9$.  Recall that the cohomological Quillen spectral sequence abuts to a free polynomial algebra with generators in bidegree $(2k+1,2k+1)$, for positive integers $k$. 
By inspection of  Table \ref{tab:E1}, we observe that $[\omega^5]$ and $[\omega^9]$ (and in fact, $[\omega^{13}]$, since a class in genus 7 and degree 14 must appear in the abutment of the Quillen spectral sequence) are  annihilated by the differentials ${}^Qd_r$  in the Quillen spectral sequence for all $r$.  Furthermore, we know that the primitive element $[\omega^5,\omega^9] =[\omega^5| \omega^9] - [\omega^9|\omega^5] \in \sseq{Q}_1^{8,8}$ of degree $16$, which is non-zero by Corollary \ref{cor: partialfreeness}, does not appear in the abutment. Since it is  annihilated by all differentials ${}^Qd_r$, it must be in the image of  a class  of degree 15 in genus $\leq 7$.  The only such class is $[\omega^5\wedge \omega^9]$ in genus $5$. We conclude that
\begin{equation*}
    {}^Qd_r \left( [\omega^5\wedge \omega^9] \right)= \begin{cases} \alpha [\omega^5,\omega^9] \qquad \hbox{ if } r=3 \\ 0 \qquad \qquad  \qquad \hbox{ else}
    \end{cases} 
\end{equation*}
for some $\alpha \in \QQ^{\times}$. Since  $[\omega^5],[\omega^9]$ generate a free Lie algebra in $\Prim \big(\sseq{Q}_1\big)$ (Corollary~\ref{cor: partialfreeness}), we may deduce the non-vanishing of many more elements, and the non-triviality of Lie brackets involving the generator $\omega^5 \wedge \omega^9$, which is noteworthy as $\omega^5\wedge\omega^9$ has odd degree. 

As a warm-up example,
let us first show that $[\omega^5, \omega^5\wedge\omega^9]\in \sseq{Q}_1^{8,13}$ is nonzero. As argued above, both $\omega^5$ and $\omega^5\wedge\omega^9$ represent nonzero classes on $\sseq{Q}_3$, and the Leibniz rule gives ${}^Q d_3[\omega^5, \omega^5 \wedge\omega^9] = [\omega^5, [\omega^5,\omega^9]]$, which is nonzero by freeness of the Lie algebra generated by $[\omega^5]$ and $[\omega^9]$.  Therefore $[\omega^5,\omega^5\wedge\omega^9]\ne 0$.  
More generally,
we can obtain an infinite supply of linearly independent primitive elements by writing down  a Hall basis for the free Lie algebra on two generators $\mathbb{L}(x,y)$, choosing for each one  an occurence of $[x,y]$ and replacing it with $\omega^5\wedge \omega^9$, and finally replacing $x$ with $\omega^5$ and  $y$ with $\omega^9$. For example:  
\begin{eqnarray} \label{primitivesinvolving59}  
  (xy)x  \, ,  \, (xy)y   & \rightarrow & [\omega^5\wedge \omega^9, \omega^5]  \, , \,  [\omega^5\wedge \omega^9, \omega^9]     \\
  ((xy)x)x \, ,  \, ((xy)y)x) \, , \, ((xy)y)y & \rightarrow &  [[\omega^5\wedge \omega^9, \omega^5], 
\omega^5] \, , \,  [[\omega^5\wedge \omega^9, \omega^9],\omega^5]  \, , \,   [[\omega^5\wedge \omega^9, \omega^9],\omega^9]   \nonumber 
   \end{eqnarray} where, in standard notation $(ab)$ denotes $[a,b]$.  In length four the Hall basis
   \[  ((xy)x)x)x \ , \ ((xy)y)x)x \ , \   ((xy)y)y)x \ , \ ((xy)y)y)y \ ,  \  ((xy)y)(xy) \ ,  \  ((xy)x)(xy)
   \]
  may be lifted to, for example, a set of six elements:
   \[   [[[\omega^5\wedge \omega^9,\omega^5],\omega^5],\omega^5]    \ , \       [[[\omega^5\wedge \omega^9,\omega^9],\omega^5],\omega^5]   \ , \   [[[\omega^5\wedge \omega^9,\omega^9],\omega^9],\omega^5]  \]
   \[    [[[\omega^5\wedge \omega^9,\omega^9],\omega^9],\omega^9]  \ , \  [[\omega^5,\omega^9],\omega^9], \omega^5\wedge \omega^9] \ , \  [[\omega^5,\omega^9],\omega^5], \omega^5\wedge \omega^9]  \]
They are independent since their images under ${}^Qd_3$ are part of a Hall basis for the free lie algebra $\mathbb{L}(\omega^5,\omega^9)$, and hence are independent. In particular,  since the Lie bracket $[\omega^5\wedge \omega^9,\omega^5]$ is non-zero,  the Milnor--Moore theorem  implies that  the two classes  in genus $8$  and degree 21 of the form $[\omega^5|\omega^5\wedge \omega^9]$, $[\omega^5\wedge \omega^9|\omega^5]$ are linearly independent, which proves  Conjecture \ref{conj: Tinjects} up to and including genus $9$. 

In genus $10$, injectivity of the map $T(\Omega^*_c[-1])\to W_0 H^*_c(\cA;\RR)$ in Conjecture~\ref{conj: Tinjects} is not known for four cohomological degrees, namely $d=34,30,29,$ and $25$.  Conjecture \ref{conj: Tinjects} predicts the existence of $2,1, 2,$ and $4$ independent classes respectively, while we prove that the dimension of $W_0 H^d_c(\cA_{10};\RR)$ is at least  $1,0,1,$ and $3$ in these degrees, respectively.   For example, in degree $30$, it is not known if the primitive element $[\omega^5\wedge \omega^9| \omega^5\wedge\omega^9]$ is non-zero.   See Table~\ref{tab:E1}.

\subsection{Proof of Theorem~\ref{thm:extension}}\label{sec:proof-mhs}   Following the notation suggested by Namikawa, we shall write $\cA_g^\Sat$ for the minimal Satake, or Baily--Borel, compactification of $\cA_g$.

\medskip

\begin{proof}  
The Satake compactification of $\cA_h$ has a natural stratification
\[
\cA_h^\Sat = \bigsqcup_{g \leq h} \cA_g.
\]
This induces a spectral sequence of mixed Hodge structures $\sseq{\Sat}_*$ abutting to the cohomology  $H^*(\cA_h^\Sat)$ whose $E_1$-page is
\[
\sseq{\Sat}_1^{g,k} = H^{g+k}_c (\cA_g),
\]
for $g \leq h$, and $0$ otherwise.  Let us consider the $E_1$-page of the induced spectral sequence on the weight zero subspaces. Choose $h = 6$, which is sufficiently large so that $H^6(\cA_h^\Sat)$ is stable. Then $\sseq{\Sat}_1$  agrees with the truncation of $W_0 H^*_c(\cA)$ after the $g=6$ column, which is depicted in Table~\ref{tab:E1}.  Similarly, the corresponding spectral sequence abutting to $W_0H^*(\cA_3^\Sat)$ is obtained by truncating after the third column. 
For degree reasons, both of these groups are given by $\sseq{\Sat}_1^{3,3}$. Thus, we have isomorphisms
$$W_0 H^6(\cA_6^\Sat) \cong W_0 H^6_c(\cA_3) \cong W_0 H^6(\cA_3^\Sat).$$ 

In particular, the inclusion $\cA_3^\Sat \subset \cA_6^\Sat$ induces an isomorphism on $W_0H^6$. Now we show that the induced map on all of $H^6$ is injective.
To see this, note that $H^6(\cA_6^\Sat)$ has rank $2$, with
\[
H^6(\cA_6^\Sat)/W_0H^6(\cA_6^\Sat)
\]
spanned by the Goresky--Pardon lift $\tilde{c}_3$ of $c_3(\Lambda)$, the third Chern class of the Hodge bundle \cite{chen-looijenga-stable, looijenga-goresky-pardon}. Thus, it will suffice to show that the restriction of $\tilde{c}_3$ to $\cA_3^\Sat$ is not zero. 

The Goresky--Pardon lift $\tilde{c}_3$ has the following property: fix $g$ and choose a toroidal compactification $\cA_g^\Sigma$, as in \cite{amrt}. Let $\Lambda_g$ denote the Hodge bundle on $\cA_g$ and let $\widetilde\Lambda_g$ be the extension to $\cA_g^\Sigma$ constructed in \cite{mumford-proportionality}. Then there is a unique projection $
\pi_\Sigma \colon \cA_g^\Sigma \to \cA_g^\Sat
$
that extends the identity on the shared open subset $\cA_g$, and
\begin{equation}\label{eq:pi*}
\pi_\Sigma^* (\tilde c_3) = c_3(\widetilde \Lambda_g).
\end{equation}
See \cite{goresky-pardon-chern}. We can choose a toroidal compactification so that the Torelli map extends to a morphism from the moduli space of stable curves $\overline \cM_g \to \cA_g^\Sigma$ and the pullback of $\tilde c_3$ is the Hodge class $\lambda_3$. Then $\lambda_3 \neq 0$ in $H^{6}(\overline \cM_3)$ because it appears as a multiplicative factor in the integrand of explicit nonzero Hodge integrals \cite{faber-pandharipande-hodge}. It follows that $\tilde{c}_3$ is nonzero in $H^6(\cA_3^\Sat),$ which proves the claim.

Next, we claim that $\tilde{c}_3$ is in the image of the push-forward for the open inclusion
\[
\iota_* \colon H^6_c(\cA_3) \to H^6(\cA_3^\Sat).
\]
The property \eqref{eq:pi*} for $g = 2$ shows that the restriction of $\tilde c_3$ to $\cA_2^\Sat$ vanishes, since $\Lambda_2$ has rank 2. Applying excision gives a long exact sequence 
\[
\cdots \to H^6_c(\cA_3) \to   H^6(\cA^\Sat_3) \to  H^6(\cA^\Sat_2) \to \cdots 
\]
Since $\tilde c_3$ is in the kernel of the map to $H^6(\cA_2^\Sat)$, it must be in the image of $H^6_c(\cA_3)$, as claimed.

We have shown that the rank two stable cohomology group $H^6(\cA_6^\Sat)$ injects into $H^6(\cA_3^\Sat)$ and the image is contained in the image of $H^6_c(\cA_3)$, which also has rank 2 \cite{hain-rational}. Thus we have a zig-zag of isomorphisms of mixed Hodge structures 
\[
H^6_c(\cA_3) \xrightarrow{\sim} \ \mathrm{Im}\,\big(H^6(\cA_6^\Sat) \to H^6(\cA_3^\Sat)\big) \ \xleftarrow{\sim} H^6(\cA_\infty^\Sat)
\]
Since they come from algebraic maps, they also induce isomorphisms in algebraic de Rham cohomology, and hence remain true in any  suitable category of realisations.
The main result of \cite{looijenga-goresky-pardon} shows that $H^6(\cA_6^\Sat)$ is a  nontrivial extension of $\QQ(-3)$ by $\QQ$, whose extension class is  given by a nonzero rational multiple of $\zeta(3)$. This means that, in a suitable choice of basis for Betti and de Rham cohomology, the period matrix $P$ is upper-triangular with $\zeta(3)$ above the diagonal, and $1$, $(2\pi i)^3$ along the diagonal. Following the zig-zag and applying Poincar\'e duality to $H^6_c(\cA_3)$ shows that $H^6(\cA_3)$ is likewise a nontrivial extension of $\QQ(-6)$ by $\QQ(-3)$. More precisely, its  period matrix, with respect to the dual basis, is the inverse transpose of $P$ times $(2\pi i)^6$, and hence lower-triangular with $-\zeta(3) (2\pi i)^3$ below the diagonal.  
\end{proof}

\bibliographystyle{amsalpha}
\bibliography{./my}
\end{document}